\title
{$\mu_p$- and $\alpha_p$-actions on K3 surfaces in characteristic $p$}
\author{Yuya Matsumoto}
\date{2021/01/31}
\address{\tusaddressfull}
\email{\gmail}
\email{\tusmail}
\thanks{This work was supported by JSPS KAKENHI Grant Numbers 15H05738, 16K17560, and 20K14296.}
\subjclass[2010]{14J28 (Primary) 14L15, 14L30 (Secondary)}
\begin{document}

\begin{abstract}
We consider $\mu_p$- and $\alpha_p$-actions on RDP K3 surfaces 
(K3 surfaces with rational double point singularities allowed) in characteristic $p > 0$.
We study possible characteristics, quotient surfaces, and quotient singularities.
It turns out that these properties of $\mu_p$- and $\alpha_p$-actions
are analogous to those of $\bZ/l\bZ$-actions (for primes $l \neq p$) and $\bZ/p\bZ$-quotients respectively.
We also show that conversely an RDP K3 surface with a certain configuration of singularities 
admits a $\mu_p$- or $\alpha_p$- or $\bZ/p\bZ$-covering by a ``K3-like'' surface, which is often an RDP K3 surface but not always, 
as in the case of the canonical coverings of Enriques surfaces in characteristic $2$.
\end{abstract}

\maketitle

\section{Introduction}
K3 surfaces are proper smooth surfaces $X$ with $\Omega^2_X \cong \cO_X$ and $H^1(X, \cO_X) = 0$.
The first condition implies that $X$ has a global non-vanishing $2$-form and it is unique up to scalar.

Actions of (finite or infinite) groups on K3 surfaces have been vastly studied.
For example, the quotient of a K3 surface by an action of a finite group of order prime to the characteristic is birational to a K3 surface
if and only if the action preserves the global $2$-form, 
and moreover the list of possible such finite groups is determined in characteristic $0$.
Much less studied are infinitesimal actions, or \emph{derivations}, on K3 surfaces in positive characteristic
(with the exception of those with Enriques quotients in characteristic $2$).
Perhaps this is because it is known that smooth K3 surfaces admit no nontrivial global derivations.
However
we find many examples of nontrivial global derivations
when we will look at \emph{RDP K3 surfaces}, by which we mean we allow rational double point singularities (RDPs), the simplest $2$-dimensional singularities.

In this paper we consider derivations that correspond to actions of group schemes $\mu_p$ and $\alpha_p$.
We study possible characteristic, quotient surfaces, and quotient singularities.
It turns out that these properties of $\mu_p$- and $\alpha_p$-actions
are quite similar to those of $\bZ/p\bZ$-actions in characteristic $\neq p$ and characteristic $p$ respectively.

The actions of $\mu_p$, and more generally of $\mu_{p^e}$ and $\mu_{n}$, on K3 surfaces 
are also discussed in our previous paper \cite{Matsumoto:k3mun}.

\medskip

The content and the main results of this paper are as follows.

In Section \ref{sec:derivations} we introduce fundamental notions of derivations, 
such as $p$-closedness and fixed loci, and give their properties.
Then in Section \ref{sec:derivations on RDP} we describe local behaviors of derivations related to RDPs.
We classify $p$-closed derivations on RDPs without fixed points (Theorem \ref{thm:mu_p alpha_p RDP})
and RDPs arising as $p$-closed derivation quotients of regular local rings (Lemma \ref{lem:degree of isolated fixed point}(\ref{case:RDP})).

We show that a $\mu_p$- or $\alpha_p$-quotient $Y$ of an RDP K3 surface $X$ in characteristic $p$
is either an RDP K3 surface, an RDP Enriques surface, or a rational surface (Proposition \ref{prop:structure of quotient}).
For $\mu_p$-actions the author proved in \cite{Matsumoto:k3mun} that the quotient is an RDP K3 surface if and only if the induced action on the global $2$-forms is trivial
(this is parallel to the case of the actions of finite groups of order not divisible by $p$).
For $\alpha_p$-actions we could not find a similar criterion, since in this case the action on the $2$-form is always trivial (this is parallel to $\bZ/p\bZ$-actions).

In \cite{Matsumoto:k3mun} we proved that $\mu_p$-actions on RDP K3 surfaces in characteristic $p$ occurs precisely if $p \leq 19$.
In this paper we prove that the corresponding bound for $\alpha_p$-actions is $p \leq 11$ (Theorem \ref{thm:bound p}).

Suppose both $X$ and the quotient $Y$ are RDP K3 surfaces.
We determine the possible characteristic $p$
for both $\mu_p$ and $\alpha_p$,
and we moreover determine the possible singularities of $Y$ (Theorem \ref{thm:alpha_p K3 K3}).
Again the results are parallel to $\bZ/l\bZ$ (for a prime $l \neq p$) and $\bZ/p\bZ$ respectively.
We also determine the possible singularities of $X$ when the quotient $Y$ is a supersingular Enriques surface (Theorem \ref{thm:alpha_p K3 Enr}).

We also consider the inverse problem:
whether an RDP K3 surface $Y$ with a suitable configuration of singularities (and certain additional properties)
can be written as the $G$-quotient of an RDP K3 surface $X$.
It is known (at least to experts) that the answer is affirmative if $G = \bZ/l\bZ$.
We show a similar result (Theorem \ref{thm:singularities of quotient K3}) when $G$ is $\bZ/p\bZ$, $\mu_p$, or $\alpha_p$,
although if $G = \mu_p$ or $G = \alpha_p$ then $X$ is only ``K3-like'' (Definition \ref{def:k3-like}) in general and it may fail to be an RDP K3 surface.
This behavior is analogous to that of the canonical $\mu_2$- and $\alpha_2$-coverings of Enriques surfaces in characteristic $2$.

Now suppose $\map{\pi}{X}{Y}$ is a finite purely inseparable morphism of degree $p$ between RDP K3 surfaces.
It is not necessarily the quotient morphism by a (regular) action of $\mu_p$ or $\alpha_p$.
We show (Theorem \ref{thm:insep K3}) that $\pi$ admits a finite ``covering'' $\map{\bar{\pi}}{\bar{X}}{\bar{Y}}$
that is a $\mu_p$- or $\alpha_p$-quotient morphism between either RDP K3 surfaces or abelian surfaces.
We determine the possible covering degree and the characteristic for each case.

In Sections \ref{sec:enriques}--\ref{sec:examples} we give explicit examples of RDP K3 surfaces and derivations.

\medskip

Throughout the paper we work over an algebraically closed field $k$ of characteristic $p \geq 0$,
and whenever we refer to $\mu_p$, $\alpha_p$, or $p$-closed derivations we assume $p > 0$.

\section{Preliminary on derivations} \label{sec:derivations}

We recall basic facts on derivations, 
and relate differential forms on $X$ to those on the derivation quotient $X^D$.

\subsection{General properties of derivations} \label{subsec:derivation}

Let $X$ be a scheme over $k$.
A \emph{(regular) derivation} on $X$ 
is a $k$-linear endomorphism $D$ of $\cO_X$ 
satisfying the Leibniz rule $D(fg) = f D(g) + D(f) g$.

Suppose for simplicity that $X$ is integral.
Then a \emph{rational derivation} on $X$ 
is a global section of $\Der(\cO_X) \otimes_{\cO_X} k(X)$, 
where $\Der(\cO_X)$ is the sheaf of derivations on $X$.
Thus, a rational derivation is locally of the form $f^{-1} D$ with $f$ a regular function and $D$ a regular derivation.

\begin{lem} \label{lem:extending derivation}
	If $A$ is a local RDP and $D$ is a derivation on $(\Spec A)^{\sm}$ (the complement of the closed point),
	then $D$ extends to a derivation on $\Spec A$.
\end{lem}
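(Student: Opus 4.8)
The plan is to exploit the normality of rational double points together with the fact that they are isolated singularities, so that $U = (\Spec A)^{\sm}$ is the complement of a single point of codimension two. The strategy is a Hartogs-type extension across codimension two: viewing $D$ as a derivation of the fraction field $K = \Frac(A)$, I will show that it necessarily carries $A$ into $A$, and hence restricts to a regular derivation on $\Spec A$ extending the given one.

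First I would record the relevant structure of $A$: an RDP is a normal two-dimensional local domain, and it is an isolated singularity, so the singular locus is exactly the closed point $\fm$ and $U = \Spec A \setminus \{\fm\}$ is regular. In particular every height-one prime $\fp$ is a point of $U$. Next I pass to germs. The germ of $D$ at the generic point is a $k$-derivation $D_\eta$ of $K$. For each height-one prime $\fp$, the germ of $D$ at $\fp \in U$ is a derivation of the local ring $A_\fp$ (the stalk of the tangent sheaf at $\fp$ is $\Der_k(A_\fp)$), and this germ is just $D_\eta$ restricted along $A_\fp \hookrightarrow K$; hence $D_\eta(A_\fp) \subseteq A_\fp$. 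Therefore, for every $f \in A$ we have $D_\eta(f) \in A_\fp$ for all $\fp$ with $\height \fp = 1$. The essential input is the standard description of a normal domain as the intersection of its localizations at height-one primes, $A = \bigcap_{\height \fp = 1} A_\fp$, which yields $D_\eta(f) \in A$. Since the Leibniz rule and $k$-linearity are inherited from $D_\eta$ on $K$, the map $D_\eta|_A \colon A \to A$ is a derivation on $\Spec A$ agreeing with $D$ on $U$, as required.

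I do not expect a serious obstacle here; the argument uses only that $A$ is an isolated normal singularity, and no further special feature of RDPs. The one point to verify carefully is that the germ of $D$ at a height-one prime really lands in $A_\fp$, which is immediate once one notes that the singular locus is the closed point alone, so all of $U$ (in particular all height-one points) is contained in the smooth locus on which $D$ is defined. If a sheaf-theoretic formulation is preferred, the same conclusion follows from the reflexivity of $\Der(A) = \Hom_A(\Omega^1_{A/k}, A)$ over the normal domain $A$: a reflexive module satisfies Serre's condition $(S_2)$, hence has depth $\geq 2$ at $\fm$, and the local cohomology exact sequence then identifies the sections over $U$ with $\Der(A)$ itself.
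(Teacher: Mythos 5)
Your proof is correct and is essentially the same argument as the paper's: the paper extends $D$ by noting $D(f) \in H^0((\Spec A)^{\sm}, \cO_A) = A$ by normality, which is exactly your Hartogs-type extension via $A = \bigcap_{\height \fp = 1} A_\fp$, just phrased sheaf-theoretically rather than through localizations at height-one primes.
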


\begin{proof}
	Indeed, for each $f \in A$ we have $D(f) \in H^0((\Spec A)^{\sm}, \cO_A) = H^0(\Spec A, \cO_A) = A$
	since $A$ is normal.
\end{proof}

\begin{lem} 
	Suppose $A$ is the localization of a finitely generated $k$-algebra at a maximal ideal $\fm$,
	and $D$ is a derivation on $A$.
	Then $D$ extends to a derivation on the completion $\hat{A} = \varprojlim_n A/\fm^n$, 
	and the completion $\widehat{A^D}$ of $A^D$ at $\fn := \fm \cap A^D$ is equal to $(\hat{A})^D$.
\end{lem}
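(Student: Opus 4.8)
The plan is to reduce the whole statement to completing an exact sequence of finite modules over the constant ring $A^D$. Two observations make this possible. First, $D$ is automatically $A^D$-linear: if $c \in A^D$ then $D(ca) = cD(a) + D(c)a = cD(a)$. Second, $A$ is a finite $A^D$-module. This second point is where I would use that $\charac k = p > 0$ and that $k$ is perfect (here algebraically closed): for every $a$ one has $D(a^p) = pa^{p-1}D(a) = 0$, so $A^p \subseteq A^D$, and since $A$ is essentially of finite type over the perfect field $k$ it is a finite module over $A^p$, hence a fortiori over $A^D$. As $A$ is Noetherian and module-finite over $A^D$, the Eakin–Nagata theorem shows that $A^D$ is Noetherian as well; and $\fn = \fm \cap A^D$ is a maximal ideal, since $k \subseteq A^D/\fn \subseteq A/\fm = k$ forces $A^D/\fn = k$.

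For the mere existence of the extension I would argue directly: the Leibniz rule gives $D(\fm^n) \subseteq \fm^{n-1}$, so $D$ is continuous for the $\fm$-adic topology and extends to a continuous $k$-linear operator $\hat D$ on $\hat A$. Continuity of multiplication then propagates the Leibniz identity from the dense subring $A$, so $\hat D$ is a derivation.

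For the identification of the constants I would set $B = \Image(D) \subseteq A$, a finite $A^D$-module, and consider the exact sequence of finite $A^D$-modules
\[
0 \to A^D \to A \namedto{D} B \to 0.
\]
Now complete at $\fn$. Since $A^D$ is Noetherian and $A^D$, $A$, $B$ are finite $A^D$-modules, $\widehat{A^D} \otimes_{A^D} (\functorspace)$ is exact and agrees with $\fn$-adic completion on these modules. Because $A$ is local with $\fm$ the unique maximal ideal over $\fn$, the ideal $\fn A$ is $\fm$-primary, so the $\fn$-adic and $\fm$-adic topologies on $A$ coincide and $\widehat{A^D} \otimes_{A^D} A = \hat A$. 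Completing the sequence therefore yields
\[
0 \to \widehat{A^D} \to \hat A \namedto{\hat D} \hat B \to 0,
\]
where the middle map is the completion of $D$; by uniqueness of continuous extensions it coincides with the $\hat D$ built above. The injection $\widehat{A^D} \hookrightarrow \hat A$ is flatness of completion applied to $A^D \hookrightarrow A$, and reading off the kernel gives exactly $(\hat A)^D = \ker \hat D = \widehat{A^D}$.

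The main obstacle, and the step I would be most careful about, is establishing the finiteness of $A$ over $A^D$ together with the Noetherianity of $A^D$: this is precisely what upgrades $D$ from a mere continuous operator to a morphism of finite modules, so that the exactness of completion applies and the kernel behaves well under completion. The secondary point requiring verification rather than assumption is the comparison of the $\fn$-adic and $\fm$-adic topologies on $A$, which relies on $A$ being local.
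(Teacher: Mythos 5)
Your proof is correct, but it takes a genuinely different route from the paper's. Both arguments run on the same engine — in characteristic $p>0$ one has $A^p\subseteq A^D$ and $A$ is module-finite over $A^p$ (hence over $A^D$), and this finiteness is genuinely needed: the statement fails in characteristic $0$ (e.g.\ $D=\partial_x+(y+1)\partial_y$ on $k[x,y]_{(x,y)}$ has $A^D=k$ while $(y+1)e^{-x}\in(\hat A)^D$). Where you diverge is in how the finiteness is exploited. The paper stays over the ring $A^p$ (which is Noetherian for free, being isomorphic to $A$) and argues element-wise: given a $D$-invariant element $([a_n])_n$ of $\hat A$, it corrects each $a_n$ by some $b_n\in\fm^n$ with $D(b_n)=D(a_n)$, reducing surjectivity of $\widehat{A^D}\to(\hat A)^D$ to the closedness statement $D(\fm^n)=\bigcap_{l\geq 0}\bigl(D(\fm^n)+\fm^{n+l}\bigr)$, which it gets from Krull's intersection theorem applied to the $A^p$-submodule $D(\fm^n)$ of the finite $A^p$-module $A$, using the containment $\fm^{n+l}\subseteq(\fm^{(p)})^{\lceil (l-N(p-1))/p\rceil}A$ to compare topologies. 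You instead work over $A^D$ itself, which forces you to invoke Eakin--Nagata to know $A^D$ is Noetherian, and then you complete the short exact sequence $0\to A^D\to A\to \Image(D)\to 0$ of finite $A^D$-modules, using flatness of completion plus the comparison of the $\fn$-adic and $\fm$-adic topologies on $A$ (via $\fn A$ being $\fm$-primary, which your finiteness gives). The trade-off: your argument is more structural and yields extra information in passing ($A^D$ is Noetherian local, $\hat A$ is finite over $\widehat{A^D}$, and the image of $\hat D$ is the completion of $\Image(D)$), whereas the paper's is more self-contained — it needs only Krull's intersection theorem and never has to address Noetherianity of $A^D$ at all. One small point worth making explicit in your write-up: identifying $\Ker\hat D$ with the kernel of the completed map $\hat A\to\widehat{\Image(D)}$ uses that the completion of the inclusion $\Image(D)\hookrightarrow A$ remains injective, which again is flatness of $\widehat{A^D}$ over $A^D$; as stated you only cite flatness for $A^D\hookrightarrow A$.
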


\begin{proof}
	Any derivation $D$ satisfies $D(\fm^n) \subset \fm^{n-1}$,
	hence $D$ induces a morphism $ \varprojlim_n A/\fm^n \to  \varprojlim_n A/\fm^{n-1}$.

	There is a canonical injection $\widehat{A^D} \to (\hat{A})^D$.
	Let us show the surjectivity of this map.
	Suppose  $([a_n])_n$ is an element of $\hat{A}$ (i.e.\ $a_n \in A$ and $a_{n+l} \equiv a_n \pmod{\fm^n}$)
	that belong to $(\hat{A})^D$ (i.e.\ $D(a_n) \in \fm^{n-1}$).
	It suffices to find an element $b_n \in \fm^n$ with $D(b_n) = D(a_n)$,
	since then $([a_n]) = ([a_n - b_n]) \in \widehat{A^D}$.
	Since $D(a_n) = D(a_{n+l}) - D(a_{n+l} - a_n) \in \fm^{n+l-1} + D(\fm^n)$,
	it suffices to show
	$D(\fm^n) = \bigcap_{l \geq 0} (D(\fm^n) + \fm^{n+l})$.
	Suppose $\fm$ is generated by $N$ elements.
	This follows from Krull's intersection theorem,
	since $\pthpower{A}$ is a Noetherian local ring,
	$A$ and hence $D(\fm^l)$ are finitely generated $\pthpower{A}$-modules,
	and $\fm^{n+l} \subset \fm^{l} \subset (\pthpower{\fm})^{\ceil{(l-N(p-1))/p}} A$.
\end{proof}

\begin{defn}
	Suppose $D$ is a derivation on a scheme $X$.
	The \emph{fixed locus} $\Fix(D)$
	is the closed subscheme of $X$ 
	corresponding to the sheaf $(\Image (D))$ of ideals generated by $\Image(D) = \set{D(a) \mid a \in \cO_X}$.
	Equivalently, this sheaf is $\Image (\bar{D})$, where $\map{\bar{D}}{\Omega^1_X}{\cO_X}$ is the morphism defined below in Definition \ref{def:derivation on differential forms}.
	A \emph{fixed point} of $D$ is a point of $\Fix(D)$.
	
	Assume $X$ is a smooth irreducible variety and $D \neq 0$. 
	Then $\Fix(D)$ consists of its divisorial part $\divisorialfix{D}$ and non-divisorial part $\isolatedfix{D}$.
	If we write $D = f \sum_i g_i \partialdd{x_i}$ for some local coordinate $x_i$
	with $g_i$ having no common factor,
	then $\divisorialfix{D}$ and $\isolatedfix{D}$ corresponds to the ideal $(f)$ and $(g_i)$ respectively.
	
	Assume $X$ is a smooth irreducible variety and suppose $D \neq 0$ is now a \emph{rational} derivation,
	locally of the form $f^{-1} D'$ for some regular function $f$ and (regular) derivation $D'$.
	Then we define $\divisorialfix{D} = \divisorialfix{D'} - \divisor(f)$ and $\isolatedfix{D} = \isolatedfix{D'}$.
	
	If $X$ is only normal, then we can still define $\divisorialfix{D}$ as a Weil divisor.
\end{defn}
Rudakov--Shafarevich \cite{Rudakov--Shafarevich:inseparable}
uses the term \emph{singularity} for the fixed locus.
We do not use this, as we want to distinguish them from the singularities of the varieties.

The next theorem is proved by Rudakov--Shafarevich \cite{Rudakov--Shafarevich:inseparable}*{Theorem 3} for regular derivations $D$ satisfying some assumptions,
and by Katsura--Takeda \cite{Katsura--Takeda:quotients}*{Proposition 2.1} for general rational derivations.
\begin{thm} \label{thm:c2 derivation}
	Let $D$ be a rational derivation on a smooth proper surface $X$. 
	Then
	\[
	\deg c_2(X) = \deg \isolatedfix{D} - K_X \cdot \divisorialfix{D} - \divisorialfix{D}^2.
	\]
\end{thm}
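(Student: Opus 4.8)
The plan is to interpret $D$ as a rational section of the tangent sheaf and to reduce the identity to the classical fact that the zero scheme of a section of a rank-$2$ bundle computes its top Chern class. Recall that the sheaf of derivations $\Der(\cO_X) = \sHom(\Omega^1_X, \cO_X)$ is locally free of rank $2$, with $c_1(\Der(\cO_X)) = -K_X$ and $c_2(\Der(\cO_X)) = c_2(X)$. A regular derivation is a global section of this bundle, and a rational derivation $D$ is a rational section; writing $D$ locally as $f \sum_i g_i \partialdd{x_i}$ with the $g_i$ coprime, the divisorial part $\divisorialfix{D}$ records the divisor of zeros and poles of $D$ as a section, while the remaining vanishing, cut out by $(g_i)$, is supported in codimension $2$.

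First I would twist away the divisorial part. Viewing the coefficient of $D$ as a rational section of a line bundle, one checks that $D$ is canonically a \emph{regular} global section $s$ of $\cE := \Der(\cO_X) \otimes \cO_X(-\divisorialfix{D})$ whose vanishing locus is of pure codimension $2$. Indeed, in the local expression $D = f \sum_i g_i \partialdd{x_i}$ (with $f$ regular and the $g_i$ coprime in the regular case, and $f$ rational in general), twisting by $\cO_X(-\divisorialfix{D})$ cancels the factor $f$, so that $\Zero(s)$ is cut out locally by the coprime ideal $(g_i)$; that is, $\Zero(s) = \isolatedfix{D}$ as closed subschemes, matching the definition of $\isolatedfix{D}$.

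Next I would invoke the standard computation of $c_2$ via zero loci. Since $\Zero(s)$ has the expected codimension $2 = \rank \cE$, the two local components of $s$ form a regular sequence in each (regular, hence Cohen--Macaulay) local ring, so the Koszul complex on $s$ resolves $\cO_{\Zero(s)}$; consequently $[\Zero(s)] = c_2(\cE) \cap [X]$ and $\deg \isolatedfix{D} = \deg c_2(\cE)$. It then remains to expand $c_2(\cE)$ by the twisting formula $c_2(E \otimes M) = c_2(E) + c_1(E) c_1(M) + c_1(M)^2$ for a rank-$2$ bundle $E$ and a line bundle $M$, applied with $E = \Der(\cO_X)$ and $M = \cO_X(-\divisorialfix{D})$. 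Using $c_1(\Der(\cO_X)) = -K_X$ and $c_1(M) = -\divisorialfix{D}$ gives
\[
\deg \isolatedfix{D} = \deg c_2(X) + K_X \cdot \divisorialfix{D} + \divisorialfix{D}^2,
\]
which rearranges into the asserted formula.

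The main obstacle is the careful bookkeeping in the rational case: one must verify that $\divisorialfix{D}$, defined as $\divisorialfix{D'} - \divisor(f)$ for $D = f^{-1}D'$, really is the full zero-minus-pole divisor of $D$ as a rational section, so that the twisted section $s$ is genuinely regular with zero scheme exactly $\isolatedfix{D}$ (note that $\divisorialfix{D}$ may be non-effective, but the intersection numbers $K_X \cdot \divisorialfix{D}$ and $\divisorialfix{D}^2$ are still well defined on the smooth proper surface $X$). Equivalently, one must confirm that the scheme structure given by $(g_i)$ is a local complete intersection of codimension $2$, which is what justifies the Koszul resolution and the identification of its length with the local contribution to $c_2(\cE)$. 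Apart from this verification the argument is a routine Chern class manipulation.
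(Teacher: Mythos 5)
Your argument is correct, and it is worth noting that the paper does not prove Theorem \ref{thm:c2 derivation} itself: it quotes the result from Rudakov--Shafarevich (for regular derivations under extra hypotheses) and Katsura--Takeda (for arbitrary rational derivations). Your proof is essentially the standard one underlying those references: regard the (nonzero) rational derivation as a regular global section $s$ of $\cE = \Der(\cO_X) \otimes \cO_X(-\divisorialfix{D})$, identify $\Zero(s)$ with $\isolatedfix{D}$, and expand $c_2(\cE)$ by the twisting formula. Your bookkeeping at the two delicate points is right. First, $\divisorialfix{D}$ is by construction the full zeros-minus-poles divisor of $D$ as a rational section: locally $D = f \sum_i g_i \partialdd{x_i}$ with $g_1, g_2$ coprime, the factor $f$ is unique up to units, so the local divisors $\divisor(f)$ glue, and in the local frame $f\partialdd{x_1}, f\partialdd{x_2}$ of $\cE$ the section $s$ has components exactly $(g_1, g_2)$; this also confirms $\Zero(s) = \isolatedfix{D}$ as schemes, consistent with the paper's definition $\isolatedfix{f^{-1}D'} = \isolatedfix{D'}$. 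Second, coprimality of $g_1, g_2$ in the $2$-dimensional regular local ring $\cO_{X,x}$ does make them a regular sequence ($g_2$ avoids every height-one prime over $(g_1)$), so the Koszul complex resolves $\cO_{\Zero(s)}$ and $[\Zero(s)] = c_2(\cE) \cap [X]$, whence $\deg \isolatedfix{D} = \deg c_2(\cE) = \deg c_2(X) + K_X \cdot \divisorialfix{D} + \divisorialfix{D}^2$ using $c_1(\Der(\cO_X)) = -K_X$ and $c_1(\cO_X(-\divisorialfix{D})) = -\divisorialfix{D}$. The whole computation is characteristic-free and uses properness only to take degrees, so it covers the generality in which the theorem is invoked in the paper.
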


A derivation $D$ on $X$ acts naturally on the sheaves $\Omega^q_X$, as follows.
\begin{defn} \label{def:derivation on differential forms}
Let $D$ be a derivation on $X$.
Decompose $\map{D}{\cO}{\cO}$ as ${\bar{D} \circ d} \colon \cO \namedto{d} \Omega^1 \namedto{\bar{D}} \cO$.
Then $\bar{D}$ is $\cO$-linear.
Let $\map{\bar{D}_q}{\Omega^q}{\Omega^{q-1}}$ ($q \geq 1$) be the ($\cO$-linear) homomorphism 
defined by 
\[
\bar{D}_q(\beta_1 \wedge \dots \wedge \beta_q) 
= \sum_{j = 1}^{q} (-1)^{j-1} \bar{D}(\beta_j) \cdot 
\beta_1 \wedge \dots \wedge \beta_{j-1} \wedge \beta_{j+1} \wedge \dots \wedge \beta_{q} ,
\]
for $1$-forms $\beta_j$, and for $q = 0$ let $\bar{D}_0$ be the zero map.
We have $\bar{D}_{q_1 + q_2}(\beta_1 \wedge \beta_2) 
= \bar{D}_{q_1}(\beta_1) \wedge \beta_2 + (-1)^{q_1} \beta_1 \wedge \bar{D}_{q_2}(\beta_2)$
for a $q_1$-form $\beta_1$ and a $q_2$-form $\beta_2$.
We define $\map{D_q := d \circ \bar{D}_q + \bar{D}_{q+1} \circ d}{\Omega^q}{\Omega^q}$ ($q \geq 0$).
\end{defn}
\begin{prop} \label{prop:derivation on differential forms}
Then we have the following properties.
\begin{itemize}
	\item $D_0 = D$.
	\item $D_1(df) = d(D_0(f))$.
	\item $D_{q_1 + q_2}(\beta_1 \wedge \beta_2) = D_{q_1}(\beta) \wedge \beta_2 + \beta_1 \wedge D_{q_2}(\beta_2)$ 
	for a $q_1$-form $\beta_1$ and a $q_2$-form $\beta_2$.
	Hence, $D_{q_1 + \dots + q_l}(\beta_1 \wedge \dots \wedge \beta_l)
	= \sum_{i = 1}^l \beta_1 \wedge \dots \wedge \beta_{i-1} \wedge D_{q_i}(\beta_i) \wedge \beta_{i+1} \wedge \dots \wedge \beta_l$
	for $q_i$-forms $\beta_i$.
	\item $[D, D']_q = [D_q, D'_q]$ and $(D^p)_q = (D_q)^p$.
	\item $(h D)_q = h \cdot D_q + dh \wedge \bar{D}_q$, 
	which is equal to $h \cdot D_q$ if for example $h \in \pthpower{k(X)}$.
	\item Hence, If $D^p = h D$ for $h \in \pthpower{k(X)}$, then $(D_q)^p = h D_q$.
	(This is not true for general $h \in k(X)$.)
\end{itemize}
\end{prop}
\begin{proof}
Straightforward.
\end{proof}
We will write simply $D$ in place of $D_q$.

\subsection{General properties of $p$-closed derivations}

We say that a derivation $D$ on an integral scheme $X$ is \emph{$p$-closed} if there exists $h \in k(X)$ with $D^p = h D$.
Quotients by such derivations will be studied in the next subsection.

The next formula is well-known.
\begin{lem}[Hochschild's formula] \label{lem:Hochschild}
Let $A$ be a $k$-algebra in characteristic $p > 0$, $a$ an element of $A$, and $D$ a derivation on $A$.
Then 
\[
(aD)^p = a^p D^p + (aD)^{p-1}(a) D.
\]
In particular, if $D$ is $p$-closed then so is $aD$.
\end{lem}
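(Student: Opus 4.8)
The plan is to realize $aD$ as the composite $L_a \circ D$ inside the associative algebra $\End_k(A)$, where $L_a$ denotes multiplication by $a$, and to compute its $p$-th iterate in ``normal form'' $\sum_i w_{n,i} D^i$. The only commutation relation needed is the Leibniz rule written as $D \circ L_a - L_a \circ D = L_{D(a)}$. The asserted formula then amounts to three statements about the coefficients of $(aD)^p = \sum_{i=1}^{p} w_{p,i} D^i$: that $w_{p,p} = a^p$, that $w_{p,1} = (aD)^{p-1}(a)$, and---the crucial point---that $w_{p,i} = 0$ for $2 \le i \le p-1$.

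First I would establish the normal form $(aD)^n = \sum_{i=1}^{n} w_{n,i} D^i$ by induction on $n$: applying $aD$ once and using Leibniz yields the recursion $w_{n+1,i} = a\,D(w_{n,i}) + a\,w_{n,i-1}$, with the conventions $w_{n,0} = 0$ and $w_{n,i} = 0$ for $i > n$. The two extreme coefficients drop out at once. The top one satisfies $w_{n+1,n+1} = a\,w_{n,n}$ with $w_{1,1} = a$, so $w_{n,n} = a^n$; the bottom one satisfies $w_{n+1,1} = a\,D(w_{n,1}) = (aD)(w_{n,1})$ with $w_{1,1} = a$, so $w_{n,1} = (aD)^{n-1}(a)$. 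Taking $n = p$ produces precisely the two terms on the right-hand side.

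The heart of the argument, which I expect to be the main obstacle, is the vanishing of the intermediate coefficients. Here I would use that in characteristic $p$ the $p$-th power of a derivation is again a derivation (the standard computation $\binom{p}{j} \equiv 0 \pmod p$ for $0 < j < p$), so $(aD)^p$ is a derivation. I would then reduce to the ``generic'' algebra $A_0 = \bF_p[y_0, y_1, \dots]$ with $D_0(y_j) = y_{j+1}$ and $a = y_0$: since each $w_{n,i}$ is, by the recursion, a universal polynomial in $a, D(a), D^2(a), \dots$ with coefficients in $\bF_p$, the unique derivation-compatible homomorphism $y_j \mapsto D^j(a)$ transports any identity $w_{p,i} = 0$ from $A_0$ to the general case. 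In $A_0$ the operators $D_0^0, D_0^1, D_0^2, \dots$ are left-$A_0$-linearly independent (faithfulness of the action of the ring of differential operators on a polynomial ring), hence so are the bilinear operators $(f,g) \mapsto D_0^j(f)\,D_0^{i-j}(g)$. Consequently a normal-form operator $\sum_i c_i D_0^i$ that is a derivation must, upon expanding the Leibniz identity on a product $fg$ and cancelling the $j=0$ and $j=i$ terms, satisfy $c_i \binom{i}{j} = 0$ for all $0 < j < i$. Taking $j = 1$ and using $\binom{i}{1} = i \not\equiv 0 \pmod p$ for $2 \le i \le p-1$ forces $c_i = 0$ in that range, whereas $i = p$ is left unconstrained since all its inner binomials vanish modulo $p$. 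This leaves exactly $(aD)^p = a^p D^p + (aD)^{p-1}(a)\,D$. The final assertion is then immediate: if $D^p = hD$ with $h \in k(X)$ and $a \neq 0$, substituting and writing $D = a^{-1}(aD)$ gives $(aD)^p = \bigl(a^{p-1} h + a^{-1}(aD)^{p-1}(a)\bigr)(aD)$, so $aD$ is $p$-closed.
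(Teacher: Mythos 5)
The paper offers no proof of this lemma at all---it is stated as ``well-known'' (it is Hochschild's, or Jacobson's, formula)---so there is no internal argument to compare against; your proposal supplies a complete proof, and it is correct, and in fact it is essentially the standard proof of this identity. The normal-form recursion $w_{n+1,i}=a\,D(w_{n,i})+a\,w_{n,i-1}$ and the evaluations $w_{n,n}=a^n$, $w_{n,1}=(aD)^{n-1}(a)$ are right; the reduction to the universal algebra $A_0=\bF_p[y_0,y_1,\dots]$ with $D_0(y_j)=y_{j+1}$ and $\phi(y_j)=D^j(a)$ is legitimate, since $\phi\circ D_0$ and $D\circ\phi$ are both $\phi$-derivations agreeing on generators and the $w_{n,i}$ are universal $\bF_p$-polynomials in $a,D(a),D^2(a),\dots$ by the recursion; and the Leibniz expansion of the derivation $(aD_0)^p$ correctly yields $c_{j+l}\binom{j+l}{j}=0$ for all $j,l\geq 1$, which kills $c_i$ for $2\leq i\leq p-1$ via $j=1$ while leaving $c_p$ unconstrained because $\binom{p}{j}\equiv 0 \pmod p$. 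One caution: your justification of the left-$A_0$-linear independence of the $D_0^i$ by ``faithfulness of the action of the ring of differential operators'' is misleading in characteristic $p$---powers of a derivation can very well be left-linearly dependent, e.g.\ $(d/dx)^p=0$ on $\bF_p[x]$---so this point should be verified directly for your specific shift derivation, which is immediate: applying $\sum_i c_i D_0^i$ to $y_m$ with $m$ larger than every index occurring in the $c_i$ gives $\sum_i c_i y_{m+i}=0$ in a polynomial ring, forcing $c_i=0$, and the same evaluation at pairs $(y_m,y_{m'})$ with $m'\gg m\gg 0$ gives the needed independence of the bilinear maps $(f,g)\mapsto D_0^j(f)\,D_0^{i-j}(g)$. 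Finally, in the last step you can avoid inverting $a$ (and hence the hypothesis $a\neq 0$): since $(aD)^{p-1}(a)=a\cdot D\bigl((aD)^{p-2}(a)\bigr)$, one gets $(aD)^p=\bigl(a^{p-1}h+D(g)\bigr)(aD)$ with $g=(aD)^{p-2}(a)$, which is exactly the form the paper itself uses later, in the proof of Lemma \ref{lem:degree of isolated fixed point}.
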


The following lemmas are useful when analyzing local properties.

\begin{lem} \label{lem:good coord}
	Suppose $B$ is a local domain
	equipped with a $p$-closed derivation $D \neq 0$
	such that $\Fix(D)$ is principal. 
	Then the maximal ideal $\fm$ of $B$ is generated by elements $x_j$ ($j \in J$) and $y$, 
	satisfying $D(x_j) = 0$. 
	If $\fm$ is generated by $n$ elements then we can take $\card{J} = n - 1$.
\end{lem}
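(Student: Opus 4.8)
The plan is to first isolate a single non-invariant coordinate $y$ and then force the remaining generators to be $D$-invariant. Write $I = (\Image D)$ for the ideal cutting out $\Fix(D)$; by hypothesis $I = (t)$ is principal, and $t \neq 0$ since $D \neq 0$. First I would produce $y \in \fm$ with $D(y) = t$. Since $t \in I = (\Image D)$, we may write $t = \sum_i b_i D(a_i)$ with $a_i, b_i \in B$; each $D(a_i) \in (t)$, say $D(a_i) = c_i t$, so $(1 - \sum_i b_i c_i)t = 0$ and hence $\sum_i b_i c_i = 1$ as $B$ is a domain. Because $B$ is local, some $c_{i_0}$ is a unit, so $D(a_{i_0}) = c_{i_0} t$ again generates $I$; after replacing $t$ by this generator we may assume $D(a_{i_0}) = t$. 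As the residue field is $k$ and $D$ kills $k$, subtracting from $a_{i_0}$ its residue gives $y \in \fm$ with $D(y) = t$. I also note that $y \notin \fm^2$: if $y = \sum_j e_j f_j$ with $e_j, f_j \in \fm$, then $t = D(y) \in \fm \cdot \Image(D) \subseteq \fm(t)$, forcing $t \in \fm t$ and hence $t = 0$, a contradiction.

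Next I would reduce the problem, via Nakayama's lemma, to spanning $\fm/\fm^2$ by the class of $y$ together with classes of genuinely $D$-invariant elements. For a generator $z$ of $\fm$ we have $D(z) = c_z t = c_z D(y)$ for some $c_z \in B$, and a direct computation gives $D(z - c_z y) = -y\,D(c_z) \in \fm \cdot I$. When $\Fix(D) \neq \emptyset$ (so $t \in \fm$) this lies in $\fm^2$, so $w := z - c_z y$ is invariant modulo $\fm^2$; in the fixed-point-free case $t$ is a unit and the analogous correction makes $w$ invariant modulo $\fm$, after which one iterates. Granting that each such $w$ can be replaced by an exactly invariant element $\tilde w$ with $\tilde w \equiv w \pmod{\fm^2}$, the classes of $y$ and of these $\tilde w$ span $\fm/\fm^2$. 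Since $\bar y \neq 0$, Nakayama lets me extract a minimal generating set of $\fm$ containing $y$; if $\fm$ is generated by $n$ elements then $\dim_k \fm/\fm^2 \le n$, so this set uses at most $n-1$ invariant elements, and I pad with further invariant elements (for instance $y^p$, which lies in $\Ker D$ because $D(y^p) = p\,y^{p-1}D(y) = 0$) to reach $\card{J} = n - 1$ exactly.

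The hard part will be the lifting step: upgrading ``invariant modulo $\fm^2$'' to ``exactly invariant.'' I would carry it out by successive approximation, passing to the completion $\hat B$; this is harmless in the situations of interest, since the completion lemma above identifies $\widehat{B^D}$ with $(\hat B)^D$ and the generation of $\fm$ descends by faithful flatness. Given $w$ with $D(w)$ of large $\fm$-adic order, I subtract a suitable multiple of $y$ to raise that order; in characteristic $0$ this is simply integrating the vector field, but here the naive correction stalls precisely at order $p$, where the factorials vanish. The point that closes the process is that $y^p \in \Ker D$, so the order-$p$ obstruction is itself invariant and may be absorbed into the invariant part, while $p$-closedness $D^p = hD$ (together with Lemma \ref{lem:Hochschild} for the rescaling $t^{-1}D$, with $(t^{-1}D)(y) = 1$) controls the higher iterates. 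Equivalently, one establishes the local normal form $\hat B = (\hat B)^D[y]$, free of rank $p$ with $y^p \in (\hat B)^D$, from which the generation $\fm = y\hat B + (\fm \cap \Ker D)\hat B$ is immediate upon expanding in the basis $1, y, \dots, y^{p-1}$. Verifying this normal form under the sole hypothesis that $\Fix(D)$ is principal is the crux of the argument, and is exactly where that hypothesis is used.
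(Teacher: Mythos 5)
Your proposal has a genuine gap, and it is exactly the one you flag yourself: the lifting step, upgrading ``$D(w)\in\fm^2$'' to an exactly $D$-invariant element, is the entire content of the lemma, and you do not carry it out. Your successive-approximation scheme is the naive route that, as you note, stalls at order $p$; the fix you gesture at --- the normal form $\hat B=(\hat B)^D[y]$, free of rank $p$ with $y^p\in(\hat B)^D$ --- is essentially a restatement of the lemma (it is Seshadri's theorem in the smooth case), so assuming it is circular. Moreover, your passage to the completion is not available under the lemma's hypotheses: $B$ is an arbitrary local domain, not assumed Noetherian, while the paper's completion lemma requires $B$ to be a localization of a finitely generated $k$-algebra; and descending ``$\fm$ is generated by $y$ and invariant elements'' from $\hat B$ to $B$ is itself not automatic without finiteness hypotheses.

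The missing idea is that no approximation is needed: the paper invariantizes in a single closed-form step. After replacing $D$ by $f^{-1}D$ where $\divisorialfix{D}=\divisor(f)$ (still $p$-closed by Hochschild's formula, and with the same invariant elements, since $B$ is a domain), one may assume $\Fix(D)=\emptyset$, so there is $y\in\fm$ with $D(y)\in B^*$. Writing $D^p=hD$ (so $D(h)=0$) and $w=y^{p-1}$, one has $D^k(w)\in yB$ for $0\le k\le p-2$ and $D^{p-1}(w)\in B^*$, hence $u:=D^{p-1}(w)-hw\in B^*\cap B^D$. For any generators $(x'_j)$ of $\fm$, set
\[
x_j \;=\; u\,x'_j+\sum_{k=0}^{p-2}(-1)^k\,D^k(w)\,D^{p-1-k}(x'_j).
\]
A telescoping computation using $D^p=hD$ gives $D(x_j)=w\,h\,D(x'_j)+(-1)^p D^{p-1}(w)D(x'_j)+u\,D(x'_j)=0$ exactly, and $x_j\equiv u\,x'_j\pmod{yB}$ with $u$ a unit, so the $x_j$ together with $y$ generate $\fm$; the count $\card{J}=n-1$ then follows because $y$ cannot be dropped ($(D(x_j))\subset\fm$ would force $(\Image D)\subset\fm$, contradicting $\Fix(D)=\emptyset$). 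This formula is a genuinely different mechanism from your iteration --- a finite averaging operator built from $w=y^{p-1}$ rather than an inverse limit --- and it is what lets the proof work for an arbitrary local domain with no completion, no flatness, and no normal form.
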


If $B$ is smooth, then this is proved in 
\cite{Seshadri:Cartier}*{Proposition 6} (see also \cite{Rudakov--Shafarevich:inseparable}*{Theorem 1 and Corollary}).

\begin{proof}
	Take $f \in B$ with $\divisorialfix{D} = \divisor(f)$.
	By replacing $D$ with the (regular) derivation $f^{-1} D$, which is also $p$-closed by Hochschild's formula (Lemma \ref{lem:Hochschild}),
	we may assume $\divisorialfix{D} = 0$, hence $\Fix(D) = \emptyset$.

	Take $h \in B$ such that $D^p = h D$. 
	Note that then $D(h) = 0$.

	Take an element $y \in B$ with $D(y) \not\in \fm$ (which exists since $\fm \not\in \Fix(D)$).
	We may assume $y \in \fm$.
	Let $w = y^{p-1}$.
	Then $D^k(w) \in y B \subset \fm$ for $0 \leq k \leq p-2$
	and $D^{p-1}(w) \in B^*$.
	We have $u := D^{p-1}(w) - h w \in B^* \cap B^D$.
	
Take elements $(x'_j)_{j \in J'}$ generating $\fm$.
	Let \[ x_j = u x'_j + \sum_{k=0}^{p-2} (-1)^k D^k(w) D^{p-1-k}(x'_j) .\]
	Then we have $D(x_j) = 0$ and, since $x_j \equiv u x'_j \pmod{y B}$,
	it follows that $x_j$ ($j \in J'$) and $y$ generate $\fm$.
	If $\card{J'} < \infty$ then we can remove one of the elements,
	and the remaining elements still generate $\fm$.
	The removed one cannot be $y$ since $(D(x_j)) \subset \fm$,
	hence the removed one is $x_{j_0}$ for some $j_0 \in J'$,
	hence $x_j$ ($j \in J' \setminus \set{j_0}$) and $y$ generate $\fm$.
\end{proof}

\begin{lem} \label{lem:D=1}
	Suppose $B$ is a local domain
	equipped with a $p$-closed derivation $D \neq 0$ of additive type
	such that $\Fix(D) = \emptyset$.
	Then there exists $x \in B$ with $D(x) = 1$.  
\end{lem}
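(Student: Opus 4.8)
The plan is to reduce the problem to producing a single unit of $B$ that is \emph{simultaneously} killed by $D$ and lies in the image of $D$; once such a unit is in hand, a one-line scaling produces the desired $x$. First I would exploit the hypothesis $\Fix(D) = \emptyset$: it says precisely that the ideal generated by $\Image(D)$ is all of $B$, and since $B$ is local this forces some $y \in B$ with $D(y) \notin \fm$, i.e.\ $D(y) \in B^*$. Replacing $y$ by $y - \lambda$ for the scalar $\lambda \in k$ with $y \equiv \lambda \bmod \fm$, I may also assume $y \in \fm$, which will be convenient below. Note that, $D$ being of additive type, we have $D^p = 0$, so in the notation ``$D^p = hD$'' of the proof of Lemma \ref{lem:good coord} we may simply take $h = 0$.

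The naive guess $x = D(y)^{-1} y$ already satisfies $D(x) = 1 + y\,D\bigl(D(y)^{-1}\bigr)$, which equals $1$ only when $D(y) \in B^D$; this fails for $p > 2$ because $D(D(y)) = D^2(y)$ need not vanish. The remedy is to replace the unit $D(y)$ by an \emph{invariant} unit. Set $w = y^{p-1}$ and consider
\[
u := D^{p-1}(w) = D^{p-1}(y^{p-1}).
\]
I claim $u \in B^* \cap B^D$. Invariance is immediate, since $D(u) = D^p(w) = 0$. That $u$ is a unit is exactly the computation carried out inside the proof of Lemma \ref{lem:good coord}: expanding $D^{p-1}(y^{p-1})$ by the Leibniz rule distributes $p-1$ derivatives over $p-1$ factors, so every term other than the one with all derivative orders equal to $1$ leaves some factor of $y$ undifferentiated and hence lies in $\fm$ (here I use $y \in \fm$); the surviving term modulo $\fm$ is $(p-1)!\,D(y)^{p-1} \equiv -D(y)^{p-1} \pmod{\fm}$ by Wilson's theorem, which is a unit.

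Granting this, I would finish as follows. Put $x := u^{-1} D^{p-2}(w)$, which makes sense since $p \geq 2$. Because $u \in B^D$ is a unit, also $u^{-1} \in B^D$, so $u^{-1}$ passes through $D$ and
\[
D(x) = u^{-1} D\bigl(D^{p-2}(w)\bigr) = u^{-1} D^{p-1}(w) = u^{-1} u = 1,
\]
as required. The one genuinely delicate point is the unit claim for $u$: it is what forces the use of $w = y^{p-1}$ rather than $y$ itself, and it is where the additive hypothesis $D^p = 0$ becomes essential, both to guarantee $u \in B^D$ and to let $h = 0$ simplify the formula from Lemma \ref{lem:good coord}. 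Everything else is formal manipulation with the Leibniz rule.
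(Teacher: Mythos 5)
Your proof is correct and follows essentially the same route as the paper: the paper also takes $y \in \fm$ with $D(y) \notin \fm$, sets $u := D^{p-1}(y^{p-1}) \in B^* \cap B^D$ (citing the computation in Lemma \ref{lem:good coord}, where the additive hypothesis gives $h = 0$), and concludes via $D^{p-1}(u^{-1} y^{p-1}) = 1$, which is exactly your $x = u^{-1} D^{p-2}(y^{p-1})$ since $u^{-1} \in B^D$. The only difference is that you spell out the Leibniz--Wilson argument for $u \in B^*$ that the paper inherits implicitly from the previous lemma.
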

\begin{proof}
	As in the previous lemma, 
	since $\Fix(D) = \emptyset$, there exists $y \in \fm$ with $D(y) \not\in \fm$,
	and then $u := D^{p-1}(y^{p-1}) \in B^* \cap B^D$.
	Then $D^{p-1}(u^{-1} y^{p-1}) = 1$.
\end{proof}

\subsection{$p$-closed derivation quotients and differential forms} \label{subsec:derivation and differential forms}

If $D$ is $p$-closed, then $X^D$ is the scheme with underlying topological space homeomorphic to (and often identified with) $X$, and with structure sheaf
$\cO_{X^D} = \cO_X^D = \set{a \in \cO_X \mid D(a) = 0}$ consisting of the $D$-invariant sections of $\cO_X$.
The natural morphism $X \to X^D$ is finite of degree $p$ (unless $D = 0$).
If $X$ is normal then so is $X^D$.

In this subsection we compare top differential forms on $X$ and the quotient $X^D$ (Propositions \ref{prop:2-forms} and \ref{prop:n-step}).

Special cases of $p$-closed derivations correspond to (non-reduced) group schemes, as follows, which are the main subject of this paper.
\begin{prop}
	Let $G = \mu_p$ (resp.\ $G = \alpha_p$).
	Then the $G$-actions on a scheme $X$ 
	correspond bijectively to the derivations $D$ on $\cO_X$ of multiplicative type (resp.\ of additive type),
	that is, $D^p = D$ (resp.\ $D^p = 0$).
	The quotient scheme $X / G$ always exists, and coincides with $X^D$.
\end{prop}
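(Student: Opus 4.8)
The plan is to pass to the local (affine) picture, translate a $G$-action into a coaction on the structure sheaf, and read off the derivation from the comultiplication. Since $\mu_p$ and $\alpha_p$ are infinitesimal (height-one) group schemes, the underlying topological space of $G$ is a single point, so a $G$-action acts trivially on the underlying space of $X$; in particular every open subscheme of $X$ is $G$-stable. It therefore suffices to establish the correspondence, and the identity $X/G = X^D$, for $X = \Spec A$ affine, and then glue over an affine cover. Write $G = \Spec R$ with $R = k[t]/(t^p)$ and $\Delta(t) = t \otimes 1 + 1 \otimes t$ in the $\alpha_p$ case, and $R = k[t]/(t^p - 1)$ with $\Delta(t) = t \otimes t$ in the $\mu_p$ case. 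A $G$-action is the same datum as a coaction, i.e.\ a $k$-algebra homomorphism $\rho \colon A \to A \otimes_k R$ satisfying the counit and coassociativity axioms; using the basis $1, t, \dots, t^{p-1}$ of $R$ we write $\rho(a) = \sum_{i=0}^{p-1} \phi_i(a)\, t^i$ with $k$-linear maps $\phi_i \colon A \to A$.

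For $G = \alpha_p$, the counit axiom forces $\phi_0 = \id$, and comparing coefficients in the coassociativity identity $(\rho \otimes \id)\circ\rho = (\id \otimes \Delta)\circ\rho$ yields $\phi_i \circ \phi_j = \binom{i+j}{i}\phi_{i+j}$. Setting $D := \phi_1$, the subrelation $\phi_i \circ \phi_1 = (i+1)\phi_{i+1}$ gives $\phi_i = D^i/i!$ for $i \leq p-1$, while $\phi_1 \circ \phi_{p-1} = \binom{p}{1}\phi_p = 0$ together with the invertibility of $(p-1)!$ forces $D^p = 0$. That $\rho$ is a ring homomorphism is, coefficientwise, exactly the higher Leibniz identity $D^n(ab) = \sum_{i+j=n}\binom{n}{i} D^i(a) D^j(b)$, which holds for all $n$ precisely when $D$ is a derivation. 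Conversely, given a derivation $D$ with $D^p = 0$, the truncated exponential $\rho(a) = \sum_{i=0}^{p-1} D^i(a)\, t^i/i!$ is a coaction, and the two assignments are mutually inverse.

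For $G = \mu_p$, the group-likeness $\Delta(t) = t \otimes t$ makes coassociativity read $\phi_i \circ \phi_j = \delta_{ij}\phi_i$, so the $\phi_i$ are orthogonal idempotents with $\sum_i \phi_i = \id$; equivalently $A = \bigoplus_{i \in \bZ/p\bZ} A_i$ is a $\bZ/p\bZ$-grading of $k$-algebras, with $A_i = \phi_i(A)$. The weight derivation $D = \sum_i i\,\phi_i$ is a derivation because the grading is multiplicative, and satisfies $D^p = \sum_i i^p \phi_i = \sum_i i\,\phi_i = D$ since $i^p = i$ in $\bF_p$. Conversely $D^p = D$ means $D$ annihilates the separable polynomial $\prod_{i \in \bF_p}(X - i)$, so $A = \bigoplus_i A_i$ with $A_i = \Ker(D - i)$, recovering the grading and hence the coaction; again the two constructions are inverse.

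Finally, in both cases the invariant subring $A^G = \{a \in A : \rho(a) = a \otimes 1\}$ equals $A^D = \Ker D$: for $\mu_p$ this is the degree-zero piece $A_0 = \Ker D$, and for $\alpha_p$ the condition $\rho(a) = a \otimes 1$ reads $D^i(a) = 0$ for all $i \geq 1$, i.e.\ $D(a) = 0$. Since the quotient of an affine scheme by a finite group scheme is the spectrum of the invariant ring, $X/G = \Spec A^D = X^D$ affine-locally, and the gluing noted above yields the global statement. The main work is the coassociativity bookkeeping in the $\alpha_p$ case (and verifying that the truncated exponential really is a coaction); the only genuinely global point, existence of $X/G$ for non-affine $X$, is handled by the topological triviality of infinitesimal actions, which lets every affine open serve as a $G$-stable chart.
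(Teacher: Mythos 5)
Your proof is correct. There is nothing in the paper to compare it against line by line: the paper's entire proof of this proposition is the single word ``Well-known,'' so your write-up supplies the standard argument (the one in Demazure--Gabriel, or Mumford's lectures on abelian varieties) that the author implicitly invokes. The coaction bookkeeping checks out: for $\alpha_p$ the coefficient comparison in coassociativity does give $\phi_i \circ \phi_j = \binom{i+j}{i}\,\phi_{i+j}$ (with $\phi_n = 0$ for $n \geq p$, which is what kills $\phi_1 \circ \phi_{p-1}$), forcing $\phi_i = D^i/i!$ and $D^p = 0$, and multiplicativity of the coaction is exactly the higher Leibniz rule, which follows by induction from the $n = 1$ case; for $\mu_p$ the orthogonal-idempotent/grading translation and the eigenspace decomposition coming from separability of $X^p - X$ are both right, as is the identification of invariants with $\Ker D$ in each case. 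The only step you gloss is why the locally ringed space $(|X|, \cO_X^D)$ obtained by gluing the affine quotients is a scheme: this needs (i) that taking $D$-invariants commutes with localization at $f \in A^D$, which holds because localization is flat, so $(A_f)^D = (\Ker D)_f = (A^D)_f$, and (ii) that $\Spec A \to \Spec A^D$ is a homeomorphism, which follows from $A^p \subseteq A^D$ (every $p$-th power is annihilated by $D$, so the principal open subsets attached to $p$-th powers are pulled back from $\Spec A^D$ and form a basis of $|X|$). Both points are routine and are subsumed in the affine quotient theorem for finite group schemes that you cite, so this is a completeness remark, not a gap.
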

\begin{proof}
	Well-known.
\end{proof}

\begin{lem} \label{lem:canonical 1-forms}
Let $X$ be a smooth variety of dimension $m$ (not necessarily proper)
equipped with a $p$-closed rational derivation $D$ 
such that $\Delta := \Fix(D)$ is divisorial. 
Let $\map{\pi}{X}{X^D}$ be the quotient map.
The morphism $\map{\pi^*}{\pi^* \Omega^1_{X^D}}{\Omega^1_X}$ 
induced by the pullback of $1$-forms
fits into a canonical exact sequence
\[ 
 0 
 \to \cO_X(-p \Delta) 
 \namedto{\overline{\pi'^*}} \pi^* \Omega^1_{X^D}
 \namedto{\pi^*} \Omega^1_X
 \namedto{\bar{D}} \cO_X(-\Delta)
 \to 0,
\]
where $F_X = \pi' \circ \pi \colon {X} \to X^D \to {\pthpower{X}}$ is the Frobenius,
$\bar{D}$ is defined as in Definition \ref{def:derivation on differential forms} (i.e.\ $\bar{D} \circ d = D$),
and $\overline{\pi'^*}$ is the morphism defined in the diagram
\[
\begin{tikzcd}
	F_{X^D}^* \Omega^1_{\pthpower{X^D}}  \arrow[r,"{\pthpower{\pi}}^*"] \arrow[rd, "0"] & 
	\pi'^* \Omega^1_{\pthpower{X}} \arrow[r, "\overline{\pthpower{D}}"] \arrow[d,"\pi'^*"] &
	\pi'^* \cO_{\pthpower{X}}(-\pthpower{\Delta}) \arrow[r] \arrow[dl, dotted, "\overline{\pi'^*}"] &
	0 \\
	&
	\Omega^1_{X^D} 
\end{tikzcd}
\]
and the equality $F_X^* (\cO_{\pthpower{X}}(-\pthpower{\Delta})) = \cO_X(-p \Delta)$.

Let $\eta$ (resp.\ $\xi$) be the image (resp.\ preimage) of $1$ by the induced isomorphism
$\cO_X \isomto \Ker (\pi^* \otimes \cO_X(p \Delta))$
(resp.\     $\Coker (\pi^* \otimes \cO_X(  \Delta)) \isomto \cO_X$).
Then 
$\eta = \frac{d(f^p)}{D(f)^p}$ and $\xi = \frac{df}{D(f)}$
for any local section $f \in \cO_X$ satisfying $\divisor(D(f)) = \Delta$.
Moreover $d \eta = 0$.
\end{lem}
\begin{proof}
	By the result of Seshadri (Lemma \ref{lem:good coord}),
	we can take a local coordinate $x_0, \dots, x_{m-1}$ of $X$
	such that $x_0^p, x_1, \dots, x_{m-1}$ is a local coordinate of $X^D$.
	Then $D = \phi \partialdd{x_0}$ 
	for some meromorphic function $\phi$ on $X$, and then $\Delta = \divisor(\phi)$.
	Then the sequence is 
	\[
		0 \to \spanned{\phi^p} 
		\to \spanned{d(x_0^p), dx_1, \dotsm dx_{m-1}}
		\to \spanned{dx_0, dx_1, \dotsm dx_{m-1}}
		\to \spanned{\phi}
		\to 0
	\]
	with $\phi^p \mapsto d(x_0^p)$ and $dx_0 \mapsto \phi$, which is clearly exact.
	The formulas of $\eta$ and $\xi$ are clear from the construction.
	$d \eta = 0$ follows either by computation using the formula
	or from the observation that 
	$d \eta \in \Image (\map{\bigwedge^2 \pi'^{*}}{F_X^* \Omega^2_{\pthpower{X}}}{\pi^* \Omega^2_{X^D}}) = 0$ 
	(since $\rank \pi'^* = 1$).
\end{proof}

\begin{prop} \label{prop:2-forms}
Let $D$ and $\map{\pi}{X}{X^D}$ as in Lemma \ref{lem:canonical 1-forms}. 
Then there is an isomorphism
\[
\Omega^m_{X/k}(\Delta)
\cong \pi^* (\Omega^m_{X^D/k}(\pi_*(\Delta)))
= \pi^* \Omega^m_{X^D/k} \otimes \cO_X(p \Delta)
\]
of $\cO_X$-modules, preserving the zero loci of forms, 
and sending 
\[ f_0 \cdot df_1 \wedge \dots \wedge df_{m-1} \wedge D(g)^{-1} dg
\mapsto
   f_0 \cdot df_1 \wedge \dots \wedge df_{m-1} \wedge D(g)^{-p} d(g^p) \]
for local sections $f_i, g$ of $\cO_X$
if $D(f_i) = 0$ for $1 \leq i < m$
and $D(g)^{-1} \in \cO_X(\Delta)$.

Taking powers and then the $D$-invariant parts, we also obtain an isomorphism
\[
((\pi_* \Omega^m_{X/k}(\Delta))^{\otimes n})^D
\cong (\Omega^m_{X^D/k}(\pi_*(\Delta)))^{\otimes n}
\]
of $\cO_{X^D}$-modules, 
satisfying the same property when $n = 1$ if $D(f_0) = 0$.

In particular, if $D$ is regular and fixed-point-free,
then we have isomorphisms 
\begin{align*}
(\pi_* (\Omega^m_{X/k})^{\otimes n}) ^{D}
&\cong (\Omega^m_{X^D/k})^{\otimes n} 
\quad \text{and} \\
H^0(X, (\Omega^m_{X/k})^{\otimes n})^{D} 
&\cong H^0(X^D, (\Omega^m_{X^D/k})^{\otimes n})
\end{align*}
with the same properties.
\end{prop}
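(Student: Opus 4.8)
The plan is to construct the first displayed isomorphism explicitly, working locally, and then deduce the remaining statements formally by taking tensor powers and invariant parts. First I would reduce to the local situation: since $\Delta = \Fix(D)$ is divisorial, around a general point of $X$ the derivation $D$ is fixed-point-free after twisting by a local equation of $\Delta$, so by Lemma \ref{lem:good coord} I can choose local coordinates $x_1, \dots, x_{m-1}, y$ with $D(x_i) = 0$ and $D(y) \notin \fm$ (up to the divisorial twist encoded by $\Delta$). In these coordinates the invariant subring $\cO_X^D$ is generated over $k$ by $x_1, \dots, x_{m-1}$ together with $g^p$ for a suitable $g$ with $D(g)$ a unit times a local equation of $\Delta$; the key point is that $y^p$ (or more precisely $g^p$) is $D$-invariant since $D^p = hD$ forces $D(g^p) = 0$ in characteristic $p$. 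This makes $\pi^{-1}(\cO_{X^D})$-module structure on differentials transparent.

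Next I would verify that the prescribed assignment
\[
f_0 \, df_1 \wedge \dots \wedge df_{m-1} \wedge D(g)^{-1}\, dg
\;\longmapsto\;
f_0 \, df_1 \wedge \dots \wedge df_{m-1} \wedge D(g)^{-p}\, d(g^p)
\]
is well-defined and $\cO_X$-linear. The essential computation is $d(g^p) = p\, g^{p-1}\, dg = 0$ naively, so one cannot interpret $d(g^p)$ downstairs as the pullback of a form upstairs carelessly; rather $D(g)^{-p} d(g^p)$ is to be read as the pullback $\pi^*(\omega)$ of the generator $\omega = D(g)^{-p}\, d(g^p)$ of $\Omega^m_{X^D}$, where on $X^D$ the element $g^p$ is a genuine coordinate and $d(g^p)$ is nonzero. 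The twist by $\cO_X(p\Delta)$ appears precisely because $D(g)^{-1} \in \cO_X(\Delta)$, so raising to the $p$-th power produces the factor $\cO_X(p\Delta)$; this matches $\pi_*(\Delta) = p\Delta$ pulled back, using that $\pi$ has degree $p$ and is purely inseparable. I would check that the map sends a local generator to a local generator, hence is an isomorphism, and that it respects zero loci because $f_0 df_1 \wedge \dots \wedge df_{m-1} \wedge dg$ vanishes at the same points as its image.

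With the first isomorphism in hand, the second follows by raising to the $n$-th tensor power and taking $D$-invariants: the operator $D$ acts on $\Omega^m_X(\Delta)$ via the $D_q$ defined earlier, and since $D^p = hD$ with $h \in \pthpower{k(X)}$ after the twist, the invariant part of $(\pi_* \Omega^m_X(\Delta))^{\otimes n}$ is an $\cO_{X^D}$-module that the first isomorphism identifies with $(\Omega^m_{X^D}(\pi_*\Delta))^{\otimes n}$; the compatibility with the explicit formula when $n = 1$ and $D(f_0) = 0$ is immediate from linearity. Finally, when $D$ is regular and fixed-point-free, $\Delta = 0$, the twists disappear, and the first isomorphism becomes $(\pi_* \Omega^m_X)^{\otimes n})^D \cong (\Omega^m_{X^D})^{\otimes n}$; taking global sections and using that $\pi$ is affine (so $H^0(X, \functorspace) = H^0(X^D, \pi_* \functorspace)$) yields the cohomological statement. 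The main obstacle I anticipate is making the assignment globally well-defined and independent of the local choice of $g$: different choices of the non-invariant coordinate differ by an invariant unit times a lower-order correction, and I must confirm that the factor $D(g)^{-p} d(g^p)$ transforms correctly under such changes so that the local isomorphisms glue. This gluing, together with the careful bookkeeping of the $p\Delta$-twist across the divisorial fixed locus, is where the real work lies.
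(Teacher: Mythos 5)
Your outline follows the same route as the paper's: pass to Seshadri coordinates via Lemma \ref{lem:good coord}, read $d(g^p)$ in $\pi^*\Omega^1_{X^D}$ rather than in $\Omega^1_X$ (where it is zero), match generators, and then deduce the tensor-power and global-section statements formally. But the step you yourself flag as ``where the real work lies'' --- that $D(g)^{-p}\,d(g^p)$ is independent of the admissible choice of $g$, so that the local isomorphisms glue --- is left unproved, and that is exactly the content the paper supplies. The paper never defines the map by a formula in $g$: Lemma \ref{lem:canonical 1-forms} first constructs choice-free maps (the $\cO$-linear contraction $\map{\bar{D}}{\Omega^1_X}{\cO_X(-\Delta)}$ and the map $\overline{\pi'^*}$ built from the Frobenius factorization $F_X = \pi' \circ \pi$), proves exactness of $0 \to \cO_X(-p\Delta) \to \pi^*\Omega^1_{X^D} \to \Omega^1_X \to \cO_X(-\Delta) \to 0$ by the coordinate computation, and only then identifies the canonical generators $\eta$ and $\xi$ of the end terms with $d(g^p)/D(g)^p$ and $dg/D(g)$; Proposition \ref{prop:2-forms} is then the determinant of this sequence, and independence of $g$ comes for free. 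If you prefer your direct construction, you can close the gap by an explicit check: with $D = \phi\,\partialdd{x_0}$ and $\Delta = \divisor(\phi)$, write $g^p = \sum_j a_j^p\,(x_0^p)^j$ with $a_j \in k[[x_1,\dots,x_{m-1}]]$; then $d_{X^D}(g^p) = (\partial g/\partial x_0)^p\, d(x_0^p)$ (the $d(a_j^p)$ terms vanish on $X^D$) and $D(g)^p = \phi^p(\partial g/\partial x_0)^p$, so $d(g^p)/D(g)^p = d(x_0^p)/\phi^p$ for \emph{every} $g$ with $\divisor(D(g)) = \Delta$. Without either this computation or the canonical sequence, the proposal is incomplete at its central point.

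Two smaller corrections. First, $D(g^p) = 0$ is just the Leibniz rule, $D(g^p) = p\,g^{p-1}D(g) = 0$ in characteristic $p$, valid for any $g$ and any derivation; $p$-closedness does not ``force'' it --- it is needed instead so that Lemma \ref{lem:good coord} applies and $\pi$ has degree $p$. Second, since $\Fix(D)$ is assumed divisorial, dividing by a local equation of $\Delta$ makes $D$ fixed-point-free \emph{everywhere}, not merely ``around a general point'': the non-divisorial part of the fixed locus is empty by hypothesis. This matters because the asserted isomorphism and its zero-locus statement must hold along $\Supp\Delta$ itself, which is precisely where the $\cO_X(p\Delta)$ bookkeeping is tested; a generic-point argument would only give an isomorphism off $\Delta$.
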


This refines the Rudakov--Shafarevich formula \cite{Rudakov--Shafarevich:inseparable}*{Corollary 1 to Proposition 3}
$K_X \sim \pi^* K_{X^D} + (p-1) \divisorialfix{D}$ (linear equivalence).
We note that, by Lemma \ref{lem:good coord}, 
there indeed exist local sections $f_0, f_1, \dots, f_{m-1}, g$ 
for which the $m$-forms in the statement are generators.

\begin{proof}
	This follows immediately from the exact sequence in Lemma \ref{lem:canonical 1-forms}
	and the description of the elements $\eta$ and $\xi$.
\end{proof}

\begin{lem} \label{lem:exterior}
Suppose 
$V_n \namedto{G_{n-1}} V_{n-1} \namedto{G_{n-2}} \dots \namedto{G_0} V_0$
is a sequence of morphisms between locally-free sheaves of equal finite rank $m$ on an irreducible scheme
 such that $\Coker G_i$ are also locally-free
and $\sum_{i \in \clopint{0}{n}} \rank \Coker G_{i}$
is equal to the rank of $\Coker G_{\clopint{0}{n}}$ at the generic point, where $G_{\clopint{0}{i}} := G_0 \circ \dots \circ G_{i-1}$.
Then $\Coker G_{\clopint{0}{n}}$ is also locally-free and
there is a unique isomorphism 
${\bigotimes_i (\det \Coker G_i)} \isomto {\det \Coker G_{\clopint{0}{n}}}$
taking $(v_i)_i$ to $\bigwedge_i G_{\clopint{0}{i}}(v_i)$
for local sections $v_i$ of $V_i$.
\end{lem}
\begin{proof}
For $0 \leq p \leq q \leq n$, let $G_{\clopint{p}{q}} := G_p \circ G_{p+1} \circ \dots \circ G_{q-1}$.
The assumption on the rank implies that 
$\Coker G_{\clopint{p}{q}}$ has rank equal to $\sum_{i \in \clopint{p}{q}} \rank \Coker G_i$ at the generic point.
We show the following.
\begin{enumerate}
\item \label{item:exterior:locally-free} For $p \leq r$, $\Coker G_{\clopint{p}{r}}$ is locally-free.
\item \label{item:exterior:exact} For $p \leq q \leq r$, the sequence 
 $0 \to \Coker G_{\clopint{q}{r}} \namedto{\beta} \Coker G_{\clopint{p}{r}} \to \Coker G_{\clopint{p}{q}} \to 0$ is exact.
\end{enumerate}
(\ref{item:exterior:locally-free}) is clear if $r - p \leq 1$.
(\ref{item:exterior:exact}) is clear if $p = q$ or $q = r$.
It suffices to show that if $p < q < r$
and (\ref{item:exterior:locally-free}) holds for $(p,q)$ and $(q,r)$
then (\ref{item:exterior:locally-free}) holds for $(p,r)$ 
and (\ref{item:exterior:exact}) holds for $(p,q,r)$.
The exactness at the middle and the right is clear.
Since $\Ker \beta$ is a subsheaf of a locally-free sheaf (by the assumption)
and its rank at the generic point is $0$, we have $\Ker \beta = 0$.
Thus (\ref{item:exterior:exact}) is true by the assumptions, and this together with the induction hypothesis imply (\ref{item:exterior:locally-free}).

Now, from (\ref{item:exterior:exact}) we obtain isomorphisms 
$\det \Coker G_{\clopint{p}{q}} \otimes \det \Coker G_{\clopint{q}{r}} \isomto \det \Coker G_{\clopint{p}{r}} \colon v \otimes w \mapsto v \wedge G_{\clopint{p}{q}}(w)$.
Composing these isomorphisms inductively, we obtain the desired isomorphism.
\end{proof}

\begin{prop} \label{prop:n-step}
	Suppose $X_0 \namedto{\pi_0} X_1 \namedto{\pi_1} \dots \namedto{\pi_{m-1}} X_m = \pthpower{X_0}$
	is a sequence of purely inseparable morphisms of degree $p$ between $m$-dimensional integral normal varieties,
	with each $\pi_i$ given by a $p$-closed rational derivation $D_i$ on $X_i$.
	Then $K_{X_0} \sim -\sum_{i = 0}^{m-1} (\pi_{i-1} \circ \dots \circ \pi_0)^* \divisorialfix{D_i}$.
\end{prop}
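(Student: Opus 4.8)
The natural strategy is to iterate the refined Rudakov--Shafarevich formula supplied by Proposition \ref{prop:2-forms}. Writing $\Pi_i := \pi_{i-1}\circ\dots\circ\pi_0\colon X_0\to X_i$ (so $\Pi_0 = \id$ and $\Pi_{i+1} = \pi_i\circ\Pi_i$), the isomorphism of Proposition \ref{prop:2-forms} at the $i$-th step gives $K_{X_i}\sim\pi_i^*K_{X_{i+1}}+(p-1)\divisorialfix{D_i}$. Pulling the $i$-th relation back by $\Pi_i^*$ and summing over $0\le i\le m-1$, the terms $\Pi_i^*K_{X_i}$ telescope and leave
\[
K_{X_0}\sim\Pi_m^*K_{X_m}+(p-1)\sum_{i=0}^{m-1}\Pi_i^*\divisorialfix{D_i}.
\]
Since each $\pi_i$ is purely inseparable of degree $p$ and $X_m=\pthpower{X_0}$, the composite $\Pi_m\colon X_0\to\pthpower{X_0}$ is the relative Frobenius; as $K_{\pthpower{X_0}}$ is the Frobenius twist of $K_{X_0}$ and the absolute Frobenius acts by multiplication by $p$ on divisor classes, $\Pi_m^*K_{X_m}\sim pK_{X_0}$. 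Substituting yields $(p-1)\bigl(K_{X_0}+\sum_i\Pi_i^*\divisorialfix{D_i}\bigr)\sim 0$.

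This already proves the statement up to $(p-1)$-torsion in $\Cl(X_0)$, but one cannot divide a linear equivalence by $p-1$, so the class-level computation does not yet give the asserted equivalence on the nose; this gap is the main obstacle. To close it I would prove the corresponding identity at the level of honest Weil divisors, where $\mathrm{Div}(X_0)$ is free and division by $p-1$ is legitimate. Choose rational functions $z_i\in k(X_i)\subset k(X_0)$ generating the successive purely inseparable extensions, so that $D_i(z_i)\ne 0$ and $z_i^p\in k(X_{i+1})$; since $k(X_0)\supset k(X_1)\supset\dots\supset k(X_m)=k(X_0)^p$ is a chain of degree-$p$ extensions with total degree $p^m$, the $z_i$ form a $p$-basis, so $dz_0,\dots,dz_{m-1}$ are a basis of $\Omega^1_{k(X_0)/k}$ and $\omega:=dz_0\wedge\dots\wedge dz_{m-1}$ is a nonzero rational $m$-form with $\divisor(\omega)\sim K_{X_0}$.

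I would then establish the \emph{honest} divisor identity
\[
\divisor(\omega)=-\sum_{i=0}^{m-1}\Pi_i^*\divisorialfix{D_i}
\]
by checking it at each prime divisor $\Gamma$ of $X_0$. At the (regular, codimension-one) generic point of $\Gamma$ the local rings are discrete valuation rings, and applying the explicit, zero-locus-preserving isomorphism of Proposition \ref{prop:2-forms} successively at the levels $i=0,\dots,m-1$ reduces the comparison of $\ord_\Gamma(\omega)$ with the coefficient of $\Gamma$ in $\sum_i\Pi_i^*\divisorialfix{D_i}$ to a one-dimensional computation, exactly as in the curve case. Because these are equalities of integers, with no passage to divisor classes, no torsion ambiguity appears, and $\divisor(\omega)\sim K_{X_0}$ then gives the result.

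The hard part is this local computation. Its delicate features are: the inseparability of the $\pi_i$ makes the naive pullback of $m$-forms vanish, so one must use the form-correspondence $D_i(z_i)^{-1}dz_i\leftrightarrow D_i(z_i)^{-p}d(z_i^p)$ of Proposition \ref{prop:2-forms} rather than honest pullback; the ``non-invariant'' coordinate direction changes from level to level; and one must control both the auxiliary factors $D_i(z_i)$ and the way the fixed divisor $\divisorialfix{D_i}$ on $X_i$ meets $\Gamma$ after pulling back by $\Pi_i$. The good coordinates of Lemma \ref{lem:good coord} (together with Lemma \ref{lem:D=1} in the additive case) are what make this bookkeeping manageable at each $\Gamma$.
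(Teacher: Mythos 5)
Your first half is correct, and your worry about $(p-1)$-torsion is exactly right: the telescoped Rudakov--Shafarevich formula only gives $(p-1)\bigl(K_{X_0}+\sum_i\Pi_i^*\divisorialfix{D_i}\bigr)\sim 0$, which does not suffice. But the repair fails as written. The honest divisor identity $\divisor(\omega)=-\sum_i\Pi_i^*\divisorialfix{D_i}$ cannot hold for an arbitrary fixed choice of the $z_i$: the left-hand side depends on that choice while the right-hand side does not. Concretely, take $m=1$, $X_0=\bP^1$ with coordinate $t$ and $D_0=\partial/\partial t$, so that $\divisorialfix{D_0}=2(\infty)$. Choosing $z_0=t$ gives $\divisor(dz_0)=-2(\infty)$, consistent with your claim; but $z_0=1/t$ is an equally legitimate generator ($D_0(z_0)=-z_0^2\neq 0$, $z_0^p\in k(X_0)^p$), and $\divisor(dz_0)=-2(0)\neq -2(\infty)$. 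What the local analysis via Lemma \ref{lem:canonical 1-forms} actually produces at each prime divisor $\Gamma$ is $\ord_\Gamma$ of the class of $dz_i$ in the cokernel $\cO(-\divisorialfix{D_i})$, namely $D_i(z_i)$, and globally this assembles to
\[
\divisor(\omega)\;=\;\sum_{i=0}^{m-1}\Pi_i^*\divisor\bigl(D_i(z_i)\bigr)\;-\;\sum_{i=0}^{m-1}\Pi_i^*\divisorialfix{D_i},
\]
with an extra term that is the principal divisor of $\prod_i D_i(z_i)\in k(X_0)^*$ (pullback of a principal divisor under a finite dominant map of normal varieties is the divisor of the pulled-back function). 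Deleting it would require each $D_i(z_i)$ to have trivial divisor globally, which one cannot arrange in general; so the identity you propose to verify at each $\Gamma$ is simply false, and the $\Gamma$-by-$\Gamma$ check would reveal this rather than close the gap.

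The gap is benign, however: since the Proposition only asserts a linear equivalence, the corrected identity immediately gives $K_{X_0}\sim\divisor(\omega)\sim-\sum_i\Pi_i^*\divisorialfix{D_i}$, with no division by $p-1$ anywhere, and your telescoping preamble becomes unnecessary. Once repaired, your computation is in essence the paper's proof: the paper discards codimension-$\geq 2$ loci, uses the four-term sequence of Lemma \ref{lem:canonical 1-forms} at each step, in which the class of $dz_i$ in the cokernel is canonically a section $D_i(z_i)$ of $\cO(-\divisorialfix{D_i})$ (equivalently, the canonical generator $\xi_i=df/D_i(f)$ after twisting by $\cO(\divisorialfix{D_i})$), and then invokes Lemma \ref{lem:exterior} to wedge these cokernel classes and trivialize $\Omega^m_{X_0}\otimes\cO\bigl(\sum_i\Pi_i^*\divisorialfix{D_i}\bigr)$ --- precisely the sheaf-level bookkeeping that absorbs the choice-dependence your fixed global $z_i$ cannot.
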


\begin{proof}
	As the conclusion does not depend on closed subschemes of codimension $\geq 2$,
	we may assume that $\Sing(X_i) = \emptyset$ and $\isolatedfix{D_i} = \emptyset$ by restricting to the complement.

We write $\map{\pi_{\clopint{0}{i}} := \pi_{i-1} \circ \dots \circ \pi_1 \circ \pi_0}{X_0}{X_i}$
and let 
$\map{G_i}{\pi_{\clopint{0}{i+1}}^* \Omega^1_{X_{i+1}}}{\pi_{\clopint{0}{i}}^* \Omega^1_{X_i}}$
be the pullback of 
$\map{\pi_i^*}{\pi_i^* \Omega^1_{X_{i+1}}}{\Omega^1_{X_i}}$
to $X_0$.
Then $\Coker (G_i \otimes \cO_{X_0}(\pi_{\clopint{0}{i}}^* \divisorialfix{D_i}))$ is free of rank $1$,
since it is the pullback of $\Coker (\pi_i^* \otimes \cO_{X_i}(\divisorialfix{D_i}))$,
which is free of rank $1$ by Lemma \ref{lem:canonical 1-forms}.
Since $G_0 \circ \dots \circ G_{m-1} = 0$, 
we can apply Lemma \ref{lem:exterior} to $G_0 \circ \dots \circ G_{m-1}$.
Then the invertible sheaf 
\begin{align*}
\Omega^m_{X_0} \otimes \cO_{X_0} \Bigl( \sum_i \pi_{\clopint{0}{i}}^* \divisorialfix{D_i} \Bigr)
&= (\Coker (G_0 \circ \dots \circ G_{m-1})) 
\otimes \bigotimes_i \cO_{X_0}(\pi_{\clopint{0}{i}}^* \divisorialfix{D_i}) \\
&\cong \bigotimes_i \Coker (G_i \otimes \cO_{X_0}(\pi_{\clopint{0}{i}}^* \divisorialfix{D_i}))
\end{align*}
is trivial.
\end{proof}

The next proposition, which we will use in Section \ref{sec:restoring},
is a slight generalization 
of arguments in \cite{Bombieri--Mumford:III}*{Sections 3 and 5}
(where only derivations of multiplicative or additive type are considered).
\begin{prop} \label{prop:covering}
	Let $D$ be a nontrivial $p$-closed derivation on an integral scheme $X$, 
	and let $\map{\pi}{X}{X^D = Y}$ be the quotient map.
	Suppose $\Fix(D) \subset \pi^{-1}(\Sing(Y))$
	and $\Sing(X) \subset \pi^{-1}(Y^{\sm})$.
	Then,
	\begin{enumerate}
		\item \label{item:Gorenstein}
		$X$ is Gorenstein.
		\item \label{item:1-form}
		There is a canonical closed $1$-form $\eta$ on $Y^{\sm}$
		that coincides with the one given in Lemma \ref{lem:canonical 1-forms} on $Y^{\sm} \cap \pi(X^{\sm})$.
		It satisfies $\Sing(X) = \pi^{-1}(\Zero(\eta))$.
		$X$ is normal if and only if $\codim \Zero(\eta) \geq 2$. 
		\item \label{item:dualizing}
		Suppose $X$ and $Y$ are proper, $Y$ admits a dualizing sheaf $\omega_Y$,
		and it is trivial ($\omega_Y \cong \cO_Y$).
		Then $X$ admits a dualizing sheaf $\omega_X$ and it is trivial.
		\item \label{item:derivation}
		Suppose $X$ and $Y$ are surfaces. 
		Suppose $Y^{\sm}$ admits a global non-vanishing $2$-form $\omega$, and fix such a $2$-form.
		Then there is a unique $p$-closed derivation $D_Y$ on $Y$
		satisfying $D_Y(f) \omega = df \wedge \eta$ on $Y^{\sm}$.
		It moreover satisfies $\Zero(\eta) = \Fix(D_Y \restrictedto{Y^{\sm}})$,
		$Y^{D_Y} = \pthpower{(\normalization{X})}$,
		and $D_Y(\omega) = 0$.
	\end{enumerate}
\end{prop}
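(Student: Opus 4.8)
The plan is to prove every assertion first over the open set $\pi^{-1}(Y^{\sm}) \subseteq X$, where the two hypotheses combine cleanly, and then to extend across the complement, which lies over $\Sing(Y)$ and hence has codimension $\geq 2$. Over $Y^{\sm}$ the derivation $D$ is fixed-point-free, since $\Fix(D) \subseteq \pi^{-1}(\Sing(Y))$ is disjoint from $\pi^{-1}(Y^{\sm})$; thus $\Fix(D)$ is locally the unit ideal and Lemma \ref{lem:good coord} applies at every point of $\pi^{-1}(Y^{\sm})$. It furnishes a local generator $y$ of the maximal ideal with $D(y)$ a unit and the remaining generators $D$-invariant; after rescaling $D$ by $D(y)^{-1}$ (harmless by Hochschild's formula, Lemma \ref{lem:Hochschild}, as it does not change $\cO_X^D = \cO_Y$) an elementary computation (a Taylor expansion in $y$) shows that $\cO_X$ is free over $\cO_Y$ with basis $1, y, \dots, y^{p-1}$ and that $c := y^p \in \cO_Y$. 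Hence, over $Y^{\sm}$, $\pi$ is the monogenic cover $\cO_Y[y]/(y^p - c)$. This already gives (1): $X$ is Cohen--Macaulay (finite flat over the regular locus $Y^{\sm}$, and smooth over $\Sing(Y)$, which $\Sing(X)$ does not meet by the second hypothesis), and a monogenic finite flat extension of a Gorenstein ring is Gorenstein.

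For (2) I would set $\eta := D(y)^{-p}\, d(y^p) = D(y)^{-p}\, dc$ locally. Both $d(y^p)$ and $D(y)^p$ are pulled back from $\cO_Y$, so $\eta$ is a rational $1$-form on $Y$ that is regular on all of $Y^{\sm}$, and $d\eta = 0$ because $d(D(y)^{-p}) = 0$ in characteristic $p$. On $V_0 := Y^{\sm} \cap \pi(X^{\sm})$ this is exactly the form of Lemma \ref{lem:canonical 1-forms}, and independence of the choice of $y$ makes $\eta$ canonical. Since $\eta$ differs from $dc$ by a unit, $\Zero(\eta) = \Zero(dc)$; applying the Jacobian criterion to the hypersurface $y^p - c = 0$ inside the smooth $\cO_Y[y]$ (where $d(y^p-c) = -dc$) identifies $\Sing(X) \cap \pi^{-1}(Y^{\sm})$ with $\pi^{-1}(\Zero(\eta))$, and as $\Sing(X)$ lies entirely over $Y^{\sm}$ this yields $\Sing(X) = \pi^{-1}(\Zero(\eta))$. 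Finally $X$ is $S_2$ (being Cohen--Macaulay), so $X$ is normal iff it is $R_1$, i.e. iff $\Zero(\eta)$ has codimension $\geq 2$. I expect this paragraph to be the main obstacle: one must produce $\eta$ as a genuinely regular form on all of $Y^{\sm}$ — including over the codimension-one locus $\pi(\Sing(X))$ that appears precisely when $X$ is not normal — and match $\Zero(\eta)$ with $\Sing(X)$ on the nose, which is what forces the explicit monogenic model of Lemma \ref{lem:good coord} to be used rather than any soft extension-by-normality argument.

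For (3), since $X$ is Gorenstein, $\omega_X$ is invertible and finite duality gives $\omega_X \cong \sHom_{\cO_Y}(\pi_*\cO_X, \omega_Y) \cong \sHom_{\cO_Y}(\pi_*\cO_X, \cO_Y)$ using $\omega_Y \cong \cO_Y$. For the monogenic cover this last sheaf is free of rank one over $\cO_X$, generated by the dual of $y^{p-1}$, so $\omega_X \cong \cO_X$ over $\pi^{-1}(Y^{\sm})$. As the complement has codimension $\geq 2$ and $X$ is $S_2$, this trivialization extends over $X$, giving $\omega_X \cong \cO_X$.

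For (4), on $Y^{\sm}$ the non-vanishing $\omega$ identifies $\Der(\cO_{Y^{\sm}})$ with $\Omega^1_{Y^{\sm}}$, and $D_Y(f)\,\omega = df \wedge \eta$ defines the derivation $D_Y$ dual to $\eta$; non-vanishing of $\omega$ forces uniqueness and the Leibniz rule is immediate. One checks $\bar D_Y(\omega) = \eta$, so that $D_Y(\omega) = d(\bar D_Y \omega) = d\eta = 0$ by the formulas of Section \ref{subsec:derivation}. For any local potential $c$ of $\eta$ one has $D_Y(c) = \eta(D_Y) = \omega(D_Y, D_Y) = 0$ by antisymmetry, so $\cO_Y^{D_Y}$ contains the generically index-$p$ subring generated by $c$ and the $p$-th powers $\cO_Y^p$; since $D_Y^p$ is a $\cO_Y^{D_Y}$-linear derivation of $k(Y)$ and such derivations form a $1$-dimensional $k(Y)$-space spanned by $D_Y$, we get $D_Y^p = h D_Y$, i.e. $D_Y$ is $p$-closed. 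The inclusion $\cO_X^p = \cO_{\pthpower X} \subseteq \cO_Y^{D_Y}$ (each $p$-th power being $D_Y$-killed) realizes $Y^{D_Y}$ as a normal, finite, birational cover of $\pthpower X$, hence as $\pthpower{(\normalization X)}$; and $\Fix(D_Y\restrictedto{Y^{\sm}}) = \Zero(\eta)$ because the isomorphism $\Der \cong \Omega^1$ carries $D_Y$ to $\eta$. All these assertions hold on the dense open $Y^{\sm}$ and, being statements about rational derivations or about reflexive data in codimension $\geq 2$, extend across $\Sing(Y)$ by normality of $Y$, as in Lemma \ref{lem:extending derivation}.
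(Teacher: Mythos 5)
Parts (\ref{item:Gorenstein}), (\ref{item:1-form}), and (\ref{item:derivation}) of your proposal are sound and run essentially parallel to the paper: like the paper, you reduce to the monogenic model $\cO_{X'} = \cO_{Y'}[S]/(S^p - c)$ over $Y' = Y^{\sm}$ (the paper gets it from a section $s$ with $D(s) \in \cO_{X'}^*$; your route via Lemma \ref{lem:good coord} plus the $D'$-Taylor-coefficient induction, with $D' = D(y)^{-1}D$ and normality of $\cO_{Y'}$ to put the coefficients in $\cO_{Y'}$, is the same computation), then get canonicity of $\eta$ by density of $Y' \cap \pi(X^{\sm})$ and Lemma \ref{lem:canonical 1-forms}, the Jacobian criterion for $\Sing(X) = \pi^{-1}(\Zero(\eta))$, and Serre's criterion for normality. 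In (\ref{item:derivation}) your two local arguments are actually slicker than the paper's: verifying $\bar{D}_Y(\omega) = \eta$ so that $D_Y(\omega) = d\eta = 0$, and getting $p$-closedness from the one-dimensionality of $\Der_{k(X)^p}(k(Y))$ rather than the paper's degree count; the identification $Y^{D_Y} = \pthpower{(\normalization{X})}$ via ``normal, finite, birational over $\pthpower{X}$'' is correct.

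The genuine gap is in (\ref{item:dualizing}). The functional ``dual of $y^{p-1}$'' is \emph{not} canonical: it depends on the monogenic generator, transforming by the unit $(dg/dy)^{1-p}$ under $y \mapsto g(y)$ (e.g.\ by $v^{-1}$ under $y \mapsto vy$ when $p = 2$, $v \in \cO_{Y'}^*$). So your local generators of $\sHom_{\cO_Y}(\pi_*\cO_X, \cO_Y)$ need not glue, and what you have shown is only that this sheaf is invertible on $U = \pi^{-1}(Y^{\sm})$ --- which is automatic once $X$ is Gorenstein --- not that it is \emph{trivial} on $U$; the class of $\omega_{X}\restrictedto{U}$ in $\Pic(U)$ is exactly what remains to be killed, and your $S_2$-extension step across $\pi^{-1}(\Sing(Y))$ only helps after a global trivialization on $U$ is in hand. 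Tellingly, your argument for (\ref{item:dualizing}) never uses properness of $X$, whereas the paper uses it at precisely this point: properness and $\codim \Fix(D) \geq 2$ (plus depth, since $X$ is CM) give $h \in H^0(X, \cO_X) = k$ where $D^p = hD$, one normalizes $h \in \set{0,1}$, and then the \emph{canonical global} functional $t = \pr_0$ (multiplicative type) or $t = D^{p-1}$ (additive type) produces the global generator $x \mapsto t(x \cdot \functorspace)$, an isomorphism off $\Fix(D)$ and hence everywhere by normality at $\Sing(Y)$. Alternatively, your computation can be repaired without the case split by replacing the dual of $y^{p-1}$ with $D(y)^{p-1}$ times it: since $D(g(y)) = g'(y)D(y)$, this product is invariant under $y \mapsto g(y)$ and does glue to a global generator on $U$.
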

\begin{proof}
	First note that $Y$ is normal.
	Indeed, for each point $y \in \Sing(Y)$, the point $\pi^{-1}(y) \in X$ is smooth by assumption,
	in particular normal, and normality inherits to derivation quotients.

	(\ref{item:1-form})
	Let $Y' = Y^{\sm}$ and $X' = \pi^{-1}(Y')$.
	Since $\Fix(D) \cap X' = \emptyset$ there exists locally a section 
	$s \in \cO_{X'}$ with $D(s) \in \cO_{X'}^*$.
	Consider the $1$-form $\eta = d(s^p) / D(s)^p$ on $Y'$.
	By Lemma \ref{lem:canonical 1-forms}, the restriction of $\eta$ 
	to $\pi(X^{\sm}) \cap Y'$ (which is dense since $X$ is integral) 	
	does not depend on the choice of $s$, is defined globally, and is killed by $d$,
	hence $\eta$ itself satisfies the same properties.
	
	Two special cases are the following.
	If $D$ is of multiplicative type
	then we can take $s$ satisfying $D(s) = s$ (\cite{Matsumoto:k3mun}*{Lemma 2.13}), 
	and then $\eta = \dlog(s^p)$.
	If $D$ is of additive type
	then we can take $s$ satisfying $D(s) = 1$ (Lemma \ref{lem:D=1}), 
	and then $\eta = d(s^p)$.

	By assumption $X$ is regular above $\Sing(Y)$.
	Locally on $Y'$, we have $\cO_{X'} = \cO_{Y'}[S] / (S^p - b)$, where $b = s^p$.
	Hence $X$ is complete intersection, in particular Gorenstein, and 
	we have $\Sing(X) = \pi^{-1}(\Zero(db)) = \pi^{-1}(\Zero(\eta))$.

	Since $X$ is regular at the generic point, $\eta$ is not identically $0$.
	$X$ is normal if and only if $\Sing(X)$ or equivalently $\Zero(\eta)$ is of codimension $> 1$.

	(\ref{item:Gorenstein}) is proved above.

	(\ref{item:dualizing})
	Since $X$ is proper and $\codim \Fix(D) \geq 2$,
	we have $h \in k$, where $D^p = h D$.
	We may assume $h \in \set{0,1}$.
	We follow \cite{Bombieri--Mumford:III}*{Proposition 9}.
	It suffices to give an $\cO_Y$-linear isomorphism 
	$\map{\phi}{\pi_* \cO_X}{\sHom(\pi_* \cO_X, \cO_Y)}$.
	Let $\phi$ be the morphism $x \mapsto t(x \cdot \functorspace)$, where
	$\map{t = \pr_0}{\pi_* \cO_X}{(\pi_* \cO_X)^{D = 0}} = \cO_Y$ if $D$ is of multiplicative type (i.e.\ $h = 1$) and
	$\map{t = D^{p-1}}{\pi_* \cO_X}{\cO_Y}$ if $D$ is of additive type (i.e.\ $h = 0$).
	Since $\Fix(D) \cap X' = \emptyset$, $\phi \restrictedto{Y'}$ is an isomorphism,
	and then $\phi$ itself is an isomorphism 
	since $\pi_* \cO_X$ and $\cO_Y$ are normal at $\Sing(Y)$.

	(\ref{item:derivation})
	We define a derivation $D_{Y'}$ on $Y' = Y^{\sm}$ by 
	$D_{Y'} \colon \cO_{Y'} \namedto{d} \Omega_{Y'}^1 \namedto{\wedge \eta} \Omega_{Y'}^2 \twonamedfrom{\otimes \omega}{\sim} \cO_{Y'}$,
	hence $D_{Y'}(f) \omega = df \wedge \eta$.
	Then $\Fix(D_{Y'}) = \Zero(\eta)$.
	Write $\cO_{X'} = \cO_{Y'}[S] / (S^p - b)$ locally on $Y'$ as in the proof of (\ref{item:1-form})
	and then $\eta = u \cdot db$ for a unit $u \in \cO_{Y'}^*$.
	Then it is clear that $b \in \cO_{Y'}^{D_{Y'}}$ and $\pthpower{(\cO_{Y'})} \subset \cO_{Y'}^{D_{Y'}}$,
	hence $\pthpower{(\cO_{X'})} \subset \cO_{Y'}^{D_{Y'}}$.
	Since $\cO_{Y'}^{D_{Y'}}$ is normal (since $\cO_Y$ is normal) we obtain 
	${\pthpower{(\normalization{(\cO_{X'})})}} \subset \cO_{Y'}^{D_{Y'}}$. 
	Since $Y$ is normal, $D_{Y'}$ extends to a derivation $D_Y$ on $Y$ by Lemma \ref{lem:extending derivation},
	and we have ${\pthpower{(\normalization{(\cO_{X})})}} \subset \cO_{Y}^{D_Y}$. 
	Comparing the degree with respect to $k(X)$
	($p^2 = [k(X) : k(\pthpower{X})] \geq [k(X) : k(Y^{D_Y})] 
	= [k(X) : k(Y)] \cdot [k(Y) : k(Y^{D_Y})] \geq p^2$)
	we observe that this is equality at the generic point,
	and then since both sides are normal we obtain the equality.
	We also obtain $[k(Y) : k(Y^{D_Y})] = p$ and hence $D_Y$ is $p$-closed.
	
	We have $D_Y(\eta) = 0$ since $\eta$ is the pullback of a $1$-form on $\pthpower{\cO_X} \subset \cO_Y^{D_Y}$.
	Comparing $D_Y(D_Y(f) \omega) = D_Y(df \wedge \eta)$ and
	$D_Y(D_Y(f)) \omega = d(D_Y(f)) \wedge \eta$
	(both of which follow from $D_Y(f) \omega = df \wedge \eta$),
	we obtain $D_Y(\omega) = 0$.
\end{proof}

\section{Local properties of derivations on smooth points and RDPs} \label{sec:derivations on RDP}

In this section we will recall basic properties of RDPs and
then consider derivations on RDPs.

\begin{defn}[RDPs]
	\emph{Rational double point} singularities in dimension $2$, RDPs for short,
	are the $2$-dimensional canonical singularities.
	
	The exceptional curves of the resolution of singularity and their intersection numbers form a Dynkin diagram of type $A_n$, $D_n$, or $E_n$.
	We say that the RDP is of type $A_n$, $D_n$, or $E_n$.
	For 
	$D_n$ and $E_n$ in characteristic $2$,
	$E_n$ in characteristic $3$,
	and $E_8$ in characteristic $5$,
	and in no other cases,
	there are more than one, finitely many, isomorphism classes of singularity sharing the same Dynkin diagram.
	They are classified and named as $D_n^r$ and $E_n^r$ by Artin \cite{Artin:RDP}, 
	where the range of $r$ is a certain finite set of non-negative integers 
	depending on the characteristic and the Dynkin diagram.
	In these cases, and also in the cases of $A_n$ with $p \divides (n+1)$ 
	and $D_n$ with $p \divides (n-2)$, and in no other cases,
	the fundamental groups are different from those of the corresponding RDPs in characteristic $0$, again see \cite{Artin:RDP}.
	
	We refer to $n$ and $r$ as the \emph{index} and \emph{coindex} of the RDP.
	
	If $A$ is the localization of a surface at an RDP, or the completion of such an algebra,
	then we call $\Spec A$ a \emph{local RDP} for short.
	
	If $\Spec A$ is a local RDP or a $2$-dimensional regular local ring,
	then we denote $\Pic(A) = \Pic((\Spec A)^{\sm})$ and call this the local Picard group of $A$.
	If $A$ is Henselian (e.g.\ if it is complete) 
	then by \cite{Lipman:rationalsingularities}*{Proposition 17.1},
	this group is determined from the Dynkin diagram as in Table \ref{table:Picard group of RDP} and is independent of the characteristic and the coindex. 
\end{defn}

\begin{table}
	\caption{Local Picard groups of Henselian RDPs (in any characteristic)} \label{table:Picard group of RDP}
	\begin{tabular}{ccccccc}
		\toprule
		smooth & $A_n$ & $D_{2m}$ & $D_{2m+1}$ & $E_6$ & $E_7$ & $E_8$ \\
		\midrule
		$0$ & $\bZ/(n+1)\bZ$ & $(\bZ/2\bZ)^2$ & $\bZ/4\bZ$ & $\bZ/3\bZ$ & $\bZ/2\bZ$ & $0$ \\
		\bottomrule
	\end{tabular}
\end{table}

\begin{defn}[RDP surfaces]
\emph{RDP surfaces}
are surfaces that have only RDPs as singularities (if any).
In particular, any smooth surface is an RDP surface.

\emph{RDP K3 surfaces}
are proper RDP surfaces whose minimal resolutions are (smooth) K3 surfaces. 
We similarly define \emph{RDP Enriques surfaces}.

Since abelian surfaces and (quasi-)hyperelliptic surfaces do not admit smooth rational curves,
any RDP abelian or RDP (quasi-)hyperelliptic surface is smooth.
\end{defn}

\begin{thm} \label{thm:mu_p alpha_p RDP}
Let $X$ be a surface equipped with a nontrivial $p$-closed derivation $D$, 
and $w \in X$ a closed point.
Let $\pi \colon X \to Y = X^D$ be the quotient morphism.
\begin{enumerate}
\item \label{thm:mu_p alpha_p RDP:non-fixed}
Assume $w \notin \Fix(D)$.
If $w$ is a smooth point then $\pi(w)$ is also a smooth point.
If $w$ is an RDP then $\pi(w)$ is either a smooth point or an RDP,
and more precisely $(\hat{\cO}_{X,w}, D)$ is isomorphic to $(k[[x,y,z]]/(F), u \cdot \partial / \partial z)$ 
where $u$ is a unit and 
$F$ is a power series $\in k[[x,y,z^p]]$ that is 
one in Table \ref{table:non-fixed RDP}.
In either case $X \times_Y \tilde{Y} \to X$ is crepant, where $\tilde{Y} \to Y$ is the minimal resolution at $\pi(w)$.
\item \label{thm:mu_p alpha_p RDP:fixed RDP symplectic}
If $w \in \Fix(D)$, then $D$ uniquely extends to a derivation $D_1$ on $X_1 = \Bl_w X$.
Suppose moreover that $\divisorialfix{D} = 0$,
that $w$ is an RDP, and that $\pi(w)$ is either a smooth point or an RDP.
Then $\pi(w)$ is an RDP,
$\divisorialfix{D_1} = 0$,
the image of each point above $w$ is either a smooth point or an RDP,
$g \colon Y_1 = (X_1)^{D_1} \to Y$ is crepant,
and $\Fix(D_1) \neq \emptyset$.
\end{enumerate}
\end{thm}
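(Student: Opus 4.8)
The plan is to reduce all five assertions to a single computation of canonical divisors near $w$, combined with the fact that rational double points are canonical singularities. First I would construct $D_1$. The hypothesis $w \in \Fix(D)$ means $D(\fm_w) \subseteq \fm_w$, so $D$ preserves the Rees algebra $\bigoplus_n \fm_w^n$ and therefore lifts to a derivation $D_1$ on $X_1 = \Bl_w X$; in the affine charts of the blowup this lift is manifestly regular, and $p$-closedness passes to $D_1$ because $X_1 \setminus E \cong X \setminus \set{w}$ and $p$-closedness is a condition at the generic point. Uniqueness is clear since that open set is dense. After replacing $X_1$ by its normalization if necessary (which alters nothing in codimension one), I record the commutative square $\pi \circ \sigma = g \circ \pi_1$, where $\sigma \colon X_1 \to X$ is the blowup, $\pi_1 \colon X_1 \to Y_1 = (X_1)^{D_1}$ is the quotient, and $g \colon Y_1 \to Y$ is the induced map. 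Since $\sigma$ contracts the curve $E = \sigma^{-1}(w)$ and $\pi_1$ is finite, $E' := \pi_1(E)$ is a curve that $g$ contracts to $\pi(w)$, so $g$ is a nontrivial proper birational morphism.

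The heart of the argument is the identity
\[
\pi_1^*\bigl(K_{Y_1} - g^* K_Y\bigr) = -(p-1)\,\divisorialfix{D_1}.
\]
I would obtain it by chaining three inputs: the Rudakov--Shafarevich formula on $X_1$, namely $K_{X_1} \sim \pi_1^* K_{Y_1} + (p-1)\divisorialfix{D_1}$; the same formula on $X$ with the hypothesis $\divisorialfix{D}=0$, giving $K_X \sim \pi^* K_Y$; and the crepancy $K_{X_1} = \sigma^* K_X$ of the maximal-ideal blowup of the RDP (the minimal resolution factors through $\sigma$, and it is crepant because RDPs are Gorenstein canonical). Substituting $K_{X_1} = \sigma^* K_X = \sigma^*\pi^* K_Y = \pi_1^* g^* K_Y$ into the first relation produces the displayed identity. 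As $\sigma$ is an isomorphism off $E$ and $\divisorialfix{D}=0$, the divisor $\divisorialfix{D_1} = \sum_i c_i E_i$ is supported on $E$ with $c_i \geq 0$, while I write $K_{Y_1} - g^* K_Y = \sum_j b_j F_j$ over the $g$-exceptional primes $F_j$ with discrepancies $b_j$.

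I would then extract every conclusion from $\sum_j b_j\,\pi_1^* F_j = -(p-1)\sum_i c_i E_i$: the left side is the pullback of an effective-or-zero divisor under the finite map $\pi_1$, and the right side is $\leq 0$. Were $\pi(w)$ a smooth point, all $b_j \geq 1$ would force the left side to be effective and nonzero, contradicting the right side; hence $\pi(w)$ is an RDP. Then canonicity gives $b_j \geq 0$, so the left side is effective and the right side anti-effective, forcing both to vanish: this yields at once $\divisorialfix{D_1}=0$ and $b_j = 0$, i.e.\ $g$ is crepant. A crepant proper birational morphism onto the RDP $\pi(w)$ with $Y_1$ normal is a crepant partial resolution, so $Y_1$ has only RDPs over $\pi(w)$, and the image in $Y_1$ of any point of $X_1$ above $w$ is a smooth point or an RDP (the non-fixed points above $w$ may alternatively be handled by the non-fixed case of the theorem already proved). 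Finally, because $w$ is fixed, $E$ is $D_1$-invariant, and $\divisorialfix{D_1}=0$ makes $D_1|_E$ nonzero; as $E$ is a configuration of rational curves, this restriction must vanish at some point, which then lies in $\Fix(D_1)$, so $\Fix(D_1) \neq \emptyset$.

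The step I expect to be the main obstacle is the crepancy $K_{X_1} = \sigma^* K_X$ of the maximal-ideal blowup together with the possible non-normality of $\Bl_w X$: one must verify that $\sigma$ extracts only discrepancy-zero divisors over the RDP and that normalizing introduces no further discrepancy, so that the effective/anti-effective dichotomy above is genuinely available. Once this and the two Rudakov--Shafarevich applications are in hand, the remaining deductions are formal.
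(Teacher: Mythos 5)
Your proposal addresses only assertion (\ref{thm:mu_p alpha_p RDP:fixed RDP symplectic}) of the theorem; this is the genuine gap. Assertion (\ref{thm:mu_p alpha_p RDP:non-fixed}) --- that a non-fixed smooth point has smooth image, the normal form $(\hat{\cO}_{X,w}, D) \cong (k[[x,y,z]]/(F), u \cdot \partial/\partial z)$ with $F \in k[[x,y,z^p]]$ as in Table \ref{table:non-fixed RDP}, and the crepancy of $X \times_Y \tilde{Y} \to X$ --- is never proved, and in fact you invoke it inside your own argument (``the non-fixed case of the theorem already proved''). The paper's proof of (\ref{thm:mu_p alpha_p RDP:non-fixed}) is of a different nature and does not follow from any discrepancy bookkeeping: the smooth case uses Lemma \ref{lem:good coord} (Seshadri coordinates with $D(x) = 0$ and $D(y)$ a unit, whence $\hat{\cO}_{Y,\pi(w)} \cong k[[x,y^p]]$), and the RDP case imports the classification of power series $F \in k[[x,y,z^p]]$ cutting out RDPs from \cite{Matsumoto:k3mun}*{Proposition 4.8}, after which the quotient type and the crepancy of $X \times_Y \tilde{Y} \to X$ are read off from the table case by case.

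For assertion (\ref{thm:mu_p alpha_p RDP:fixed RDP symplectic}) your route is essentially the paper's own: the paper fixes a $2$-form $\chi$ near $\pi(w)$, transports it via Proposition \ref{prop:2-forms}, and arrives at the same identity $\pi_1^* K_{Y_1/Y} + (p-1) \divisorialfix{D_1} = 0$, followed by exactly your effective/anti-effective dichotomy; it too defers $\Fix(D_1) \neq \emptyset$ to a cited lemma from \cite{Matsumoto:k3mun}. Two caveats in your version deserve a sentence each. First, Rudakov--Shafarevich gives only a linear equivalence, so to conclude that both sides vanish you need the standard remark that a principal divisor supported on the fibers over $w$ is zero (a rational function with exceptional-only divisor is a unit on the normal surface $X$); the paper sidesteps this by comparing divisors of one fixed $2$-form, which gives honest equalities. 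Second, your step ``$E$ is $D_1$-invariant'' is not formal in characteristic $p$: $D_1$ preserves the invertible ideal $\fm_w \cO_{X_1} = \cO_{X_1}(-E)$, but the Cartier divisor $E$ (the fundamental cycle) may have components of multiplicity divisible by $p$ (e.g.\ the multiplicity-$2$ components of $D_n$ and $E_n$ in characteristic $2$), and at the generic point of such a component invariance of the reduced curve does not follow, since $D_1(t^m) = m t^{m-1} D_1(t)$ carries no information when $p \divides m$. This is repairable --- every RDP's fundamental cycle has a component of multiplicity prime to $p$, and on an invariant reduced rational component the induced section of the degree-$2$ tangent bundle of $\bP^1$ vanishes somewhere, at which point the full derivation is fixed because the conormal direction already lies in the invariant ideal --- but as written the step, like the omitted assertion (\ref{thm:mu_p alpha_p RDP:non-fixed}), is a hole rather than a proof.
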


\begin{proof}
(\ref{thm:mu_p alpha_p RDP:non-fixed})
Assume $w$ is a smooth point
(this case is already proved in \cite{Seshadri:Cartier}*{Proposition 6}).
Taking a coordinate $x,y$ as in Lemma \ref{lem:good coord} 
(i.e.\ $D(x) = 0$ and $D(y) \in \cO_{X,w}^*$),
we have $\hat \cO_{Y,\pi(w)} \cong k[[x,y^p]]$,
hence $\cO_{Y,\pi(w)}$ is smooth.

Assume $w$ is an RDP.
By Lemma \ref{lem:good coord} we have a coordinate $x,y,z$ satisfying 
$D(x) = D(y) = 0$ and 
$D(z) \in \cO_{X,w}^*$.

We recall the classification \cite{Matsumoto:k3mun}*{Proposition 4.8} 
of all formal power series $F \in k[[x,y,z^p]]$
such that $k[[x,y,z]] / (F)$ defines an RDP at the origin,
up to multiples by units, 
and up to coordinate change preserving the invariant subalgebra $k[[x,y,z^p]] \subset k[[x,y,z]]$.
The result is displayed in Table \ref{table:non-fixed RDP}. 
We observed that in each case $\pi(w)$ is either a smooth point or an RDP and that $X \times_Y \tilde{Y}$ is an RDP surface crepant over $X$,
where $\tilde{Y} \to Y$ is the resolution at $\pi(w)$.
(The entries of the singularities of $X \times_Y \tilde{Y}$ is omitted if $Y$ is already smooth.)

(\ref{thm:mu_p alpha_p RDP:fixed RDP symplectic})
Take a $2$-form $\chi$ on $Y$, nonzero on a neighborhood of $\pi(w)$.
Let $\omega$ be the $D$-invariant $2$-form on $X$ 
corresponding to $\chi$ under the isomorphism in Proposition \ref{prop:2-forms}.
Let $\omega_1 = q^* \omega$, where $\map{q}{X_1}{X}$ is the blow-up.
Let $\chi_1$ be the $2$-form on $Y_1$ corresponding to $\omega_1$.
Then we have
\begin{align*}
\divisor(\omega)  = \pi ^*(\divisor(\chi) ) + (p-1) \divisorialfix{D}, \quad 
\divisor(\omega_1) = \pi_1^*(\divisor(\chi_1)) + (p-1) \divisorialfix{D_1},
\end{align*}
and we have
\[
\divisor(\omega_1) = q^* (\divisor(\omega)) + K_{X_1/X} , \quad 
\divisor(\chi_1)   = g^* (\divisor(\chi)  ) + K_{Y_1/Y} .
\]
By assumption we have $\divisorialfix{D} = 0$ and $K_{X_1/X} = 0$.
Hence we have 
\[
\pi_1^* K_{Y_1/Y} + (p-1) \divisorialfix{D_1} = 0.
\]
Since both terms are effective (since $\pi(w)$ is an RDP) we have $\pi^* K_{Y_1/Y} = (p-1) \divisorialfix{D_1} = 0$,
and since $\pi$ is a homeomorphism we have $K_{Y_1/Y} = 0$.
In particular $\pi(w)$ is not smooth,
and there are no non-RDP singularities above $\pi(w)$.
Finally, $\Fix(D_1) \neq \emptyset$ is proved in the same way as the corresponding assertion
in \cite{Matsumoto:k3mun}*{Lemma 4.9(2)}.
\end{proof}

\begin{table} 
\caption{Non-fixed $p$-closed derivations on RDPs} \label{table:non-fixed RDP} 
\begin{tabular}{llllll} 
\toprule
 $p$ & equation            &              & $X$          & $Y = X^D$ & $X \times_Y \tilde{Y}$ \\ 
\midrule
 any & $xy + z^{mp}$       & ($m \geq 2$) & $A_{mp-1}$   & $A_{m-1}$ & $mA_{p-1}$ \\
 any & $xy + z^{p}$        &              & $A_{p-1}$    & smooth    & --- \\
\midrule
 $5$ & $x^2 + y^3 + z^5$   &              & $E_8^0$      & smooth    & --- \\
\midrule
 $3$ & $x^2 + z^3 + y^4$   &              & $E_6^0$      & smooth    & --- \\
 $3$ & $x^2 + y^3 + yz^3$  &              & $E_7^0$      & $A_1$     & $E_6^0$  \\
 $3$ & $x^2 + z^3 + y^5$   &              & $E_8^0$      & smooth    & --- \\
\midrule
 $2$ & $z^2 + x^2y + xy^m$ & ($m \geq 2$) & $D_{2m}^0$   & smooth    & --- \\
 $2$ & $x^2 + yz^2 + xy^m$ & ($m \geq 2$) & $D_{2m+1}^0$ & $A_1$     & $D_{2m}^0$  \\
 $2$ & $x^2 + xz^2 + y^3$  &              & $E_6^0$      & $A_2$     & $D_4^0$  \\
 $2$ & $z^2 + x^3 + xy^3$  &              & $E_7^0$      & smooth    & --- \\
 $2$ & $z^2 + x^3 + y^5$   &              & $E_8^0$      & smooth    & --- \\
\bottomrule
\end{tabular} 

\end{table}

\begin{defn} [cf.\ \cite{Matsumoto:k3mun}*{Definition 4.6}] \label{def:maximal} 
We say that an RDP surface $X$ equipped with a $p$-closed derivation $D$
is \emph{maximal} at a closed point $w \in X$ (not necessarily fixed)
if either $w \in X$ is a smooth point or $\pi(w) \in X^D$ is a smooth point.

We say that $X$, or the quotient morphism $\map{\pi}{X}{Y = X^D}$, is \emph{maximal} with respect to the derivation 
if it is maximal at every closed point.
We define the maximality of $\mu_p$- and $\alpha_p$-actions similarly.
\end{defn}

\begin{cor} \label{cor:maximize}
Let $\map{\pi}{X}{Y = X^D}$ as in the previous theorem. 
Assume that $\divisorialfix{D} = 0$ and that $X$ and $Y$ are RDP surfaces.
Then there exists an RDP surface $X'$ and a derivation $D'$ on $X'$, whose quotient morphism denoted $\map{\pi'}{X'}{Y'}$,
fitting into a diagram
	\[
	\begin{tikzcd}
	X' \arrow[r,"\pi'"] \arrow[d,"g"] &
	Y' \arrow[r,equal] \arrow[d] &
	X'^{D'} \\
	X \arrow[r,"\pi"] &
	Y \arrow[r,equal]  &
	X^{D} 
	\end{tikzcd}
	\]
with $X' \to X$ and $Y' \to Y$ surjective birational and crepant, 
$D' = D$ on the isomorphic locus of $X' \to X$,
$\Fix(D')$ isolated, $g(\Fix(D')) = \Fix(D)$,
and $\pi'$ maximal.

$X'$ is characterized as the maximal partial resolution of $X$
to which the derivation extends.
If $D$ is of multiplicative type (resp.\ of additive type, resp.\ fixed-point-free), then so is $D'$.
\end{cor}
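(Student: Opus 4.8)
The plan is to reach a maximal model by an iterative partial resolution driven entirely by Theorem \ref{thm:mu_p alpha_p RDP}. Call a closed point $w \in X$ \emph{bad} if it violates maximality, i.e.\ both $w$ and its image $\pi(w)$ are singular (RDPs). By Theorem \ref{thm:mu_p alpha_p RDP}(\ref{thm:mu_p alpha_p RDP:fixed RDP symplectic}) a fixed RDP always has singular image, so a bad point is either a fixed RDP or a non-fixed RDP with RDP image. For a bad \emph{fixed} point $w$ I would replace $X$ by $\Bl_w X$ together with the extension $D_1$ of $D$ provided by Theorem \ref{thm:mu_p alpha_p RDP}(\ref{thm:mu_p alpha_p RDP:fixed RDP symplectic}); this keeps $\divisorialfix = 0$, makes the images of the points over $w$ smooth or RDP, retains a fixed point over $w$, and induces a crepant map on quotients. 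For a bad \emph{non-fixed} point $w$ I would instead take the minimal resolution $\tilde Y \to Y$ at $\pi(w)$ and replace $X$ by the normalization of $X \times_Y \tilde Y$; Theorem \ref{thm:mu_p alpha_p RDP}(\ref{thm:mu_p alpha_p RDP:non-fixed}) guarantees this is a crepant RDP surface over $X$, the pulled-back derivation is regular (the locus is fixed-point-free, so no divisorial component appears), and the points over $w$ now have smooth image. In both moves $X$ and $Y$ are modified by crepant birational maps, $\divisorialfix$ stays $0$, the surface stays an RDP surface, and the multiplicative/additive type is inherited by the (unique) extension.

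To see the process stops, fix once and for all the minimal resolution $S \to X$. Every surface produced is an RDP surface crepant over $X$, hence sits between $X$ and $S$ as a partial resolution, with minimal resolution again $S$. The number $c$ of exceptional $(-2)$-curves contracted by $S \to X''$ (equivalently the total index of the RDPs of $X''$) is a nonnegative integer that drops strictly under each move, since each move is a nontrivial crepant modification toward $S$. Hence after finitely many steps no bad point remains and the resulting $\pi' \colon X' \to Y'$ is maximal in the sense of Definition \ref{def:maximal}.

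The listed properties are then read off. Crepancy of $g \colon X' \to X$ and of $Y' \to Y$ follows by composing the crepant steps, and $D' = D$ on the isomorphism locus because every move is local at a bad point. Since $\divisorialfix{D'} = 0$, the locus $\Fix(D')$ is non-divisorial, hence isolated. The inclusion $g(\Fix(D')) \subseteq \Fix(D)$ holds because non-fixed loci stay non-fixed, while $g(\Fix(D')) \supseteq \Fix(D)$ holds because over each fixed RDP the blow-up retains a fixed point ($\Fix(D_1) \neq \emptyset$ in Theorem \ref{thm:mu_p alpha_p RDP}(\ref{thm:mu_p alpha_p RDP:fixed RDP symplectic})) and smooth fixed points are preserved untouched. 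Finally each extension preserves the relation $D^p = \varepsilon D$ with $\varepsilon \in \{0,1\}$, so $D'$ is of the same type; and if $D$ is fixed-point-free there are no fixed points to blow up, only the non-fixed move occurs, and fixed-point-freeness persists.

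The delicate point is the characterization of $X'$ as the maximal partial resolution to which the derivation extends (with $\divisorialfix = 0$). That $D$ extends to $X'$ is built into the construction; the content is the converse, that any crepant partial resolution $X'' \to X$ carrying a regular extension of $D$ with no divisorial fixed part is dominated by $X'$. I would argue by contradiction: such an $X''$, if it properly dominated $X'$, would resolve $X'$ further at one of its singular points, and by maximality every singular point of $X'$ is a non-fixed RDP whose image in $Y'$ is \emph{smooth}. Using the local normal forms of Table \ref{table:non-fixed RDP} at such a point, one checks that any nontrivial crepant modification of $X'$ there forces a divisorial component into the fixed locus of the extended derivation, since there is no room to refine on the $Y'$-side (where $Y'$ is already smooth); this contradicts $\divisorialfix = 0$. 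This local computation at the maximal configurations is the main obstacle; the rest is bookkeeping on top of Theorem \ref{thm:mu_p alpha_p RDP}.
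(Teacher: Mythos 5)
Your construction phase reproduces the paper's proof almost exactly: the same two moves (blow up a fixed RDP via Theorem \ref{thm:mu_p alpha_p RDP}(\ref{thm:mu_p alpha_p RDP:fixed RDP symplectic}); replace $X$ by $X \times_Y \tilde{Y}$ at a non-fixed RDP with RDP image via Theorem \ref{thm:mu_p alpha_p RDP}(\ref{thm:mu_p alpha_p RDP:non-fixed})), the same crepancy bookkeeping, and a termination argument (strict drop of the total RDP index, which one can read off from Table \ref{table:non-fixed RDP}) that is only implicit in the paper's ``repeating this finitely many times.'' That part, including $g(\Fix(D')) = \Fix(D)$ and the preservation of multiplicative/additive type, is sound.

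The gap is in the characterization of $X'$ as the \emph{maximal} partial resolution to which the derivation extends — precisely the step you flag as ``the main obstacle'' and do not carry out — and moreover your proposed contradiction is aimed at the wrong target. You argue that a strictly larger partial resolution carrying a regular extension of $D$ ``forces a divisorial component into the fixed locus,'' contradicting $\divisorialfix{} = 0$; but the statement of the corollary imposes no condition $\divisorialfix{D''} = 0$ on a competing model, so even if your local check succeeded it would only rule out extensions with trivial divisorial fixed part, not extensions altogether. The actual phenomenon, which the paper isolates in Lemma \ref{lem:derivation on resolution}, is the opposite sign: at each maximal non-fixed RDP (fixed-point-free $D$ with smooth quotient), the unique rational derivation $\tilde{D}$ induced on the minimal resolution has $\divisorialfix{\tilde{D}}$ \emph{strictly negative} along every exceptional curve (e.g.\ $\divisorialfix{\tilde{D}} = -\sum e_i$ for $A_{p-1}$), i.e.\ $\tilde{D}$ has poles there. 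Since the coefficient along any curve surviving on an intermediate partial resolution is computed at its generic point, where the resolution is an isomorphism, negativity on the full resolution shows the derivation is not regular on \emph{any} nontrivial partial resolution above such a point — no extension exists, with or without divisorial fixed part. So to close your argument you should replace the claimed ``divisorial fixed component'' contradiction with the explicit coefficient computation of Lemma \ref{lem:derivation on resolution} (or redo that case-by-case computation from the normal forms of Table \ref{table:non-fixed RDP}), concluding non-regularity directly.
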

\begin{proof}
If $D$ has a fixed RDP $w$ (which is an isolated fixed point by assumption) then consider $X_1 = \Bl_w X \to X$ and $\map{\pi_1}{X_1}{X_1^{D_1} = Y_1}$.
where $D_1$ is the induced derivation on $X_1$.
By Theorem \ref{thm:mu_p alpha_p RDP}(\ref{thm:mu_p alpha_p RDP:fixed RDP symplectic}),
$D_1$ on $X_1$ satisfies the same condition, and $X_1 \to X$ and $Y_1 \to Y$ are crepant.
Repeating this finitely many times, we may assume $X_1$ has no fixed RDP.

If $D_1$ has a non-fixed RDP $w$ whose image $\pi(w)$ is an RDP,
then consider $X_2 = X_1 \times_{Y_1} \tilde{Y_1}$ and the induced derivation $D_2$, where $\tilde{Y_1} \to Y_1$ is the minimal resolution at $\pi(w)$.
Since $w \not\in \Fix(D_1)$ is equivalent to the existence of $f \in \cO_{X_1,w}$ with $D_1(f) \in \cO_{X_1,w}$, 
and since this property inherits to points above $w$,  $\Fix(D_2)$ does not meet the fiber above $w$.
Comparing $2$-forms as in the proof of Theorem \ref{thm:mu_p alpha_p RDP}(\ref{thm:mu_p alpha_p RDP:fixed RDP symplectic}),
we obtain $K_{X_2/X_1} = (p-1) \divisorialfix{D_2} = 0$.
Therefore $X_2 \to X_1$ and $Y_2 \to Y_1$ are crepant and 
$D_2$ on $X_2$ satisfies the same condition.
Repeating this for the (finitely many) points $w$,
we obtain $X'$ with the desired properties.

The characterization follows from Lemma \ref{lem:derivation on resolution}, 
which states that each exceptional curve above the remaining singularities appears in $\divisorialfix{D'}$ with nonzero coefficient.

The final assertion is obvious for multiplicative and additive type,
and for fixed-point-freeness this follows from $g(\Fix(D')) = \Fix(D)$.
\end{proof}

Next, we classify RDPs that can be written as derivation quotients of smooth points,
and give a lower bound for $\deg \isolatedfix{D}$ 
of derivations $D$ with non-RDP quotients. 
The classification, as in (\ref{case:RDP}), of such RDPs in characteristic $2$ 
is also proved by Tziolas \cite{Tziolas:alphapmup}*{Proposition 3.6}.

\begin{lem} \label{lem:degree of isolated fixed point}
Let $D$ be a nonzero $p$-closed derivation on $B = k[[x,y]]$ in characteristic $p$.
Suppose that $\Supp \Fix(D)$ consists precisely of the closed point.
Let $s = \deg \isolatedfix{D} = \dim_k B / (D(x),D(y))$.
\begin{enumerate}
\item \label{case:alpha}
If $D$ is of additive type then $s \geq 2$.
\item \label{case:RDP}
Assume $B^D$ is an RDP. 
\begin{enumerate}
	\item \label{subcase:degree}
	Then $(p, s, B^D)$ is one of those listed in Table \ref{table:derivation quotient RDP}.
	In particular, we have $s = n / (p-1)$ in every case, where $n$ is the index of the RDP.
	The table also shows an example of $D$ (satisfying $D^p = h D$) realizing each case.
	\item \label{subcase:mu}
	If $D$ is of multiplicative type,
	then $B^D$ is of type $A_{p-1}$.
	\item \label{subcase:alpha}
	If $D$ is of additive type, then 
	$(p, B^D)$ is one of $(5, E_8^0)$, $(3, E_6^0)$, $(2, D_{4m}^0)$, or $(2, E_8^0)$.
	\item \label{subcase:alpha35}
	If $D$ is of additive type and $(p, B^D) = (5, E_8^0),  (3, E_6^0)$,
	then $\Image (D^j \restrictedto{\Ker D^{j+1}})$ is equal to the maximal ideal $\fn$ of $B^D$ 
	for each $1 \leq j \leq p-1$.
\end{enumerate}

\item \label{case:alpha non-RDP}
Assume $D$ is of additive type and $B^D$ is a non-RDP.
If $p = 2$ then $s \geq 12$.
If $p = 3$ then $s > 3$.
If $p = 5$ then $s > 2$.
\end{enumerate}
\end{lem}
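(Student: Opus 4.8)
The plan is to reduce the RDP classification to the already-settled non-fixed case of Theorem \ref{thm:mu_p alpha_p RDP}, after disposing of the linear-algebra part (\ref{case:alpha}) directly. Throughout, since $\Supp\Fix(D)$ is the closed point, the ideal $(D(x),D(y))$ is $\fm$-primary; in particular $\divisorialfix{D}=0$, the number $s=\dim_k B/(D(x),D(y))$ is finite, and it equals the intersection multiplicity at the origin of the coprime curves $D(x)=0$ and $D(y)=0$. Since $B$ is regular while $B^{D}$ is singular, $\pi$ is a homeomorphism sending the origin to the singular point of $Y=\Spec B^{D}$, so the origin is the unique fixed point and lies over the RDP.

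For (\ref{case:alpha}), suppose $s=1$. Then $(D(x),D(y))=\fm$, so $D(x),D(y)\in\fm$, and one checks $D(\fm)\subseteq\fm$ and $D(\fm^{2})\subseteq\fm^{2}$; hence $D$ induces $\bar D$ on $\fm/\fm^{2}$ whose image is spanned by $\overline{D(x)},\overline{D(y)}$, so $\bar D$ is surjective, i.e.\ bijective. But $\bar D^{p}=\overline{D^{p}}=0$ for additive $D$, and a nonzero space has no invertible nilpotent endomorphism. This contradiction gives $s\ge2$; note the excluded case $s=1$ is exactly the multiplicative $A_{p-1}$ model $D=x\partial_x-y\partial_y$.

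The core is (\ref{case:RDP}), and the key device is the dual derivation. Applying Proposition \ref{prop:covering}(\ref{item:derivation}) to $\pi\colon\Spec B\to Y$ (legitimate since the RDP $Y$ is Gorenstein, so $Y^{\sm}$ carries a nonvanishing $2$-form) produces a $p$-closed derivation $D'$ on $Y$ with $Y^{D'}=X^{(p)}$ smooth and $\Fix(D'|_{Y^{\sm}})=\Zero(\eta)$. As $\Sing(X)=\emptyset$, Proposition \ref{prop:covering}(\ref{item:1-form}) forces $\Zero(\eta)=\emptyset$, so $D'$ is fixed-point-free on $Y^{\sm}$; moreover the singular point $w$ cannot lie in $\Fix(D')$, since otherwise Theorem \ref{thm:mu_p alpha_p RDP}(\ref{thm:mu_p alpha_p RDP:fixed RDP symplectic}) applied to $(Y,D',w)$ would make $Y^{D'}=X^{(p)}$ singular, contradicting smoothness. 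Hence $(Y,D',w)$ is an instance of the non-fixed case Theorem \ref{thm:mu_p alpha_p RDP}(\ref{thm:mu_p alpha_p RDP:non-fixed}) with smooth quotient, so the type of $B^{D}$ must occur in the ``$Y$ smooth'' rows of Table \ref{table:non-fixed RDP}: $A_{p-1}$, and $E_{8}^{0}\ (p=5)$, $E_{6}^{0},E_{8}^{0}\ (p=3)$, $D_{2m}^{0},E_{7}^{0},E_{8}^{0}\ (p=2)$. For each surviving type I would dualize the corresponding entry (or read it off \cite{Matsumoto:k3mun}*{Proposition 4.8}) to obtain an explicit $D$ on $k[[x,y]]$, record its type, and compute $s$; comparison with the index $n$ then gives $s=n/(p-1)$ throughout the finite table, proving (\ref{subcase:degree}). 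Restricting to $D^{p}=D$ leaves only $A_{p-1}$ (\ref{subcase:mu}), and to $D^{p}=0$ leaves $(5,E_8^0),(3,E_6^0),(2,D_{4m}^0),(2,E_8^0)$ (\ref{subcase:alpha}); finally (\ref{subcase:alpha35}) is a direct verification on the explicit $(5,E_8^0)$ and $(3,E_6^0)$ derivations that each $\Image(D^{j}|_{\Ker D^{j+1}})$ fills $\fn$. For (\ref{case:alpha non-RDP}), where $B^{D}$ is normal but non-canonical, I would bound $s$ from below using the minimal resolution of $B^{D}$ together with the $c_{2}$/discrepancy formula (Theorem \ref{thm:c2 derivation}, applied after globalizing, and Proposition \ref{prop:n-step}): the failure of $Y$ to be canonical forces extra positive contributions to $\deg\isolatedfix$, and a characteristic-by-characteristic estimate yields $s\ge12$ for $p=2$ and $s>3,\ s>2$ for $p=3,5$.

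I anticipate two main obstacles. First, converting the candidate list into the exact table (\ref{subcase:degree})--(\ref{subcase:alpha}): matching derivation types across the duality, and in particular explaining why additive type forces $D_{4m}^{0}$ rather than all $D_{2m}^{0}$, requires delicate bookkeeping of the additive condition and is the most subtle point. Second, part (\ref{case:alpha non-RDP}): pinning down the precise constants $12,3,2$ for a non-canonical $\alpha_p$-quotient demands a sharp analysis of the resolution and the fixed scheme rather than a soft bound.
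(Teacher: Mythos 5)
Your part (\ref{case:alpha}) is correct, and your reduction for (\ref{subcase:degree}) is sound and close in spirit to the paper: you obtain the dual derivation from Proposition \ref{prop:covering}(\ref{item:derivation}) and use the smoothness of $Y^{D'}=\pthpower{X}$ plus Theorem \ref{thm:mu_p alpha_p RDP}(\ref{thm:mu_p alpha_p RDP:fixed RDP symplectic}) to exclude a fixed point at $w$, whereas the paper gets $\divisorialfix{D'}\sim 0$ from the $p$-torsion of the local Picard group together with the Rudakov--Shafarevich formula and then rescales by Hochschild; both legitimately land in the ``$Y$ smooth'' rows of Table \ref{table:non-fixed RDP}. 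But the first point you defer is exactly where the content of (\ref{subcase:mu}) and (\ref{subcase:alpha}) lies, and it is not resolved by ``recording the type'': the classification determines $D$ only up to multiplication by a unit $f\in B^*$, and rescaling changes the $p$-closedness constant, via Hochschild's formula (Lemma \ref{lem:Hochschild}), to $(fD)^p=(f^{p-1}h+D(g))\,fD$ with $g=(fD)^{p-2}(f)$. The paper's discrimination is precisely: if $h\in\fm$ and $(\Image(D))\subset\fm$ then $f^{p-1}h+D(g)\neq 1$ for every unit $f$ (so only $A_{p-1}$ can be multiplicative), and if $h\notin(\Image(D))$ then $f^{p-1}h+D(g)\neq 0$ (so $A_{p-1}$, $E_8^0$ in $p=3$, $D_{4m+2}^0$, and $E_7^0$ can never be additive). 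Without this computation, the claim you yourself flag as the subtle point --- that additive type yields $D_{4m}^0$ but not $D_{4m+2}^0$ --- is simply not established, so (\ref{subcase:mu}) and (\ref{subcase:alpha}) remain unproved in your write-up.

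The second gap is in (\ref{case:alpha non-RDP}), where your proposed route would actually fail. A bound obtained by globalizing and applying Theorem \ref{thm:c2 derivation} or Proposition \ref{prop:n-step} to the resolution is blind to whether $D^p=0$, and Corollary \ref{cor:derivation2-12} exhibits a $2$-closed \emph{non-additive} derivation in characteristic $2$ with $\divisorialfix{D}=0$, non-RDP (elliptic double point) quotient $Z^2+X^3Y+Y^5+\varepsilon=0$, and $\deg\isolatedfix{D}=11$; hence any type-blind numerical estimate tops out at $s\geq 11$ and cannot reach $12$. The paper instead uses the additive hypothesis structurally: by Theorem \ref{thm:derivation2} one writes $D=hD'$ with $D'(x)=S^2+T^2x$, $D'(y)=R^2+T^2y$, notes that $s<12$ with non-RDP quotient forces $R,S\in\fm^2$ and $T\in\fm\setminus\fm^2$, and then additivity gives $D'(h)+hT^2=0$, impossible since $\Image(D')\subset\fm^3$ while $hT^2\notin\fm^3$. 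Likewise for $p=3,5$ the thresholds are not inequalities at all: the paper shows by explicit normal forms and weighted-degree bookkeeping that $s$ at or below the bound forces $B^D$ to be an actual RDP ($E_6^0$, $E_8^0$), with $s=2$ outright impossible for additive $D$ at $p=3$ because $D(f)=0$ cannot hold --- a construction of the invariant ring that a discrepancy estimate cannot deliver. (These normal forms are also what the paper's proof of (\ref{subcase:alpha35}) is built on, so your ``direct verification'' there presupposes machinery your outline never sets up.)
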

The following corollary is an immediate consequence of this lemma
and will be used in Section \ref{sec:mu_p alpha_p K3}.
\begin{cor} \label{cor:configuration of isolated fixed point}
Suppose $A_i = k[[x,y]]$, $1 \leq i \leq N$, are respectively equipped with derivations $D_i$ of additive type
and suppose $\Supp \Fix(D_i)$ consists precisely of the closed point for each $i$.
Let $s_i = \deg \isolatedfix{D_i} = \dim_k A_i / (D_i(x),D_i(y))$.
Assume $\sum s_i = 24 / (p+1)$.
Then either 
\begin{itemize}
\item 
$N = 1$ and $A_1^{D_1}$ is a non-RDP and $p \geq 3$, or 
\item
each $A_i^{D_i}$ is an RDP, 
and more precisely $(p, \set{A_i^{D_i}})$ is 
$(2, 2 D_4^0)$, $(2, 1 D_8^0)$, $(2, 1 E_8^0)$,
$(3, 2 E_6^0)$, or
$(5, 2 E_8^0)$.
\end{itemize}
\end{cor}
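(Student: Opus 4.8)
The plan is to read off the conclusion purely arithmetically from Lemma~\ref{lem:degree of isolated fixed point}, factor by factor. First I would fix the numerology: by part~(\ref{case:alpha}) each $s_i \geq 2$, and since the $s_i$ are positive integers and $N \geq 1$, the hypothesis $\sum_i s_i = 24/(p+1)$ forces $24/(p+1)$ to be an integer $\geq 2$. Hence $p+1 \mid 24$ and $p+1 \leq 12$, so for $p$ prime the only possibilities are $p \in \{2,3,5,7,11\}$, with $\sum_i s_i$ equal to $8,6,4,3,2$ respectively.

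Next I would record, for each factor, what Lemma~\ref{lem:degree of isolated fixed point} allows. Because $\Supp \Fix(D_i)$ is the closed point we have $\divisorialfix{D_i} = 0$ while $\Fix(D_i) \neq \emptyset$, so $A_i^{D_i}$ is a genuine singularity (a smooth quotient would make $D_i$ fixed-point-free), and thus each $A_i^{D_i}$ is either an RDP or a non-RDP. If it is an RDP, then part~(\ref{case:RDP})(\ref{subcase:alpha}) restricts $(p, A_i^{D_i})$ to $(5, E_8^0)$, $(3, E_6^0)$, $(2, D_{4m}^0)$, or $(2, E_8^0)$, and part~(\ref{subcase:degree}) pins the colength to $s_i = n/(p-1)$, i.e.\ to $2$, $3$, $4m$, $8$ in these four cases; in particular there is no additive RDP quotient when $p \in \{7,11\}$. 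If it is a non-RDP, then part~(\ref{case:alpha non-RDP}) gives $s_i \geq 12$ for $p = 2$, $s_i \geq 4$ for $p = 3$, and $s_i \geq 3$ for $p = 5$.

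Then I would run the case analysis on $p$. For $p = 2$ a non-RDP factor alone contributes $s_i \geq 12 > 8$, so every factor is an RDP; writing $8$ as a sum of admissible colengths (each a multiple of $4$, with $8$ also realised by $E_8^0$) gives exactly $2 D_4^0$, $1 D_8^0$, or $1 E_8^0$. For $p = 3$ every factor has colength $\geq 3$ (an $E_6^0$ has colength exactly $3$, a non-RDP has colength $\geq 4$), so from $\sum_i s_i = 6$ either $N = 2$ with both factors of colength $3$, forcing $2 E_6^0$, or $N = 1$ with $s_1 = 6$, which must be a non-RDP. For $p = 5$ every factor has colength $\geq 2$ (an $E_8^0$ has colength $2$, a non-RDP has colength $\geq 3$), so from $\sum_i s_i = 4$ either $N = 2$ with both of colength $2$, forcing $2 E_8^0$, or $N = 1$ with $s_1 = 4$, a non-RDP. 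For $p = 7$ and $p = 11$ there is no additive RDP quotient, so every factor is a non-RDP; since each $s_i \geq 2$ and $\sum_i s_i$ is $3$ or $2$, this forces $N = 1$. Collecting the outcomes, whenever a non-RDP factor appears we have $N = 1$ and $p \in \{3,5,7,11\}$, i.e.\ the first alternative with $p \geq 3$, and every remaining case is one of the listed all-RDP configurations.

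I expect the only real subtlety to be the bookkeeping that excludes a non-RDP factor coexisting with other factors: this rests on combining the precise lower bounds of part~(\ref{case:alpha non-RDP}) with the exact RDP colengths of part~(\ref{subcase:degree}) rather than the crude bound $s_i \geq 2$, since for instance at $p = 3$ one must distinguish a non-RDP colength $\geq 4$ from the RDP value $3$. The two further points I would make sure to state explicitly, rather than leave implicit, are that no additive RDP quotient exists at $p = 7, 11$ (so those primes fall automatically into the first alternative) and that the quotient is never smooth at the fixed point.
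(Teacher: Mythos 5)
Your proposal is correct and is exactly the bookkeeping the paper intends: it states this corollary as an ``immediate consequence'' of Lemma~\ref{lem:degree of isolated fixed point}, and your case analysis (using $s_i \geq 2$ from part~(\ref{case:alpha}), the exact colengths $s = n/(p-1)$ and the additive-type list from part~(\ref{case:RDP}), and the non-RDP lower bounds from part~(\ref{case:alpha non-RDP})) reproduces precisely the listed configurations. You were also right to make explicit the one step the paper leaves tacit, namely that a fixed point forces a singular quotient (Rudakov--Shafarevich, cited after Lemma~\ref{lem:good coord}), so that each $A_i^{D_i}$ genuinely falls into the RDP/non-RDP dichotomy.
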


\begin{table}
\caption{RDPs arising as quotients of smooth points by $p$-closed derivations,
	and examples of derivations} \label{table:derivation quotient RDP} 
\begin{tabular}{lllll}
\toprule
$p$ & $\deg \isolatedfix{D}$
             & RDP          & example of $D(x), D(y)$               & $h$ \\
\midrule                                                
any & $1$    & $A_{p-1}$    & $x, -y$                    & $1$ \\
\midrule                                                
$5$ & $2$    & $E_8^0$      & $y, x^2$                   & $0$ \\
\midrule                                                
$3$ & $3$    & $E_6^0$      & $y^3, x$                   & $0$ \\
$3$ & $4$    & $E_8^0$      & $y^4, x$                   & $y^3$ \\
\midrule                                                
$2$ & $4m$   & $D_{4m}^0$   & $x^2, y^{2m}$              & $0$ \\
$2$ & $4m+2$ & $D_{4m+2}^0$ & $x^2 + x y^{2m}, y^{2m+1}$ & $y^{2m}$ \\
$2$ & $7$    & $E_7^0$      & $x y^2, x^2 + y^3$         & $y^2$ \\
$2$ & $8$    & $E_8^0$      & $y^4, x^2$                 & $0$ \\
\bottomrule
\end{tabular}

\end{table}

\begin{proof}[Proof of Lemma \ref{lem:degree of isolated fixed point}]
(\ref{case:alpha})
Since $D^p = 0$, it follows that $D \restrictedto{\fm/\fm^2}$ is nilpotent,
hence for some coordinate $x,y \in \fm$ we have $D(x) \in \fm^2$.

(\ref{subcase:degree}--\ref{subcase:alpha}) 
We observe that the derivation $D$ described in Table \ref{table:derivation quotient RDP} 
satisfies $\divisorialfix{D} = 0$ and $D^p = h D$, and it realizes the RDP.

Suppose $B^D$ is an RDP.
Since the composite $\Pic(B^D) \to \Pic(B) \to \Pic(\thpower{(B^D)}{1/p}) \cong \Pic(B^D)$ is equal to the $p$-th power map,
and since $\Pic(B)$ is trivial,
$\Pic(B^D)$ is a $p$-torsion group and has no nontrivial prime-to-$p$ torsion.
The natural morphism $\Spec B^D \to \Spec \pthpower{B}$ is the quotient morphism 
with respect to some rational $p$-closed derivation $D'$ on $B^D$.
	Then by the Rudakov--Shafarevich formula we have
	\[
	K_{B^D} \sim \pi^* K_{\pthpower{B}} + (p-1) \divisorialfix{D'},
	\]
	but since both canonical divisors are trivial, we have $(p-1) \divisorialfix{D'} \sim 0$,
	and by above we have in fact $\divisorialfix{D'} \sim 0$.
	Replacing $D'$ with $g^{-1} D'$ where $\divisorialfix{D'} = \divisor(g)$, 
	we may assume $D'$ is regular with $\divisorialfix{D'} = 0$.
	Then, by Theorem \ref{thm:mu_p alpha_p RDP}, the closed point is not an isolated fixed point either,
	and  $(p, B^D, D')$ is one of $(p, X, D$) listed in Table \ref{table:non-fixed RDP} with $X^D$ smooth.
	Hence, after a coordinate change, 
	$(p, D)$ is one of those listed in Table \ref{table:derivation quotient RDP}
	up to replacing $D$ by a unit multiple.
	We obtain (\ref{subcase:degree}).

It remains to check the impossibility for the derivation to be of multiplicative or additive type.
Suppose $D_1$ is a derivation on $B$ satisfying $\divisorialfix{D_1} = 0$ and realizing the RDP.
Then $D_1 = f D$ for some $f \in B^*$, where $D$ is the derivation given in Table \ref{table:derivation quotient RDP}.
By Hochschild's formula (Lemma \ref{lem:Hochschild}) we have $D_1^p = (f^{p-1} h + D(g)) D_1$
where $g = (f D)^{p-2}(f)$.
If $h \in \fm$ and $(\Image(D)) \subset \fm$ then $f^{p-1} h + D(g) \neq 1$ for any $f \in B^*$.
Thus we obtain (\ref{subcase:mu}).
If $h \not\in (\Image(D))$ then $f^{p-1} h + D(g) \neq 0$ for any $f \in B^*$.
Thus we obtain (\ref{subcase:alpha}).

(\ref{case:alpha non-RDP}) 
If $p > 5$ then there is nothing to prove.
We will check that if $p \leq 5$ and $D$ is of additive type with $s$ less than the bound then $B^D$ is an RDP.

Suppose $p = 5$ and $s = 2$.
We have $D \restrictedto{\fm/\fm^2} \neq 0$ and $(D \restrictedto{\fm/\fm^2})^2 = 0$.
We may assume $D(y) = x$, $D(x) = f = y^2 + g$, $g \in (x^2, xy, y^{3})$.
we say that the monomial $x^i y^j$ has degree $3 i + 2 j$
and let $I_n$ be the ideal generated by the monomials of degree $\geq n$.
We have $D(I_n) \subset I_{n+1}$,
$f \equiv y^2 \pmod{I_5}$, and
$D^2(f) - 2(x^2 + y^3) =: h \in {I_7}$.
Let $B' = k[[X,Y,Z]] / (-Z^5 + 2(X^2 + Y^3) + h^5) \subset B^D$
where $X = x^5$, $Y = y^5$, $Z = D^2(f) = D^4(y)$.
Since $h^5 \in \thpower{I_7}{5} = (X^3, X^2 Y, X Y^2, Y^4)_{k[[X,Y]]}$,
$B'$ is normal and hence $B' = B^D$, 
and it is an RDP of type $E_8^0$.

Suppose $p = 3$ and $s = 2,3$.
We have $D \restrictedto{\fm/\fm^2} \neq 0$ and $(D \restrictedto{\fm/\fm^2})^2 = 0$.
We may assume $D(y) = x$, $D(x) = f$, $D(f) = 0$,
$f = y^s + g$, $g \in (x^2, xy, y^{s+1})$.
Then since $D(f) = 0$ it follows that $s \neq 2$,
hence $s = 3$, and that $g \in (x^2, xy^2, y^4)$.
We say that the monomial $x^i y^j$ has degree $2 i + j$
and let $I_n$ be the ideal generated by the monomials of degree $\geq n$.
We have $D(I_n) \subset I_{n+1}$ and $g \in I_4 = (x^2, x y^2, y^4)$.
Let $B' = k[[X,Y,Z]] / (-Z^3 + X^2 + Y f^3) \subset B^D$
where $X = x^3$, $Y = y^3$, $Z = x^2 + yf$.
Since $f^3 = Y^3 + g^3$ with $g^3 \in \thpower{I_4}{3} = (X^2, XY^2, Y^4)_{k[[X,Y]]}$,
$B'$ is normal and hence $B' = B^D$, 
and it is an RDP of type $E_6^0$.

Suppose $p = 2$.
By Theorem \ref{thm:derivation2} there exists $h \in k[[x,y]]^*$ and $R,S,T \in k[[x,y]]$ such that 
$D' = h^{-1} D$ satisfies
$D'(x) = S^2 + T^2 x$, $D'(y) = R^2 + T^2 y$, and $D'^2 = T^2 D'$.
(This derivation $D'$ is $p$-closed but not necessarily of additive type.)
Suppose $s < 12$ and that $B^D = B^{D'}$ is not an RDP.
	Then by Corollary \ref{cor:derivation2-12} we have
	$R, S \in \fm^2$, $T \in \fm$, and $T \not\in \fm^2$.
	Since $D = h D'$ is of additive type we have $D'(h) + h T^2 = 0$,
	but this is impossible since $\Image (D') \subset \fm^3$ and $h T^2 \not\in \fm^3$.

(\ref{subcase:alpha35})
We use the description given in the proof of (\ref{case:alpha non-RDP}).
Suppose $D$ is additive and $(p, B^D) = (3, E_6^0), (5, E_8^0)$.
Since $\Image(D^{p-1}) \subset \Image(D^j \restrictedto{\Ker D_{j+1}}) \subset \Image(D \restrictedto{\Ker D^2}) 
\subset \fm \cap B^D = \fn$,
it suffices to show $\fn \subset \Image(D^{p-1})$.
If $(p, s) = (5, 2)$,
a straightforward calculation yields
$D^4(y) = Z$, 
$D^4(x^2) \equiv y^5 = Y \pmod{I_{11} \cap B^D}$,
$D^4(x^3 y) \equiv x^5 = X \pmod{I_{16} \cap B^D}$.
Since the initial terms of the elements $Z,Z^2,Y,X$ have different degrees $6,12,10,15$,
these elements are linearly independent modulo $I_{15+1}$,
hence $D^4(y),D^4(x^2),D^4(x^3 y)$ generate $\fn/\fn^2$.
If $(p, s) = (3, 3)$,
a straightforward calculation yields
$D^2(y) = Y + g$ ($g \in I_4 \cap B^D$),
$D^2(y^2) = 2 Z$, 
$D^2(x y^2) = 2 X$.
Clearly these elements generate $\fn/\fn^2$.
\end{proof}

\begin{thm} \label{thm:derivation2}
	Let $k$ be an algebraically closed field of characteristic $2$.
	Let $D$ be a nonzero $p$-closed derivation on $B = k[[x,y]]$.
	Then there exist $h, R, S, T \in k[[x,y]]$,
	such that $D = h D'$ where $D'$ is the $p$-closed derivation defined by 
	$D'(x) = S^2 + T^2 x$ and $D'(y) = R^2 + T^2 y$.
	It follows that 
	\begin{align*}
	B^D = B^{D'} &= k[[x^2, y^2, R^2 x + S^2 y + T^2 xy]] \\
	&\cong k[[X, Y, Z]] / (Z^2 + (\secondpower{R})^2 X + (\secondpower{S})^2 Y + (\secondpower{T})^2 XY),
	\end{align*}
	where $X = x^2$, $Y = y^2$, $Z = R^2 x + S^2 y + T^2 xy$.
	We have $D'^2 = T^2 D'$ and $D^2 = (D'(h) + h T^2) D$.
\end{thm}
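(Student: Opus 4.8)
The plan is to exploit two special features of characteristic $2$: every derivation kills squares, and $\partialdd{x}^2 = \partialdd{y}^2 = 0$ on $B$. Writing $a = D(x)$ and $b = D(y)$, the second fact gives $D^2 = D(a)\partialdd{x} + D(b)\partialdd{y}$ (the mixed second-order terms cancel since $2=0$), so $D$ is $2$-closed, say $D^2 = gD$, if and only if $D(a) = ga$ and $D(b) = gb$ for some $g \in \Frac B$. Since $D(c^2) = 2cD(c) = 0$ for all $c$ and $k$ is perfect, we have $B^2 = k[[x^2,y^2]] \subseteq B^D$. As $[\Frac B : \Frac B^2] = 4$ while the quotient map has degree $2$, i.e.\ $[\Frac B : \Frac B^D] = 2$, we get $[\Frac B^D : \Frac B^2] = 2$; in particular $B^D \supsetneq B^2$.

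Next I would produce a distinguished invariant element and read off $D'$ from it. Decompose $B = B^2 \oplus M$ as free $B^2$-modules, where $M = B^2 x \oplus B^2 y \oplus B^2 xy$, with projection $\pi_M \colon B \to M$. Choosing any $w \in B^D \setminus B^2$ and noting $w - \pi_M(w) \in B^2 \subseteq B^D$, we obtain $Z := \pi_M(w) \in B^D \cap M$ with $Z \neq 0$. Since every element of $B^2$ is a square, we may write $Z = R^2 x + S^2 y + T^2 xy$, and dividing $Z$ by $e^2$ with $e = \gcd(R,S,T)$ (which keeps it in $B^D \cap M$) we may assume $\gcd(R,S,T) = 1$. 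Now define $D'$ by $D'(x) = S^2 + T^2 x =: a'$ and $D'(y) = R^2 + T^2 y =: b'$. A direct computation gives $D'(Z) = (R^2 + T^2 y)a' + (S^2 + T^2 x)b' = b'a' + a'b' = 0$, and $D'$ automatically kills $x^2,y^2$, so $B^2[Z] \subseteq B^{D'} \cap B^D$. Because $Z \in M \setminus \{0\}$ and $\Frac B^2 \cap B = B^2$, we have $Z \notin \Frac B^2$, so $\Frac B^2(Z)$ has degree $2$; since $D' \neq 0$ (else $a' = b' = 0$ would force $Z = 0$), comparison of degrees gives $\Frac B^{D'} = \Frac B^2(Z) = \Frac B^D$, whence $B^{D'} = B \cap \Frac B^{D'} = B \cap \Frac B^D = B^D$. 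Thus $D$ and $D'$ are both $\Frac B^D$-linear derivations of $\Frac B$, a $1$-dimensional $\Frac B$-space, so $D = hD'$ for some $h \in \Frac B$.

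The crux is to show $h \in B$, equivalently that $D'$ is primitive, i.e.\ $\gcd(a',b') = 1$ (so that $\divisorialfix{D'} = 0$); granting this, $a = ha'$ and the resulting identity $ab' = a'b$ give $a' \mid ab'$, hence $a' \mid a$ and $h = a/a' \in B$ (the cases $a'=0$ or $b'=0$ being immediate). For primitivity I would use the characteristic-$2$ differential identities $da' = T^2\,dx$ and $db' = T^2\,dy$, since all remaining terms are differentials of squares and vanish. Suppose an irreducible $\varpi$ divides both $a'$ and $b'$. If $\varpi \mid T$, then $\varpi \mid a' - T^2 x = S^2$ and $\varpi \mid b' - T^2 y = R^2$, so $\varpi \mid \gcd(R,S,T) = 1$, absurd. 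If $\varpi \nmid T$, then from $a',b' \in (\varpi)$ one checks directly (writing $a' = \varpi a_1$, $b' = \varpi b_1$ and using $d\varpi \wedge d\varpi = 0$) that $da' \wedge db' \in (\varpi)\,\Omega^2_B$; but $da' \wedge db' = T^4\, dx \wedge dy$, forcing $\varpi \mid T^4$, again absurd. Hence $\gcd(a',b') = 1$. This primitivity step is the main obstacle; everything else is bookkeeping.

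Finally, the displayed consequences are routine. We have $B^D = B^2[Z] = k[[x^2,y^2,\,R^2 x + S^2 y + T^2 xy]]$, and squaring $Z$ kills the cross terms, giving $Z^2 = R^4 x^2 + S^4 y^2 + T^4 x^2 y^2 = (\secondpower{R})^2 X + (\secondpower{S})^2 Y + (\secondpower{T})^2 XY$ with $X = x^2$, $Y = y^2$, which is exactly the stated presentation. The identity $D'^2 = T^2 D'$ follows by applying $D'$ to $a'$ and $b'$ (again using that $D'$ kills squares), and then $D^2 = (D'(h) + hT^2)D$ follows from Hochschild's formula (Lemma \ref{lem:Hochschild}) applied to $D = hD'$, together with $D'(h) = h^{-1}D(h)$.
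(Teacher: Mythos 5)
Your proof is correct, and it takes a genuinely different route from the paper's. The paper begins with the (unproved) structural assertion that $B^D$ is singly generated over $k[[x^2,y^2]]$, writes the generator as $f = R^2x+S^2y+T^2xy$ with $\gcd(R,S,T)=1$, and reads off the proportionality $D(x):D(y)=(S^2+T^2x):(R^2+T^2y)$ directly from $D(f)=0$; you instead manufacture the invariant $Z$ by projecting some $w\in B^D\setminus B^2$ onto $M = B^2x\oplus B^2y\oplus B^2xy$, so your argument needs only \emph{one} invariant outside $B^2$ (available by a degree count) rather than a generator, and you recover $D=hD'$ from the one-dimensionality of the space of derivations killing the index-$2$ subfield. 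At the crux --- coprimality of $a'=S^2+T^2x$ and $b'=R^2+T^2y$ --- the paper's case $P\nmid T$ runs through $x=S^2/T^2$ and $y=R^2/T^2$ in $B/P$, hence $B/P=\secondpower{(B/P)}$, absurd; your substitute, comparing $da'\wedge db'=T^4\,dx\wedge dy$ with $da'\wedge db'\in(\varpi)\,\Omega^2_B$ to force $\varpi\mid T$, is equally valid and more mechanical (both treat the case $\varpi\mid T$ identically). One step you label routine merits a sentence: the reverse inclusion $B^D\subseteq B^2[Z]$, i.e.\ that $Z$ actually generates. It does follow from what you proved: since $\gcd(a',b')=1$, the Jacobian ideal of $Z^2+(\secondpower{R})^2X+(\secondpower{S})^2Y+(\secondpower{T})^2XY$ pulls back along $X=x^2$, $Y=y^2$ to $((a')^2,(b')^2)$, so the singular locus is zero-dimensional and the hypersurface is normal by Serre's criterion; as $B^D$ is integral over $B^2[Z]$ with the same fraction field, equality follows. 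The paper buries the same point inside its single-generator claim, so your level of detail matches the source's.
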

Here $\secondpower{R} = \secondpower{R}(X,Y) \in k[[X,Y]]$ is the power series 
satisfying $\secondpower{R}(x^2, y^2) = R(x,y)^2$, 
and $\secondpower{S}$ and $\secondpower{T}$ are defined in the same way.

We can give a classification of quotient singularities with small $\deg \isolatedfix{D}$,
using which we can complete the proof of Lemma \ref{lem:degree of isolated fixed point}.
\begin{cor} \label{cor:derivation2-12}
	Let $D$, $h$, $R$, $S$, $T$, and $D'$ be as in the previous theorem.
	Assume $\divisorialfix{D} = 0$.
	\begin{enumerate}
		\item If $R$ or $S$ is a unit, then $B^D$ is smooth and $\deg \isolatedfix{D} = 0$.

		Hereafter we assume this is not the case, and we implicitly make similar assumptions cumulatively.
		
		\item If $T$ is a unit, then $B^D$ is an RDP of type $A_1$.
		\item If $R$ and $S$ generate $\fm$, 
		then $B^D$ is an RDP of type $D_4^0$.
		\item Suppose $R$ and $S$ generate a $1$-dimensional subspace of $\fm/\fm^2$.
		We may assume $R \not\in \fm^2$ and $S \in \fm^2$.
		Suppose moreover that $x$ and $R$ generate $\fm$.
		Let $m = \dim_k B / (R, S)$ and $n = \dim_k B / (R, T)$ (so $2 \leq m \leq \infty$ and $1 \leq n \leq \infty$).
		Since $\divisorialfix{D'} = 0$, 
		at least one of $m$ and $n$ is finite. (e.g.\ $(R,S,T) = (y, x^m, 0), (y, 0, x^n)$.)
		Then $B^D$ is an RDP of type $D_{\min \set{4m,4n+2}}^0$.
		\item Suppose $R \not\in \fm^2$, $S \in \fm^2$, and that $x$ and $R$ do not generate $\fm$.
		\begin{itemize}
			\item If $\dim_k B / (R, T) = 1$ (e.g.\ $(R,S,T) = (x,0,y)$), then $B^D$ is an RDP of type $E_7^0$.
			\item If $\dim_k B / (R, T) > 1$ and $\dim_k B / (R, S) = 2$ (e.g.\ $(R,S,T) = (x,y^2,0)$), then $B^D$ is an RDP of type $E_8^0$.
			\item If $\dim_k B / (R, T) > 1$ and $\dim_k B / (R, S) = 3$ (e.g.\ $(R,S,T) = (x,y^3,0)$), 
			then $B^D$ is an elliptic double point of the form $Z^2 + X^3 + Y^7 + \varepsilon = 0$,
			where $\varepsilon \in (X^5, X^3 Y, X^2 Y^3, X Y^4, Y^9)$,
			and $\deg \isolatedfix{D} = 12$.
		\end{itemize}
		\item Suppose $R, S \in \fm^2$, $T \not\in \fm^2$.
		We may assume $T \equiv x \pmod {\fm^2}$.
		\begin{itemize}
			\item If $\dim_k B / (T, S) = 2$ (e.g. $(R,S,T) = (0, y^2, x)$),
			then $B^D$ is an elliptic double point of the form $Z^2 + X^3 Y + Y^5 + \varepsilon = 0$,
			where $\varepsilon \in (X^5, X^4 Y, X^3 Y^2, X^2 Y^3, X Y^4, Y^7)$,
			and $\deg \isolatedfix{D} = 11$.
			\item If $\dim_k B / (T, S) > 2$ and $\dim_k B/ (T, R) = 2$ (e.g. $(R,S,T) = (y^2, 0, x)$),
			then $B^D$ is an elliptic double point of the form $Z^2 + X^3 Y + X Y^4 + \varepsilon = 0$,
			where $\varepsilon \in (X^5, X^4 Y, X^3 Y^2, X^2 Y^3, X Y^5, Y^7)$,
			and $\deg \isolatedfix{D} = 12$.
		\end{itemize}
		\item In all other cases, $\deg \isolatedfix{D} > 12$ and $B^D$ is not an RDP.
	\end{enumerate}
	If $B^D$ is $A_n$, $D_n$, or $E_n$, 
	then we have $\deg \isolatedfix{D} = n$.
\end{cor}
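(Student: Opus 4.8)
The plan is to reduce everything to the explicit hypersurface presentation of $B^D$ furnished by Theorem~\ref{thm:derivation2}. Writing $X = x^2$, $Y = y^2$ and $\alpha = \secondpower{R}$, $\beta = \secondpower{S}$, $\gamma = \secondpower{T} \in k[[X,Y]]$ (so that $\ord_{(X,Y)}\alpha = \ord_{(x,y)} R$, and similarly for $\beta,\gamma$), that theorem gives
\[
B^D \cong k[[X,Y,Z]]/(Z^2 + \alpha^2 X + \beta^2 Y + \gamma^2 XY), \qquad Z = R^2 x + S^2 y + T^2 xy .
\]
Because $\charac k = 2$ and $k$ is perfect, replacing the generator $Z$ by $uZ + g$ (with $u \in k[[X,Y]]^*$, $g \in k[[X,Y]]$) replaces $f := \alpha^2 X + \beta^2 Y + \gamma^2 XY$ by $u^2 f + g^2$, and $\{g^2\}$ is all of $k[[X^2,Y^2]]$. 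Hence the analytic type of $B^D$ depends only on $f$ up to addition of a square, multiplication by a unit square, and analytic coordinate change in $(X,Y)$; this is the only freedom I will use. In parallel, since $\divisorialfix{D} = 0$ forces $h$ to be a unit and $\isolatedfix{D} = \isolatedfix{D'}$, I compute
\[
\deg \isolatedfix{D} = \dim_k B/(S^2 + T^2 x,\ R^2 + T^2 y),
\]
an elementary intersection multiplicity of the two plane curves $D'(x) = 0$ and $D'(y) = 0$.

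First I would dispose of the two degenerate strata. If $R$ or $S$ is a unit, the linear part of $f$ already contains $\alpha^2(0)X$ (or $\beta^2(0)Y$), so $f$ is a regular parameter, $Z^2 = f$ is regular, and $D'(y)$ (or $D'(x)$) is a unit, giving colength $0$. If $T$ is a unit while $R,S \in \fm$, a translation of $X,Y$ turns $\gamma^2 XY + \alpha^2 X + \beta^2 Y$ into $\gamma^2 XY$ up to a square, which is absorbed into $Z$, yielding $Z^2 + XY$ of type $A_1$; and since in characteristic $2$ the partial derivatives of the squares $S^2, R^2$ vanish, the Jacobian of $(S^2 + T^2 x, R^2 + T^2 y)$ is $\diag(T^2,T^2)$, invertible, so the colength is $1$. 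These fix the base of the stratification.

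For the remaining cases I stratify, exactly as in the statement, by the position of the linear parts of $R,S$ in $\fm/\fm^2$ and by the finite colengths $m = \dim_k B/(R,S)$ and $n = \dim_k B/(R,T)$. The recurring mechanism is always the same: after a normalizing coordinate change, the leading part of $f$ determines the Dynkin type via Artin's list \cite{Artin:RDP} (the relevant equations $Z^2 + X^2Y + XY^m$, $Z^2 + X^3 + XY^3$, $Z^2 + X^3 + Y^5$ being those already recorded for $D_{2m}^0, E_7^0, E_8^0$ in Table~\ref{table:non-fixed RDP}), while the higher-order remainder is shown to lie in a small enough ideal to be irrelevant. Thus when $R,S$ generate $\fm$ one may take $\alpha,\beta$ as coordinates and reach $Z^2 + X^2 Y + XY^2 = D_4^0$, the term $\gamma^2 XY$ being negligible, with colength $4$. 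The genuinely delicate stratum is case (4): normalizing $R$ to a coordinate, one must compare the $S$--contribution (producing the $4m$ branch) with the $T$--contribution (producing the $4n+2$ branch) and verify that the milder of the two governs both the Artin type $D_{\min\{4m,4n+2\}}^0$ and, by a direct monomial count, the colength $\min\{4m,4n+2\}$.

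The hard part will be cases (5)--(7), where $B^D$ stops being a rational double point. Recognizing $E_7^0$ and $E_8^0$ is again a finite normal-form check, but proving that the threshold equations $Z^2 + X^3 + Y^7 + \varepsilon$ and $Z^2 + X^3 Y + Y^5 + \varepsilon$ (and their companions) are \emph{non-rational} elliptic double points cannot be done by the size of $\deg \isolatedfix{D}$ alone, since the series $D_{4m}^0$ already realizes arbitrarily large colengths. Instead one must exhibit an elliptic configuration in the minimal resolution (equivalently $p_g > 0$), and then check that every $\varepsilon$ in the stated ideal leaves both the singularity type and the value $\deg \isolatedfix{D} \in \{11,12\}$ unchanged, the strictly worse configurations falling into case (7) with $\deg \isolatedfix{D} > 12$. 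Finally, the closing assertion $\deg \isolatedfix{D} = n$ for $B^D$ of type $A_n, D_n, E_n$ requires no separate argument: in each rational case above the colength $\dim_k B/(S^2 + T^2 x, R^2 + T^2 y)$ computed alongside the type turns out to equal the index $n$ of the Dynkin diagram, which is the stated equality.
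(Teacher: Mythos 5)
Your proposal is correct and is exactly the computation the paper intends: its entire proof of this corollary is the single word ``Straightforward,'' meaning precisely the case-by-case normal-form analysis of $Z^2 + (\secondpower{R})^2 X + (\secondpower{S})^2 Y + (\secondpower{T})^2 XY$ from Theorem \ref{thm:derivation2} together with the colength computation $\deg \isolatedfix{D} = \dim_k B/(S^2+T^2x,\,R^2+T^2y)$ that you set up (and your reductions --- $h$ a unit, the $u^2 f + g^2$ freedom on $f$, leading-form/intersection-multiplicity arguments --- are the right ones; I spot-checked the examples and colengths $1,4,7,8,11,12$ and they come out as stated). You also correctly flag the one non-routine point hidden in ``Straightforward,'' namely that non-rationality of the borderline double points must be certified by the resolution (e.g.\ $p_g>0$) rather than by the size of $\deg \isolatedfix{D}$.
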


\begin{proof}[Proof of Theorem \ref{thm:derivation2}]
	$B^D$ satisfies $k[[x^2, y^2]] \subset B^D \subset k[[x,y]]$
	and hence there exists $f \in k[[x,y]]$ such that $B^D = k[[x^2, y^2, f]]$.
	Write $f = Q^2 + R^2 x + S^2 y + T^2 xy$ with $Q,R,S,T \in k[[x^2,y^2]]$. 
	We have $\gcd(Q,R,S,T) = 1$.
	We may assume $Q = 0$.
	
	Since $D(f) = 0$ we have $(R^2 + T^2 y) D(x) + (S^2 + T^2 x) D(y) = 0$.
	There exists $h \in \Frac B$ such that $D(x) = (S^2 + T^2 x) h$ and $D(y) = (R^2 + T^2 y) h$.
	It remains to show $h \in B$.

	It suffices to show that $R^2 + T^2 y$ and $S^2 + T^2 x$ have no nontrivial common factor.
	Suppose there exists an irreducible non-unit power series $P \in k[[x,y]]$ 
	dividing both $S^2 + T^2 x$ and $R^2 + T^2 y$.
	Since $P$ does not divide $T$ (since $\gcd(R,S,T) = 1$), 
	we have $x = S^2/T^2$ and $y = R^2/T^2$ in the quotient ring $B / P$, hence $B / P = \secondpower{(B/P)}$, contradiction.	
\end{proof}
\begin{proof}[Proof of Corollary \ref{cor:derivation2-12}]
Straightforward.
\end{proof}

\begin{conv} \label{conv:exceptional curves}
We use the following numbering for the exceptional curves of the resolutions of RDPs. 
\begin{itemize}
\item $A_n$: $e_1, \dots, e_n$, where $e_i \cdot e_{i+1} = 1$.
\item $D_n$: $e_1, \dots, e_n$, where 
	$\set{(i,j) \mid i < j, \, e_i \cdot e_j = 1} = 
	\set{(1,2), \dots, (n-2, n-1)} \cup \set{(n-2,n)}$.
\item $E_6$: $e_1, e_{2\pm}, e_{3\pm}, e_4$, where
$e_1 \cdot e_4 = e_{2+} \cdot e_{3+} = e_{2-} \cdot e_{3-} = e_{3\pm} \cdot e_4 = 1$.
\item $E_7$: $e_1, \dots, e_7$, where 
	$\set{(i,j) \mid i < j, \, e_i \cdot e_j = 1} = 
	\set{(1,2), \dots, (5,6)} \cup \set{(4,7)}$.
\item $E_8$: $e_1, \dots, e_8$, where 
	$\set{(i,j) \mid i < j, \, e_i \cdot e_j = 1} = 
	\set{(1,2), \dots, (6,7)} \cup \set{(5,8)}$.
\end{itemize}
\end{conv}

\begin{lem} \label{lem:derivation on resolution}
Let $X = \Spec B$ be a local RDP of index $n$ in characteristic $p$,
equipped with a $p$-closed derivation $D$,
with $\Fix(D) = \emptyset$ and $X^D = \Spec B^D$ smooth.
Let $\tilde{X}$ be the resolution of $X$ and $\tilde{D}$ the rational derivation on $\tilde{X}$ induced by $D$.
Then $(\tilde{D})^2 = -2n/(p-1)$ and $\deg \isolatedfix{\tilde{D}} = n(p-2)/(p-1)$.
\end{lem}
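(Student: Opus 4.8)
The plan is to pass to the minimal resolution $\rho\colon\tilde X\to X$ and exploit that it is crepant. Since an RDP is canonical, $K_{\tilde X}=\rho^*K_X$, and the canonical sheaf of a local RDP is trivial; hence $K_{\tilde X}$ is numerically trivial over $X$, so $K_{\tilde X}\cdot E_j=0$ for every exceptional curve $E_j$ of $E=\bigcup_{j=1}^n E_j$, and in particular $K_{\tilde X}\cdot\divisorialfix{\tilde D}=0$. Writing $\tilde\pi\colon\tilde X\to\tilde Y:=\tilde X^{\tilde D}$ for the quotient, the refined Rudakov--Shafarevich formula (Proposition \ref{prop:2-forms}) gives $\tilde\pi^*K_{\tilde Y}=K_{\tilde X}-(p-1)\divisorialfix{\tilde D}=-(p-1)\divisorialfix{\tilde D}$ numerically over the point. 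Because $D$ is fixed-point-free on $X^{\sm}$ with smooth quotient, all of $\divisorialfix{\tilde D}$ and $\isolatedfix{\tilde D}$ is supported over the RDP, so every intersection number below is taken on the proper curve $E$; throughout I read ``$(\tilde D)^2$'' as the self-intersection $\divisorialfix{\tilde D}^2$.

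Next I would analyze $\tilde Y$. By Theorem \ref{thm:mu_p alpha_p RDP}(\ref{thm:mu_p alpha_p RDP:non-fixed}) the hypotheses force $(p,X)$ into the finitely many rows of Table \ref{table:non-fixed RDP} having smooth quotient; in particular $n/(p-1)\in\bZ$, so the asserted numbers are integers, and one checks in these cases that $\tilde Y$ is again an RDP surface whose singular points lie exactly over $\isolatedfix{\tilde D}$. By Lemma \ref{lem:degree of isolated fixed point}(\ref{subcase:degree}) each such RDP of index $n_w$ contributes $n_w/(p-1)$, so $\deg\isolatedfix{\tilde D}=N'/(p-1)$ where $N'=\sum_w n_w$ is the total index of the singularities of $\tilde Y$. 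A bookkeeping of $\ell$-adic Euler characteristics then controls $\sigma\colon\tilde Y\to Y$: the purely inseparable maps $\pi$ and $\tilde\pi$ are homeomorphisms, so $\chi(\tilde X)=\chi(\tilde Y)$ and $\chi(X)=\chi(Y)$, while resolving the RDP of $X$ changes $\chi$ by $n$ and resolving those of $\tilde Y$ by $N'$; hence the smooth resolution $\widehat{\tilde Y}\to Y$ is a composition of $b=n+N'$ blow-ups and $K_{\tilde Y/Y}^2=-b=-(n+N')$.

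I would then combine these through three relations. Squaring the Rudakov--Shafarevich relation gives $(p-1)^2\divisorialfix{\tilde D}^2=(\tilde\pi^*K_{\tilde Y})^2$, and the projection formula relates the right-hand side to $p\,K_{\tilde Y/Y}^2$ up to a correction supported at the non-flat locus of $\tilde\pi$, i.e.\ at the RDPs of $\tilde Y$; with $K_{\tilde Y/Y}^2=-(n+N')$ this expresses $\divisorialfix{\tilde D}^2$ through $n$ and $N'$. The third, independent relation is Theorem \ref{thm:c2 derivation} in localized form: since $K_{\tilde X}\cdot\divisorialfix{\tilde D}=0$, the fixed-locus data over the point satisfies $\deg\isolatedfix{\tilde D}-\divisorialfix{\tilde D}^2=c_2^{\mathrm{loc}}$, where $c_2^{\mathrm{loc}}$ is the localized second Chern number of $T_{\tilde X}(-\divisorialfix{\tilde D})$ over $E$. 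Feeding in the values of the correction term and of $c_2^{\mathrm{loc}}$, the three relations solve to $N'=n(p-2)$, whence $\deg\isolatedfix{\tilde D}=n(p-2)/(p-1)$ and $\divisorialfix{\tilde D}^2=-2n/(p-1)$.

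I expect the two local computations to be the main obstacle: the non-flatness correction of $\tilde\pi$ at the RDPs of $\tilde Y$ in the projection formula, and the independent evaluation of the localized Chern number $c_2^{\mathrm{loc}}$ (which must turn out to be $np/(p-1)$, not the naive $\chi(E)=n+1$). Both are genuinely local at the singular point. If a uniform evaluation proves elusive, the reduction via Theorem \ref{thm:mu_p alpha_p RDP}(\ref{thm:mu_p alpha_p RDP:non-fixed}) guarantees that only finitely many $(p,X)$ occur, so one may instead resolve each equation in Table \ref{table:non-fixed RDP} explicitly, read off the multiplicities of $\divisorialfix{\tilde D}$ along the $E_j$ and the isolated zeros of the reduced derivation, and verify the two uniform formulas directly.
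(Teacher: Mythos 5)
Your primary route breaks at its key intermediate claim: it is \emph{not} true that $\tilde Y:=\tilde X^{\tilde D}$ is an RDP surface with singular points over $\isolatedfix{\tilde D}$. Test the smallest nontrivial case, $X=\Spec k[[x,y,z]]/(xy-z^3)$ of type $A_2$ in characteristic $3$ with $D=\partialdd{z}$ (so $\Fix(D)=\emptyset$ and $X^D=\Spec k[[x,y]]$ is smooth, as in the lemma). On the middle chart of the resolution, with $x=x_1^2y_1$, $y=x_1y_1^2$, $z=x_1y_1$, one computes $\tilde D=(x_1y_1)^{-1}(-x_1\partialdd{x_1}-y_1\partialdd{y_1})$: the residual derivation at the isolated fixed point $q_{12}$ is of multiplicative type with weights $(1,1)$, and its invariant ring is the third Veronese subring of $k[[x_1,y_1]]$, i.e.\ the cone over the twisted cubic --- a $\frac13(1,1)$ cyclic quotient singularity, which is non-Gorenstein and not an RDP. (In general, on the $i$-th chart of the $A_{p-1}$ resolution the weights come out as $(-i,\,i+1)$, whose sum is $1\not\equiv 0 \bmod p$, so for $p\geq 3$ \emph{none} of the isolated fixed points of $\tilde D$ has an RDP quotient.) Consequently Lemma \ref{lem:degree of isolated fixed point}(\ref{subcase:degree}) is inapplicable and your relation $\deg\isolatedfix{\tilde D}=N'/(p-1)$ is unfounded; moreover, since $\tilde Y$ has non-canonical, non-Gorenstein points, $K_{\tilde Y}$ is only $\bQ$-Cartier, $\widehat{\tilde Y}\to\tilde Y$ is not crepant, and $K_{\tilde Y/Y}^2$ is not $-(n+N')$: in the example above one finds $K_{\tilde Y/Y}^2=-8/3$ (consistent with $(p-1)^2\divisorialfix{\tilde D}^2=4\cdot(-2)=-8=3\,K_{\tilde Y/Y}^2$, so in fact no ``non-flatness correction'' to the projection formula is needed --- the error sits in the blow-up count, not there).

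Even setting the counterexample aside, the system of ``three relations'' does not close: the two quantities you explicitly leave open (the correction term and the localized Chern number $c_2^{\mathrm{loc}}$, which you only infer backwards from the desired answer $np/(p-1)$) are exactly the undetermined content, so nothing pins down $N'$, $\divisorialfix{\tilde D}^2$, and $\deg\isolatedfix{\tilde D}$. Your final fallback paragraph is, in fact, the paper's actual proof: reduce via Theorem \ref{thm:mu_p alpha_p RDP}(\ref{thm:mu_p alpha_p RDP:non-fixed}) to the rows of Table \ref{table:non-fixed RDP} with smooth quotient --- $(p,A_{p-1})$, $(2,D_{2m}^0)$, $(2,E_7^0)$, $(2,E_8^0)$, $(3,E_6^0)$, $(3,E_8^0)$, $(5,E_8^0)$ --- and for each one resolve explicitly and read off the multiplicities of $\divisorialfix{\tilde D}$ along the exceptional curves together with the isolated fixed points (e.g.\ $\divisorialfix{\tilde D}=-\sum_i e_i$ with the $p-2$ intersection points for $A_{p-1}$). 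But you carry out none of these computations, and they are the entire substance of the lemma; as written, the proposal does not establish either formula.
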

\begin{proof}
For each case of $(p, \Sing(X))$,
a straightforward computation yields the following description of $(\tilde{D})$ and $\isolatedfix{\tilde{D}}$,
from which the stated equalities follow.
The cases for $p = 2$ also appear in \cite{Ekedahl--Hyland--Shepherd-Barron}*{Lemma 6.5}.

If $p = 2$, then $\isolatedfix{\tilde{D}} = 0$.
For every case, each closed point in $\Supp \isolatedfix{\tilde{D}}$ appears with degree $1$, so we write only the support.
We denote by $q_{ij}$ the intersection of $e_i$ and $e_j$,
 and by $q'_i$ a certain point on $e_i$ (not lying on the other components).

$(p, A_{p-1})$:
$\divisorialfix{\tilde{D}} = - \sum e_i$,
$\isolatedfix{\tilde{D}} = \set{q_{i, i+1} \mid 1 \leq i \leq p-2}$.

$(2, D_{2m}^0)$:
$\divisorialfix{\tilde{D}} = - (\sum_{i = 1}^{m-1} (2i e_{2i-1} + 2i e_{2i}) + m e_{2m-1} + m e_{2m})$.

$(2, E_{7}^0)$:
$\divisorialfix{\tilde{D}} = -(3 e_1 + 4 e_2 + 7 e_3 + 8 e_4 + 6 e_5 + 2 e_6 + 5 e_7)$.

$(2, E_{8}^0)$:
$\divisorialfix{\tilde{D}} = -(2 e_1 + 6 e_2 + 8 e_3 + 12 e_4 + 14 e_5 + 10 e_6 + 4 e_7 + 8 e_8)$.

$(3, E_{6}^0)$:
$\divisorialfix{\tilde{D}} = -(2 e_1 + 2 e_{2+} + 2 e_{2-} + 3 e_{3+} + 3 e_{3-} + 3 e_4)$,
$\isolatedfix{\tilde{D}} = \set{q'_1, q_{2+,3+}, q_{2-,3-}}$.

$(3, E_{8}^0)$:
$\divisorialfix{\tilde{D}} = -(2 e_1 + 3 e_2 + 6 e_3 + 8 e_4 + 9 e_5 + 7 e_6 + 4 e_7 + 5 e_8)$,
$\isolatedfix{\tilde{D}} = \set{q'_1, q_{34}, q_{67}, q'_8}$.

$(5, E_{8}^0)$:
$\divisorialfix{\tilde{D}} = -(2 e_1 + 3 e_2 + 4 e_3 + 5 e_4 + 5 e_5 + 4 e_6 + 2 e_7 + 3 e_8)$,
$\isolatedfix{\tilde{D}} = \set{q_{12}, q_{23}, q_{34}, q_{67}, q'_7, q'_8}$.
\end{proof}

\section{\texorpdfstring{$\mu_p$- and $\alpha_p$-actions}{mu\_p- and alpha\_p-actions} on RDP K3 surfaces} \label{sec:mu_p alpha_p K3}

\begin{prop} \label{prop:structure of quotient}
Let $G = \mu_p$ or $G = \alpha_p$.
Let $X$ be an RDP K3 surface or an RDP Enriques surface
equipped with a nontrivial $G$-action
and let $D$ be the corresponding derivation.
If the divisorial part $\divisorialfix{D}$ of $\Fix(D)$ is zero and each point in $\pi(\Fix(D))$ is either smooth or an RDP,
then $X/G$ is an RDP K3 surface or an RDP Enriques surface.
Otherwise, $X/G$ is a (possibly singular) rational surface. 

If $X$ is an RDP K3 surface,
then $X/G$ is an RDP Enriques surface if and only if the $G$-action is fixed-point-free ($\Fix(D) = \emptyset$), 
and in this case we have $p = 2$.
\end{prop}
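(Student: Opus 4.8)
The plan is to reduce the whole statement to the holomorphic Euler characteristic. An RDP K3 surface has $\chi(\cO_X)=2$ and trivial dualizing sheaf $\omega_X\cong\cO_X$, whereas an RDP Enriques surface has $\chi=1$ and numerically trivial $\omega$; since the first part of the proposition already separates off the rational case, the dichotomy ``K3 vs.\ Enriques'' is exactly $\chi(\cO_Y)=2$ vs.\ $\chi(\cO_Y)=1$. The engine is the exact sequence $0\to\cO_Y\to\pi_*\cO_X\to Q\to0$, giving $\chi(\cO_X)=\chi(\cO_Y)+\chi(Q)$, used together with the Rudakov--Shafarevich formula $\omega_X\cong\pi^*\omega_Y\otimes\cO((p-1)\divisorialfix{D})$.

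First I would treat the free direction. If $\Fix(D)=\emptyset$ then, by Lemma \ref{lem:good coord} at smooth points and Theorem \ref{thm:mu_p alpha_p RDP}(1) at RDPs, one has $\cO_X=\cO_Y[z]/(z^p-w)$ locally, so $\pi$ is finite flat of degree $p$ and $\pi_*\cO_X$ carries a filtration (the $\mu_p$-eigenspaces, resp.\ the $\alpha_p$-Jordan filtration by $\ker D^j$) whose successive quotients are numerically trivial invertible sheaves (for $\mu_p$ the powers $\cL^{\otimes i}$ of a $p$-torsion sheaf; for $\alpha_p$ sheaves pulled back to $\cO_Y$ by Frobenius). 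Hence $\chi(Q)=(p-1)\chi(\cO_Y)$ and $\chi(\cO_X)=p\,\chi(\cO_Y)$, so $2=p\,\chi(\cO_Y)$ forces $p=2$ and $\chi(\cO_Y)=1$. Since $\divisorialfix{D}=0$, Rudakov--Shafarevich gives $\pi^*\omega_Y\cong\cO_X$, so $\omega_Y$ is numerically trivial and $Y$ cannot be rational; by the first part it is therefore an RDP Enriques surface.

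For the converse I would extract $p=2$ first. Assume $Y=X/G$ is RDP Enriques. It is not rational, so by the first part $\divisorialfix{D}=0$, and Rudakov--Shafarevich again yields $\pi^*\omega_Y\cong\cO_X$, i.e.\ $\omega_Y\in\ker(\pi^*\colon\Cl(Y)\to\Cl(X))$. Away from the finitely many (isolated, since $\divisorialfix{D}=0$) images of fixed points $\pi$ is finite flat, so the norm map gives $N_\pi\circ\pi^*=[p]$ and hence $\ker\pi^*\subseteq\Cl(Y)[p]$; as $\omega_Y$ is $2$-torsion, either $\omega_Y=\cO_Y$, which happens only in characteristic $2$, or $\omega_Y$ has order $2$ and lies in $\Cl(Y)[p]$, again forcing $p=2$. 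It then remains to show $\Fix(D)=\emptyset$, equivalently (with $p=2$, $\chi(\cO_Y)=1$, whence $\chi(Q)=1$) that no isolated fixed point occurs.

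This last step is where the two group schemes diverge and where the real work lies. For $G=\mu_p$ the local model at an isolated fixed point is $D(x)=x,\ D(y)=-y$ (Lemma \ref{lem:degree of isolated fixed point}), whose divergence vanishes; since $\omega_X$ is nowhere vanishing its constant term survives, so the induced eigenvalue on the global $2$-form is $0$, and by the criterion of \cite{Matsumoto:k3mun} a trivial action on the $2$-form makes $Y$ an RDP K3 surface, contradicting $\chi(\cO_Y)=1$. Thus an Enriques $\mu_p$-quotient is fixed-point-free. For $G=\alpha_p$ the action on $\omega_X$ is always trivial, so this shortcut is unavailable; instead one must compute, at each isolated fixed point (types $D_{4m}^0,E_8^0$ for $p=2$ by Lemma \ref{lem:degree of isolated fixed point}), the local defect $\delta_w$ by which $\pi_*\cO_X$ fails to be locally free, i.e.\ the correction in $\chi(Q)$ relative to the free value $(p-1)\chi(\cO_Y)$, and show $\delta_w\neq0$: then $2=p\,\chi(\cO_Y)+\sum_w\delta_w$ with $p=2$, $\chi(\cO_Y)=1$ forces $\sum_w\delta_w=0$ and hence no fixed points. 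This local length computation is the main obstacle, and I expect to carry it out by comparing $\pi_*\cO_X$ with its free model on the minimal resolution, using Theorem \ref{thm:c2 derivation} and the explicit resolution data of Lemma \ref{lem:derivation on resolution}.
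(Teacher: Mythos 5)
Your proposal has two genuine gaps. The most serious one: the trichotomy in the first half of the statement (the quotient is an RDP K3 or RDP Enriques surface when $\divisorialfix{D}=0$ and the images of fixed points are at worst RDPs, and rational otherwise) is itself part of what must be proved, yet you use it as given (``the first part of the proposition already separates off the rational case'') and lean on it repeatedly --- in the free case to know $\chi(\cO_Y)\in\set{1,2}$, and in the converse direction to deduce $\divisorialfix{D}=0$ from non-rationality of $Y$. Your Euler-characteristic machinery cannot substitute for it, since a rational quotient also has $\chi(\cO_Y)=1$. The paper proves this part by combining the Rudakov--Shafarevich formula ($\pi^*K_Y \sim K_X - (p-1)\divisorialfix{D}$, so $K_Y\equiv 0$ iff $\divisorialfix{D}=0$) with $K_{\tilde Y}\le \rho^*K_Y$ (equality iff $\Sing(Y)$ consists only of RDPs) to get $\kappa(\tilde Y)=0$ exactly under the stated hypotheses and $\kappa(\tilde Y)=-\infty$ otherwise, and then excludes abelian, (quasi-)hyperelliptic and non-rational ruled quotients because purely inseparable morphisms preserve the $l$-adic $b_1$ and $b_1(\tilde X)=0$. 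None of this is in your proposal.

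The second gap is the one you flag yourself: for $G=\alpha_2$ the fixed-point-freeness of an Enriques quotient is only a plan (``I expect to carry it out''), not a proof. The idea is in fact completable, and more cheaply than you suggest: filter $\pi_*\cO_X$ by $\Ker D^{j+1}$; then $D^j$ identifies the graded pieces with ideals $\cI_j=\Image(D^{j}\restrictedto{\Ker D^{j+1}})\subset\cO_Y$, and at a fixed point $\cI_j\subset\fm_w$, so each fixed point makes $\chi(\cO_X)-p\,\chi(\cO_Y)$ strictly negative; with $\chi(\cO_X)=2$, $p=2$, $\chi(\cO_Y)=1$ this is an immediate contradiction, and no resolution data from Theorem \ref{thm:c2 derivation} or Lemma \ref{lem:derivation on resolution} is needed (this is exactly the $\chi(\cI_j)$ computation the paper runs in the proof of Theorem \ref{thm:singularities of quotient K3}(\ref{item:quotient}) for $\bZ/p\bZ$). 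The paper's own proof of the equivalence takes a different route: reduce to the maximal model via Corollary \ref{cor:maximize} and observe that the relation $s=n/(p-1)$ of Lemma \ref{lem:degree of isolated fixed point}(\ref{subcase:degree}) lets \cite{Matsumoto:k3mun}*{Proposition 6.10} go through verbatim for $\alpha_p$. Note also that your $\mu_p$ eigenvalue argument has an untreated case: Lemma \ref{lem:degree of isolated fixed point} supplies the model $D=x\partial/\partial x-y\partial/\partial y$ only at fixed points that are \emph{smooth} on $X$, so a fixed RDP of $X$ must first be blown up (again Corollary \ref{cor:maximize}) before the divergence computation applies. The parts you did complete --- $\chi(\cO_X)=p\,\chi(\cO_Y)$ for free actions forcing $p=2$ and $\chi(\cO_Y)=1$, and $\omega_Y\in\Ker(\pi^*)\subset\Cl(Y)[p]$ forcing $p=2$ in the converse --- are correct and parallel the paper's shorter argument, which gets $p=2$ from Proposition \ref{prop:2-forms} and the order of $K_Y$.
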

\begin{proof}
Let $Y = X/G$.
By the Rudakov--Shafarevich formula, $\pi^* K_Y \sim K_X - (p-1) \divisorialfix{D}$,
hence $K_Y \leq 0$ in $(\Pic(Y) \otimes \bQ) / {\equiv}$, and $K_Y \equiv 0$ if and only if $\divisorialfix{D} = 0$.
We have $\Sing(Y) \subset \pi(\Sing(X) \cup \Fix(D))$,
and each point of $\pi(\Sing(X) \setminus \Fix(D))$ is either a smooth point or an RDP by Theorem \ref{thm:mu_p alpha_p RDP}(\ref{thm:mu_p alpha_p RDP:non-fixed}).
Let $\map{\rho}{\tilde{Y}}{Y}$ be the resolution.
Then $K_{\tilde{Y}} \leq \rho^* K_Y$ and the equality holds if and only if $\Sing(Y)$ consists only of RDPs.
We deduce that $K_{\tilde{Y}} \equiv 0$ if and only if $\divisorialfix{D} = 0$ and each point in $\pi(\Fix(D))$ is either smooth or an RDP.
In this case $Y$ is a proper RDP surface with $\kappa(\tilde{Y}) = 0$.
Otherwise we have $\kappa(\tilde{Y}) = -\infty$. 

Next we will show that $Y$ is not birational to abelian, (quasi-)hyperelliptic, or non-rational ruled surface.
Since $\pi$ is purely inseparable we have $b_1(\tilde{X}) = b_1(X) = b_1(Y) = b_1(\tilde{Y})$,
where $b_i = \dim_{\bQ_l} \Het^i(\functorspace, \bQ_l)$ are the $l$-adic Betti numbers for an auxiliary prime $l \neq p$.
Since $\tilde{X}$ is a K3 surface or an Enriques surface we have $b_1(\tilde{X}) = 0$.
Hence $\tilde{Y}$ is not abelian, (quasi-)hyperelliptic, nor non-rational ruled, since such surfaces have $b_1 > 0$.
Thus the first assertion follows.

Suppose $X$ is an RDP K3 surface.
To show the equivalence of freeness and Enriques quotient,
we may assume that $\Fix(D)$ is isolated and that, by Corollary \ref{cor:maximize}, $\pi$ is maximal.
By the equality $s = n / (p-1)$ of Lemma \ref{lem:degree of isolated fixed point}(\ref{subcase:degree}),
\cite{Matsumoto:k3mun}*{Proposition 6.10} (which is stated for $\mu_p$-actions) holds also for $\alpha_p$-actions,
from which the equivalence and $p = 2$ follows.
\end{proof}

\begin{rem}
	Suppose $X$ is an RDP K3 surface.
	If $G = \mu_p$, the author showed \cite{Matsumoto:k3mun}*{Theorems 6.1 and 6.2} that $X/\mu_p$ is an RDP K3 surface
	if and only if the action is \emph{symplectic} (\cite{Matsumoto:k3mun}*{Definition 2.6}) in the sense that the nonzero global $2$-form $\omega$ on $X^{\sm}$,
	which is unique up to scalar, is $D$-invariant (i.e.\ $D(\omega) = 0$).
	Note that since $D^p = D$ we always have $D(\omega) = i \omega$ for some $i \in \bF_p$.
	If $G = \alpha_p$, then this criterion fails since, in fact, any action is symplectic in this sense, since $D^p = 0$.
	This difference is parallel to that of actions of tame and wild finite groups (i.e.\ of order not divisible or divisible by $p$).
\end{rem}

\begin{thm} \label{thm:duality}
	Let $X$ and $Y$ be RDP surfaces
	with $K_X$ numerically trivial and $K_Y$ trivial. 
	If $\map{\pi}{X}{Y}$ is the quotient morphism by either a $\mu_p$-action or an $\alpha_p$-action,
	then so is the induced morphism $\map{\pi'}{Y}{\pthpower{X}}$ 
	(not necessarily by the same group).
\end{thm}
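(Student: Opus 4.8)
The plan is to produce a single regular $p$-closed derivation $D'$ on $Y$ with $\cO_Y^{D'} = \cO_{\pthpower{X}}$ and then to show that its multiplier is a constant, from which the dichotomy $\mu_p$/$\alpha_p$ will follow formally. Let $D$ be the derivation attached to the given $G$-action, so $D^p = D$ or $D^p = 0$. Since $\cO_X^p \subset \cO_X^D = \cO_Y$, the Frobenius $F_X$ factors as $F_X = \pi' \circ \pi$, and $\pi'$ has degree $p$. First I would record that $\divisorialfix{D} = 0$: the Rudakov--Shafarevich formula gives $K_X \sim \pi^* K_Y + (p-1)\divisorialfix{D} = (p-1)\divisorialfix{D}$ because $K_Y \sim 0$, and as $K_X$ is numerically trivial the effective divisor $\divisorialfix{D}$ is numerically trivial, hence zero. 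Therefore $\Fix(D) = \isolatedfix{D}$ is finite, and together with the finite set $\Sing(X)$ it cuts out a dense open $V \subset Y^{\sm}$, with finite complement, over which $\pi$ restricts to a fixed-point-free quotient of smooth surfaces.

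Over $V$ I would run the construction of Lemma \ref{lem:canonical 1-forms} and Proposition \ref{prop:covering}(\ref{item:derivation}). Fix a global nowhere-vanishing $2$-form $\omega$ on $Y^{\sm}$ (which exists since $K_Y \sim 0$) and let $\eta$ be the canonical closed $1$-form on $V$, then define $D'_V$ by $D'_V(f)\,\omega = df \wedge \eta$. Since $\Fix(D)\restrictedto{V} = \emptyset$, the exact sequence of Lemma \ref{lem:canonical 1-forms} has $\cO_V(-p\Delta) = \cO_V(-\Delta) = \cO_V$, so $\eta$ generates a trivial line bundle and is nowhere zero; hence $\Fix(D'_V) = \Zero(\eta) = \emptyset$, and the degree computation in the proof of Proposition \ref{prop:covering}(\ref{item:derivation}) gives $\cO_V^{D'_V} = \cO_{\pthpower{X}}\restrictedto{V}$. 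Because $Y$ is normal and $Y \setminus V$ is finite, $D'_V$ extends to a regular derivation $D'$ on all of $Y$ by Lemma \ref{lem:extending derivation} (and a Hartogs argument at the smooth points of $Y \setminus V$); it is $p$-closed, has $\divisorialfix{D'} = 0$, and satisfies $\cO_Y^{D'} = \cO_{\pthpower{X}}$ since both subsheaves are normal and agree on the codimension-$2$ complement of $V$. In particular $\Fix(D') = \isolatedfix{D'}$ is again finite, so $\pi'$ is the quotient morphism for the $p$-closed derivation $D'$.

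It then remains to identify the type of $D'$. Write $D'^p = h' D'$ with $h' \in k(Y)$, so that $D'(h') = 0$. Away from the finite set $\Fix(D')$ the image of $D'$ contains a unit, so by Lemma \ref{lem:good coord} there is locally an element $y$ with $D'(y) \in \cO_Y^*$, whence $h' = D'^p(y)/D'(y)$ is regular on $Y \setminus \Fix(D')$. As $Y$ is proper, normal, and $\Fix(D')$ has codimension $2$, we conclude $h' \in H^0(Y, \cO_Y) = k$. If $h' = 0$ then $D'$ is of additive type and $\pi'$ is an $\alpha_p$-quotient. If $h' \neq 0$, choose $\lambda \in k^*$ with $\lambda^{p-1} = (h')^{-1}$ (possible since $k$ is algebraically closed); then $D'(\lambda) = 0$ and Hochschild's formula give $(\lambda D')^p = \lambda^{p-1} h'\,(\lambda D') = \lambda D'$, so $\lambda D'$ is of multiplicative type with the same kernel, and $\pi'$ is a $\mu_p$-quotient.

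The main obstacle I anticipate is the constancy of $h'$, which is exactly the step where the hypotheses enter: it rests on $\Fix(D')$ having codimension $2$, and hence on first establishing $\divisorialfix{D'} = 0$ (from the triviality of both canonical classes) and the fixed-point-freeness of $D'$ over the large open $V$ (from the nowhere-vanishing of $\eta$, itself a consequence of $X$ being smooth over $V$). The care needed there is in checking that the construction over $V$ really extends to a \emph{regular} derivation on $Y$ with no divisorial fixed part, so that $h'$ extends across the finitely many bad points. Once $h'$ is known to be constant, the clean dichotomy between $\mu_p$ and $\alpha_p$ — and the fact that the group may switch, since the rescaling is not by a unit of $\cO_Y$ but by a scalar — drops out formally.
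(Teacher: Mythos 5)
Your proof is correct, and its middle step takes a genuinely different route from the paper's. Both arguments share the same endpoints: first, $\divisorialfix{D}=0$ from the Rudakov--Shafarevich formula together with numerical triviality of $K_X$ and $K_Y \sim 0$ (the paper phrases this as $\divisorialfix{D} \sim 0$, which for an effective numerically trivial divisor amounts to the same thing); and last, the rescaling dichotomy, where the multiplier $h'$ is constant by properness and one normalizes it to $0$ or $1$ via $\lambda^{p-1}=(h')^{-1}$ — this final step is identical to the paper's. Where you diverge is in producing the regular derivation $D'$ on $Y$ with $\divisorialfix{D'}=0$ and $\cO_Y^{D'}=\cO_{\pthpower{X}}$. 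The paper proceeds field-theoretically: it takes \emph{any} rational $p$-closed derivation $D'$ with $Y^{D'}=\pthpower{X}$ (e.g.\ $D'(h)=1$ on a generator $h$ of $k(Y)/k(\pthpower{X})$), applies Proposition \ref{prop:n-step} to the composite $X \to Y \to \pthpower{X}$ to get $K_Y \sim -\divisorialfix{D'} - \pi'^*(\divisorialfix{\pthpower{D}})$, concludes $\divisorialfix{D'}=\divisor(g)$, and replaces $D'$ by the regular derivation $g^{-1}D'$ (Hochschild preserving $p$-closedness). You instead build $D'$ geometrically over the big open $V$ via the canonical closed $1$-form $\eta$ and the trivializing $2$-form $\omega$, i.e.\ the machinery of Lemma \ref{lem:canonical 1-forms} and Proposition \ref{prop:covering}(\ref{item:derivation}) — note that the hypotheses of Proposition \ref{prop:covering} can fail for the original $\pi$, but your restriction to $V$ makes them hold vacuously, which is the right fix — and then extend across the finite set $Y \setminus V$ by normality. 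The paper's route uses $K_Y \sim 0$ as a divisor-class bookkeeping device and never needs to exhibit a fixed-point-free locus; your route uses $K_Y \sim 0$ to get the nowhere-vanishing $\omega$ and buys in exchange an explicit description of $D'$ and of its fixed locus ($\Fix(D') \subset Y\setminus V$, hence finite, hence $\divisorialfix{D'}=0$ without any linear-equivalence argument). Your justification of the constancy of $h'$ (regularity away from the finite $\Fix(D')$ via a local $y$ with $D'(y)$ a unit, then normality and properness) is in fact more detailed than the paper's one-line assertion that $\lambda$ is everywhere regular. One small remark: the $p$-closedness of your $D'$, which you assert in passing, follows as in the paper from $[k(Y):k(Y^{D'})]=p$, since the $k(Y)$-space of derivations killing $k(Y^{D'})$ is then one-dimensional; it would be worth making that explicit.
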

\begin{proof}
	Let $D$ be the derivation on $X$ corresponding to the action.
	By the Rudakov--Shafarevich formula $K_X \sim \pi^* K_Y + (p-1) \divisorialfix{D}$,
	we have $(p-1) \divisorialfix{D} \equiv 0$.
	Since $\divisorialfix{D}$ is effective and numerically trivial, it follows that $\divisorialfix{D} \sim 0$.

	Let $D'$ be a rational $p$-closed derivation on $Y$ inducing $\pi'$, i.e.\ $Y^{D'} = \pthpower{X}$.
	(To find one, take a generator $h$ of $k(Y)/k(\pthpower{X})$ (so $h^p \in k(\pthpower{X})$),
	and define $D'$ by $D' \restrictedto{k(\pthpower{X})} = 0$ and $D'(h) = 1$.
	Then $D'^p = 0$, in particular $D'$ is $p$-closed.)
	By Proposition \ref{prop:n-step},
	we have $K_Y \sim - \divisorialfix{D'} - \pi'^*(\divisorialfix{\pthpower{D}}$.
	Since $K_Y \sim 0$ and $\divisorialfix{\pthpower{D}} \sim 0$, 
	we have $\divisorialfix{D'} = \divisor(g)$ for some rational function $g \in k(Y)^*$.
	Then $D'' := g^{-1} D'$ is a regular derivation on $Y$ with 
	$Y^{D''} = Y^{D'} = \pthpower{X}$ and $\divisorialfix{D''} = 0$.
	By Hochschild's formula $D''$ is also $p$-closed, hence $D''^p = \lambda D''$ 
	for some everywhere regular function $\lambda$ on $Y$, hence $\lambda \in k$,
	and by replacing $D''$ with a scalar multiple we may assume $\lambda = 0$ or $\lambda = 1$,
	and then $D''$ gives either an $\alpha_p$- or $\mu_p$-action respectively.
\end{proof}

\begin{rem}
There exist finite inseparable morphisms of degree $p$ between RDP K3 surfaces
that are not $\mu_p$- nor $\alpha_p$-quotients.
Classification of such morphisms will be given in Section \ref{sec:insep}.

Theorem \ref{thm:duality} fails also if $\pi$ is a $\mu_2$-quotient with $Y$ an Enriques surface (so that $K_Y$ is nontrivial), as in the next proposition, proved by the same way as Theorem \ref{thm:duality}.
\end{rem}

\begin{prop}[cf.\ \cite{Cossec--Dolgachev:enriques}*{Section 1.2}] \label{prop:classical Enriques non-duality}
Let $X$ be an RDP K3 surface in characteristic $p = 2$ and 
$\map{\pi}{X}{Y}$ a $\mu_2$-quotient morphism with $Y$ an RDP classical Enriques surface.
Then $\map{\pi'}{Y}{\thpower{X}{2}}$ is not the quotient morphism by a $p$-closed (regular) derivation. 
Instead $\pi'$ is the quotient morphism by a $p$-closed rational derivation $D'$ on $Y$
with $\divisorialfix{D'} \sim K_Y$.
\end{prop}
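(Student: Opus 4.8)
The plan is to run the argument of Theorem \ref{thm:duality} verbatim, the only new feature being that $K_Y$ is now a nonzero $2$-torsion class rather than trivial, so that the ``twist'' which disappeared in the duality statement survives here as $\divisorialfix{D'} \sim K_Y$. First I would record that $\divisorialfix{D} = 0$. Since $X$ is an RDP K3 surface we have $K_X \sim 0$, so the Rudakov--Shafarevich formula $K_X \sim \pi^* K_Y + (p-1)\divisorialfix{D}$ with $p = 2$ reads $\pi^* K_Y + \divisorialfix{D} \sim 0$. As $K_Y$ is numerically trivial, $\pi^* K_Y$ is numerically trivial on $X$, and a numerically trivial class on an RDP K3 surface is trivial (its Picard group embeds into that of the K3 resolution, which is torsion-free), so $\pi^* K_Y \sim 0$. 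Hence $\divisorialfix{D} \sim 0$, and being effective it is $0$; consequently the Frobenius twist satisfies $\divisorialfix{\thpower{D}{2}} = 0$ as well.

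Next I would produce the derivation $D'$ exactly as in Theorem \ref{thm:duality}: choosing a generator $h$ of $k(Y)/k(\thpower{X}{2})$ and setting $D'\restrictedto{k(\thpower{X}{2})} = 0$ and $D'(h) = 1$ gives a $p$-closed rational derivation on $Y$ with $Y^{D'} = \thpower{X}{2}$, so that $\pi'$ is its quotient morphism. Applying Proposition \ref{prop:n-step} to the sequence $Y \namedto{\pi'} \thpower{X}{2} \to \thpower{Y}{2}$, whose two steps are given by $D'$ and by $\thpower{D}{2}$, yields
\[
K_Y \sim -\divisorialfix{D'} - \pi'^{*}\divisorialfix{\thpower{D}{2}} = -\divisorialfix{D'}.
\]
Since $K_Y$ is $2$-torsion, $-K_Y \sim K_Y$, and therefore $\divisorialfix{D'} \sim K_Y$, which is the asserted description.

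Finally I would rule out a regular derivation. Suppose toward a contradiction that $\pi'$ were the quotient morphism by a \emph{regular} $p$-closed derivation $D''$ on $Y$. Then $Y^{D''} = \thpower{X}{2}$, so the same application of Proposition \ref{prop:n-step} gives $\divisorialfix{D''} \sim K_Y$. But the divisorial fixed locus of a regular derivation is an effective divisor, so $|K_Y|$ would be nonempty, i.e.\ $h^0(Y, \omega_Y) > 0$. This contradicts the fact that a classical Enriques surface has $h^0(\omega_Y) = 0$ (the presence of RDPs is harmless, since they are canonical and rational, so $h^0(Y,\omega_Y) = h^0(\tilde{Y},\omega_{\tilde Y}) = 0$ for the minimal resolution). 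Hence no regular derivation induces $\pi'$, and $D'$ is genuinely rational with $\divisorialfix{D'} \sim K_Y \not\sim 0$.

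The steps above are mostly bookkeeping once Theorem \ref{thm:duality} and Proposition \ref{prop:n-step} are in hand; the point where the hypotheses genuinely enter, and the one I expect to be the main obstacle, is the interplay in the first and last paragraphs. One must use torsion-freeness of $\Pic(X)$ together with $\divisorialfix{\thpower{D}{2}} = 0$ to make the \emph{nonzero} class $K_Y$ (rather than $0$) appear on the right-hand side of the $n$-step formula, and then use that $Y$ is \emph{classical} (so $K_Y$ is a nonzero torsion class with $h^0(\omega_Y) = 0$) to convert ``$\divisorialfix{D'} \sim K_Y$, not $\sim 0$'' into the impossibility of an effective representative, and hence of a regular derivation.
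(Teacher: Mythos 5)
Your proof is correct and is exactly the argument the paper intends: the paper gives no separate proof of this proposition, saying only that it is ``proved by the same way as Theorem \ref{thm:duality}'', and your run-through is precisely that argument --- first $\divisorialfix{D} = 0$ from the Rudakov--Shafarevich formula, then Proposition \ref{prop:n-step} applied to $Y \to \thpower{X}{2} \to \thpower{Y}{2}$ giving $\divisorialfix{D'} \sim -K_Y \sim K_Y$, and finally effectivity of the divisorial fixed locus of a regular derivation contradicting $h^0(Y,\omega_Y) = 0$ for a classical Enriques surface. One small caution: your parenthetical that numerically trivial classes on an RDP K3 surface are trivial is valid here only because $\pi^* K_Y$ is Cartier ($Y$ is Gorenstein), since the class group $\Pic(X^{\sm})$ of an RDP K3 surface can have torsion (cf.\ Table \ref{table:singularities of quotient K3}) --- alternatively that step can be bypassed entirely, as $\divisorialfix{D} = 0$ already follows from its effectivity together with numerical triviality, which is how the proof of Theorem \ref{thm:duality} proceeds.
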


Suppose $X$ and $Y$ are RDP K3 surfaces.
We will determine possible characteristics and singularities.

\begin{thm} \label{thm:alpha_p K3 K3} 
Let $\map{\pi}{X}{Y}$ be a $G$-quotient morphism between RDP K3 surfaces in characteristic $p$,
where $G \in \set{\mu_p, \alpha_p}$.
If $G = \mu_p$ then $p \leq 7$.
If $G = \alpha_p$ then $p \leq 5$.

If moreover $\pi$ is maximal,
then $\Sing(Y)$ are as follows.
\begin{itemize}
	\item $\frac{24}{p+1} A_{p-1}$ if $G = \mu_p$.
	\item $2D_4^0$, $1D_8^0$, or $1E_8^0$ if $G = \alpha_2$.
	\item $2E_6^0$ if $G = \alpha_3$.
	\item $2E_8^0$ if $G = \alpha_5$.
\end{itemize}
	By Theorem \ref{thm:duality}, $X$ is a $G'$-quotient of $\thpower{Y}{1/p}$ for $G' \in \set{\mu_p,\alpha_p}$, 
	and hence $\Sing(X)$ is also as described above.
	In particular, the total index of RDPs of $X$ and that of $Y$
	are both equal to $24(p-1)/(p+1)$.
\end{thm}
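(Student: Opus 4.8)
The plan is to reduce to the maximal case, run the Chern-number formula of Theorem~\ref{thm:c2 derivation} on the minimal resolution of $X$, duplicate it on $Y$ by means of Theorem~\ref{thm:duality} to get a second equation, solve the resulting linear system for the fixed-point data, and then read off the singularity lists from Lemma~\ref{lem:degree of isolated fixed point} and Corollary~\ref{cor:configuration of isolated fixed point}. First I would dispose of the divisorial part: since $X$ and $Y$ are RDP K3 surfaces we have $K_X\sim K_Y\sim 0$, so the Rudakov--Shafarevich formula $K_X\sim\pi^*K_Y+(p-1)\divisorialfix{D}$ forces $(p-1)\divisorialfix{D}\sim 0$, and being effective, $\divisorialfix{D}=0$. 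By Corollary~\ref{cor:maximize} I may replace $\pi$ by its maximal model (crepant over $X$ and over $Y$, hence still a quotient between RDP K3 surfaces, of the same type $\mu_p$ or $\alpha_p$), so that $\Fix(D)$ is isolated and $\pi$ is maximal; as $p$ and the type are unchanged this also suffices for the unconditional bounds. Maximality together with Theorem~\ref{thm:mu_p alpha_p RDP} then fixes the local picture: every fixed point $v_i$ is a smooth point of $X$ (a fixed RDP would, by part~(\ref{thm:mu_p alpha_p RDP:fixed RDP symplectic}), force $\pi(v_i)$ to be an RDP, contradicting maximality at a non-smooth point), and every RDP $w_j$ of $X$ is non-fixed and maps to a smooth point of $Y$. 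Thus the RDPs of $Y$ are exactly the images $\pi(v_i)$, of index $m_i=(p-1)s_i$ with $s_i=\deg\isolatedfix{D}$ at $v_i$, by Lemma~\ref{lem:degree of isolated fixed point}(\ref{case:RDP}).

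Next comes the count. Lifting $D$ to a rational derivation $\tilde D$ on the K3 resolution $\tilde X$ and applying Theorem~\ref{thm:c2 derivation} with $K_{\tilde X}=0$ gives $24=\deg\isolatedfix{\tilde D}-\divisorialfix{\tilde D}^2$. The isolated fixed points contribute $S:=\sum_i s_i$ to $\deg\isolatedfix{\tilde D}$; by Lemma~\ref{lem:derivation on resolution} each non-fixed RDP $w_j$ of index $n_j$ contributes $n_j(p-2)/(p-1)$ to $\deg\isolatedfix{\tilde D}$ and $-2n_j/(p-1)$ to $\divisorialfix{\tilde D}^2$ (the exceptional loci over distinct RDPs being disjoint, the self-intersections add). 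Writing $N:=\sum_j n_j$ for the total index of $\Sing(X)$, the formula collapses to $24=S+\frac{p}{p-1}N$. Now Theorem~\ref{thm:duality} makes $\pi'\colon Y\to\thpower{X}{p}$ a $\mu_p$- or $\alpha_p$-quotient; I would check that $\pi'$ is again maximal, its non-fixed RDPs being those of $Y$ (mapping to smooth points of $\thpower{X}{p}$) and its isolated fixed points sitting at the smooth points $\pi(w_j)$, where they produce the RDPs of $\thpower{X}{p}\cong X$. The same count on $\tilde Y$ then reads $24=\frac{N}{p-1}+pS$. Solving the two equations gives
\[
S=\sum_i s_i=\frac{24}{p+1},\qquad N=\sum_j n_j=(p-1)S=\frac{24(p-1)}{p+1},
\]
which is simultaneously the hypothesis of Corollary~\ref{cor:configuration of isolated fixed point} and the asserted total index, equal for $X$ and $Y$ since $\sum_i m_i=(p-1)S=N$.

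It then remains to read off each case. For $G=\alpha_p$ every $s_i\ge 2$ by Lemma~\ref{lem:degree of isolated fixed point}(\ref{case:alpha}) and every quotient singularity is an RDP, so Corollary~\ref{cor:configuration of isolated fixed point} with $\sum_i s_i=24/(p+1)$ leaves exactly $(p,\{A_i^{D_i}\})\in\{(2,2D_4^0),(2,1D_8^0),(2,1E_8^0),(3,2E_6^0),(5,2E_8^0)\}$, yielding both $p\le 5$ and the stated $\Sing(Y)$. For $G=\mu_p$, Lemma~\ref{lem:degree of isolated fixed point}(\ref{subcase:mu}) forces each $A_i^{D_i}=A_{p-1}$, i.e.\ $s_i=1$, so $\Sing(Y)=\frac{24}{p+1}A_{p-1}$, and integrality of $24/(p+1)$ confines $p$ to $\{2,3,5,7,11,23\}$. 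The statement for $\Sing(X)$ follows by feeding the dual quotient $\thpower{Y}{1/p}\to X$ back into the theorem just proved.

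The hard part will be the bound $p\le 7$ for $\mu_p$, since the numerology above still permits $p=11$ and $p=23$. The case $p=23$ is immediate: the configuration $A_{22}$ has rank $22$, exceeding the rank $\le 21$ available in the part of $\NS(\tilde Y)$ orthogonal to a polarization. The case $p=11$ is the genuine obstacle: it gives $2A_{10}$ of rank $20$, which survives every rank and index bound available from the local results here. Ruling it out requires the finer study of the symplectic $\mu_p$-action on $\ell$-adic (or crystalline) cohomology carried out in \cite{Matsumoto:k3mun}, the positive-characteristic analogue of the Nikulin--Mukai constraint forbidding order-$11$ symplectic actions; so for $G=\mu_p$ I would import the bound from that analysis rather than reprove it from the surface-local tools above.
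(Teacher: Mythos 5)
Your proposal is correct and follows essentially the same route as the paper: reduce to the maximal case via Corollary \ref{cor:maximize}, combine Theorem \ref{thm:c2 derivation} with Lemmas \ref{lem:degree of isolated fixed point} and \ref{lem:derivation on resolution} on the resolutions of $X$ and (via Theorem \ref{thm:duality}) of $Y$ to get the two linear equations yielding $\sum s_i = 24/(p+1)$ and total index $24(p-1)/(p+1)$, then read off the lists and exclude $p=23$ by the rank-$\leq 21$ lattice bound and $p=11$ by citing \cite{Matsumoto:k3mun}*{Theorem 7.1}. The only cosmetic differences are that you make the reduction ($\divisorialfix{D}=0$, maximality of $\pi'$) more explicit and package the $\alpha_p$ case through Corollary \ref{cor:configuration of isolated fixed point}, which is exactly what that corollary was stated for.
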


\begin{rem}
	Suppose $X$ is a smooth K3 surface 
	and $G \subset \Aut(X)$ a cyclic subgroup of prime order $p$.
	Assume $Y = X/G$ is an RDP K3 surface.
	If $\charac(k) \neq p$ then it is well-known that $\Sing(Y)$ is $\frac{24}{p+1} A_{p-1}$, and in particular 
	the total index of RDPs of $Y$ is equal to $24(p-1)/(p+1)$.
	We will see below (Theorem \ref{thm:singularities of quotient K3})
	that this value is equal to $24(p-1)/(p+1)$ even in characteristic $p$.
	Consequently, this value $24(p-1)/(p+1)$ appears for actions of \emph{any} group scheme $G$ of order $p$ in \emph{any} characteristic!
\end{rem}

\begin{proof}[Proof of Theorem \ref{thm:alpha_p K3 K3}]
	We may assume $\pi$ is maximal.
	First we prove the assertion for the total indices of $\Sing(X)$ and $\Sing(Y)$.
	Let $\set{w_i} \subset X$ and $\set{v_j} \subset Y$ be the RDPs, 
	of indices $m_i$ and $n_j$ respectively.
	Let $\tilde{X}$ be the resolution of $X$ and $\tilde{D}$ the induced rational derivation on $\tilde{X}$.
	Using Lemma \ref{lem:degree of isolated fixed point}(\ref{case:RDP}) and Lemma \ref{lem:derivation on resolution} we obtain
	\begin{align*}
	\divisorialfix{\tilde{D}}^2  = - \sum_i \frac{2}{p-1} m_i, \qquad
	\deg \isolatedfix{\tilde{D}} =   \sum_i \frac{p-2}{p-1} m_i + \sum_j \frac{1}{p-1} n_j.
	\end{align*}
	By Theorem \ref{thm:c2 derivation} we have 
	$24 = \deg \isolatedfix{\tilde{D}} - \divisorialfix{\tilde{D}}^2
	 = \sum_i \frac{p}{p-1} m_i + \sum_j \frac{1}{p-1} n_j$.

	We can apply the same argument to $\map{\pi'}{Y}{\pthpower{X}}$ to obtain another equality.
	Also, since $\pi$ is purely inseparable we have $\dim \Het^2(X, \bQ_l) = \dim \Het^2(Y, \bQ_l)$ and hence $\sum_i m_i = \sum_j n_j$.
	By either way, we obtain $\sum_i m_i = \sum_j n_j = 24(p-1)/(p+1)$.

	Each $v_j$ is one of those appearing in Table \ref{table:derivation quotient RDP}. 
	If $G = \alpha_p$ then we have $p \leq 5$ and then $\Sing(Y)$ is as stated.
	If $G = \mu_p$ then $\Sing(Y)$ is as stated, and hence $(p+1) \divides 24$ and $24(p-1)/(p+1) < 22$.
	This implies $p \leq 11$.
	We refer to \cite{Matsumoto:k3mun}*{Theorem 7.1} for a proof of $p \neq 11$.
\end{proof}

\section{Inseparable morphisms of degree $p$ between RDP K3 surfaces} \label{sec:insep}
Suppose $\map{\pi}{X}{Y}$ is a finite inseparable morphism of degree $p$ between RDP K3 surfaces.
It is not always a quotient morphism by a global regular derivation.
However it can be covered by such a quotient morphism, and we have a classification as in Theorem \ref{thm:insep K3}.

\begin{lem} \label{lem:conjugate by SL2}
	Let $r > 1$ be an integer prime to $p = \charac k$.
	Suppose either 
	$M = \begin{pmatrix}
	\lambda & 0 \\ 0 & -\lambda
	\end{pmatrix}$ ($\lambda \in k^*$),
	or 
	$r$ is even and 
	$M = \begin{pmatrix}
	0 & 1 \\ 0 & 0
	\end{pmatrix}$.
	Then there is no $g \in \SL_2(k)$ of order $r$ such that $g^{-1} M g = \zeta M$ with a primitive $r$-th root $\zeta$ of $1$.
\end{lem}
\begin{proof}
If $2 \divides r$, then $g^{r/2} \in \SL_2(k)$ is of order $2$, hence $g^{r/2} = -I_2$, which is central.
If $r > 2$ in the former case, then $M$ and $\zeta M$ have different eigenvalues.
\end{proof}

\begin{thm} \label{thm:insep K3}
	Suppose $\map{\pi}{X}{Y}$ is a finite inseparable morphism of degree $p$ between RDP K3 surfaces.
	Then for some $r \geq 1$ and some $G \in \set{\mu_p,\alpha_p}$,
	there exists a $\bZ/r\bZ$-equivariant $G$-quotient morphism $\map{\bar{\pi}}{\bar{X}}{\bar{Y}}$ between proper RDP surfaces equipped with $\bZ/r\bZ$-actions,
	fitting into a commutative diagram
	\[
	\begin{tikzcd}
	\bar{X} \arrow[d,"\phi_X"] \arrow[r,"\bar{\pi}"] &
	\bar{Y} \arrow[d,"\phi_Y"] \\
	      {X}                 \arrow[r,"      {\pi}"] &
		  {Y} 
	\end{tikzcd}
	\]
	such that $\map{\phi_X}{\bar{X}}{X}$ and $\map{\phi_Y}{\bar{Y}}{Y}$ are the $\bZ/r\bZ$-quotient morphisms.

	Among such ``coverings'' $\bar{\pi}$, there exists a minimal one
	(i.e.\ any other such covering admits $\bar{\pi}$ as a subcovering).
	If $\bar{\pi}$ is minimal, then 
	$r \in \set{1,2,3,4,6}$ and $r \divides p-1$, 
	the $\bZ/r\bZ$-actions on $\bar{X}$ and $\bar{Y}$ are symplectic (in the usual sense on abelian and K3 surfaces),
	and moreover exactly one the following holds:
	\begin{enumerate}
		\item \label{case:abelian} $\bar{X}$ and $\bar{Y}$ are (smooth) abelian surfaces, and $r \neq 1$;
		\item \label{case:K3 mu} $\bar{X}$ and $\bar{Y}$ are RDP K3 surfaces, $G = \mu_p$, $p \leq 7$, and $(p,r) \neq (7,2),(7,6)$; or
		\item \label{case:K3 alpha} $\bar{X}$ and $\bar{Y}$ are RDP K3 surfaces, $G = \alpha_p$, $p \leq 5$, and $(p,r) \neq (5,4)$.
	\end{enumerate}
	
	Every case and every remaining $(p,r)$ occurs.

	If $\bar{\pi}$ is minimal and moreover maximal (in the sense of Definition \ref{def:maximal}), then $\Sing(Y)$ is as described in Table \ref{table:insep K3}.
\end{thm}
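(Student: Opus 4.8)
The plan is to reduce $\pi$ to a genuine group-scheme quotient after a tame cyclic base change, and then read off $\Sing(Y)$ from the known $\mu_p$/$\alpha_p$ picture together with the fixed-point theory of the cyclic action. First I would attach to $\pi$ a rational $p$-closed derivation $D$ on $X$ with $X^D = Y$ and measure its deviation from pure type. Writing $D^p = hD$ with $h \in k(X)^D = k(Y)$, the Rudakov--Shafarevich formula together with $K_X \sim K_Y \sim 0$ (both are RDP K3 surfaces) gives $(p-1)\divisorialfix{D} \sim 0$. In contrast to the smooth case this does \emph{not} force $\divisorialfix{D} \sim 0$, because $\Cl(X)$ can carry torsion concentrated at the RDPs (Table \ref{table:Picard group of RDP}); hence $[\divisorialfix{D}]$ is a torsion class whose order $r$ divides $p-1$, and in particular $p \nmid r$. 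I would then let $\phi_X\colon \bar X \to X$ be the cyclic $\bZ/r\bZ$-cover trivialising this class. Since $r$ is prime to $p$ the cover is separable, so $D$ lifts canonically to a derivation $\bar D$ on $\bar X$; because $\phi_X^*\divisorialfix{D}$ becomes principal, $\bar D$ normalises to a regular derivation with $\divisorialfix{\bar D}=0$, and its invariant factor becomes a global regular function on the proper integral surface $\bar Y := \bar X^{\bar D}$, hence constant. Thus $\bar D$ is of multiplicative or additive type, giving a $G$-action with $G\in\{\mu_p,\alpha_p\}$, while the deck group $\bZ/r\bZ$ commutes with the canonical lift $\bar D$ and descends to the required $\bZ/r\bZ$-equivariant square; minimality is obtained by taking $r$ to be the exact order of $[\divisorialfix{D}]$, since any covering trivialising the class factors through this one.

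Next I would pin down the numerical constraints for the minimal covering. The $\bZ/r\bZ$-action preserves the global $2$-form (it comes from deck transformations of a cover \'etale in codimension one), so it is symplectic and $K_{\bar X}\equiv 0$; as in Proposition \ref{prop:structure of quotient} the auxiliary Betti number $b_1(\bar X)$ is then $0$ or $4$, giving the trichotomy ``RDP K3'' versus ``abelian'' (with $r\neq 1$ in the abelian case, since $X$ itself is K3). When $\bar X,\bar Y$ are RDP K3 surfaces, $\bar\pi$ is a genuine $\mu_p$- or $\alpha_p$-quotient, so Theorem \ref{thm:alpha_p K3 K3} yields $p\le 7$ (resp.\ $p\le 5$). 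The restriction $r\in\{1,2,3,4,6\}$ is exactly $\varphi(r)\le 2$: a symplectic automorphism of finite order $r$ acts on a $2$-dimensional space (the tangent space of the abelian surface, or the transcendental part) with primitive $r$-th-root-of-unity eigenvalues, and integrality of the action on $H^1$ (resp.\ on the transcendental lattice) forces $\varphi(r)\le2$. Combined with $r\mid p-1$ this already controls the K3 cases, while the three exceptional pairs $(7,2),(7,6),(5,4)$ would be excluded by confronting the explicit singularity lists of Theorem \ref{thm:alpha_p K3 K3} with the fixed-point combinatorics of a symplectic $\bZ/r\bZ$-action, showing that no such action is compatible with the required RDP configuration. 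Occurrence of every remaining $(p,r)$ is by explicit examples, constructed in the later sections.

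Finally, for the last assertion I would assume $\bar\pi$ minimal and maximal and compute $\Sing(Y)$ via $Y=\bar Y/(\bZ/r\bZ)$. Here $\Sing(\bar Y)$ is already known: $\bar Y$ is smooth in the abelian case, and otherwise carries the configuration $\frac{24}{p+1}A_{p-1}$, $2D_4^0$/$1D_8^0$/$1E_8^0$, $2E_6^0$, or $2E_8^0$ from Theorem \ref{thm:alpha_p K3 K3}. Since the $\bZ/r\bZ$-action is symplectic of order $r\in\{1,2,3,4,6\}$ and tame, its fixed locus is finite and produces RDPs in the quotient of the standard Nikulin/Artin type (an order-$r$ symplectic automorphism contributes a prescribed number of points whose quotients are $A_{r-1}$, together with the contributions of the proper subgroups for non-prime $r$). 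I would then assemble $\Sing(Y)$ from two inputs: the images of the RDPs of $\bar Y$ (grouped into $\bZ/r\bZ$-orbits, possibly enlarged where a fixed point meets an existing RDP), and the new RDPs at the isolated fixed points, case by case in $(p,r)$, and verify that the result matches Table \ref{table:insep K3}.

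I expect the main obstacle to be precisely this last bookkeeping: tracking how the finitely many fixed points of the symplectic $\bZ/r\bZ$-action sit relative to the RDPs of $\bar Y$ coming from the $\mu_p$/$\alpha_p$-quotient, since a fixed point lying on an exceptional configuration changes the resulting singularity type, and the very same incidence analysis is what rules out $(7,2),(7,6),(5,4)$. The other delicate points are establishing cleanly that the torsion-class cover yields only abelian or RDP K3 surfaces (no worse singularities), and that the $\bZ/r\bZ$- and $G$-actions commute with the stated compatibilities so that both quotients and the descent of $\bar D$ make sense simultaneously.
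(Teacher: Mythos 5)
Your overall architecture matches the paper's: attach a rational $p$-closed derivation $D$ with $X^D = Y$, note that $(p-1)\divisorialfix{D}$ is principal on $X^{\sm}$ so that $[\divisorialfix{D}]$ is torsion of order $r \divides p-1$, pass to the cyclic cover $\bar{X}$ trivializing it, normalize the lifted derivation to $\divisorialfix{\bar{D}} = 0$, and observe that the $p$-closure factor is a global function, hence constant, giving $G \in \set{\mu_p,\alpha_p}$; minimality via the exact order of the class is also the paper's route. However, your mechanism for the constraint $r \in \set{1,2,3,4,6}$ is wrong, and this is the main gap. The claim that a symplectic automorphism of finite order forces $\varphi(r) \le 2$ is false: on abelian surfaces in positive characteristic, symplectic automorphisms of orders $5, 8, 10, 12$ genuinely exist — Katsura's table (Table \ref{table:singularities of abelian-quotient K3}), which the paper quotes, lists their RDP K3 quotients (e.g.\ $5A_4$ for $r = 5$) — because on a supersingular abelian surface the action on $\Het^1$ (rank $4$) can realize the full degree-$4$ cyclotomic polynomial, so integrality only yields $\varphi(r) \le 4$. (Your parenthetical about the transcendental lattice is also off: a symplectic automorphism of a K3 acts \emph{trivially} on the transcendental part, so it imposes no such eigenvalue constraint; in the K3 cases the bound on $r$ falls out trivially from $r \divides p-1$ together with $p \le 7$ from Theorem \ref{thm:alpha_p K3 K3}.) The paper excludes $r \in \set{5,8,10,12}$ in the abelian case by an argument you cannot avoid: for those $r$ the exceptional curves of the resolution of $\bar{X}/\spanned{g}$ span a rank-$20$ negative-definite lattice, forcing $\bar{X}/\spanned{g}$ and hence $\bar{X}$ to be supersingular, and then Katsura's Lemma 6.3 — which rests only on $[\bQ(\zeta_r):\bQ] = 4$ and so extends from $r=5$ to $r = 8,10,12$ — rules out symplectic automorphisms of order $r$ on supersingular abelian surfaces when $p \equiv 1 \pmod{r}$, which here is automatic from $r \divides p-1$. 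As written, your argument proves too much (it would contradict Katsura's examples) and leaves the abelian case genuinely open.

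Two further points need repair. First, your trichotomy step asserts $b_1(\bar{X}) \in \set{0,4}$ with a reference to Proposition \ref{prop:structure of quotient}, but that proposition concerns quotients of K3 surfaces, not covers; since $r \ge 2$ forces $p \ge 3$, you must exclude the (quasi-)hyperelliptic surfaces with trivial canonical class in characteristic $3$ (which have $b_1 = 2$). The paper does this by comparing $l$-adic Euler--Poincar\'e characteristics to produce an involution with $16$ fixed points on the resolution and deriving the contradiction $22 - 16 \le \dim \Het^2 = 2$. Second, the deck group does \emph{not} commute with $\bar{D}$: the paper shows $g_X^*(\bar{D}) = \lambda \bar{D}$ with $\lambda$ a \emph{primitive} $r$-th root of unity. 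Equivariance of $\bar{\pi}$ only needs $g_X$ to preserve $\Ker \bar{D}$, so your diagram survives, but the primitivity of $\lambda$ is what makes the induced action on $\bar{Y}$ of exact order $r$ and is the engine of the eigenvalue computations that (i) exclude $(p,r) = (7,2), (7,6), (5,4)$ and (ii) show that in all surviving cases $g_X$ fixes no point of $\Fix(\bar{D})$ — so the incidence bookkeeping you flag as the main obstacle in deriving Table \ref{table:insep K3} in fact collapses: $\Sing(Y)$ is just the free-orbit image of $\Sing(\bar{Y})$ plus the standard cyclic quotient singularities at the isolated fixed points of the symplectic action.
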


\begin{table}
\caption{Structure of purely inseparable morphisms of degree $p$ between RDP K3 surfaces} \label{table:insep K3}
\begin{tabular}{llll}
\toprule                                    
covering       & $p$                 & $r$ & $\Sing(Y)$ \\
\midrule                                    
abelian        & $\equiv 1 \pmod{6}$ & $6$ & $A_5 + 4 A_2 + 5 A_1$ \\
abelian        & $\equiv 1 \pmod{4}$ & $4$ & $4 A_3 + 6 A_1$ \\
abelian        & $\equiv 1 \pmod{3}$ & $3$ & $9 A_2$  \\
abelian        & $\equiv 1 \pmod{2}$ & $2$ & $16 A_1$ \\
\midrule                                    
K3, $\mu_7$    & $7$                 & $3$ & $  A_6 + 6 A_2$ \\
K3, $\mu_7$    & $7$                 & $1$ & $3 A_6$ \\
K3, $\mu_5$    & $5$                 & $4$ & $  A_4 + 4 A_3 + 2 A_1$ \\
K3, $\mu_5$    & $5$                 & $2$ & $2 A_4 + 8 A_1$ \\
K3, $\mu_5$    & $5$                 & $1$ & $4 A_4$ \\
K3, $\mu_3$    & $3$                 & $2$ & $3 A_2 + 8 A_1$ \\
K3, $\mu_3$    & $3$                 & $1$ & $6 A_2$ \\
K3, $\mu_2$    & $2$                 & $1$ & $8 A_1$ \\
\midrule                                    
K3, $\alpha_5$ & $5$                 & $2$ & $  E_8^0 + 8 A_1$ \\
K3, $\alpha_5$ & $5$                 & $1$ & $2 E_8^0$ \\
K3, $\alpha_3$ & $3$                 & $2$ & $  E_6^0 + 8 A_1$ \\
K3, $\alpha_3$ & $3$                 & $1$ & $2 E_6^0$ \\
K3, $\alpha_2$ & $2$                 & $1$ & $2 D_4^0$, $1 D_8^0$, or $1 E_8^0$ \\
\bottomrule
\end{tabular}
\end{table}

\begin{proof}
	As in the proof of Theorem \ref{thm:duality},
	take a rational derivation $D$ with $Y = X^{D}$. 
	Then we have $(p-1) \divisorialfix{D} = 0$ in $\Pic(X^{\sm})$.
	Let $\map{\phi}{\overline{X^{\sm}}}{X^{\sm}}$ be the \'etale covering trivializing $\divisorialfix{D}$ (so $r = \deg \phi$ divides $p-1$).
	Then the normalization $\bar{X}$ of $X$ in $k(\overline{X^{\sm}})$ is an RDP surface.

	We claim that $\bar{X}$ is an RDP K3 surface or an abelian surface.
	This is trivial if $r = 1$. Assume $r \geq 2$, hence $p \geq 3$.
	By construction $\bar{X}$ has trivial canonical divisor.
	If $\bar{X}$ is not RDP K3 nor abelian, then it is (quasi-)hyperelliptic surface in characteristic $p = 3$.
	Hence $r = 2$.
	Comparing the $l$-adic Euler--Poincar\'e characteristic (which is $0$ and $24$ for (quasi-)hyperelliptic and K3 surfaces), 
	we observe that the involution $g$ on the resolution $\tilde{\bar{X}}$ has $16$ fixed points,
	but then we have 
	\[
	22 - 16    = \dim \Het^2(\tilde{\bar{X}}/\spanned{g}, \bQ_l) 
	           = \dim \Het^2(\tilde{\bar{X}}, \bQ_l)^{\spanned{g}} 
	        \leq \dim \Het^2(\tilde{\bar{X}}, \bQ_l)
	           = 2, 
	\]
	a contradiction.

	We have $\phi^{-1}(\divisorialfix{D}) = \divisor(h)$ for some $h \in k(\bar{X})$, 
	and then $\bar{D} := h^{-1} \cdot \phi^*(D)$ is a regular derivation.
	Write $\bar{X}^{\bar{D}} = \bar{Y}$.
	Take a generator $g_X$ of the $\bZ/r\bZ$-action on $\bar{X}$.
	Then $g_X$ acts on $\bar{D}$ by multiplication by a $r$-th root $\lambda$ of unity. 
	This $\lambda$ is in fact a primitive $r$-th root of unity since,
	if $\lambda^{s} = 1$, then $\bar{D}$ descends to $\bar{X} / \spanned{g_X^{s}}$,
	hence $\divisorialfix{D}$ is trivialized on $\bar{X} / \spanned{g_X^{s}}$, 
	hence $g_X^s = 1$, hence $r \divides s$.
	Hence $g_X$ induces an automorphism $g_Y$ on $\bar{Y}$ of order $r$ with $\bar{Y} / \spanned{g_Y} = Y$.
	
	We show the minimality.
	Let $\map{\psi}{\bar{X}'}{X}$ with $\bar{D}'$ be another covering of $\pi$ with the required properties.
	Then the pullback $\psi^*(D)$ of $D$ to $\bar{X}'$ coincide with $\bar{D}'$ up to $k(X)^*$, in particular $\divisorialfix{\psi^*(D)} \sim 0$ on $\Pic(\psi^{-1}(X^{\sm}))$.
	Hence $\psi \restrictedto{\psi^{-1}(X^{\sm})}$ factors through $\phi \restrictedto{\phi^{-1}(X^{\sm})}$,
	and $\psi$ factors through $\phi$.

	We show $r \in \set{2,3,4,6}$ and the description of the singularities in the case $\bar{X}$ is an abelian surface.
	It is proved by Katsura \cite{Katsura:generalizedkummer}*{Theorem 3.7 and Table in page 17} that,
	if $\bar{X}$ is an abelian surface and $g$ is a nontrivial symplectic automorphism (fixing the origin) of order $r$ prime to $p = \charac k$, then
	$r \in \set{2,3,4,5,6,8,10,12}$,
	$\bar{X}/\spanned{g}$ is an RDP K3 surface,
	and $\Sing(\bar{X}/\spanned{g})$ are as in Table \ref{table:singularities of abelian-quotient K3}
	(in \cite{Katsura:generalizedkummer} the coefficient of $A_7$ in order $8$ is written as $1$, but this is a misprint and actually it is $2$). 
	In particular, if $r \in \set{5,8,10,12}$ then 
	(since the exceptional curves of the resolution of $\bar{X}/\spanned{g}$ generate a rank $20$ negative-definite lattice)
	$\bar{X}/\spanned{g}$ is a supersingular RDP K3 surface and 
	$\bar{X}$ is a supersingular abelian surface.
	It is showed \cite{Katsura:generalizedkummer}*{Lemma 6.3}
	that supersingular abelian surfaces in characteristic $p$ do not have symplectic automorphisms of order $r = 5$ if $p \equiv 1 \pmod{5}$.
	One observes that the proof of this lemma 
	relies only on the fact that $[\bQ(\zeta_5):\bQ] = 4$, 
	therefore it remains valid if we replace $5$ with $8$, $10$, or $12$.
	Hence we obtain $r \in \set{2,3,4,6}$ in our case.

	Suppose $\bar{X}$ is an RDP K3 surface and $\bar{\pi}$ is a $\mu_p$-quotient or an $\alpha_p$-quotient.
	Then respectively $p \leq 7$ or $p \leq 5$ by Theorem \ref{thm:alpha_p K3 K3}.
	We show that if $r > 1$ then $g_X$ does not fix any point of $\Fix(\bar{D})$.
	If $G = \mu_p$, then the action of $\bar{D}$ on the tangent space of a point of $\Fix(\bar{D})$
	is diagonalizable with eigenvalues $\pm i$ ($i \in \bF_p^*$).
	If $G = \alpha_p$, then $p \in \set{3,5}$ and hence $r \in \set{2,4}$, 
	and the action of $\bar{D}$ on the tangent space is nilpotent and nontrivial (otherwise $s \geq 3$ in Lemma \ref{lem:degree of isolated fixed point}).
	Hence in either case it is impossible by Lemma \ref{lem:conjugate by SL2}.

	Using this, we show that $(G,r) = (\mu_7, 2), (\mu_7, 6), (\alpha_5, 4)$ cannot happen. 
	If $(G, r) = (\mu_7, 2)$, then $\Fix(\bar{D})$ consists of $3$ points  
	$w_1, w_2, w_4 \in \bar{X}$, on whose tangent spaces $\bar{D}$ acts by eigenvalues $\pm 1, \pm 2, \pm 4$ respectively.
	Since $g_X^* \bar{D} = -\bar{D}$, we have $g_X(w_1) \neq w_2, w_4$, 
	and we have $g_X(w_1) \neq w_1$ by above. Contradiction.
	The case $(G, r) = (\mu_7, 6)$ is reduced to the previous case.
	If $(G, r) = (\alpha_5, 4)$, then $\Fix(\bar{D})$ consists of $2$ points,
	hence $g_X^2$ fixes each point. Contradiction.

	The assertion on $\Sing(Y)$ follows from the description of $\Sing(\bar{Y})$ (Theorem \ref{thm:alpha_p K3 K3}),
	the description of the fixed locus and the quotient singularities of a symplectic automorphism of finite order prime to the characteristic
	(Nikulin \cite{Nikulin:auto}*{Section 5} ($p = 0$) and Dolgachev--Keum \cite{Dolgachev--Keum:auto}*{Theorem 3.3} ($p > 0$)),
	and the observation above that $\spanned{g_X}$ acts freely on $\Fix(\bar{D})$. 

	We will see in Examples \ref{ex:k3:2}--\ref{ex:k3:7m:non-normal} ($r = 1$),
	\ref{ex:insep K3 with abelian cover} ($r > 1$, $\bar{X}$ abelian), \ref{ex:insep K3 with K3 cover} ($r > 1$, $\bar{X}$ K3)
	that all cases indeed occur.
\end{proof}

\begin{table}
\caption{RDP K3 surfaces arising as symplectic cyclic quotients of abelian surfaces \cite{Katsura:generalizedkummer}*{Table in page 17}} \label{table:singularities of abelian-quotient K3} 
\begin{tabular}{ll}
\toprule
$r$  & $\Sing(X)$ \\
\midrule
$2$  & $16 A_1$ \\
$3$  & $9 A_2$ \\
$4$  & $4 A_3 + 6 A_1$ \\
$5$  & $5 A_4$ \\
$6$  & $A_5 + 4 A_2 + 5 A_1$ \\
$8$  & $2 A_7 + A_3 + 3 A_1$ \\
$10$ & $A_9 + 2 A_4 + 3 A_1$ \\ 
$12$ & $A_{11} + A_3 + 2 A_2 + 2 A_1$ \\
\bottomrule
\end{tabular}
\end{table}

\section{\texorpdfstring{$\bZ/p\bZ$-, $\mu_p$-, $\alpha_p$-coverings of RDPs}{Z/pZ-, mu\_p-, alpha\_p-coverings of RDPs}} \label{sec:covering}

In this section we describe $\bZ/p\bZ$-, $\mu_p$-, and $\alpha_p$-coverings of certain RDPs 
that are related to $\bZ/p\bZ$-, $\mu_p$-, and $\alpha_p$-coverings of RDP K3 surfaces discussed in 
Section \ref{sec:restoring}.

\subsection{\texorpdfstring{$\mu_p$-coverings}{mu\_p-coverings}} \label{subsec:covering:mu}

Let $Z = \Spec A$ be a local ring that is an RDP of type $A_{n-1}$, in characteristic $p \geq 0$ (possibly dividing $n$).
	Let $\tilde{Z} \to Z$ be the minimal resolution and 
	let $e_j$ ($1 \leq j \leq n-1$) be the exceptional curves numbered 
	as in Convention \ref{conv:exceptional curves}
	(i.e.\ $e_j \cdot e_{j'} = 1$ if and only if $\abs{j - j'} = 1$).

\begin{lem} \label{lem:Picard group of A}
	\ 
	\begin{enumerate}
	\item \label{lem:Picard:cyclic}
	There is a canonical injection from $\Pic(Z^{\sm})$ to a cyclic group of order $n$.
	It is compatible with \'etale extensions of $A$ 
	and it is an isomorphism if $A$ is Henselian.
	
	In the following assertions, we assume that the injection in (\ref{lem:Picard:cyclic}) is an isomorphism.
	\item \label{lem:Picard:algebra*}
	For each $0 < h < n$, let $L_h$ be a line bundle on $Z^{\sm}$ belonging to the class $h \in \bZ/n\bZ \cong \Pic(Z^{\sm})$.
	Let $L_0 = \cO_{Z^{\sm}}$.
	Let $\phi_0 = \id_{L_0}$ and $\phi_1 = \id_{L_1}$.
	Take isomorphisms 
	$\map[\isomto]{\phi_h}{L_h}{L_1^{\otimes h}}$ ($2 \leq h < n$)
	and $\map[\isomto]{\psi}{L_0}{L_1^{\otimes n}}$.
	Then the morphisms 
	\begin{align*}
		{\phi_{h+{h'}}^{-1} \circ (\phi_h \otimes \phi_{h'})} &\colon
		{L_h \otimes L_{h'}} \to {L_{h+h'}} \quad (h + h' < n), \\
		{(\phi_{h+h'-n}^{-1} \otimes \psi^{-1}) \circ (\phi_h \otimes \phi_{h'})} &\colon
		{L_h \otimes L_{h'}} \to {L_{h+h'-n}} \quad (h + h' \geq n) 
	\end{align*}
	define an $\cO_{Z^{\sm}}$-algebra structure on $V := \bigoplus_{h = 0}^{n-1} L_h$.
	\item \label{lem:Picard:algebra}
	Let $\bar{L}_h := \iota_* L_h$ and $\bar{V} = \iota_* V = \bigoplus_{h = 0}^{n-1} \bar{L}_h$,
	where $\map{\iota}{Z^{\sm}}{Z}$ is the inclusion.
	Then the $\cO_{Z^{\sm}}$-algebra structure on $V$
	extends to an $\cO_{Z}$-algebra structure on $\bar{V}$,
	and $\bar{V}$ is regular.
	$U := \Spec \bar{V} \to Z$ is a $\mu_n$-covering.
	\item \label{lem:Picard:coefficients}
	Let $\tilde{L}_h = \tilde{\iota}_* L_h$,
	where $\map{\tilde{\iota}}{Z^{\sm}}{\tilde{Z}}$
	is the inclusion.
	Then $I_h := \Image( (\tilde{L}_h)^{\otimes n} \to \cO_{\tilde{Z}})$ is an invertible sheaf
	and, writing $I_h = \cO_{\tilde{Z}}(- \sum b_{h,j} e_j)$,
	there exists $a \in (\bZ/n\bZ)^*$ such that $b_{h,j} \equiv a h j \pmod{n}$.
	More precisely, we have
	\[
	I_h = \cO(-\sum_j f_n((a h \bmod n), j) e_j).
	\]
	\end{enumerate}
	Here $m \bmod n$ denotes the remainder modulo $n$, i.e., the unique integer $\in \set{0, \dots, n-1 }$ congruent to $m$ modulo $n$,
	and the function $\map{f_n}{\set{1, 2, \dots, n-1}^2}{\bZ}$
	is defined by 
	\[
		f_n(h, j) = \begin{cases}
			h j & (j \leq n - h) \\
			(n - h) (n - j) & (j \geq n - h) .
		\end{cases}
	\]
\end{lem}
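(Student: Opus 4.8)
The plan is to reduce everything to the minimal resolution $\rho\colon\tilde{Z}\to Z$, whose exceptional locus $E=\sum_{j=1}^{n-1}e_j$ is an $A_{n-1}$-chain, and to read off all four assertions from the intersection form, i.e.\ from (minus) the Cartan matrix $C$ of $A_{n-1}$.

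For (\ref{lem:Picard:cyclic}) I would use that $\tilde{Z}$ is regular and $Z^{\sm}\cong\tilde{Z}\setminus E$, so the excision sequence for class groups gives $\Pic(Z^{\sm})=\Cl(Z^{\sm})=\Pic(\tilde{Z})/\spanned{[e_j]}$. The intersection pairing defines $\Pic(\tilde{Z})\to\bZ^{n-1}$, $L\mapsto(L\cdot e_j)_j$, sending $\spanned{[e_j]}$ onto the column span of $C$; passing to quotients yields a map $\Pic(Z^{\sm})\to\Coker(C)$, where $\Coker(C)$ is cyclic of order $\det C=n$. Injectivity comes from $\Pic(\tilde Z)\hookrightarrow\bZ^{n-1}$ for the resolution of a rational singularity (a numerically trivial bundle being trivial, since $R^1\rho_*\cO=0$; cf.\ Lipman), and surjectivity in the Henselian case from the fact that over a Henselian base every intersection profile is realized. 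Compatibility with \'etale extensions is immediate, since $\rho$ and the $e_j$ base-change and intersection numbers are preserved.

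For (\ref{lem:Picard:algebra*})--(\ref{lem:Picard:algebra}) I would observe that via the $\phi_h$ the sheaf $V$ is just $\bigoplus_{h=0}^{n-1}L_1^{\otimes h}$ carrying the multiplication of the cyclic-cover algebra $\bigoplus_{h\ge0}L_1^{\otimes h}$ modulo the single relation $L_1^{\otimes n}\cong\cO$ supplied by $\psi$; associativity and commutativity are then a formal coherence check involving only $\psi$, so $V$ is the $\mu_n$-cover of $Z^{\sm}$ attached to the $n$-torsion bundle $L_1$. Since the singular point has codimension $2$ in $Z$, each $\bar{L}_h=\iota_*L_h$ is reflexive, hence coherent, so $\bar{V}=\iota_*V$ is a coherent $\cO_Z$-algebra. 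The substantive point is the regularity of $U=\Spec\bar{V}$, and here I would reduce to the complete local case (coherence and regularity descend along the faithfully flat completion) and invoke the characteristic-free presentation $\hat{Z}\cong\Spec k[[x,y]]^{\mu_n}$ for the weight $(1,-1)$ action of $\mu_n$ — valid even when $p\mid n$, since the comodule action and its invariants $k[[x^n,y^n,xy]]=k[[u,v,w]]/(uv-w^n)$ make sense in all characteristics. Matching the weight-$h$ eigenspace of $k[[x,y]]$ with $\bar{L}_h$ (up to the relabelling $h\mapsto ah$ of (\ref{lem:Picard:coefficients})) identifies $\bar{V}$ with the regular ring $k[[x,y]]$, giving both the regularity of $U$ and the fact that $U\to Z$ is a $\mu_n$-covering.

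For (\ref{lem:Picard:coefficients}) I would fix the invertible extension $\tilde{L}_h$ of $L_h$ to $\tilde{Z}$ and the trivialization $L_h^{\otimes n}\cong\cO_{Z^{\sm}}$ coming from $\psi$, which corresponds to a function $f_h\in A$ with $\divisor_Z(f_h)=nD_h$ for a minimal Weil divisor $D_h$ of class $h$. The trivialization extends to a map $(\tilde{L}_h)^{\otimes n}\to\cO_{\tilde{Z}}$ that is locally multiplication by $\rho^*f_h$, so its image is the invertible ideal $I_h=\cO_{\tilde{Z}}(-\sum_j b_{h,j}e_j)$ with $b_{h,j}=\ord_{e_j}(\rho^*f_h)\ge0$. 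Intersecting with $e_i$ and using that $\rho^*$ of a divisor on $Z$ meets each $e_i$ trivially gives $n(\tilde{L}_h\cdot e_i)=(C\,b_{h,\bullet})_i$, hence $b_{h,\bullet}=nC^{-1}(\tilde{L}_h\cdot e_\bullet)$; since one checks $nC^{-1}_{ij}\equiv-ij\pmod n$, the residue $\sum_i i(\tilde{L}_h\cdot e_i)\bmod n$ is independent of the chosen extension and equals $-ah$ for a unit $a$ recording the class of $L_1$ relative to the generator of (\ref{lem:Picard:cyclic}), which yields $b_{h,j}\equiv ahj\pmod n$. The exact values then follow by computing $\ord_{e_j}(\rho^*f_h)$ in the toric resolution of $uv=w^n$ (with $f_1=u$, $f_{n-1}=v$, and minimal monomials in general), reproducing $f_n(ah\bmod n,j)=nC^{-1}_{(n-(ah\bmod n)),\,j}$.

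The main obstacle is the regularity in (\ref{lem:Picard:algebra}) when $p\mid n$: for $p\nmid n$ one could argue that $U\to Z^{\sm}$ is an \'etale $\mu_n$-torsor over a smooth base, but in the modular case $\mu_n$ is non-smooth and the cover has non-reduced fibres, so regularity of the total space is not formal. What rescues this is that the presentation $Z\cong\bA^2/\mu_n$ for the weight $(1,-1)$ action does not see the characteristic, so the cover is literally $\bA^2$; the remaining care is to match the abstractly built algebra $\bar{V}$ with this model and to track the unit $a$ throughout, so that (\ref{lem:Picard:coefficients}) stays consistent with (\ref{lem:Picard:cyclic})--(\ref{lem:Picard:algebra}).
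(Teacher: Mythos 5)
Your parts (\ref{lem:Picard:cyclic}), (\ref{lem:Picard:algebra*}) and (\ref{lem:Picard:coefficients}) are sound, and in fact more detailed than the paper, which simply cites Lipman's Proposition 17.1 for (\ref{lem:Picard:cyclic}) and declares (\ref{lem:Picard:algebra*}) and (\ref{lem:Picard:coefficients}) straightforward; your identity $f_n(h,j) = n(C^{-1})_{n-h,\,j}$ for the inverse of the $A_{n-1}$ Cartan matrix is exactly the right computation, as $n(C^{-1})_{ij} = n\min(i,j) - ij$. Your reduction in (\ref{lem:Picard:algebra}) to the complete local model $A = k[[x^n,y^n,xy]] \subset B = k[[x,y]]$ is also the paper's route. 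The genuine gap is the sentence asserting that matching the weight-$h$ eigenspace of $k[[x,y]]$ with $\bar{L}_h$ ``identifies $\bar{V}$ with the regular ring $k[[x,y]]$'': matching the graded pieces as $A$-modules does \emph{not} match the multiplications. The algebra structure on $\bar{V}$ depends on $\psi$ through the unit $a \in A^*$ defined by $\psi^{-1}(x^{\otimes n}) = a x^n$, and when $p \divides n$ this unit is in general not an $n$-th power in $A^*$ (e.g.\ $1 + xy$ is not a $p$-th power in $A$, since $xy$ is not a $p$-th power in $k[[x,y]]$). So for $p \divides n$ the algebras $\bar{V}$ for different choices of $\psi$ are genuinely non-isomorphic $\mu_n$-covers of $Z$ --- this is precisely the content of the remark following the lemma --- and no graded identification of $\bar{V}$ with the fixed eigenspace algebra $B$ can exist. ``Tracking the unit $a$'' is therefore the entire content of (\ref{lem:Picard:algebra}), not residual bookkeeping, and your untwisting heuristic would fail exactly in the modular case the lemma is designed to cover.

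The paper closes this by absorbing $a$ into new coordinates: set $x' = a^{1/n} x$ and $y' = a^{-1/n} y$, where $a^{1/n}$ exists only in a purely inseparable extension of $B$ when $p \divides n$. Then $x'y' = xy$, $x'^n = a x^n$, $y'^n = a^{-1} y^n$, so $k[[x',y']]$ is again a regular ring whose degree-$0$ (invariant) subalgebra is exactly $A$, and multiplication by $a^{h/n}$ maps $\bar{L}_h = x^h A + y^{n-h} A$ isomorphically onto $x'^h A + y'^{n-h} A$; the factor $a$ that $\psi$ inserts into products with $h + h' \geq n$ is absorbed by $a^{(h+h')/n} = a \cdot a^{(h+h'-n)/n}$, so these maps assemble to a graded $A$-algebra isomorphism $\bar{V} \isomto k[[x',y']]$, giving regularity for \emph{every} choice of $\phi_h$ and $\psi$ (the twisted model $k[[x',y']]$, not the original $k[[x,y]]$, is the correct target). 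With this one step supplied, your proposal coincides with the paper's proof.
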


\begin{proof}
	(\ref{lem:Picard:cyclic})
	This is \cite{Lipman:rationalsingularities}*{Proposition 17.1}.
	
	(\ref{lem:Picard:algebra*})
	Straightforward.
	
	(\ref{lem:Picard:algebra})
	We may assume that $A$ is complete.
	By changing the isomorphism $\bZ/n\bZ \cong \Pic(Z^{\sm})$,
	we may assume that $A = k[[x^n,y^n,xy]] \subset B = k[[x,y]]$
	and identify $\bar{L}_h$ with $x^h A + y^{n-h} A \subset B$ for $0 < h < n$,
	and $\phi_h^{-1}$ with the multiplication in $B$.
	We have $\psi^{-1}(x^{\otimes n}) = a x^n$ with $a \in A^*$.
	Replacing $B = k[[x,y]]$ with $k[[x',y']]$ ($x' = a^{1/n} x$, $y' = a^{-1/n} y$),
	and identifying $x^h A + y^{n-h} A \isomto x'^h A + y'^{n-h} A$
	by the multiplication by $(a^{1/n})^h$,
	we may assume $a = 1$.
	Then $\bar{V} = B$ and is regular.

	(\ref{lem:Picard:coefficients})
	Straightforward (cf.\ \cite{Matsumoto:k3mun}*{Lemma 4.16}).
\end{proof}

\begin{rem}
Suppose $A$ is Henselian.
If $p \notdivides n$, 
then $U \to Z$ is independent of the choices (since, under the notation in the proof, $a^{1/n}$ exists in $A^*$)
and $U \restrictedto{Z^{\sm}} \to Z^{\sm}$ is the fundamental covering.
To the contrary,
if $p \divides n$,
then $U \to Z$ does depend on the choice of the isomorphisms $\phi_h$ and $\psi$, and is not unique.
\end{rem}

\subsection{\texorpdfstring{$\bZ/p\bZ$- and $\alpha_p$-coverings}{Z/pZ- and alpha\_p-coverings}} \label{subsec:covering:alpha}

\begin{lem} \label{lem:Ext1}
	Let $A$ be a Noetherian Gorenstein $2$-dimensional local $k$-algebra,
	and $I \subset A$ an ideal with $\Supp(A/I) \subset \set{\idealm_A}$
	(equivalently $I \supset \idealm_A^n$ for some $n$).
	Then $\dim_k \Ext^1(I, A) = \dim_k A/I$.
	For any other such ideal $I'$ with $I' \subset I$,
	the induced map $\Ext^1(I, A) \to \Ext^1(I', A)$ is injective.
	The map $\Ext^1(I, A)  \isomto \Ext^2(A/I, A) \to \localcoh{\idealm_A}{A}$ is injective
	and its image is the $I$-torsion part $\localcohtorsion{\idealm_A}{A}{I}$.

		If $x,y$ is a regular sequence in $\idealm_A$,
		then we have an isomorphism 
		$\localcoh{\idealm_A}{A} \isomto \Coker \Bigl( A[x^{-1}] \oplus A[y^{-1}] \to A[(xy)^{-1}] \Bigr)$.
\end{lem}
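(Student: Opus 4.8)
The plan is to derive all four assertions from a single structural input: for a finite-length $A$-module $N$ the functor $\Ext^2_A(\functorspace, A)$ is an exact, length-preserving contravariant duality. First I would record the vanishing $\Ext^i_A(N,A) = 0$ for $i \neq 2$. Since $A$ is Cohen--Macaulay of dimension $2$ and $N$ has finite length, the grade of $\Ann N$ on $A$ equals $2$, so $\Ext^i_A(N,A) = 0$ for $i < 2$; and since $A$ is Gorenstein of dimension $2$ its injective dimension as a module over itself is $2$, so $\Ext^i_A(N,A) = 0$ for $i > 2$. Feeding $0 \to I \to A \to A/I \to 0$ into the long exact $\Ext_A(\functorspace, A)$-sequence then collapses it to $\Hom(I,A) \cong A$ together with a connecting isomorphism $\Ext^1(I,A) \cong \Ext^2(A/I, A)$ --- precisely the isomorphism named in (3). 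Because the two vanishings hold for every finite-length module, the same long exact sequence shows $\Ext^2(\functorspace, A)$ is exact on finite-length modules; as $\Ext^2(k, A) \cong k$ (a characterization of Gorenstein rings), d\'evissage on the length gives $\dim_k \Ext^2(N, A) = \length(N) = \dim_k N$. Taking $N = A/I$ yields $\dim_k \Ext^1(I,A) = \dim_k A/I$, the first assertion.

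For the injectivity in (2) I would argue by naturality. The inclusion $I' \subset I$ and the quotient $A/I' \twoheadrightarrow A/I$ assemble into a morphism of short exact sequences, with the identity on the middle term $A$, so the two connecting isomorphisms sit in a commuting square relating the restriction map $\Ext^1(I,A) \to \Ext^1(I',A)$ to the map $\Ext^2(A/I, A) \to \Ext^2(A/I', A)$. The latter is induced by a surjection and hence is injective by the exactness of $\Ext^2(\functorspace, A)$ established above; therefore the former is injective as well.

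For (3) I would write $\localcoh{\idealm_A}{A} = \localcoh{I}{A} = \varinjlim_n \Ext^2(A/I^n, A)$, using $\sqrt{I} = \idealm_A$ and the cofinality of the $I^n$ among the $\idealm_A^n$. The transition maps are induced by the surjections $A/I^{n+1} \twoheadrightarrow A/I^n$, hence are injective by exactness of $\Ext^2(\functorspace, A)$, so the canonical map from the $n = 1$ term is injective; this is the injectivity of $\Ext^2(A/I, A) \to \localcoh{\idealm_A}{A}$. Its image is annihilated by $I$, so it lands in the $I$-torsion $\localcohtorsion{\idealm_A}{A}{I}$. For the reverse inclusion I would identify $\Ext^2(\functorspace, A)$ on finite-length modules with Matlis duality $N^{\vee} := \Hom_A(N, E_A(k))$ (local duality for finite-length modules over a Gorenstein ring, requiring no completion). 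Dualizing $0 \to I/I^n \to A/I^n \to A/I \to 0$ then realizes $\Ext^2(A/I, A) \cong (A/I)^{\vee}$ as a submodule of $\Ext^2(A/I^n, A) \cong (A/I^n)^{\vee}$ that is annihilated by $I$; since adjunction gives $\Hom_A(A/I, (A/I^n)^{\vee}) \cong (A/I \otimes_A A/I^n)^{\vee} = (A/I)^{\vee}$, this submodule is the entire $I$-torsion of $(A/I^n)^{\vee}$. Passing to the colimit identifies the image with $\localcohtorsion{\idealm_A}{A}{I}$.

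Finally, (4) is the \v{C}ech computation. A regular sequence $x,y$ in a $2$-dimensional Cohen--Macaulay local ring is a system of parameters, so $\sqrt{(x,y)} = \idealm_A$ and $\localcoh{\idealm_A}{A} = \localcoh{(x,y)}{A}$ is computed by the \v{C}ech complex $A \to A[x^{-1}] \oplus A[y^{-1}] \to A[(xy)^{-1}]$, whose top cohomology is exactly the displayed cokernel. I expect the main obstacle to be the image identification in (3): matching the abstract connecting isomorphism and the colimit description of $\localcoh{\idealm_A}{A}$ against the genuine $I$-torsion forces one to invoke local duality for finite-length modules and to keep careful track of which map is dual to which, whereas the length count behind (1)--(2) and the \v{C}ech computation in (4) are routine once that duality is in hand.
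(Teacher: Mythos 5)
Your proof is correct, and it follows the route the paper intends: the paper's own proof is only a citation to \cite{Matsumoto:k3rdpht}*{Lemma 3.1} together with the remark that the dimension statement reduces to $\dim_k \Ext^1(\idealm_A, A) = \dim_k \Ext^2(k,A) = 1$ from the Gorenstein condition, which is precisely the d\'evissage at the heart of your argument. Your development --- the vanishing of $\Ext^i(N,A)$ for $i \neq 2$ on finite-length $N$, the connecting isomorphism $\Ext^1(I,A) \cong \Ext^2(A/I,A)$, exactness and length-preservation of $\Ext^2(\functorspace,A)$, Matlis duality (valid without completion for finite-length modules) to identify the image in $\localcoh{\idealm_A}{A}$ with the $I$-torsion, and the \v{C}ech computation for the last assertion --- correctly supplies in full the standard proof that the paper outsources to the reference.
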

\begin{proof}
	See \cite{Matsumoto:k3rdpht}*{Lemma 3.1}.
	The assertion on the dimension follows from $\dim \Ext^1(\idealm, A) = \dim \Ext^2(k, A) = 1$, which follows from Gorenstein.
\end{proof}

\begin{lem} \label{lem:Ext1-Frobenius} 
Let $A$ and $I$ be as in the previous Lemma.
Then there are canonical semilinear maps 
$\map{F}{\Ext^1(I, A)}{\Ext^1(\pthpower{I}, A)}$ and
$\map{F}{\Ext^2(A/I, A)}{\Ext^2(A/\pthpower{I}, A)}$,
	which we call the Frobenius,
	satisfying the following properties.
	\begin{itemize}
		\item 
		$F$ commute with the boundary maps
		and the pullbacks by inclusions $I' \injto I$ of ideals.
		\item 
		$h_{\pthpower{I}}(F(e)) = (h_I(e))^p$,
		where $h_I$ is the map 
		$\Ext^1(I, A)  \isomto \Ext^2(A/I, A) \to \localcoh{\idealm_A}{A} \isomto \Coker \Bigl( A[x^{-1}] \oplus A[y^{-1}] \to A[(xy)^{-1}] \Bigr)$ 
		defined in Lemma \ref{lem:Ext1}. 
	\end{itemize}
\end{lem}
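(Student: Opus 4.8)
The plan is to construct both Frobenius maps from a single canonical Frobenius action on the local cohomology module $\localcoh{\idealm_A}{A}$ and then transport it through the identifications supplied by Lemma \ref{lem:Ext1}. First I would introduce the $p$-semilinear endomorphism $\Phi$ of $\localcoh{\idealm_A}{A}$. Using the presentation $\localcoh{\idealm_A}{A} \isomto \Coker\Bigl( A[x^{-1}] \oplus A[y^{-1}] \to A[(xy)^{-1}] \Bigr)$ of Lemma \ref{lem:Ext1}, I define $\Phi$ as the map induced by the ring Frobenius $a \mapsto a^p$ of $A[(xy)^{-1}]$, so that $\Phi([a/(xy)^n]) = [a^p/(xy)^{pn}]$. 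This descends to the cokernel because Frobenius carries $A[x^{-1}]$ and $A[y^{-1}]$ into themselves; it is additive (as $(u+v)^p = u^p+v^p$ in characteristic $p$) and satisfies $\Phi(c\xi) = c^p\Phi(\xi)$, and it is the canonical Frobenius of local cohomology, hence independent of the chosen regular sequence $x,y$.

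The key point, and essentially the only step with genuine content, is a torsion shift: $\Phi$ sends the $I$-torsion into the $\pthpower{I}$-torsion, i.e.\ $\Phi\bigl(\localcohtorsion{\idealm_A}{A}{I}\bigr) \subset \localcohtorsion{\idealm_A}{A}{\pthpower{I}}$. Indeed, if $I\xi = 0$ and $a \in I$, then $a^p \Phi(\xi) = \Phi(a\xi) = 0$ directly from the displayed formula, and since $\pthpower{I}$ is generated by the elements $a^p$ with $a \in I$, the class $\Phi(\xi)$ is killed by $\pthpower{I}$. This is precisely where one must work inside $\localcoh{\idealm_A}{A}$: the naive module map $A/I \to A/\pthpower{I}$, $\bar a \mapsto \overline{a^p}$, points the wrong way to induce anything on $\Ext^2(\functorspace, A)$, whereas on local cohomology Frobenius acts in the correct (covariant) direction.

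With this in hand I would define $F$ by transport. By Lemma \ref{lem:Ext1} the map $h_I$ realizes the boundary isomorphism $\Ext^1(I,A) \isomto \Ext^2(A/I,A)$ and identifies both groups with $\localcohtorsion{\idealm_A}{A}{I} \subset \localcoh{\idealm_A}{A}$, and likewise for $\pthpower{I}$. I then set $F := h_{\pthpower{I}}^{-1} \circ \Phi \circ h_I$ on $\Ext^1(I,A)$, and the analogous composite on $\Ext^2(A/I,A)$; these are well defined by the torsion shift and $p$-semilinear because $\Phi$ is.

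The two asserted properties are then formal. Property (2) is immediate from the definition, since $(h_I(e))^p$ is, by construction of the cokernel model, exactly $\Phi(h_I(e))$. For property (1), both Frobenius maps are obtained by conjugating the one canonical endomorphism $\Phi$ through the injections into $\localcoh{\idealm_A}{A}$, so commutation with the boundary isomorphisms $\Ext^1(I,A) \isomto \Ext^2(A/I,A)$ (and their analogues for $\pthpower{I}$) is automatic, as these are built into $h_I$; and compatibility with the pullback along an inclusion $I' \injto I$ follows because on local cohomology that pullback is just the inclusion $\localcohtorsion{\idealm_A}{A}{I} \injto \localcohtorsion{\idealm_A}{A}{I'}$, which $\Phi$ respects, the requisite naturality of $h$ being part of Lemma \ref{lem:Ext1}. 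The main obstacle is thus confined to the torsion-shift computation of the second paragraph; everything else is bookkeeping through the identifications of Lemma \ref{lem:Ext1}.
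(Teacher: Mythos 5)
Your construction is correct and matches the paper's intended argument: the paper's proof simply cites \cite{Matsumoto:k3rdpht}*{Lemmas 3.3 and 3.4} with the remark that the maps are naturally defined on the local cohomology group $\localcoh{\idealm_A}{A}$, which is exactly your route — the $p$-th power Frobenius on the \v{C}ech model $\Coker\bigl(A[x^{-1}] \oplus A[y^{-1}] \to A[(xy)^{-1}]\bigr)$, the torsion shift from $I$-torsion to $\pthpower{I}$-torsion via $\Phi(a\xi) = a^p\Phi(\xi)$, and transport through the identifications $h_I$, $h_{\pthpower{I}}$ of Lemma \ref{lem:Ext1}. The remaining points (naturality of $h$ in $I$, and that $\pthpower{I}$ again satisfies $\Supp(A/\pthpower{I}) \subset \set{\idealm_A}$ so Lemma \ref{lem:Ext1} applies to it) are routine, as you indicate.
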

\begin{proof}
	We define the maps on the local cohomology groups $\localcoh{\idealm_A}{A}$,
	and use the identification of Lemma \ref{lem:Ext1}.
\end{proof}

Now let $Z = \Spec A$ be a local RDP in characteristic $p$
and suppose $(p, \Sing(Z))$ is one of the following, and define an integer $m \geq 1$ accordingly.
\begin{itemize}
\item $(p, \Sing(Z)) = (2, D_{4m}^r)$, $m \geq 1$, $r \in \set{0, \dots, m}$.
\item $(p, \Sing(Z)) = (2, E_8^r)$, $r \in \set{0,1,2}$, let $m = 2$.
\item $(p, \Sing(Z)) = (3, E_6^r)$, $r \in \set{0,1}$, let $m = 1$.
\item $(p, \Sing(Z)) = (5, E_8^r)$, $r \in \set{0,1}$, let $m = 1$.
\end{itemize}
Thus, in each case, the range of $r$ is $\set{0, \dots, m}$. 

(The RDPs of type $D_{4m}^{r}$ ($r \in \set{m+1, \dots, 2m-1}$) and $E_{8}^r$ ($r \in \set{3,4}$) in characteristic $2$
will not be discussed in this paper.)

We assume $A$ is complete, 
and we fix the presentation $A = k[[x,y,z]] / (F)$ as follows, for each case of $(p, \Sing(Z))$.
		\begin{alignat*}{2}
			(2, D_{4m}^r) &\colon F =  z^2 + x^2 y + x y^{2m}+ \lambda z x y^m,      &\qquad \lambda &= 0, y^{m-r} \quad (r = 0, r > 0), \\
			(2, E_8^r)    &\colon F =  z^2 + x^3   + y^5     + \lambda z x y^2,      &\qquad \lambda &= 0, y, 1 \quad (r = 0,1,2), \\
			(3, E_6^r)    &\colon F = -z^2 + x^3   + y^4     + \lambda x^2 y^2,      &\qquad \lambda &= 0, 1 \quad (r = 0,1), \\
			(5, E_8^r)    &\colon F =  z^2 + x^3   + y^5     + (\lambda/2) x y^4,    &\qquad \lambda &= 0, 2 \quad (r = 0,1).
		\end{alignat*}

		Write $x_1 = x$ and $x_2 = y$.
		Let $Z_i = \Spec A[x_i^{-1}]$.
		Define $\bar{q}_i \in A[x_i^{-1}]$ as below and let $\bar{\varepsilon} := z/(x y^{m})$.
		Then we have $\bar{\varepsilon}^p - \lambda \bar{\varepsilon} = \bar{q}_1 - \bar{q}_2$.
\[
\bar{q}_1 := \begin{cases}
x^{-1}, \\
x^{-2} y, \\
x^{-3} y z, \\
x^{-5} (y^5 + \lambda x y^4 + (\lambda^2/4) x^2 y^3 + 2 x^3)z,
\end{cases}
\quad
\bar{q}_2 := \begin{cases}
y^{-(2m-1)}, \\
y^{-4} x, \\
-y^{-3} z , \\
-y^{-5} x z. 
\end{cases}
\]
		Note that $\bar{\varepsilon}$ itself cannot be written as $\bar{\varepsilon} = q'_1 - q'_2$ with $q'_i \in A[x_i^{-1}]$.

Define an ideal $I \subset A$ to be $I = (x, y^m, z)$ according to the convention on $m$ and the presentation given above.
In fact, this ideal can be defined intrinsically (without assuming completeness) as follows:
\begin{itemize}
	\item If $(p, \Sing(Z))$ is $(2, D_4^r)$, $(3, E_6^r)$, or $(5, E_8^r)$,
	then $I$ is the maximal ideal $\fm$. 
	\item If $(p, \Sing(Z))$ is $(2, D_{4m}^r)$ (resp.\ $(2, E_8^r)$), 
	then $I$ consists of the elements that vanish on the component $e_{2m}$ (resp.\ $e_{4}$) 
	with order $\geq 2m$ (resp.\ $\geq 8$),
	where the components are numbered as in Convention \ref{conv:exceptional curves}.
\end{itemize}

\begin{lem} \label{lem:RDP:bis:dim H1L1}
			$\Ext^1_A(I, A)$ is $m$-dimensional as a $k$-vector space,
and generated by the class $\bar{e}$ of $\bar{\varepsilon}$ as an $A$-module
(under the identification of Lemma \ref{lem:Ext1}).
\end{lem}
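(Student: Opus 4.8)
The plan is to verify the two assertions separately: that $\dim_k \Ext^1_A(I,A) = m$, and that $\bar e$ generates. For the dimension I would invoke Lemma \ref{lem:Ext1}, which gives $\dim_k \Ext^1_A(I,A) = \dim_k A/I$, and then compute $\dim_k A/I = m$ by hand in each of the four cases: since $x, z \in I$, setting $x = z = 0$ reduces $A/I$ to $k[[y]]/(\bar F, y^m)$, and as $\bar F = F|_{x=z=0}$ is either $0$ (type $D_{4m}$) or a power $y^e$ with $e \ge m$ (the other cases), this is $k[[y]]/(y^m)$, of dimension $m$. At the same time Lemma \ref{lem:Ext1} identifies $\Ext^1_A(I,A)$, $A$-linearly, with the $I$-torsion submodule $\localcohtorsion{\idealm_A}{A}{I}$ of $\localcoh{\idealm_A}{A}$, and the latter with $\Coker\bigl(A[x^{-1}]\oplus A[y^{-1}] \to A[(xy)^{-1}]\bigr)$; to apply this I would first check that $x,y$ is a regular sequence, which holds because $A/(x,y) \cong k[[z]]/(z^2)$ has finite length in every case.

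The key structural point I would then isolate is that in all four cases the equation $F$ is, up to a unit, monic of degree $2$ in $z$. Hence $A$ is free of rank $2$ over $k[[x,y]]$ with basis $1, z$, and this basis is preserved by all three localizations, so the \v{C}ech cokernel splits as $\localcoh{\idealm_A}{A} \cong H^2_{(x,y)}(k[[x,y]]) \cdot 1 \,\oplus\, H^2_{(x,y)}(k[[x,y]]) \cdot z$. Using the standard inverse-monomial basis $\{x^{-a}y^{-b} : a,b \ge 1\}$ of $H^2_{(x,y)}(k[[x,y]])$, this yields an explicit $k$-basis $\{x^{-a}y^{-b},\ x^{-a}y^{-b}z : a,b\ge 1\}$ of $\localcoh{\idealm_A}{A}$, in which $\bar\varepsilon = z/(xy^m) = x^{-1}y^{-m}z$ is a single basis vector (in particular nonzero, recovering the remark preceding the statement).

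Finally I would show $\bar e$ generates. Since $\Ext^1_A(I,A)$ is annihilated by $I$, it is a module over $A/I \cong k[[y]]/(y^m)$, so it suffices to check that $\bar\varepsilon, y\bar\varepsilon, \dots, y^{m-1}\bar\varepsilon$ span. First one verifies $I\cdot\bar\varepsilon = 0$: the generators $x$ and $y^m$ send $\bar\varepsilon$ into $A[y^{-1}]$ and $A[x^{-1}]$ respectively, while $z\bar\varepsilon = z^2/(xy^m)$ becomes, after substituting the relation $z^2 = \dots$ coming from $F$, a sum of terms each lying in $A[x^{-1}]$, $A[y^{-1}]$, or $A$, hence $0$ in the cokernel. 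Consequently $y^j\bar\varepsilon = x^{-1}y^{-(m-j)}z$ for $0 \le j \le m-1$, and these are $m$ distinct vectors of the basis above, hence linearly independent; since $\dim_k\Ext^1_A(I,A) = m$ they form a basis, so $\bar e$ generates the module. The main work — and the only place the four cases must be inspected individually — is the rank-$2$ splitting together with the reduction of $z\bar\varepsilon$ via the explicit equations; everything else is formal once Lemma \ref{lem:Ext1} and this basis are in place.
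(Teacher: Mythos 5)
Your proposal is correct and follows essentially the same route as the paper: the dimension count comes from Lemma \ref{lem:Ext1} together with $A/I \cong k[[y]]/(y^m)$, and generation rests on the rank-two decomposition $A = k[[x,y]] \oplus k[[x,y]]\,z$ inside the \v{C}ech cokernel, which is precisely how the paper's proof sees that the classes $h(y)\bar{\varepsilon}$ (equivalently your $y^j\bar{\varepsilon}$) are nonzero, concluding by the same dimension comparison. Your explicit inverse-monomial basis and the direct verifications that $I\bar{\varepsilon}=0$ and that $x,y$ is a regular sequence simply spell out details that the paper's annihilator phrasing $\Ann(\bar{e}) = I$ leaves implicit.
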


\begin{proof}
		By Lemma \ref{lem:Ext1},
		we have $\dim \Ext^1_A(I, A) = \dim_k (A/I) = m$.
		We also have $I \subset \Ann(\bar{e})$.
		It remains to show that $\Ann(\bar{e}) \subset I$.
		It suffices to show $y^{m-1} \notin \Ann(\bar{e})$.
		Using the isomorphism $A = k[[x,y]] \oplus k[[x,y]] z$ of $k[[x,y]]$-modules, we see that 
		the class of $y^{m-1} \bar{\varepsilon} = z y^{m-1} / (x y^m) = z / (xy)$ in
		$\Coker \Bigl( A[x^{-1}] \oplus A[y^{-1}] \to A[(xy)^{-1}] \Bigr)$ is nontrivial.
\end{proof}

	\begin{lem} \label{lem:cohomology of RDP:bis}
		Let $q_i \in A[x_i^{-1}]$ and $\varepsilon \in A[(xy)^{-1}]$.
		Suppose $\varepsilon^p - \lambda \varepsilon = q_1 - q_2$, and 
		the class $[\varepsilon]$
		is a generator of $\Ext^1_A(I, A)$.
		Let $U_i \to Z_i$ be the coverings given by 
		$\cO_{U_i} = \cO_{Z_i}[t_i] / (t_i^p - \lambda t_i - q_i)$,
		glue them on $Z_1 \cap Z_2$ by 
		$t_1 - t_2 = \varepsilon$, 
		and let $U = \Spec B \to Z = \Spec A$ be the normalization of $U_1 \cup U_2 \to Z_1 \cup Z_2 = Z^{\sm} \subset Z$.
		Then the following assertions hold.
		\begin{enumerate}
		\item \label{item:RDP:bis:unit}
			Let $e$ be the class of $\varepsilon$ in $\Ext^1_A(I, A)$ 
			Then $e = h \cdot \bar{e}$ 
			for some $h \in (k[y]/y^m)^*$.
			We have ($\iota^*(e) \neq 0$ and) $F(e) = \lambda \cdot \iota^*(e)$,
			where 
			$\map{F}{\Ext^1(I, A)}{\Ext^1(\pthpower{I}, A)}$ is the Frobenius (Lemma \ref{lem:Ext1-Frobenius}),
			and $\map{\iota^*}{\Ext^1(I, A)}{\Ext^1(\pthpower{I}, A)}$ is the morphism induced from the inclusion $\map{\iota}{\pthpower{I}}{I}$.
			If $r = m$ then $h \in \mu_{p-1} \subset k^*$.
		\item \label{item:RDP:bis:U smooth}
			$U_1 \cup U_2$ is regular, and $U$ is regular.
		\item \label{item:RDP:bis:delta}
			There is a unique endomorphism $\delta \in \End(B)$ (of the $A$-module $B$) satisfying
			$\delta \restrictedto{A} = 0$,
			$\delta(t_i) = 1$, 
			$\delta(b c) = \delta(b) c + b \delta(c) + \lambda^{1/(p-1)} \delta(b) \delta(c)$,
			and $\delta^p = 0$.
			Here we fix a $(p-1)$-th root $\lambda^{1/(p-1)}$ of $\lambda$.

			If $r = m$ (resp.\ $0 < r < m$), then
			$g := \id + \lambda^{1/(p-1)} \delta$ is an automorphism of order $p$
			generating $\Aut_Z(U)$, and
			$\pi$ is a $\bZ/p\bZ$-covering with $\Supp \Fix(g)$ consisting precisely of the closed point (resp.\ $\dim \Supp \Fix(g) = 1$).
			If $r = m$, this means that $U \times_{Z} Z^{\sm} \to Z^{\sm}$ is the fundamental covering.

			If $r = 0$,
			then $\delta$ is a derivation of additive type, and
			$\pi$ is an $\alpha_p$-covering with $\Supp \Fix(\delta)$ consisting precisely of the closed point.
		\item \label{item:RDP:bis:V}
			We have $\Image (\delta^{j} \restrictedto{\Ker \delta^{j+1}}) = I$ for all $1 \leq j \leq p-1$.
		\item \label{item:RDP:bis:e}
			Let $V = \Ker \delta^2 \subset B$.
			The extension
			\[ 0 \to A \to V \namedto{\delta} I \to 0 \]
			is non-split.
			The corresponding class in $\Ext^1(I, A)$ is $e$.
		\end{enumerate}
	\end{lem}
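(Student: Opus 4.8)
The plan is to read off everything from the explicit chart description of $B$ together with the dictionary, already set up in Lemmas \ref{lem:Ext1}--\ref{lem:RDP:bis:dim H1L1}, between the twisted derivation $\delta$ on $U$ and the local cohomology class $e \in \Ext^1_A(I,A)$. Writing $\mu = \lambda^{1/(p-1)}$, I would first construct $\delta$ chartwise: on $\cO_{U_i} = \cO_{Z_i}[t_i]/(t_i^p - \lambda t_i - q_i)$ the prescriptions $\delta|_A = 0$, $\delta(t_i) = 1$ and the twisted Leibniz rule determine a unique additive operator, and it respects the gluing $t_1 - t_2 = \varepsilon$ because $\delta$ kills $A[(xy)^{-1}]$ (from $\delta|_A=0$ one gets $\delta(x^{-1})=\delta(y^{-1})=0$), so $\delta(\varepsilon)=0 = 1-1$. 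When $\lambda \neq 0$ this $\delta$ equals $\mu^{-1}(g - \id)$ for the chart automorphism $g\colon t_i \mapsto t_i + \mu$, whence $\delta^p = \mu^{-p}(g-\id)^p = \mu^{-p}(g^p - \id) = 0$ since $g$ has order $p$; when $\lambda = 0$, $\delta = d/dt_i$ is an honest additive-type derivation with $\delta^p = 0$. Both $\delta$ (in the additive case) and $g$ (in the $\bZ/p\bZ$ case) extend to the normalization $B$ by Lemma \ref{lem:extending derivation}, and a degree count gives $\Ker\delta = B^{\langle g\rangle} = A$ (resp.\ $B^\delta = A$), that $g$ generates $\Aut_Z(U) = \bZ/p\bZ$, and the multiplicative-versus-additive dichotomy of (\ref{item:RDP:bis:delta}).

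Part (\ref{item:RDP:bis:U smooth}), the regularity of $U$, is the computational heart. Each $U_i$ is a hypersurface over the regular ring $\cO_{Z_i} = A[x_i^{-1}]$ (the singular point of $Z$ lies on both $C_1$ and $C_2$, so both charts are smooth), hence $B$ is Gorenstein; it remains to verify that the local ring of $B$ at the unique point over the closed point of $Z$ is regular, which I would do case by case from the equations in the table by exhibiting a regular system of parameters. Granting this, the fixed-locus assertions split into two regimes. For $r = 0$ we have $\lambda = 0$, so $\delta$ is an additive-type derivation on the \emph{regular} ring $B$ with $B^\delta = A$ the given RDP — exactly the setting of Lemma \ref{lem:degree of isolated fixed point}, whose parts (\ref{subcase:alpha}) and (\ref{subcase:alpha35}) yield that $\Supp\Fix(\delta)$ is the closed point and (for $p = 3,5$, where $I = \fm$) that $\Image(\delta^j|_{\Ker\delta^{j+1}}) = I$. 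For $r > 0$, I read off $\divisorialfix{\delta}$ and $\isolatedfix{\delta}$ from $\lambda = y^{m-r}$: for $r = m$ ($\lambda$ a unit) the cover is étale over $Z^{\sm}$, i.e.\ the fundamental $\bZ/p\bZ$-cover, so $\Fix(g)$ is isolated, while for $0 < r < m$ the vanishing of $\lambda$ on $(y=0)$ forces $\dim\Supp\Fix(g) = 1$.

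For part (\ref{item:RDP:bis:unit}): since $\Ext^1_A(I,A) \cong A/I \cong k[y]/(y^m)$ is cyclic on $\bar e$ with annihilator $I$ (Lemma \ref{lem:RDP:bis:dim H1L1}), any generator is $e = h\bar e$ with $h$ a unit of $k[y]/(y^m)$, i.e.\ $\deg h < m$ with nonzero constant term. Pushing to $\localcoh{\fm}{A}$ through the injective map $h_I$ and using Lemma \ref{lem:Ext1-Frobenius}, one computes $h_{\pthpower I}(F(e)) = h_I(e)^p = [\varepsilon^p] = [\lambda\varepsilon + q_1 - q_2] = \lambda[\varepsilon]$, whereas $h_{\pthpower I}(\iota^* e) = [\varepsilon]$; injectivity gives $\iota^* e \neq 0$ and $F(e) = \lambda\,\iota^* e$. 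For $r = m$, $\lambda \in k^*$, and the same identity applied to $\bar e$, combined with the semilinearity $F(h\bar e) = h^p F(\bar e)$ and the $A$-linearity of $\iota^*$, yields $(h^p - h)\,\iota^*\bar e = 0$. Since $\Ann(\iota^*\bar e) = \Ann(\bar e) = I$ and $I \cap k[y] = (y^m)$, this reads $h^p \equiv h \pmod{y^m}$; comparing coefficients and descending on exponents (using $a_0 \neq 0$) forces $h \in \bF_p^* = \mu_{p-1}$.

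Finally, for part (\ref{item:RDP:bis:e}), $\delta$ maps $V = \Ker\delta^2$ into $\Ker\delta = A$ with kernel $A$, and over $Z_1, Z_2$ the module $V$ is generated by $A[x_i^{-1}]$ and $t_i$ with $\delta(t_i) = 1$, so the \v{C}ech $1$-cocycle of the extension $0 \to A \to V \namedto{\delta} \delta(V) \to 0$ is exactly the transition $t_1 - t_2 = \varepsilon$; hence $\delta(V) = I$ (this is (\ref{item:RDP:bis:V}) for $j = 1$), the extension is the one classified by $e = [\varepsilon]$, and it is non-split because $e \neq 0$. For general $j$ in (\ref{item:RDP:bis:V}) I would run the same analysis on the successive quotients $\Ker\delta^{j+1}/\Ker\delta^{j}$ (using that each maps into the previous one under $\delta$), noting that for $p = 2$ only $j = 1$ occurs and for $r = 0$, $p = 3,5$ one simply quotes Lemma \ref{lem:degree of isolated fixed point}(\ref{subcase:alpha35}). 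The main obstacle I expect is the case-by-case regularity computation in (\ref{item:RDP:bis:U smooth}), on which everything downstream depends, together with the precise bookkeeping of $\divisorialfix$ versus $\isolatedfix$ for the intermediate coindices $0 < r < m$.
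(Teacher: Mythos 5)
Much of your outline runs parallel to the paper's actual proof: the chartwise construction and gluing of $\delta$, the unit/Frobenius computation in (\ref{item:RDP:bis:unit}) (essentially identical to the paper's, via $h_{\pthpower{I}}(F(e)) = [\varepsilon]^p = \lambda[\varepsilon]$ and $h^p = h$ in $k[y]/(y^m)$), the cocycle identification in (\ref{item:RDP:bis:e}), and the characteristic-$2$ normalization via $u = x t_1$, $v = y^m t_2$ are all sound. But there is a genuine gap at the first point you defer. For (\ref{item:RDP:bis:U smooth}) in the $r = 0$ cases with $p = 3, 5$ (types $E_6^0$, $E_8^0$), ``exhibiting a regular system of parameters case by case'' presupposes that you have computed the normalization $B$ at the point over the closed point, which is invisible in the charts $U_1, U_2$; you never produce the integral generators, and in characteristic $3$ and $5$ they are not as accessible as in characteristic $2$. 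The paper sidesteps this entirely: it applies Proposition \ref{prop:covering} to $U' \to Z^{\sm}$ to obtain the dual derivation $D$ on $Z$ with $Z^D = \pthpower{(\normalization{U})}$, verifies $\Fix(D) = \emptyset$ by the one-line computation $D(x_i) = -c + F_{x_{i+2}} f_{x_{i+1}} - F_{x_{i+1}} f_{x_{i+2}} \in \cO_Z^*$ using the explicit $\bar{q}_i$, and then invokes Theorem \ref{thm:mu_p alpha_p RDP}(\ref{thm:mu_p alpha_p RDP:non-fixed}) (Table \ref{table:non-fixed RDP}: these RDPs with fixed-point-free derivations have \emph{smooth} quotient) to conclude $\pthpower{U}$, hence $U$, is regular. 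Relatedly, for $r = 0$ you cannot extract ``$\Supp\Fix(\delta)$ is the closed point'' from Lemma \ref{lem:degree of isolated fixed point} — that is a \emph{hypothesis} of that lemma, not a conclusion; it needs the separate (easy) observations that $\delta(t_i) = 1$ kills the fixed locus on $U_1 \cup U_2$ and that the closed point is fixed because its image is singular.

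The larger gap is (\ref{item:RDP:bis:V}) for $r = m$, $p = 3, 5$ (types $E_6^1$, $E_8^1$). Your plan to ``run the same analysis on the successive quotients $\Ker\delta^{j+1}/\Ker\delta^j$'' requires deciding, for $2 \leq j \leq p-1$, which $b \in A$ admit global elements of $B$ of the form $b\, t_i^j/j! + (\text{lower order in } t_i)$, and the gluing $t_1 = t_2 + \varepsilon$ turns this into a membership problem for classes built from $b\varepsilon^2, \dots, b\varepsilon^{j}$ modulo $A[x^{-1}] + A[y^{-1}]$ with correctable lower-order terms — not a formal consequence of the $j = 1$ case, and nothing in your proposal attacks it. Tellingly, the paper does not prove this locally at all: it proves $I_j = I$ for $E_6^1$ and $E_8^1$ by a \emph{global} argument, taking the explicit $\bZ/3\bZ$- and $\bZ/5\bZ$-actions on K3 surfaces of Examples \ref{ex:k3:z3} and \ref{ex:k3:z5} whose quotients have exactly $2E_6^1$ resp.\ $2E_8^1$, and computing Euler characteristics of the filtration ideals $\cI_j$ of $\pi_*\cO_X$: from $2 = \chi(\cO_X) = \chi(\cO_Y) + \sum_{j=1}^{p-1}\chi(\cI_j) = 2 + \sum_j (2 - 2 a_j)$ with $a_j = \dim_k A/I_j \geq 1$, it forces $a_j = 1$, i.e.\ $I_j = \fm = I$ for all $j$. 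This transfer from global examples to a local statement is the missing idea; without it (or a new local computation you have not supplied) the coindex-$1$ cases of (\ref{item:RDP:bis:V}) remain unproved, and with them the quotient-singularity count for $G = \bZ/p\bZ$ in Theorem \ref{thm:singularities of quotient K3}(\ref{item:quotient}) that depends on this lemma.
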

		These descriptions of the coverings for the cases $r > 0$ 
		are essentially the ones given in \cite{Artin:RDP}*{Sections 4--5}.
		(We note that the equations for $p = 3,5$ given there should be fixed as $- \alpha^3 - \alpha$ for $p = 3$ and $\alpha^5 - 2 \alpha$ for $p = 5$.)

\begin{proof}
		(\ref{item:RDP:bis:unit})
		By Lemma \ref{lem:RDP:bis:dim H1L1}, the first assertion is clear.
		The assumption on $\varepsilon$ yields the equality $F(e) - \lambda \cdot \iota^*(e) = 0$.
		Suppose $r = m$.
		Since $\bar{e}$ satisfies the same equality and since $\lambda \in k^*$,
		we have $h^p = h$ in $k[y]/(y^m)$, hence $h \in \mu_{p-1}$.
		
		(\ref{item:RDP:bis:U smooth})
		For the cases $r = m$, this is proved by Artin \cite{Artin:RDP}*{Sections 4--5}.

		Suppose $(p, \Sing(Z)) = (2, D_{4m}^r)$ (resp.\ $(2, E_8^r)$).
		Let $u = x t_1$ and $v = y^m t_2$.
		Then we have $u^2 + \lambda x u - x^2 f= x$ (resp.\ $= y$),
		$v^2 + \lambda y^m v - y^{2m} f = y$ (resp.\ $= x$),
		and $y^m u - x v = z$.
		Let $B' = A[u, v]$.
		Then by above $B'$ is integral over $A$, satisfies $A \subsetneq B' \subset \Frac B$, and
		its maximal ideal is generated by $u$ and $v$,
		hence $B'$ is regular, in particular normal, hence $B' = B$.

		Suppose $r = 0$.
		Let $Z' := Z^{\sm} = Z_1 \cup Z_2$ and $U' := U \times_Z Z'$.
		Let $\omega$ be the $2$-form on $Z'$ satisfying 
		$F_{x_i} \omega = dx_{i+1} \wedge dx_{i+2}$,
		where we write $(x_1,x_2,x_3) = (x,y,z)$
		and consider the indices modulo $3$.
		Applying Proposition \ref{prop:covering} to $U' \to Z'$,
		we obtain a $1$-form $\eta$ satisfying $\eta = dq_i = d\bar{q}_i + df$ on $Z_i$
		and a derivation $D'$ satisfying $D'(g) \omega = dg \wedge \eta$,
		$\pi(\Sing(U')) = \Zero(\eta) = \Fix(D')$, and 
		$Z'^{D'} = \pthpower{(\normalization{(U')})}$.
		Since $Z$ is normal, $D'$ extends to a derivation $D$ on $Z$,
		and since $U$ is normal we have $Z^D = \pthpower{U}$.
		It remains to show $\Fix(D) = \emptyset$,
		since then by Theorem \ref{thm:mu_p alpha_p RDP}(\ref{thm:mu_p alpha_p RDP:non-fixed})
		it follows that $\pthpower{U}$ and hence $U$ are regular.
		Let $c = 1,1,3$ and $i = 3,1,2$ for $p = 2,3,5$ respectively (hence $F_{x_i} = 0$).
		A straightforward calculation yields 
		$\eta = c F_{x_{i+1}}^{-1} dx_{i+2} + df$ ($= - c F_{x_{i+2}}^{-1} dx_{i+1} + df$).
		Hence we have 
		$D(x_i) = -c + F_{x_{i+2}} f_{x_{i+1}} - F_{x_{i+1}} f_{x_{i+2}} \in \cO_Z^*$
		(where $df = \sum_h f_{x_h} d x_h$),
		hence $\Fix(D) = \emptyset$.

		(\ref{item:RDP:bis:delta})
		On each $U_i$ there exists a unique $\delta \in \End(\cO_{U_i})$ with the required properties.
		They glue to an endomorphism $\delta$ on $\cO_{U_1 \cup U_2}$.
		Since $U$ is normal and $U_1 \cup U_2$ is the complement in $U$ of a codimension $2$ subscheme, this $\delta$ extends to $U$.

			If $r = m$ (resp.\ $0 < r < m$), then
			$g := \id + \lambda^{1/(p-1)} \delta \in \End(B)$ 
			preserves products and satisfies $g^p = \id$, hence is an automorphism.
			It is nontrivial since $\lambda \neq 0$ and $\delta \neq 0$.
			Since the ideal of $\cO_{U_1 \cup U_2}$ generated by $\Image(g - \id)$ is $(y^{m-r})$,
			we have $\Supp \Fix(g) \restrictedto{U_1 \cup U_2} = \emptyset$
			(resp.\ $\Supp \Fix(g) \restrictedto{U_1 \cup U_2} = (y = 0)$).
			Since the image of the closed point of $U$ is singular, the closed point belongs to $\Supp \Fix(g)$.

			If $r = 0$,
			then $\delta$ is a derivation (since $\lambda = 0$) and is of additive type, 
			we have $\Supp \Fix(\delta) \restrictedto{U_1 \cup U_2} = \emptyset$,
			and similarly the closed point belongs to $\Supp \Fix(\delta)$.

		(\ref{item:RDP:bis:V})
		If $(p, \Sing(Z)) = (3, E_6^0), (5, E_8^0)$,
		this is proved in Lemma \ref{lem:degree of isolated fixed point}(\ref{subcase:alpha35}).

		Let $I_j := \Image (\delta^{j} \restrictedto{\Ker \delta^{j+1}})$ 
		for each $1 \leq j \leq p-1$.
		We have $I_{p-1} \subset I_j \subset I_1$.
		By assumption we have $\varepsilon \notin A[x^{-1}] + A[y^{-1}]$,
		hence $I_1 \subsetneq A$, hence $I_1 \subset \fm$.

		Suppose $p = 2$.
		Let $u,v$ be as in the proof of (\ref{item:RDP:bis:U smooth}).
		We have $\delta(1) = 0$, $\delta(u) = x$, $\delta(v) = y^m$, 
		$\delta(u v) = x v + u y^m + \lambda x y^m  = z + \lambda x y^m$.
		Since the $A$-module $B = k[[u,v]]$ is generated by $1, u, v, u v$,
		we obtain $I_1 = \Image(\delta) = (x, y^m, z) = I$.

		Suppose $(p, \Sing(Z)) = (3, E_6^1), (5, E_8^1)$. 
		Let $a_j = \dim_k (A/I_j)$ for each $1 \leq j \leq p-1$.
		Since $I_j \subset I_1 = I = \fm$ we have $a_j \geq 1$.
		It suffices to show $\sum_j (a_j - 1) = 0$.
		Suppose there is an action of $G = \bZ/p\bZ = \spanned{g}$ on a K3 surface $X$ 
		such that the quotient $Y = X/G$ is an RDP K3 surface with $(p, \Sing(Y)) = (3, n E_6^1), (5, n E_8^1)$ 
		and that $\Supp \Fix(G) = \pi^{-1}(\Sing(Y))$,
		where $\map{\pi}{X}{Y}$ is the quotient morphism.
		At each singular point $w$ of $Y$, 
		the morphism $\hat{\cO}_{X,\pi^{-1}(w)} \to \hat{\cO}_{Y,w}$ is as above
		(since it is the fundamental covering of $(\Spec \hat{\cO}_{Y,w})^{\sm}$).
		Let $\delta := g - \id \in \End(\pi_* \cO_X)$ and 
		$\cI_j := \Image (\delta^{j} \restrictedto{\Ker \delta^{j+1}}) \subset \cO_Y$ for each $1 \leq j \leq p-1$.
		We have $\chi_Y(\cI_j) = \chi_Y(\cO_Y) - \chi_Y(\cO_Y/\cI_j) = 2 - n a_j$ and
		$2 = \chi_X(\cO_X) = \chi_Y (\pi_* \cO_X) = \chi_Y(\cO_Y) + \sum_{j = 1}^{p-1} \chi_Y(\cI_j)
		= 2 + \sum_j (2 - n a_j)$.
		Since there indeed exist examples with $n = 2$ (Examples \ref{ex:k3:z3} and \ref{ex:k3:z5}),
		we obtain $\sum_j (a_j - 1) = 0$. 

		(\ref{item:RDP:bis:e})
		Clear.
\end{proof}

	\begin{lem} \label{lem:cohomology of RDP on K3:bis}
		Suppose $Y$ is an RDP K3 surface
		and let $Z_i = \Spec \hat{\cO}_{Y, w_i}$ for $w_i \in \Sing(Y)$.
		\begin{enumerate}
		\item \label{item:RDP on K3:bis:D4 E6 E8}
			Suppose $\Sing(Y) = \set{w_1, w_2}$.
			Let $\cI$ be the ideal $\cI = \Ker( \cO_Y \to \bigoplus_{i = 1,2} \cO_{Y,w_i} / \fm_{w_i} )$,
			where $\fm_{w_i}$ are the maximal ideals.
			Then the restriction $\Ext^1_Y(\cI, \cO_Y) \to \Ext^1_{Z_1}(\fm_{w_1}, \cO_{Z_1})$ is an isomorphism.
		\item \label{item:RDP on K3:bis:D8 E8}
			Suppose $\Sing(Y) = \set{w_1}$
			and $(p, w_1)$ is either $(2, D_8^r)$ or $(2, E_8^r)$ with $r \in \set{0,1,2}$. 
			Let $I \subset \cO_{Y,w_1}$ be the ideal defined above 
			(just before Lemma \ref{lem:RDP:bis:dim H1L1}) and
			$\cI = \Ker( \cO_Y \to \cO_{Y,w_1} / I )$.
			Then $\Ext^1_Y(\cI, \cO_Y) \to \Ext^1_{Z_1}(I, \cO_{Z_1})$ is injective and
			its image is a $1$-dimensional $k$-vector space generated by $a \cdot \bar{e}$ for some $a \in A^*$,
			where $\bar{e}$ is an element as in Lemma \ref{lem:RDP:bis:dim H1L1}.
		\end{enumerate}
	\end{lem}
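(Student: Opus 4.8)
The plan is to treat both parts uniformly through the structure sequence
$0 \to \cI \to \cO_Y \to \cO_Z \to 0$,
where $\cO_Z = \cO_Y/\cI$ is the finite-length skyscraper cut out by $\cI$ (the two reduced points in~(1), i.e.\ length $2$; the length-$m$ scheme $\Spec A_1/I$ in~(2)), and to apply $\Hom_Y(\functorspace, \cO_Y)$. Since $Y$ is an RDP K3 surface it is Gorenstein with $\omega_Y \cong \cO_Y$, $H^1(Y,\cO_Y) = 0$ and $H^2(Y,\cO_Y) \cong k$; hence $\Ext^i_Y(\cO_Y, \cO_Y) = H^i(Y,\cO_Y)$ equals $k, 0, k$ for $i = 0,1,2$, and Serre duality on the Gorenstein surface gives $\Ext^i_Y(\cF, \cO_Y) \cong H^{2-i}(Y, \cF)^\vee$. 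The whole argument then reduces to controlling a single connecting map.

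First I would compute $\Ext^\bullet_Y(\cO_Z, \cO_Y)$. As $\cO_Z$ is supported on $\Sing(Y)$, the sheaves $\sExt^q(\cO_Z,\cO_Y)$ are supported at points, so the local-to-global spectral sequence degenerates and identifies the global groups with the local ones $\Ext^\bullet_{A_i}(A_i/I_i, A_i)$ over the complete local rings $A_i = \hat\cO_{Y,w_i}$. Each $A_i$ is $2$-dimensional Gorenstein and $A_i/I_i$ has finite length, so $\Ext^j_{A_i}(A_i/I_i, A_i)$ vanishes for $j \le 1$ and equals the Matlis dual $(A_i/I_i)^\vee$ for $j = 2$ by local duality. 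Feeding this, with the computation of $\Ext^\bullet_Y(\cO_Y,\cO_Y)$, into the long exact sequence produces a connecting isomorphism $\Ext^1_Y(\cI,\cO_Y) \isomto \ker\phi$, where $\phi \colon \Ext^2_Y(\cO_Z,\cO_Y) \to \Ext^2_Y(\cO_Y, \cO_Y)$. By Serre duality $\phi$ is dual to the restriction-of-functions map $H^0(\cO_Y) = k \to H^0(\cO_Z)$, $1 \mapsto 1$, which is injective; hence $\phi$ is surjective and $\dim_k \ker\phi = \dim_k H^0(Y,\cO_Z) - 1 = 1$ in both~(1) (two points, each with $m = 1$) and~(2) ($m = 2$). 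Thus $\Ext^1_Y(\cI, \cO_Y)$ is one-dimensional in all cases.

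Next I would compare with the local groups via the naturality square relating the global connecting map $\partial\colon \Ext^1_Y(\cI,\cO_Y) \hookrightarrow \Ext^2_Y(\cO_Z,\cO_Y)$ (injective because $\Ext^1_Y(\cO_Y,\cO_Y) = 0$) to the local connecting isomorphism $\partial_{\mathrm{loc}}\colon \Ext^1_{A_1}(I,A_1) \isomto \Ext^2_{A_1}(A_1/I, A_1)$ of Lemma~\ref{lem:Ext1}, the right-hand vertical map being localization at $w_1$ (an isomorphism when $\Sing(Y) = \set{w_1}$, and the projection onto the $w_1$-summand in the two-point case). For~(1) the restriction to $Z_1$ is then $\partial_{\mathrm{loc}}^{-1}$ followed by the projection of $\ker\phi \subset k \oplus k$ onto its first factor; since $\phi$ restricted to the $w_2$-summand is dual to the isomorphism $H^0(\cO_Y)\isomto H^0(k(w_2))$ and is therefore injective, this projection is injective on the one-dimensional $\ker\phi$, and the restriction is an isomorphism. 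For~(2) the same square shows the restriction is injective with image $\partial_{\mathrm{loc}}^{-1}(\ker\phi)$, a one-dimensional subspace of the $m=2$-dimensional $\Ext^1_{A_1}(I,A_1) = (A_1/I)\bar e$ of Lemma~\ref{lem:RDP:bis:dim H1L1}.

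The main obstacle is the last identification in~(2): showing this line is spanned by $a\bar e$ with $a \in A^*$ and not by $y\bar e$. Here I would use that $A_1/I \cong k[y]/(y^m)$ (a direct check from the normal forms of $F$ fixed before the lemma), that $\partial_{\mathrm{loc}}$ is $A_1$-linear and sends the module generator $\bar e$ to a generator of the cyclic module $(A_1/I)^\vee$, and that $\ker\phi = \set{\xi : \xi(1) = 0}$. Then $\partial_{\mathrm{loc}}(y\bar e) = y\cdot\partial_{\mathrm{loc}}(\bar e)$, whose value at $1$ is $\partial_{\mathrm{loc}}(\bar e)(y^{m-1})$, the pairing of a generator of $(A_1/I)^\vee$ with the socle, hence nonzero; so $y\bar e$ is not in the image. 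A one-dimensional subspace of $k\bar e \oplus k y\bar e$ avoiding $y\bar e$ is spanned by some $(c_0 + c_1 y)\bar e$ with $c_0 \neq 0$, i.e.\ by $a\bar e$ with $a$ a unit, as claimed. The delicate point to pin down is the compatibility of the Serre-duality description of $\phi$ with the Matlis-duality/socle description of $\partial_{\mathrm{loc}}$, so that the condition $\xi(1)=0$ genuinely cuts out $\Image\partial$; once that compatibility is fixed, the remainder is bookkeeping in the two long exact sequences.
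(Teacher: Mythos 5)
Your proposal is correct, and it shares the paper's backbone --- the long exact sequence of $0 \to \cI \to \cO_Y \to \cO_Y/\cI \to 0$, the Serre-duality dimension count $\dim \Ext^1_Y(\cI,\cO_Y) = h^0(\cO_Y/\cI) - 1$, and the identification of the $\Ext^2$ terms with local data at the singular points --- but it resolves the two key steps by a genuinely different mechanism. The paper never computes the map $\phi$ (its $\gamma$) explicitly: it instead introduces the auxiliary larger ideal $\cJ$ (the ideal of the reduced point(s)) and exploits the cartesian square relating $\Ext^1_Y(\cJ,\cO)$ and $\Ext^1_Y(\cI,\cO)$, so that (1) follows from $\Ext^1_Y(\cJ,\cO) = 0$ with $\cJ$ concentrated at $w_2$, and in (2) the forbidden line is identified module-theoretically as $M_J = yM$ (the unique $A$-submodule of the right length) with $M_Y \cap M_J = 0$ forced by cartesianness. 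You instead describe $\ker\phi$ explicitly as the functionals vanishing at $1$ and rule out $y\bar e$ by the socle pairing $(y\xi)(1) = \xi(y) \neq 0$; this is exactly dual to the paper's $M_J = yM$ argument, since $\partial_{\mathrm{loc}}(yM) = y\,(A_1/I)^\vee$ is precisely the line of functionals killing the socle. What the paper's route buys is that it needs duality only for dimension counts plus functoriality of the two $\Ext$ sequences, so it sidesteps your flagged delicate point entirely; what your route buys is a self-contained local computation with no auxiliary ideal. As for the flagged compatibility: it is not a genuine gap, because naturality alone pins down the two identifications of $\Ext^2_Y(\cO_Z,\cO_Y)$ with $(A_1/I)^\vee$ up to multiplication by a unit $u \in A_1^*$, and your argument tolerates this ambiguity --- in (2) one gets $(y\xi_0)(u) = \xi_0(uy) = u(0)\,\xi_0(y) \neq 0$, and in (1) injectivity of $\phi$ on the $w_2$-summand likewise survives replacing evaluation at $1$ by evaluation at a unit. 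With that remark added, your proof closes completely.
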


\begin{proof}
		Let $\cI \subsetneq \cO_Y$ be any ideal on an RDP K3 surface $Y$ with $\dim \Supp (\cO_Y / \cI) = 0$ (hence $\Supp (\cO_Y / \cI) \neq \emptyset$).
		By Serre duality (and the equalities $h^1(\cO_Y) = 0$ and $h^2(\cO_Y) = 1$), we obtain 
		$\dim \Ext^1(\cI, \cO_Y) = h^0(\cO / \cI) - 1$ and  
		$\dim \Ext^2(\cI, \cO_Y) = 0$.

		Comparing the long exact sequences for $0 \to \cI \to \cO \to \cO/\cI \to 0$ on $Y$ and $\coprod_i Z_i$,
		we have (since $H^1(Y, \cO) = H^1(Z_i, \cO) = H^2(Z_i, \cO) = 0$)
		\[
		\begin{tikzcd}
			0                                    \arrow[r] &
			\Ext^1_Y(\cI,     \cO)               \arrow[r] \arrow[d] &
			\Ext^2_Y(\cO/\cI, \cO)               \arrow[r] \arrow[d,"\sim"] &
			H^2(Y, \cO)                          \arrow[r] \arrow[d] &
			0 \\
			0                                      \arrow[r] &
			\bigoplus_i \Ext^1_{Z_i}(\cI,     \cO) \arrow[r] &
			\bigoplus_i \Ext^2_{Z_i}(\cO/\cI, \cO) \arrow[r] &
			0,                      &
		\end{tikzcd}
		\]
		hence we obtain an exact sequence 
		\[
		\begin{tikzcd}
			0                                      \arrow[r] &
			\Ext^1_Y(\cI,     \cO)                 \arrow[r] &
			\bigoplus_i \Ext^1_{Z_i}(\cI,     \cO) \arrow[r] &
			H^2(Y, \cO)                            \arrow[r] &
			0 
		\end{tikzcd}
		\]
		compatible with the Frobenius and the pullbacks by inclusions of ideals. 
		Here, the Frobenius on $\Ext^1_Y(\cI, \cO)$ is induced by the one on $H^2_{\Supp(\cO/\cI))}(Y, \cO)$.
		In particular, for any inclusion $\cI \injto \cJ \subsetneq \cO_Y$, 
		the diagram
		\[
		\begin{tikzcd}
			            \Ext^1_Y    (\cJ, \cO) \arrow[r] \arrow[d] & 
			\bigoplus_i \Ext^1_{Z_i}(\cJ, \cO)           \arrow[d] \\
			            \Ext^1_Y    (\cI, \cO) \arrow[r] &
			\bigoplus_i \Ext^1_{Z_i}(\cI, \cO) 
		\end{tikzcd}
		\]
		is a cartesian diagram.

		(\ref{item:RDP on K3:bis:D4 E6 E8})
		Apply this to $\cJ = \Ker (\cO_Y \to \cO_{Y, w_2} / \fm_{w_2})$.
		
		(\ref{item:RDP on K3:bis:D8 E8}) 
		Let $\cJ = \Ker (\cO_Y \to \cO_{Y, w_1} / \fm_{w_1})$
		and consider the diagram above. 
		Write $A = \hat{\cO}_{Y,w_1}$,
		$M := \Ext^1_{Z_1}(I, \cO)$,
		$M_J := \Image(\Ext^1_{Z_1}(  J, \cO) \to \Ext^1_{Z_1}(  I, \cO))$,  and
		$M_Y := \Image(\Ext^1_Y    (\cI, \cO) \to \Ext^1_{Z_1}(  I, \cO))$. 
		We know that $M$ is generated by an element $\bar{e}$ with $\Ann(\bar{e}) = I = (x, y^2, z)$,
		and that $M_J = M[J] = y M$ by Lemma \ref{lem:Ext1}.
		Now $M_Y \subset M$ is a $1$-dimensional $k$-vector subspace
		with $M_Y \cap M_J = \Ext^1_Y(\cJ, \cO) = 0$ by the above cartesian diagram.
		This shows that $M_Y$ has a basis $a \cdot \bar{e}$ for some $a \in A^*$.
\end{proof}

\section{\texorpdfstring{$\bZ/p\bZ$-, $\mu_p$-, $\alpha_p$-coverings of K3 surfaces by K3-like surfaces}{Z/pZ-, mu\_p-, alpha\_p-coverings of K3 surfaces by K3-like surfaces}} \label{sec:restoring}

Let $G$ be one of $\bZ/l\bZ$, $\bZ/p\bZ$, $\mu_p$, or $\alpha_p$ ($l$ is a prime $\neq p$).
Suppose $\map{\pi}{X}{Y}$ is a $G$-quotient morphism between RDP K3 surfaces in characteristic $p$, 
and suppose moreover that $\pi$ is maximal (Definition \ref{def:maximal}) if $G = \mu_p$ or $G = \alpha_p$
and that $X$ is smooth if $G = \bZ/l\bZ$ or $G = \bZ/p\bZ$.
Let $\map{\rho}{\tilde{Y}}{Y}$ be the minimal resolution.

Quotient singularities on $Y$ and some additional properties on $\Pic(Y^{\sm})$ are known for $G = \bZ/l\bZ$
(Theorem \ref{thm:singularities of quotient K3:tame}(\ref{item:quotient tame})).
We prove its analogue for $G = \mu_p, \bZ/p\bZ, \alpha_p$ 
(Theorem \ref{thm:singularities of quotient K3}(\ref{item:quotient})).
For $G = \bZ/l\bZ$, conversely, such properties on $Y$ recovers a $\bZ/l\bZ$-covering $X \to Y$
(Theorem \ref{thm:singularities of quotient K3:tame}(\ref{item:restore tame})).
We state and prove its analogue for $G = \mu_p, \bZ/p\bZ, \alpha_p$ 
(Theorem \ref{thm:singularities of quotient K3}(\ref{item:restore})).
However, in the converse statement for $\mu_p$ and $\alpha_p$,
the covering is a K3-like surface (Definition \ref{def:k3-like}) but not necessarily birational to a K3 surface.
This situation is similar to the canonical $\mu_2$- or $\alpha_2$-coverings of classical or supersingular Enriques surfaces in characteristic $2$,
where the covering is K3-like (\cite{Bombieri--Mumford:III}*{Proposition 9}) but not necessarily birational to a K3 surface.

\begin{thm} \label{thm:singularities of quotient K3:tame}
	\ 
	\begin{enumerate}
	\item \label{item:quotient tame}
	Let $\pi$ be as above and suppose $G = \bZ/l\bZ$.
	Then $l \leq 7$, 
	$\Sing(Y) = \frac{24}{l+1} A_{l-1}$,
	and $\card{\Pic(Y^{\sm})_{\tors}} = l$.
	The $l$-torsion is given by a divisor on $\tilde{Y}$ whose multiple by $l$ is linearly equivalent to 
	$\sum_{i,j} j a_i e_{i,j}$
	for a suitable numbering $e_{i,j}$ ($1 \leq i \leq \frac{24}{l+1}$, $1 \leq j \leq l-1$, $e_{i,j} \cdot e_{i,j+1} = 1$) of exceptional curves of $\tilde{Y}$.
	Here $(a_1, \dots, a_{24/(l+1)})$ is given by
	$(1, \dots, 1)$, $(1, \dots, 1)$, $(1,1,2,2)$, $(1,2,4)$ for $l = 2,3,5,7$ respectively.
	Every prime $l \leq 7$ occur in every characteristic $\neq l$.
	
	\item \label{item:restore tame}
	Conversely, let $Y$ be an RDP K3 surface in characteristic $\neq l$ with $\Sing(Y) = \frac{24}{l+1} A_{l-1}$ and
	$\Pic(Y^{\sm})_{\tors} \neq 0$.
	Then there exists a smooth K3 surface $X$ and a $\bZ/l\bZ$-quotient morphism $\map{\pi}{X}{Y}$
	with $\Supp \Fix(\bZ/l\bZ) = \pi^{-1}(\Sing(Y))$.
	\end{enumerate}
\end{thm}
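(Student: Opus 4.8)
The plan is to reduce both directions to the tame theory of symplectic automorphisms of K3 surfaces, using the $\ell$-adic Euler characteristic $e$ (with $e(\tilde Y)=e(X)=24$ for K3 surfaces) for the numerics and Lemma~\ref{lem:Picard group of A} for the torsion. For the direct statement~(\ref{item:quotient tame}), I would first observe that, since $Y$ (equivalently $\tilde Y$) is an RDP K3 surface, the $\bZ/l\bZ$-action on the smooth K3 surface $X$ is symplectic, i.e.\ preserves the global $2$-form, by the tame analogue of Proposition~\ref{prop:2-forms} recalled in the introduction. As $\gcd(l,p)=1$ the action is linearizable at each fixed point, and a nontrivial finite-order element of $\SL_2$ preserving the symplectic form on the tangent plane has eigenvalues $\zeta_l^{\pm a}$ with $a\neq 0$; hence $\Fix(\bZ/l\bZ)$ is isolated and each fixed point yields the quotient singularity $\frac1l(1,-1)$, that is, an $A_{l-1}$. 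Writing $N=\card{\Sing(Y)}$, I would then compute $e$ in two ways: $\pi$ is étale of degree $l$ over $Y^{\sm}$ with a single preimage over each singular point, so $e(X)=l\cdot e(Y^{\sm})+N$, while resolving each $A_{l-1}$ inserts a chain of Euler number $l$, so $e(\tilde Y)=e(Y^{\sm})+Nl$; with $e(X)=e(\tilde Y)=24$ these force $N(l+1)=24$, whence $\Sing(Y)=\frac{24}{l+1}A_{l-1}$ and $(l+1)\mid 24$, i.e.\ $l\in\set{2,3,5,7,11,23}$.

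To finish the bound I would rule out $l=11,23$. For $l=23$ ($N=1$) the exceptional curves span a negative-definite rank-$22$ sublattice of $\NS(\tilde Y)$; adjoining an ample class (of positive square, hence outside this lattice) gives $\rank\NS(\tilde Y)\geq 23$, which is impossible. The case $l=11$ ($N=2$) is where I expect the main obstacle: the elementary lattice bound only gives $\rank\NS(\tilde Y)\geq 21$, which is permitted, so I would instead invoke the classification of symplectic automorphisms of prime order on K3 surfaces (Nikulin in characteristic $0$; Dolgachev--Keum in positive characteristic $\neq l$) to exclude $11$ and leave exactly $l\leq 7$.

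For the torsion, the restriction $\pi^{-1}(Y^{\sm})\to Y^{\sm}$ is a connected étale $\bZ/l\bZ$-torsor; as $l\neq p$ and $k$ is algebraically closed we have $\bZ/l\bZ\cong\mu_l$, so by Kummer theory it corresponds to a nonzero class $\tau\in\Pic(Y^{\sm})[l]$, and it is precisely the $\mu_l$-covering studied in Lemma~\ref{lem:Picard group of A}. Since $\NS(\tilde Y)$ is torsion-free, $\Pic(Y^{\sm})_{\tors}$ injects into $\bigoplus_i\Pic(\hat\cO_{Y,w_i})=(\bZ/l\bZ)^N$ and hence is an $l$-group; I would identify it with the index of the root sublattice $\bigoplus_i A_{l-1}$ in its saturation inside the K3 lattice to obtain order exactly $l$. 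The explicit generator then comes from Lemma~\ref{lem:Picard group of A}(\ref{lem:Picard:coefficients}): extending $\tau$ to $\tilde Y$ produces a divisor $D$, supported on the exceptional curves, with $lD$ linearly equivalent to the class recorded in the statement; the local level $a_i$ at $w_i$ is the index $h$ of the covering computed there via $f_l$, tabulated as $(1,\dots,1)$, $(1,\dots,1)$, $(1,1,2,2)$, $(1,2,4)$ for $l=2,3,5,7$. Crucially each $a_i\in(\bZ/l\bZ)^*$, so $\tau$ restricts to a generator of every local Picard group $\bZ/l\bZ$.

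For the converse~(\ref{item:restore tame}), given $\tau\in\Pic(Y^{\sm})_{\tors}\setminus\set0$ (of order $l$ by the above), Kummer theory produces a connected étale $\bZ/l\bZ$-cover $V\to Y^{\sm}$, and I take $X$ to be the normalization of $Y$ in $k(V)$, a normal $\bZ/l\bZ$-Galois cover étale over $Y^{\sm}$. The crucial point is smoothness of $X$: over each $w_i$ the cover is attached to $\tau|_{\hat\cO_{Y,w_i}}$, which (the torsion being cyclic of order $l$ with generator restricting to a generator at every $w_i$) generates $\Pic(\hat\cO_{Y,w_i})=\bZ/l\bZ$, so it is the fundamental covering of the $A_{l-1}$ singularity, namely the smooth germ $\bA^2\to\bA^2/(\bZ/l\bZ)$; thus $X$ is smooth with $\Fix(\bZ/l\bZ)=\pi^{-1}(\Sing(Y))$. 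Finally $\pi$ is étale in codimension $1$ and $K_Y\sim 0$, so $K_X\sim\pi^* K_Y\sim 0$, and reversing the Euler-characteristic computation gives $e(X)=l\cdot e(Y^{\sm})+N=24$; a smooth surface with trivial canonical class and $e=24$ is a K3 surface (an abelian surface would have $e=0$, an Enriques surface $K\not\sim 0$), completing the construction.
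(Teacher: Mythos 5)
Your numerics and overall architecture track the paper's proof closely (citing Nikulin and Dolgachev--Keum for the classification, the \'etale Euler-characteristic count, Lemma \ref{lem:Picard group of A} for building the covering, and the $K_X \sim 0$, $e(X)=24$ identification of $X$ as K3 — your self-contained derivation of $\Sing(Y)=\frac{24}{l+1}A_{l-1}$ and the exclusion of $l=23$ are if anything more explicit than the paper, which simply cites these facts). But there is a genuine gap at the heart of the converse (\ref{item:restore tame}): you assert, parenthetically, that a nonzero torsion class ``restricts to a generator at every $w_i$,'' and likewise that the torsion has order exactly $l$ ``by identifying it with the index of the root sublattice in its saturation.'' Neither is proved, and neither is automatic. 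A priori an element of $\Pic(Y^{\sm})_{\tors} \subset (\bZ/l\bZ)^N$ could have some local components $a_i = 0$; over such a point $w_i$ the induced covering is split, the normalization $X$ acquires $l$ copies of an $A_{l-1}$ singularity there, and both the smoothness of $X$ and the claim $\Supp\Fix(\bZ/l\bZ) = \pi^{-1}(\Sing(Y))$ fail. In the direct statement (\ref{item:quotient tame}) you could import ``all $a_i$ are units'' from the quotient structure, but in the converse $Y$ is not given as a quotient, so that property must be extracted from the hypotheses alone.

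This is exactly what the paper's lattice computation supplies, and it is the one idea your proposal is missing. Writing a nonzero torsion class as $l\Delta = \sum_{i,j} b_{i,j} e_{i,j}$ with $b_{i,j} \equiv j a_i \pmod{l}$ (via Lemma \ref{lem:Picard group of A}(\ref{lem:Picard:coefficients})), one computes $\Delta^2 = -l^{-1}\sum_i a_i(l-a_i)$, which must be an even integer since $\Pic(\tilde{Y})$ is even, and must satisfy $\Delta^2 \neq -2$ (else $\pm\Delta$ would be effective, a contradiction). Running through $l = 2,3,5,7$, these two constraints force $(a_i)$ to be precisely the tuples $(1,\dots,1)$, $(1,\dots,1)$, $(1,1,2,2)$, $(1,2,4)$ of the statement — in particular every $a_i \neq 0$, which is what legitimizes your ``fundamental covering, hence smooth germ'' step, and which also pins down the tabulated values that you quote in part (\ref{item:quotient tame}) without derivation. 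The same classification of admissible tuples yields $\card{\Pic(Y^{\sm})_{\tors}} = l$: two independent glue vectors $(a_i)$, $(a'_i)$ would admit a combination $a_i - m a'_i$ with components neither all zero nor all nonzero, contradicting the list. Your saturation-index framing is correct as a description but does not by itself bound the index; without this computation both the order claim and the crucial nonvanishing remain unestablished.
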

\begin{proof}
	(\ref{item:quotient tame})
	The assertions $l \leq 7$ and $\Sing(Y) = \frac{24}{l+1} A_{l-1}$
	are proved by 
	Nikulin \cite{Nikulin:auto}*{Section 5} ($p = 0$) and Dolgachev--Keum \cite{Dolgachev--Keum:auto}*{Theorem 3.3} ($p > 0$).
	Then the eigenspace of $\pi_* \cO_X$ for a nontrivial eigenvalue gives an invertible sheaf whose $l$-th power is isomorphic
	to $\cO_{\tilde{Y}}(- \sum_{i,j} f_l(a_i, j) e_{i,j})$ for a suitable numbering,
	where $f_l$ is the function defined in Lemma \ref{lem:Picard group of A}.
	See \cite{Matsumoto:k3mun}*{Theorem 7.1} for details.
	See the proof of (\ref{item:restore tame}) to show that $\Pic(Y^{\sm})$ has no more torsion.

	Examples for each $l$ are well-known.
	
	\medskip
	
	(\ref{item:restore tame})
	By the exact sequence
	\[
		0 \to \bigoplus_{i,j} \bZ [e_{i,j}] \to \Pic(\tilde{Y}) \to \Pic(Y^{\sm}) \to 0 ,
	\]
	where $e_{i,j}$ runs through the exceptional curves of $\tilde{Y} \to Y$
	over $w_i \in \Sing(Y)$,
	and the fact that discriminant group of the $A_{l-1}$ lattice is cyclic of order $l$,
	we see that a nontrivial element of $\Pic(Y^{\sm})_{\tors}$ 
	is of order $l$ and induces $\Delta \in \Pic(\tilde{Y})$
	satisfying $\sum b_{i,j} e_{i,j} = l \Delta \in l \Pic(\tilde{Y})$ for some coefficients $b_{i,j} \in \bZ$
	not all divisible by $l$.
	By Lemma \ref{lem:Picard group of A}(\ref{lem:Picard:coefficients}), 
	there exist integers $a_i$ satisfying $b_{i,j} \equiv j a_i \pmod{l}$.
	We may assume $a_i \in \set{0, \dots, \floor{l/2}}$ and 
	$b_{i,j} = (j a_i \bmod l) \in \set{0, 1, \dots, l-1}$.
	Computing the intersection number $(l \Delta)^2$, 
	we obtain $\Delta^2 = - l^{-1} \sum_i a_i(l-a_i) \in 2 \bZ$.
	Moreover we have $\Delta^2 \neq -2$ since if $\Delta^2 = -2$ then $\Delta$ or $- \Delta$ is effective, 
	which leads to a contradiction.
	The only solution $(a_i)$ is as in the statement of (\ref{item:quotient tame}), 
	up to the numbering of the RDPs $w_i$.
	
	Suppose there are two $l$-torsion elements 
	$\sum (j a_i \bmod l) e_{i,j}$ and $\sum (j a'_i \bmod l) e_{i,j}$
	with $(a_i)$ and $(a'_i)$ linearly independent in $\bF_l^{24/(l+1)}$.
	Then for some $m \in \bZ$, 
	the elements $a_i - m a'_i \in \bF_l$ are neither all zero nor all nonzero, contradicting the observation above.
	Hence $\Pic(Y^{\sm})_{\tors}$ is of order $l$.

	Now suppose there is a nontrivial $l$-torsion of $Y^{\sm}$.
	Construct a $\mu_l$-covering $\map{\pi}{X}{Y}$ as in Lemma \ref{lem:Picard group of A}.
	Then $X$ is regular above $\Sing(Y)$.

	It is clear from the construction that $\pi$ is finite \'etale outside $\Sing(Y)$. Hence $X$ is a smooth proper surface.
	A non-vanishing $2$-form on $Y^{\sm}$ pullbacks to a non-vanishing $2$-form on $X \setminus \pi^{-1}(\Sing(Y))$, which then extends to $X$.
	For each $0 < k < l$, we have $(\tilde{L}_k)^2 = -4$ by the calculation of $\Delta^2$ above, 
	hence $\chi(\tilde{Y}, \tilde{L}_k) = 0$,
	hence $\chi(Y, \bar{L}_k) = \chi(Y, \rho_* \tilde{L}_k) = \chi(\tilde{Y}, \tilde{L}_k) = 0$
	since $R^i \rho_* \tilde{L}_k = 0$ for $i > 0$.
	Here $\chi$ is the Euler--Poincar\'e characteristic of the sheaf cohomology.
	Hence $\chi(X, \cO) = \chi(Y, \cO) + \sum_{0 < k < l} \chi(Y, \bar{L}_k) = 2 + 0 = 2$.
	Hence $X$ is a K3 surface.

	Alternatively, we can conclude that $X$ is a K3 surface
	from by computing 
	the Euler--Poincar\'e characteristic $\chi$ of the $l'$-adic cohomology 
	for an auxiliary prime $l' \neq \charac k$. 
	Indeed, as $\pi$ is finite \'etale outside $\Sing(Y)$, we have 
	$\chi(X \setminus \pi^{-1}(\Sing(Y))) =  l \cdot \chi(Y^{\sm})$,
	hence $\chi(X) - \frac{24}{l+1} = l \cdot (\chi(Y) - l \frac{24}{l+1})$, 
	therefore $\chi(X) = 24$.
\end{proof}

\begin{defn}[following \cite{Bombieri--Mumford:III}*{Proposition 9}] \label{def:k3-like}
A proper reduced Gorenstein (not necessarily normal) surface $X$ is \emph{K3-like} if
$h^i(X, \cO_X) = 1,0,1$ for $i = 0,1,2$, and the dualizing sheaf $\omega_X$ is isomorphic to $\cO_X$.

RDP K3 surfaces are K3-like.
\end{defn}
\begin{thm} \label{thm:singularities of quotient K3}
	\ 
	\begin{enumerate}
	\item \label{item:quotient}
	Let $G$ be $\mu_p$, $\bZ/p\bZ$, or $\alpha_p$.
	Let $\pi$ be as in the beginning of this section.
	Then $(G, \Sing(Y), \card{\Pic(Y^{\sm})_{\tors}})$ is one of those listed in Table \ref{table:singularities of quotient K3}.
	If $G = \mu_p$, then the $p$-torsion is given by a divisor on $\tilde{Y}$ whose multiple by $p$ is linearly equivalent to 
	$\sum_{i,j} j a_i e_{i,j}$,
	with $a_i$ as in Theorem \ref{thm:singularities of quotient K3:tame}(\ref{item:quotient tame}).
	Every case occur.
	
	\item \label{item:restore}
	Conversely, 
	suppose $Y$ is an RDP K3 surface in characteristic $p$ with $\Sing(Y)$ as in Table \ref{table:singularities of quotient K3},
	let $G$ be the corresponding group scheme, 
	and if $G = \mu_p$ suppose moreover 
	$\Pic(Y^{\sm})_{\tors} \neq 0$.
	Then there exists a $G$-quotient morphism $\map{\pi}{X}{Y}$ from a proper K3-like surface $X$
	with $\Sing(X) \cap \pi^{-1}(\Sing(Y)) = \emptyset$
	and $\Supp \Fix(G) = \pi^{-1}(\Sing(Y))$.
	If $G = \bZ/p\bZ$ then $X$ is a smooth K3 surface.
	If $G = \mu_p$ or $G = \alpha_p$, then one of the following holds:
	\begin{itemize}
		\item $X$ is an RDP K3 surface.
		\item $X$ is a normal rational surface with $\Sing(X)$ consisting of a single non-RDP singularity, and $p \geq 3$.
		\item $X$ is a non-normal rational surface with $\dim \Sing(X) = 1$.
	\end{itemize}
	\end{enumerate}
\end{thm}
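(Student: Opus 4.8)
The plan is to prove the two implications separately, and within each to run $G = \mu_p$, $\bZ/p\bZ$, $\alpha_p$ in parallel with the tame prototype of Theorem \ref{thm:singularities of quotient K3:tame}, reusing its arguments wherever they transpose verbatim. The unifying principle is that $\mu_p$ behaves like $\bZ/l\bZ$ (so its covers are governed by line bundles via Lemma \ref{lem:Picard group of A}), while $\alpha_p$ and $\bZ/p\bZ$ behave like wild quotients (so their covers are governed by the extension/cohomology classes of Lemmas \ref{lem:cohomology of RDP:bis} and \ref{lem:cohomology of RDP on K3:bis}).

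For the forward implication (\ref{item:quotient}), the singularity lists for $G = \mu_p$ and $G = \alpha_p$ (and the bounds $p \le 7$, resp.\ $p \le 5$) are already supplied by Theorem \ref{thm:alpha_p K3 K3}. For $\mu_p$ the remaining torsion claim I would obtain by decomposing $\pi_* \cO_X$ into its $\mu_p$-eigen-line-bundles on $Y^{\sm}$: the nontrivial character yields an element of $\Pic(Y^{\sm})_{\tors}$ whose $p$-th power is $\cO_{\tilde Y}(-\sum_{i,j} f_p(a_i,j) e_{i,j})$ by Lemma \ref{lem:Picard group of A}(\ref{lem:Picard:coefficients}), from which the $a_i$ are read off exactly as in \cite{Matsumoto:k3mun}; that the order is precisely $p$ follows from the converse argument. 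The genuinely new case is $G = \bZ/p\bZ$. Since $\bZ/p\bZ$ is \'etale of prime order the action is free in codimension $1$ (otherwise $Y$ could not be an RDP K3 surface), so each local quotient $\hat\cO_X \to \hat\cO_{Y,w_i}$ exhibits $w_i$ as an RDP whose fundamental local \'etale cover is regular; by the local analysis (Artin's classification together with Lemma \ref{lem:cohomology of RDP:bis}(\ref{item:RDP:bis:delta})) these are exactly the $r=m$ types $D_{4m}^m$, $E_8^2$ $(p=2)$, $E_6^1$ $(p=3)$, $E_8^1$ $(p=5)$. The global constraint I would extract by globalizing the Euler-characteristic computation from the proof of Lemma \ref{lem:cohomology of RDP:bis}(\ref{item:RDP:bis:V}): the $\cO_Y$-linear nilpotent endomorphism $\delta = g - \id$ gives a kernel filtration of $\pi_*\cO_X$ whose graded pieces are carried by $\delta^j$ isomorphically onto the ideals $\cI_j \subset \cO_Y$, locally equal to the ideals $I$ of Lemma \ref{lem:RDP:bis:dim H1L1}, so that $2 = \chi(\cO_X) = \chi(\pi_*\cO_X) = 2 + (p-1)(2 - \sum_i m_i)$ forces $\sum_i m_i = 2$, which pins down $\Sing(Y)$ (and recovers the total index $24(p-1)/(p+1)$).

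For the converse (\ref{item:restore}) the construction of $X$ splits according to the dichotomy above. For $G = \mu_p$ I would mimic Theorem \ref{thm:singularities of quotient K3:tame}(\ref{item:restore tame}) verbatim: a nonzero class in $\Pic(Y^{\sm})_{\tors}$ lifts to $\Delta \in \Pic(\tilde Y)$ with $p\Delta \sim \sum_{i,j}(j a_i \bmod p)\, e_{i,j}$, the inequalities $\Delta^2 \ge 0$ and $\Delta^2 \neq -2$ single out the admissible $(a_i)$, and Lemma \ref{lem:Picard group of A}(\ref{lem:Picard:algebra}) produces the $\mu_p$-covering $X = \Spec \bar V$, whose K3-likeness follows by the same $\chi$-computation. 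For $G = \bZ/p\bZ$ and $G = \alpha_p$ the local models of Lemma \ref{lem:cohomology of RDP:bis} supply, at each $w_i$, a fundamental $\bZ/p\bZ$- (resp.\ $\alpha_p$-) cover, and the task is to glue these to a global cover; this is precisely what Lemma \ref{lem:cohomology of RDP on K3:bis} controls, its exact sequence identifying $\Ext^1_Y(\cI,\cO_Y)$ with an isomorphic copy of the local $\Ext^1$ when $\card{\Sing(Y)} = 2$ and with an injective image when $\card{\Sing(Y)} = 1$, so a chosen generator assembles the local extensions into one global class and hence one global cover $\pi\colon X \to Y$ carrying the global derivation $\delta$. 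Proposition \ref{prop:covering} then applies to this $X$ (its hypotheses hold since $U$ is regular over $\Sing(Y)$ and $\Fix(\delta) = \pi^{-1}(\Sing Y)$): $X$ is Gorenstein with $\omega_X \cong \cO_X$, hence K3-like, and $X$ is normal exactly when the canonical closed $1$-form $\eta$ has $\codim \Zero(\eta) \ge 2$.

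Finally, the trichotomy for $\mu_p$/$\alpha_p$ is where the real work lies, and I would read it off from $\eta$ via the dual derivation $D_Y$ on $Y$ determined by $D_Y(f)\omega = df\wedge\eta$ (Proposition \ref{prop:covering}(\ref{item:derivation}), using self-duality of $\alpha_p$): a divisorial component of $\Zero(\eta) = \Fix(D_Y)$ makes $X$ non-normal with $\dim\Sing(X)=1$, while for isolated $\Fix(D_Y)$ the singularities of $X = \pi^{-1}(\Zero(\eta))$ are governed by Lemma \ref{lem:degree of isolated fixed point} and by the numerical constraint $\sum s_i = 24/(p+1)$ of Corollary \ref{cor:configuration of isolated fixed point}, leaving only ``all RDPs'' (so $X$ is an RDP K3 surface) and ``a single non-RDP, $p \ge 3$'' (so $X$ is a normal rational surface by Proposition \ref{prop:structure of quotient}); for $G = \bZ/p\bZ$ the regularity of the local covers makes $X$ a smooth K3 surface. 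The main obstacle I anticipate is exactly this gluing-plus-classification step for $\alpha_p$ and $\bZ/p\bZ$: verifying that the global class of Lemma \ref{lem:cohomology of RDP on K3:bis} really produces a total space whose singularity type matches the asserted trichotomy, and that every listed case is realized, which will ultimately be settled by the explicit examples of the later sections.
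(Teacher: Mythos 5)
Your proposal follows the paper's own proof architecture almost step for step: the forward direction via Theorem \ref{thm:alpha_p K3 K3} plus the eigensheaf/torsion computation for $\mu_p$, and for $\bZ/p\bZ$ via Artin's list of RDPs with smooth $\bZ/p\bZ$-fundamental cover (Remark \ref{rem:cyclic}) combined with the Euler-characteristic identity $2 = 2 + (p-1)(2 - \sum_i m_i)$ extracted from the filtration of $\pi_*\cO_X$ by the ideals $\cI_j = \Image(\delta^j\restrictedto{\Ker \delta^{j+1}})$ and Lemma \ref{lem:cohomology of RDP:bis}(\ref{item:RDP:bis:V}); the converse via the tame line-bundle construction for $\mu_p$ and the $\Ext^1$-class gluing of Lemma \ref{lem:cohomology of RDP on K3:bis} for $\bZ/p\bZ$ and $\alpha_p$; and the final analysis via Proposition \ref{prop:covering}. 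This is exactly the paper's route, so the question is whether your sketch of the last step closes.

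It does not quite, and the gap is concentrated where you say the real work lies. You read the trichotomy off by applying Corollary \ref{cor:configuration of isolated fixed point} to the dual derivation $D_Y$ whenever $\Fix(D_Y)$ is isolated, justified by ``self-duality of $\alpha_p$''. But Theorem \ref{thm:duality} explicitly allows the group scheme to change under duality, and Section \ref{subsec:K3 quotients} exhibits pairs with $(G, G') = (\mu_p, \alpha_p)$ and $(\alpha_p, \mu_p)$; so after normalizing $D_Y^p = \lambda D_Y$ with $\lambda \in \set{0,1}$, the multiplicative case $\lambda = 1$ genuinely occurs in both the $\mu_p$- and the $\alpha_p$-converse. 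Corollary \ref{cor:configuration of isolated fixed point}, and equally the non-RDP lower bounds of Lemma \ref{lem:degree of isolated fixed point}(\ref{case:alpha non-RDP}) feeding into it, are stated only for derivations of \emph{additive} type, so for multiplicative $D_Y$ they give you nothing -- in particular they do not exclude a non-RDP quotient in that branch. The paper closes it by a different ingredient: $D_Y(\omega) = 0$ (Proposition \ref{prop:covering}(\ref{item:derivation})) makes the multiplicative action symplectic, and \cite{Matsumoto:k3mun}*{Theorem 6.1} then yields that $Y^{D_Y} = \pthpower{X}$ is an RDP K3 surface, which your sketch never invokes. Three smaller glosses: in the two-RDP case of the wild converse you must match the two local Frobenius eigenvalues ($\lambda_1 = \lambda_2$, proved in the paper by a diagram chase through $H^2(Y, \cO)$, with a companion argument showing $a \in k^*$ in the one-RDP $D_8$/$E_8$ case) before a single group scheme and the equation $t^p = \lambda t + c$ make sense; the table's entry $\card{\Pic(Y^{\sm})_{\tors}} = 1$ in the $\bZ/p\bZ$ and $\alpha_p$ rows requires the lattice computation you omit; and the constraint singling out the $(a_i)$ in the $\mu_p$ converse is $\Delta^2 \in 2\bZ$ together with $\Delta^2 \neq -2$ (all solutions have $\Delta^2 = -4$), not ``$\Delta^2 \geq 0$''.
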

	All three cases of (\ref{item:restore}) occur for all $G \in \set{\mu_p \, (p \leq 7), \, \alpha_p \, (p \leq 5)}$
	unless otherwise stated.
	See Section \ref{subsec:K3 quotients} for examples.
\begin{rem} \label{rem:cyclic}
	Dolgachev--Keum studied $\bZ/p\bZ$-actions on K3 surfaces in characteristic $p$.
	Their results in the case of K3 quotients are as follows \cite{Dolgachev--Keum:wild-p-cyclic}*{Theorem 2.4 and Remark 2.6}:
	Suppose $G = \bZ/p\bZ$ acts on a K3 surface $X$ in characteristic $p$ with quotient $Y$ birational to a K3 surface.
	Then 
	\begin{itemize}
	\item $\Fix(G)$ is isolated and $\Sing(Y) = \pi(\Fix(G))$, and each singularity of $Y$ is an RDP.
	\item $1 \leq \# \Sing(Y) \leq 2$ and $p \leq 5$.
	\item If $p = 2$, then $\Sing(Y)$ is one of $1 D_4^1$, $2 D_4^1$, $1 D_8^2$, or $1 E_8^2$.
	\end{itemize}
(The $E_8^2$ on the last is misprinted as $E_8^4$ in \cite{Dolgachev--Keum:wild-p-cyclic}*{Remark 2.6}.)

	Also note that if $G = \bZ/p\bZ$ then each quotient RDP singularity on $Y$ should be one of those
	having fundamental group $\bZ/p\bZ$ and smooth fundamental covering,
	which, due to Artin \cite{Artin:RDP}*{Sections 4--5}, are the following:
	\begin{itemize}
		\item $D_{4r}^r$ ($r \geq 1$) and $E_8^2$ if $p = 2$.
		\item $E_6^1$ if $p = 3$.
		\item $E_8^1$ if $p = 5$.
		\item There are no such RDP if $p \geq 7$.
	\end{itemize}
	Note that these RDPs and their $\bZ/p\bZ$-coverings are discussed in Section \ref{subsec:covering:alpha}.
	Thus, it was known that $\Sing(Y)$ is $1 E_6^1$ or $2 E_6^1$ if $p = 3$, and $1 E_8^1$ or $2 E_8^1$ if $p = 5$.
	Compared to these results, we exclude the possibility of $1 D_4^1$ ($p = 2$), $1 E_6^1$ ($p = 3$), and $1 E_8^1$ ($p = 5$).
\end{rem}

\begin{table} 
	\caption{Singularities of $\bZ/l\bZ$-, $\mu_p$-, $\bZ/p\bZ$-, and $\alpha_p$-quotient K3 surfaces in characteristic $p$} \label{table:singularities of quotient K3} 
	\begin{tabular}{lllll} 
		\toprule
	char. & $G$ & & $\Sing(Y)$ & $\card{\Pic(Y^{\sm})_{\tors}}$ \\
		\midrule
		$p \geq 0$ & $\bZ/l\bZ$ & $l \leq 7$ prime, $l \neq p$ & $\frac{24}{l+1} A_{l-1}$           & $l$ \\
		\midrule
		$p$        & $\mu_p$    & $p \leq 7$                   & $\frac{24}{p+1} A_{p-1}$           & $p$  \\
		\midrule
		$5$        & $\bZ/5\bZ$ &                              & $2 E_8^1$                          & $1$ \\
		$3$        & $\bZ/3\bZ$ &                              & $2 E_6^1$                          & $1$ \\
		$2$        & $\bZ/2\bZ$ &                              & $2 D_4^1$, $1 D_8^2$, or $1 E_8^2$ & $1$ \\
		\midrule 
		$5$        & $\alpha_5$ &                              & $2 E_8^0$                          & $1$ \\
		$3$        & $\alpha_3$ &                              & $2 E_6^0$                          & $1$ \\
		$2$        & $\alpha_2$ &                              & $2 D_4^0$, $1 D_8^0$, or $1 E_8^0$ & $1$ \\
		\bottomrule
	\end{tabular} 

\end{table}

\begin{proof}[Proof of Theorem \ref{thm:singularities of quotient K3}]
	(\ref{item:quotient}) 
	Consider the case $G = \mu_p, \alpha_p$.
	The assertion on $\Sing(Y)$ is showed in Theorem \ref{thm:alpha_p K3 K3}.
	If $G = \mu_p$, 
	the author showed \cite{Matsumoto:k3mun}*{Theorem 7.1} that the eigenspace of 
	$\pi_* \cO_X$ for a nontrivial eigenvalue (of the derivation $D$ of multiplicative type corresponding to the $\mu_p$-action) 
	gives an invertible sheaf whose $p$-th power is isomorphic
	to $\cO_{\tilde{Y}}(- \sum_{i,j} f_p(a_i, j) e_{i,j})$ for a suitable numbering.
	Here $f_p$ is the function defined in Lemma \ref{lem:Picard group of A}.
	The same (characteristic-free) argument as in the proof of Theorem \ref{thm:singularities of quotient K3:tame}(\ref{item:restore tame}) 
	shows $\card{\Pic(Y^{\sm})_{\tors}}= p$.
	A similar calculation shows that 
	if $(p, \Sing(Y))$ is $(2, 2 D_4^r)$ etc.\ then $\Pic(Y^{\sm})_{\tors} = 0$.

	Consider the case $G = \bZ/p\bZ$.
	As in Proposition \ref{prop:structure of quotient} (using the usual ramification formula in place of the Rudakov--Shafarevich formula)
	we have that $\Fix(G)$ is finite
	and each point in $\pi(\Fix(G))$ is an RDP.
	Let $\Sing(Y) = \set{w_i}$.
	Then each $w_i$ is one of the RDPs appearing in Remark \ref{rem:cyclic}, 
	hence in Section \ref{subsec:covering:alpha},
	and let $m_i$ be the integer defined there. 
	Let $\cI_j = \Image (\delta^{j} \restrictedto{\Ker \delta^{j+1}})$ for $1 \leq j \leq p-1$,
	where $\delta = g - \id \in \End(\pi_* \cO_X)$.
	We have $\chi_Y(\cI_j) = \chi_Y(\cO_Y) - \chi_Y(\cO/\cI_j) = 2 - \sum_i m_i$
	by Lemma \ref{lem:cohomology of RDP:bis}(\ref{item:RDP:bis:V}).
	Since $2 = \chi_X (\cO_X) = \chi_Y(\cO_Y) + \sum_j \chi_Y(\cI_j)
	= 2 + (p-1) (2 - \sum_i m_i)$,
	we obtain $\sum_i m_i = 2$.
	This proves the assertion on $\Sing(Y)$.

	\medskip

	(\ref{item:restore})
	Suppose $G = \mu_p$.
	As in the case of $\bZ/l\bZ$ (Theorem \ref{thm:singularities of quotient K3:tame}(\ref{item:restore tame})), 
	with $l$ replaced with $p$, 
	we obtain a $\mu_p$-covering $\map{\pi}{X}{Y}$.
	Since in this case $\pi$ is not \'etale over $Y^{\sm}$, $X$ may be singular.
	
	By Proposition \ref{prop:covering},
	$X$ is Gorenstein with $\omega_X \cong \cO_X$,
	and we have a derivation $D$ on $Y$
	satisfying $\Fix(D) = \pi(\Sing(X))$ and $Y^{D} = \pthpower{(\normalization{X})}$.
	Here $\normalization{-}$ is the normalization.
	Also $X$ is normal if and only if the divisorial part 
	$\divisorialfix{D \restrictedto{Y^{\sm}}}$ of $\Fix(D \restrictedto{Y^{\sm}})$ is zero.
		
	As in the $\bZ/l\bZ$ case,
	we have $\chi(X, \cO_X) = 2$.
	Since $X$ is connected and reduced we have $h^0(X, \cO_X) = 1$,
	and $h^2(X, \cO_X) = h^2(X, \omega_X) = h^0(X, \cO_X) = 1$.
	Thus $X$ is K3-like.

	Let $D' = D \restrictedto{Y^{\sm}}$ 
	and suppose $\divisorialfix{D'} \neq 0$. 
	Then $X$ is non-normal.
	By Proposition \ref{prop:structure of quotient}, 
	$Y^D = \pthpower{(\normalization{X})}$ is rational, and hence $X$ is rational.

	Now suppose $\divisorialfix{D'} = 0$.
	Then $X$ is normal and we have $Y^D = \pthpower{X}$.
	As in the proof of Theorem \ref{thm:duality} we have $D^p = \lambda D$ for some scalar $\lambda$, 
	and we may assume $\lambda = 1$ or $\lambda = 0$ (by replacing $D$ by a suitable multiple).
	
	Suppose $\lambda = 1$.
	Since $D$ is a derivation of multiplicative type with $D(\omega) = 0$ (Proposition \ref{prop:covering}(\ref{item:derivation})),
	where $\omega$ is a global $2$-form on $Y^{\sm}$,
	it follows from \cite{Matsumoto:k3mun}*{Theorem 6.1} that $Y^D$ is an RDP K3 surface.

	Next suppose $\lambda = 0$.
	By Theorem \ref{thm:c2 derivation} and Lemma \ref{lem:derivation on resolution}
	and the assumption on $\Sing(Y)$, we have $\deg \isolatedfix{D'} = 24/(p+1)$.
	Then, by Corollary \ref{cor:configuration of isolated fixed point},
	either every singularity of $X$ is an RDP,
	or $X$ has a single singularity and it is non-RDP and $p \geq 3$.
	In the latter case $X$ is a rational surface by Proposition \ref{prop:structure of quotient}.

	\bigskip
	
	Now we consider the cases $G = \bZ/p\bZ$ and $G = \alpha_p$ simultaneously.
	Write $\Sing(Y) = \set{w_i}_{i=1}^N$.
	Define an ideal $\cI = \cI_1 \subset \cO_Y$ 
	by $\cI = \Ker (\cO_Y \to \bigoplus_i (\cO_{Y, w_i} / I_{w_i}))$,
	where $I_{w_i} \subset \cO_{Y,w_i}$ is as in Section \ref{subsec:covering:alpha}.
	Then we have $h^0(\cO_Y/\cI) = 2$ and hence $\dim \Ext^1(\cI, \cO) = 1$ (as in the proof of Lemma \ref{lem:cohomology of RDP on K3:bis}).
	Take a nonzero element $e \in \Ext^1(\cI, \cO)$ corresponding to a non-split extension
	\[
	0 \to \cO_{Y} \to V \namedto{\delta} \cI \to 0
	\]
	(which is unique up to scalar)
	and let $e_i := e \restrictedto{Z_i} \in \Ext^1_{Z_i}(I_{w_i}, \cO) $ 
	be its restriction to $Z_i = \Spec \cO_{Y, w_i}$.
	By Lemma \ref{lem:cohomology of RDP on K3:bis}, $e_i$ generates this group as an $\cO_{Y,w_i}$-module. 
	
	As in the proof of Lemma \ref{lem:cohomology of RDP on K3:bis}, 
	we have a diagram with exact rows
		\[
		\begin{tikzcd}
			0                                    \arrow[r] &
			\Ext^1_Y(\cI,     \cO)               \arrow[r,"\beta"] \arrow[d,shift left] \arrow[d,shift right] &
			\bigoplus_{i = 1}^N \Ext^1_{Z_i}(I_{w_i},     \cO) \arrow[r,"\gamma"] \arrow[d,shift left] \arrow[d,shift right] &
			H^2(Y, \cO)                          \arrow[r] \arrow[d,shift left] \arrow[d,shift right] &
			0 \\
			0                                    \arrow[r] &
			\Ext^1_Y(\pthpower{\cI},     \cO)               \arrow[r,"\beta'"] &
			\bigoplus_{i = 1}^N \Ext^1_{Z_i}(\pthpower{I_{w_i}},     \cO) \arrow[r,"\gamma'"] &
			H^2(Y, \cO)                          \arrow[r] &
			0 
		\end{tikzcd}
		\]
	where the double vertical arrows are $F$ and $\iota^*$.
	By Lemma \ref{lem:cohomology of RDP:bis}(\ref{item:RDP:bis:unit}),
	we have $\Image(F_{\mathrm{middle}}) \subset \Image(\iota^*_{\mathrm{middle}})$.
	Hence 
	$\Image(F_{\mathrm{left}}) 
	\subset \beta'^{-1}(\Image(F_{\mathrm{middle}}))
	\subset \beta'^{-1}(\Image(\iota^*_{\mathrm{middle}}))
	= \Image(\iota^*_{\mathrm{left}})$,
	where the last equality follows from snake lemma (applied to the commutative diagram for $\iota^*$)
	since $\iota^*_{\mathrm{right}} = \id$.
	Since $\Ext^1_Y(\cI, \cO)$ is $1$-dimensional, we obtain $F(e) = \lambda \cdot \iota^*(e)$ for some $\lambda \in k$.
	Clearly the same equality holds for $e_i$ for each $i$,
	and since $e_i$ is a generator, we have the following equivalence:
	$\lambda \neq 0$ (resp.\ $\lambda = 0$)
	if and only if the coindex $r$ of the RDP(s) is $\neq 0$ (resp.\ $r = 0$)
	if and only if $G = \bZ/p\bZ$ (resp.\ $G = \alpha_p$).

	Since the restriction $e \restrictedto{Y^{\sm}} \in H^1(Y^{\sm}, \cO)$ is annihilated by $F - \lambda$, it induces a $G$-covering $X \restrictedto{Y^{\sm}} \to Y^{\sm}$ as follows.
	Take an open covering $\set{O_h}$ of $Y^{\sm}$ fine enough 
	and take local sections $t_h \in V$ with $\delta(t_h) = 1 \in \cI \restrictedto{Y^{\sm}} = \cO_{Y^{\sm}}$.
	Let $e_{h h'} = t_h - t_{h'} \in \cO$ (this $1$-cocycle represents $e \restrictedto{Y^{\sm}}$).
	Then since $F(e) = \lambda \cdot \iota^*(e)$ there exists $c_h \in \cO$ with $e_{h h'}^p - \lambda e_{h h'} = c_h - c_{h'}$.
	We equip the locally-free sheaf 
	$V_{p-1} := \Sym^{p-1}_{\cO_{Y^{\sm}}}(V \restrictedto{Y^{\sm}})$ on $Y^{\sm}$
	with an $\cO_{Y^{\sm}}$-algebra structure
	by $t_h^p := \lambda t_h + c_h$,
	and let $X \restrictedto{Y^{\sm}} = \sSpec V_{p-1}$.
	Since each $e_i$ is a generator, this $Y^{\sm}$-scheme is regular above a neighborhood of $w_i$ 
	by Lemma \ref{lem:cohomology of RDP:bis}(\ref{item:RDP:bis:U smooth}).
	By filling the holes above $\Sing(Y)$ by normalization, 
	we obtain a $Y$-scheme $X$ that is isomorphic to $X \restrictedto{Y^{\sm}}$ outside $\Sing(Y)$ and regular above a neighborhood of $\Sing(Y)$
	(again by Lemma \ref{lem:cohomology of RDP:bis}(\ref{item:RDP:bis:U smooth})).
	Extend $\map{\delta}{V \restrictedto{Y^{\sm}}}{\cO_{Y}}$ 
	to an endomorphism $\delta \in \End_{\cO_{Y^{\sm}}}(V_{p-1})$
	by $\delta(a \otimes b) = \delta(a) \otimes b + a \otimes \delta(b) + \lambda^{1/(p-1)} \delta(a) \otimes \delta(b)$
	(note that this is compatible with the equality $t_h^p = \lambda t_h + c_h$)
	and then extend it to an endomorphism $\delta \in \End_{\cO_{Y}}(\cO_X)$ (by using normality of $X$ above a neighborhood of $\Sing(Y)$).
	Then $\delta$ corresponds to a $G$-action on $X$, and $\map{\pi}{X}{Y}$ is a $G$-covering.

	Let $\cI_j = \Image (\delta^{j} \restrictedto{\Ker \delta^{j+1}})$ for $1 \leq j \leq p-1$.
	Then we have $\cI_j = \Ker (\cO_Y \to \bigoplus_i (\cO_{Y, w_i} / I_{w_i,j}))$
	with $I_{w_i,j}$ as in Section \ref{subsec:covering:alpha},
	hence we have $\chi(\cI_j) = 2 - \sum_i \dim (\cO_{Y, w_i} / I_{w_i,j}) = 0$.
	Since $\pi_* \cO_X$ has $\cO_Y, \cI_1, \dots, \cI_{p-1}$ as a composition series
	and since $\chi(\cO_Y) = 2$ and $\chi(\cI_j) = 0$ for each $1 \leq j \leq p-1$,
	we have $\chi(\cO_X) = 2$.

	Suppose $G = \bZ/p\bZ$.
	It is clear from the construction that $\pi$ is finite \'etale outside $\Sing(Y)$. 
	Hence $X$ is smooth, 
	and a non-vanishing $2$-form on $Y^{\sm}$ pullbacks to a non-vanishing $2$-form on $X \setminus \pi^{-1}(\Sing(Y))$, which then extends to $X$.
	Hence $X$ is a K3 surface.

	Suppose $G = \alpha_p$.
	We conclude by using Proposition \ref{prop:covering} as in the case of $G = \mu_p$.
\end{proof}

\begin{rem}
In the proof of the case of $G = \bZ/p\bZ, \alpha_p$ of Theorem \ref{thm:singularities of quotient K3}(\ref{item:restore}), we showed $F(e_i) = \lambda \cdot \iota^*(e_i)$ for each RDP $w_i$. 
A similar argument proves an unexpected consequence 
on non-existence of certain RDP K3 surfaces:
	If $Y$ is an RDP K3 surface in characteristic $p$,
	then $(p,\Sing(Y))$ cannot be 
	$(2, D_4^0 + D_4^1)$,
	$(3, E_6^0 + E_6^1)$,
	$(5, E_8^0 + E_8^1)$,
	$(2, D_{4m}^1)$ ($m \geq 2$),
	nor
	$(2, E_8^1)$.
	This implies that RDP K3 surfaces in characteristic $2$ cannot have 
	$D_{2n}^r$ nor $D_{2n+1}^r$
	if $0 < r < n-1$ and $2 \notdivides (n-r)$,
	since a partial resolution of such an RDP produces a 
	$D_{2(n-r+1)}^1$ with $2 \divides (n - r + 1)$ and $n - r + 1 > 2$.

We do not prove this in this paper,
as we give a more general result,
relating singularities to the height of the K3 surface, 
in a subsequent paper \cite{Matsumoto:k3rdpht}*{Theorem 1.2}.
\end{rem}

\section{\texorpdfstring{Bound of $p$ for $\alpha_p$-actions}{Bound of p for alpha\_p-actions}} \label{sec:bound p}

\begin{thm} \label{thm:bound p}
Let $k$ be an algebraically closed field of characteristic $p > 0$.
Then there exists an RDP K3 surface equipped with a nontrivial action of $\mu_p$ (resp. $\alpha_p$) if and only if $p \leq 19$ (resp.\ $p \leq 11$).
\end{thm}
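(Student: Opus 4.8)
The plan is to prove the two directions separately, treating the $\mu_p$-statement as essentially known and concentrating the work on $\alpha_p$. For the \textbf{existence} (``if'') direction I would exhibit, for each prime $p\le 11$ (resp.\ $p\le 19$), a single RDP K3 surface carrying a nontrivial additive (resp.\ multiplicative) $p$-closed derivation. The $\mu_p$-assertion is exactly \cite{Matsumoto:k3mun}, so nothing new is needed there. For $\alpha_p$ the surfaces can be produced either by the covering construction of Theorem \ref{thm:singularities of quotient K3}(\ref{item:restore}), applied to an RDP K3 surface $Y$ carrying the appropriate quotient singularities, or by writing down explicit equations as in Section \ref{sec:examples}. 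The only subtle values are $p=7,11$: here no K3 quotient exists, so one must exhibit a surface whose $\alpha_p$-quotient is merely a rational surface, and these explicit examples carry the weight of the existence half.

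For the \textbf{bound} (``only if'') direction, suppose $\alpha_p$ acts nontrivially on an RDP K3 surface $X$ via an additive derivation $D$. First I would reduce, using Corollary \ref{cor:maximize}, to the case $\divisorialfix{D}=0$ with $\Fix(D)$ isolated and $\pi$ maximal, so that every fixed point lies on the smooth locus of $X$ and has local degree $s_i=\deg\isolatedfix{D}\ge 2$ by Lemma \ref{lem:degree of isolated fixed point}(\ref{case:alpha}). A key structural input is that a smooth K3 surface admits no nonzero global derivation; hence in this maximal model $X$ must still carry genuine RDPs, which are necessarily non-fixed with smooth quotients, and for $p\ge 7$ all of type $A_{p-1}$. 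Applying Theorem \ref{thm:c2 derivation} on the minimal resolution $\tilde X$ together with Lemma \ref{lem:derivation on resolution} yields the balance
\[ 24=\sum_i s_i+\sum_j \tfrac{p}{p-1}\,m_j, \]
where the $m_j$ are the indices of the non-fixed RDPs, with at least one term present. I would then split according to the quotient $Y=X^D$ (Proposition \ref{prop:structure of quotient}). If $Y$ is an RDP Enriques surface the action is fixed-point-free and $p=2$. If $Y$ is an RDP K3 surface, then $\Fix(D)\ne\emptyset$ for $p\ge 3$, and the purely inseparable equality of $l$-adic second Betti numbers forces $\sum_i m_i=\sum_j n_j$, which collapses the balance to $\sum_i s_i=24/(p+1)$; since each $s_i\ge 2$, we get $24/(p+1)\ge 2$, i.e.\ $p\le 11$. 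This is precisely the content packaged in Corollary \ref{cor:configuration of isolated fixed point}.

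The remaining, and genuinely hardest, case is when $Y$ is a \emph{rational} surface, which for $\alpha_p$ is \emph{forced} as soon as $p\ge 7$: beyond $p=5$ there is no additive RDP quotient (Lemma \ref{lem:degree of isolated fixed point}(\ref{case:alpha})), so every fixed point produces a non-RDP singularity on $Y$. Here the clean identity $\sum_i s_i=24/(p+1)$ is no longer available, because the Betti-number matching $\sum m_i=\sum n_j$ fails, and the balance above by itself only gives $24\ge 2+p$, hence $p\le 19$. Extracting the sharp threshold $p\le 11$ is therefore the main obstacle, and I expect it to require a finer analysis in this rational regime: combining the refined lower bounds for non-RDP additive quotients (Lemma \ref{lem:degree of isolated fixed point}(\ref{case:alpha non-RDP})) with the constraint that the non-fixed $A_{p-1}$-contributions satisfy $\sum_j \tfrac{p}{p-1}m_j\le 22$, and with the cohomological and lattice-theoretic restrictions on a rational surface that admits a purely inseparable degree-$p$ covering by a K3 surface. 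The heart of the argument is to show that for every $p\ge 13$ no admissible configuration of isolated fixed points and non-fixed $A_{p-1}$-singularities summing to $24$ can actually be realised by a global additive derivation on a K3 surface, while all configurations for $p\le 11$ are realisable by the examples produced in the existence direction.
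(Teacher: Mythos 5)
Your existence half coincides with the paper's (the $\mu_p$ case is quoted from \cite{Matsumoto:k3mun}, and the $\alpha_p$ examples for $p\le 11$, including the rational-quotient examples for $p=7,11$, are exactly those of Section \ref{subsec:rational quotients}), and your treatment of the K3- and Enriques-quotient cases is sound in substance. But there is a genuine gap at precisely the point you flag: the rational-quotient case. For $p\ge 13$ this is not one case among several — it is the \emph{only} case, since by Lemma \ref{lem:degree of isolated fixed point}(\ref{subcase:alpha}) no RDP is an additive quotient for $p\ge 7$, so every fixed point maps to a non-RDP and Proposition \ref{prop:structure of quotient} forces the quotient to be rational (unless $\Fix(D)=\emptyset$, which gives $p=2$). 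Your proposal stops there at ``$p\le 19$'' and defers the sharp bound to an unspecified ``finer analysis''; that deferred step is the entire content of the theorem for $13\le p\le 19$. Moreover, your preliminary reduction is not available in this regime: Corollary \ref{cor:maximize} takes $\divisorialfix{D}=0$ and both $X$ and $X^D$ being RDP surfaces as \emph{hypotheses}, so when the quotient is rational you cannot arrange $\divisorialfix{D}=0$ (the $\alpha_{11}$ example $D=\partial/\partial t$ indeed has divisorial fixed locus), and your $c_2$-balance identity is therefore not even the right starting point where you need it.

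The paper closes this case by a mechanism of a different nature, with no fixed-point count at all. For $p\ge 13$, after blowing up fixed RDPs (Theorem \ref{thm:mu_p alpha_p RDP}(\ref{thm:mu_p alpha_p RDP:fixed RDP symplectic})), all RDPs are non-fixed, hence of type $A_{mp-1}$ by Table \ref{table:non-fixed RDP}, and the bound $mp-1\le 21$ on the total index forces a single RDP $x$ of type $A_{p-1}$. Reading off, at $x$, the coefficient of $\partial/\partial z$ in $(fD)\restrictedto{\hat\cO_{X,x}}$ gives an injection $H^0(X,\cO_X(\Delta))\to k$ (a kernel element would yield a nonzero global vector field on the minimal resolution), where $\Delta=\divisorialfix{D}$; hence $h^0(\cO_X(\Delta))=1$, $\Supp\Delta$ is a union of ADE configurations, and on $X''$ (the complement of $\Fix(D)$ and $x$) one computes $\Pic(X'')[p]=0$ as in Theorem \ref{thm:singularities of quotient K3:tame}(\ref{item:restore tame}), there being only one $A_{p-1}$-type configuration. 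Then Proposition \ref{prop:covering} produces the reverse derivation $D_{Y''}$ with $Y''^{D_{Y''}}=\pthpower{X''}$ and $D_{Y''}^p=hD_{Y''}$ with $h$ a \emph{constant}; $h\ne 0$ because $A_{p-1}$ is not an additive-type quotient of a smooth point (Lemma \ref{lem:degree of isolated fixed point} again), so $\thpower{Y''}{1/p}\to X''$ is a $\mu_p$-covering, classified by an element of $\Pic(X'')[p]=0$, hence non-reduced — a contradiction. This duality-plus-Picard-torsion argument is the missing idea; no refinement of the numerical balance of isolated fixed points and $A_{p-1}$-contributions can distinguish $p=13$ from $p=11$ in the rational regime, as your own estimate $24\ge 2+p$ shows.
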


\begin{proof}
The case of $\mu_p$ is given in \cite{Matsumoto:k3mun}*{Theorem 8.2}.
Examples of RDP K3 surfaces with a nontrivial action of $\alpha_p$ in characteristic $p$ for $p \leq 11$ are given in Section \ref{subsec:rational quotients}.
It remains to show that if $p \geq 13$ then there is no such example.

Suppose $p \geq 13$ and $X$ is an RDP K3 surface equipped with a nontrivial action of $\alpha_p$.
Since smooth K3 surface have no global derivations, $X$ has an RDP $x$,
and since RDPs fixed by the $\alpha_p$-action can be blown up, we may assume $x$ and all other RDPs are not fixed.
Since $p \geq 7$, by Theorem \ref{thm:mu_p alpha_p RDP}(\ref{thm:mu_p alpha_p RDP:non-fixed}), $x$ and all other RDPs are of type $A_{mp-1}$.
Since $p \geq 13$, It follows that $m = 1$ and that $x$ is the only RDP.

We observe that the tangent module $T_B$ of the RDP $B = \hat{\cO}_{X,x} = k[[x,y,z]]/(xy - z^p)$ of type $A_{p-1}$ 
is a free $B$-module with basis $e_1 = x \partialdd{x} - y \partialdd{y}, e_2 = \partialdd{z}$,
and that $a_1 e_1 + a_2 e_2$ ($a_1, a_2 \in B$) fixes the closed point if and only if $a_2 \in \idealm_B$.

Let $\Delta := \divisorialfix{D}$.
The morphism $H^0(X, \cO_X(\Delta)) \to H^0(X, T_X) \to T_B \to B/\idealm_B = k$
taking $f$ to the coefficient modulo $\idealm_B$ of $e_2$ in $(f D) \restrictedto{B}$
is injective, since if $f$ is in the kernel then $fD$ extends to an element of $H^0(\tilde{X}, T_{\tilde{X}}) = 0$. 
Hence $\dim H^0(X, \cO_X(\Delta)) = 1$.
It follows that $\Supp(\Delta)$ is a finite disjoint union (possibly empty) of ADE configurations of smooth rational curves.

Let $X' = X \setminus \Supp(\Fix(D)) = X \setminus    (\Supp(\Delta) \cup \Supp(\isolatedfix{D}))$,
$X'' = X' \setminus \set{x}$,
and $Y' = X'^D$, $Y'' = X''^D$.
Then $X''$ and $Y''$ are smooth.
Since  $X''$ is the complement in a K3 surface of a finite disjoint union of ADE configurations and closed points, 
we have $H^0(X'', \cO_X) = k$.
Moreover we can compute $\Pic(X'')[p]$ as in the proof of Theorem \ref{thm:singularities of quotient K3:tame}(\ref{item:restore tame}),
and since there is at most one RDP of type $A_{p-1}$ (since $p \geq 13$) we obtain $\Pic(X'')[p] = 0$.

Fix a nonzero element $\omega_X$ of $H^0(X^{\sm}, \Omega^2_X)$ (which is unique up to $k^*$).
We have $D(\omega_X) = 0$ since $D$ acts nilpotently on this $1$-dimensional space.
Let $\omega_Y$ be the $2$-form on $Y''$ corresponding to $\omega_X \restrictedto{X''}$ via the isomorphism 
$H^0(X'', \Omega^2_{X})^{D} \cong H^0(Y'', \Omega^2_{Y})$
of Proposition \ref{prop:2-forms}.
Let $D_{Y''}$ be the derivation on $Y''$ of Proposition \ref{prop:covering}.
Then $\Fix(D_{Y''}) = \emptyset$, $D_{Y''}$ is $p$-closed, and $Y''^{D_{Y''}} = \pthpower{X''}$.
Write $D_{Y''}^p = h D_{Y''}$ with $h \in k(Y)$. Then $h \in H^0(Y'', \cO_Y) \subset H^0(X'', \cO_X) = k$.

As $D_{Y''}$ extends to $D_{Y'}$ on $Y'$ with $\Fix(D_{Y'}) \subset \set{\pi(x)}$, we have $h \neq 0$, 
since $x$ (of type $A_{p-1}$) is not $\alpha_p$-quotient by Lemma \ref{lem:degree of isolated fixed point}. 
We may assume $h = 1$ (by replacing $D_{Y'}$ with a multiple by $k^*$).
Then $\thpower{Y''}{1/p} \to X''$ is a $\mu_p$-covering,
which corresponds to an element of $\Pic(X'')[p] = 0$.
Since $H^0(X'', \cO_X) = k$, such a covering is non-reduced, which is absurd.
\end{proof}

\section{Coverings of supersingular Enriques surfaces in characteristic 2} \label{sec:enriques}
	In this section we give a restriction on
	the singularities of the canonical $\alpha_2$-covering 
	of a supersingular Enriques surface in characteristic $2$,
	and give some examples.
	A more detailed study will be given in a subsequent paper \cite{Matsumoto:k3cover}.

	Let $X$ be a classical or supersingular (smooth) Enriques surface in characteristic $2$
	(i.e.\ an Enriques surface with $\Pic^{\tau}(X) = \bZ/2\bZ$ or $\alpha_2$ respectively).
	Let $\map{\pi}{Y}{X}$ be its canonical $\mu_2$- or $\alpha_2$-cover.
	We recall some known properties of $Y$.
\begin{itemize}
	\item (\cite{Bombieri--Mumford:III}*{Proposition 9}) 
	$Y$ is K3-like 
	(as in Definition \ref{def:k3-like}, i.e.\ $h^i(Y, \cO_Y) = 1,0,1$ for $i = 0,1,2$, and the dualizing sheaf $\omega_Y$ is isomorphic to $\cO_Y$).
	There exists a global regular $1$-form $\eta \neq 0$ on $X$, unique up to scalar,
	and it satisfies $\Sing(Y) = \pi^{-1}(\Zero(\eta))$.
	The zero locus $\Zero(\eta)$ is nonempty (hence $Y$ is singular somewhere), and if it has no divisorial part then it is of degree $12$.
	\item (\cite{Cossec--Dolgachev:enriques}*{Theorem 1.3.1})
	One of the following holds.
	\begin{itemize}
		\item $Y$ has only RDPs as singularities, and $Y$ is an RDP K3 surface.
		\item $Y$ has only isolated singularities, it has exactly one non-RDP singularity and that is an elliptic double point, 
		and $Y$ is a normal rational surface.
		\item $Y$ has $1$-dimensional singularities, and $Y$ is a non-normal rational surface.
	\end{itemize}
	\item (\cite{Ekedahl--Shepherd-Barron:exceptional})
	Non-normal examples exist. 
	More detailed properties, for example on the structure of the divisorial part of $\Zero(\eta)$, are proved.
	\item (\cite{Ekedahl--Hyland--Shepherd-Barron}*{Corollary 6.16})
	If $Y$ is an RDP K3 surface,
	then $\Sing(Y)$ is one of 
	$12 A_1$, $8 A_1 + D_4^0$, $6 A_1 + D_6^0$, $5 A_1 + E_7^0$,
	$3 D_4^0$, $D_8^0 + D_4^0$, $E_8^0 + D_4^0$, or $D_{12}^0$.
	\item (\cite{Schroer:K3-like}*{Sections 13--14})
	If $Y$ has an elliptic double point singularity, then there are no other singularities on $Y$.
	Such examples exist.
\end{itemize}

By using similar arguments as in Theorem \ref{thm:singularities of quotient K3}(\ref{item:restore}),
we can give some restrictions on the singularities of the canonical $\alpha_2$-coverings of \emph{supersingular} Enriques surfaces in characteristic $2$,
assuming it is an RDP K3 surface.
Since this method depends on the triviality of the canonical divisor of $X$, it cannot be applied to classical Enriques surfaces.
\begin{thm} \label{thm:alpha_p K3 Enr}
	Let $\map{\pi}{Y}{X}$ be the canonical $\alpha_2$-covering of 
	a supersingular Enriques surface $X$.
	If $Y$ is an RDP K3 surface, then 
		$\Sing(Y)$ is one of $12 A_1$, $3 D_4^0$, $D_8^0 + D_4^0$, $E_8^0 + D_4^0$, or $D_{12}^0$.
\end{thm}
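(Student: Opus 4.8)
The plan is to combine the duality of Theorem~\ref{thm:duality} with the local classification of Lemma~\ref{lem:degree of isolated fixed point}, using crucially that a supersingular Enriques surface has genuinely trivial canonical class $\omega_X \cong \cO_X$ rather than merely a $2$-torsion one. Write $D$ for the additive-type derivation on $Y$ corresponding to the $\alpha_2$-action, so $Y^D = X$. Since $Y$ is an RDP K3 surface we have $K_Y \sim 0$, and since $X$ is a supersingular Enriques surface we have $K_X \sim 0$; hence Theorem~\ref{thm:duality} applies, with the cover $Y$ in the role of its ``$X$'' and the base $X$ in the role of its ``$Y$''. It provides a single regular derivation $D_X$ on $X$, with $X^{D_X} = \pthpower{Y}$ and $\divisorialfix{D_X} = 0$, which is either globally of multiplicative type ($D_X^2 = D_X$) or globally of additive type ($D_X^2 = 0$). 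This global alternative is the heart of the argument; it is exactly what breaks down for classical Enriques surfaces, where $K_X$ is nontrivial and Theorem~\ref{thm:duality} is unavailable (compare Proposition~\ref{prop:classical Enriques non-duality}).

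Next I would record the local geometry and count degrees. As $D_X$ is a regular derivation on the smooth proper surface $X$ with $\divisorialfix{D_X} = 0$, its fixed locus is isolated, and Theorem~\ref{thm:c2 derivation} together with $\deg c_2(X) = 12$ gives $\deg \isolatedfix{D_X} = 12$. Since $\pi$ is a homeomorphism, $\Sing(Y) = \set{w_i}$ is in bijection with $\Fix(D_X) = \set{\pi(w_i)}$, each $\pi(w_i)$ being a smooth point of $X$; on $\hat{\cO}_{X,\pi(w_i)} \cong k[[x,y]]$ the derivation $D_X$ has invariant subring $\pthpower{B_i}$, where $B_i = \hat{\cO}_{Y,w_i}$ is the RDP of $Y$ above it, and $\pthpower{B_i} \cong B_i$ since $k$ is perfect.

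Now I would feed these local quotients into Lemma~\ref{lem:degree of isolated fixed point}. Whichever global type $D_X$ has is inherited at every $\pi(w_i)$, so: if $D_X$ is multiplicative then part~(\ref{subcase:mu}) forces every $\pthpower{B_i}$, hence every $B_i$, to be of type $A_1$; if $D_X$ is additive then part~(\ref{subcase:alpha}) forces every $B_i$ to be of type $D_{4m}^0$ or $E_8^0$. Moreover part~(\ref{subcase:degree}) identifies the local contribution $\deg \isolatedfix{D_X}$ at $\pi(w_i)$ with $n_i/(p-1) = n_i$, the index of $B_i$, so the global count becomes $\sum_i n_i = 12$.

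It remains to enumerate. In the multiplicative case every $B_i$ is $A_1$ and $\sum_i n_i = 12$ yields $12A_1$. In the additive case the indices $n_i$ range over $\set{4,8,12,\dots}$ (the types $D_{4m}^0$) together with $8$ (the type $E_8^0$), and the partitions of $12$ into such parts, namely $4+4+4$, $4+8$, and $12$, produce $3D_4^0$, $D_8^0 + D_4^0$, $E_8^0 + D_4^0$, and $D_{12}^0$. Together these are exactly the asserted configurations. The only genuine obstacle is the global multiplicative-or-additive dichotomy for $D_X$, which is where the triviality of $K_X$ enters through Theorem~\ref{thm:duality}; everything else is the local classification plus a short arithmetic. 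Conceptually, the three further configurations $8A_1 + D_4^0$, $6A_1 + D_6^0$, and $5A_1 + E_7^0$ allowed in general by \cite{Ekedahl--Hyland--Shepherd-Barron}*{Corollary 6.16} disappear here because the first would pair a $\mu_2$-type point $A_1$ with an $\alpha_2$-type point $D_4^0$, impossible for one global $D_X$, while $D_6^0$ and $E_7^0$ never arise as $\alpha_2$-quotients of a smooth point at all.
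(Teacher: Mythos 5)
Your proposal is correct and is essentially the paper's own argument: the paper likewise applies Theorem \ref{thm:duality} (using that a supersingular Enriques surface has $K_X$ trivial) to realize $X \to \secondpower{Y}$ as the quotient by a regular derivation $D$ of multiplicative or additive type with $\divisorialfix{D} = 0$, deduces $\deg \isolatedfix{D} = 12$ from Theorem \ref{thm:c2 derivation}, and concludes by Lemma \ref{lem:degree of isolated fixed point}. The details you add beyond the paper's three-line proof --- the identification $s_i = n_i/(p-1) = n_i$ at each fixed point, the partition enumeration of $12$ into parts from $\set{1}$ (multiplicative case) or $\set{4m, 8}$ (additive case), and the explanation of why the configurations $8A_1 + D_4^0$, $6A_1 + D_6^0$, $5A_1 + E_7^0$ from the Ekedahl--Hyland--Shepherd-Barron list are excluded --- are accurate elaborations of exactly that argument.
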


\begin{proof}
	By Theorem \ref{thm:duality},
	$X \to \secondpower{Y}$ is the quotient by a derivation $D$ of multiplicative or additive type 
	with $\divisorialfix{D} = 0$.
	Then $\deg \isolatedfix{D} = 12$ by Theorem \ref{thm:c2 derivation}.
	The assertion follows from by Lemma \ref{lem:degree of isolated fixed point}.
\end{proof}

\begin{rem}
$12 A_1$ is the most generic case, and explicit examples are given for example by \cite{Katsura--Kondo:1-dimensional}*{Section 4}.
We give examples with $\Sing(Y)$ being $3 D_4^0$, $D_8^0 + D_4^0$, $E_8^0 + D_4^0$, or single non-RDP, in Example \ref{ex:supersingular Enriques},
and we will give an example of the remaining RDP case (i.e.\ $\Sing(Y) = D_{12}^0$) in a subsequent paper \cite{Matsumoto:k3cover}*{Section 5}.
See also \cite{Schroer:K3-like}*{Sections 13--15} for various examples, 
although classical and supersingular Enriques surfaces are not distinguished explicitly.
\end{rem}

\begin{rem}
We note an error of an example of Bombieri--Mumford \cite{Bombieri--Mumford:III}*{Section 5}.
Let $X$ be a supersingular Enriques surface (in characteristic $2$).
They showed that there exists a regular vector field $\vartheta$ (canonical up to scalar) and
they gave two examples of $X$, second of which is claimed to have $\delta_X = 0$,
where $\delta_X$ is the scalar defined by $\vartheta^2 = \delta_X \vartheta$
(by normalizing $\vartheta$ we may assume $\delta_X \in \set{0,1}$).
However their calculation is incorrect and this $X$ actually has $\delta_X = 1$.
Note that $\delta_X = 1$ (resp. $\delta_X = 0$) is equivalent to
the morphism $X \to \normalization{(\secondpower{Y})}$ being a $\mu_2$-quotient (resp.\ an $\alpha_2$-quotient),
where $Y \to X$ is the canonical covering of the Enriques surface.

Their construction is as follows.
Let $Y \subset \bP^5$ be the complete intersection of the three quadrics
\begin{gather*}
x_1^2 + x_1 x_2 + y_3^2 + y_1 x_2 + x_1 y_2 = 0, \\
x_2^2 + x_2 x_3 + y_1^2 + y_2 x_3 + x_2 y_3 = 0, \\
x_3^2 + x_3 x_1 + y_2^2 + y_3 x_1 + x_3 y_1 = 0.
\end{gather*}
This surface $Y$ has exactly 6 isolated singular points:
\begin{gather*}
(1, 1, 1, 0, 0, 0); \\
(t^3, t, 1, t^3, t, 1), \quad t^3 + t^2 + 1 = 0; \\
(t^2, t, 1, t^3, t^2, t), \quad t^2 + t + 1 = 0.
\end{gather*}
(We corrected the error on the coordinates of the points of the third type.)
Let $X$ be the quotient of $Y$ by the $\alpha_2$-action
$(x_i, y_i) \mapsto (x_i, \varepsilon x_i + y_i)$,
that is, $D(x_i) = 0$ and $D(y_i) = x_i$.
They claim that $X$ is a smooth supersingular Enriques surface,
but actually it has $3 A_2$ singularities at the images of the $3$ points 
$(t^3, t, 1, t^3, t, 1)$, $t^3 + t^2 + 1 = 0$, of type $A_5$.
(The other singularities of $Y$ are all $A_1$ and their images are smooth points.)
Then $\Sing(\tilde{X} \times_X Y)$ is $12 A_1$,
with three $A_1$ above each $A_5$ of $Y$,
where $\tilde{X} \times_X Y$ is the canonical $\alpha_2$-covering of the resolution $\tilde{X}$ of $X$.
Consequently $\tilde{X}$ has $\delta_{\tilde{X}} = 1$.

We will construct supersingular Enriques surfaces with $\delta_X = 0$.
\end{rem}

\begin{example} \label{ex:supersingular Enriques} 
We consider the indices modulo $3$.
Let $F_i \in k[x_1,x_2,x_3,y_1,y_2,y_3]$ ($i=1,2,3$) be homogeneous quadratic polynomials 
belonging to the subring
$
k[x_j^2, y_j^2, t_j, s_j]_{j = 1,2,3}
$
(resp.\ $
k[x_j^2, y_j^2, t_j, u_j]_{j = 1,2,3}
$),
where $t_j = x_{j+1} x_{j+2}$,
$s_j = y_{j+1} y_{j+2}$,
$u_j = x_{j+1} y_{j+2} + x_{j+2} y_{j+1}$,
and let $Y = (F_1 = F_2 = F_3 = 0) \subset \bP^5$.
Endow $Y$ with 
a derivation $D$ of multiplicative (resp.\ additive) type with 
\begin{align*}
(D(x_j), D(y_j)) &= (0, y_j) \\ (\text{resp.} \; 
(D(x_j), D(y_j)) &= (0, x_j) )
\end{align*}
(see the convention before Example \ref{ex:k3:2}).
If $F_i$ are generic, then $Y$ is an RDP K3 surface and 
the quotient $X = Y^D$ is a classical (resp.\ supersingular) Enriques surface.
Liedtke \cite{Liedtke:liftingEnriques}*{Proposition 3.4} showed that any classical (resp.\ supersingular) Enriques surface is birational to an RDP Enriques surface of this form.
(Liedtke's theorem also covers singular Enriques surfaces (i.e.\ those with $\Pictau = \mu_2$), which we do not discuss in this paper.)

As showed in Proposition \ref{prop:classical Enriques non-duality},
in the classical case there is no (regular) derivation $D'$ on $X$ with $X^{D'} = \normalization{(\secondpower{Y})}$.

Consider the supersingular case.
Write $F_i = A_i + B_i + C_i$,
where 
$A_i \in \spanned{x_j^2, y_j^2}_{j}$,
$B_i \in \spanned{t_j}_{j}$,
$C_i \in \spanned{u_j}_{j}$.
For simplicity assume $C_1, C_2, C_3$ are linearly independent,
and then we may assume $C_i = u_i$.
Write $B_i = \sum_j b_{ij} t_j$.
The derivation $D'$ on $X$ defined by 
\begin{align*}
D'(B_i + u_i) &= 0, \\
D'(t_j)       &= b_{j+1,j+2} x_{j+1}^2 + b_{j+2,j+1} x_{j+2}^2 + e t_j + A_j,
\end{align*}
where $e = \sum_j b_{jj}$,
satisfies $X^{D'} = \normalization{(\secondpower{Y})}$ and $D'^2 = e D'$.
(To check that this is well-defined, it suffices to observe 
$D'(t_{j+1}) t_{j+2} + t_{j+1} D'(t_{j+2}) = x_j^2 D'(t_j)$,
and it is straightforward.)
If $e \neq 0$ then $e^{-1} D'$ is of multiplicative type
and if $e = 0$ then $D'$ is of additive type.
One can check (e.g.\ by using the Jacobian criterion) 
that if $F_i$ is generic with $C_i = u_i$ and $e = 0$ then $\Sing(Y)$ is $3D_4^0$
at $(G_1 = G_2 = G_3 = H_1 = H_2 = H_3 = 0)$,
\begin{align*}
	G_j &= \sqrt{A_j} + \sqrt{b_{j+1,j+2}} x_{j+1} + \sqrt{b_{j+2,j+1}} x_{j+2}, \\
	H_j &= B_j + u_j + b_{j+1,j+2} x_{j+1}^2 + b_{j+2,j+1} x_{j+2}^2.
\end{align*}
Note that the subscheme $(H_1 = H_2 = H_3 = 0) \subset \bP^5$ is of codimension $2$ and degree $3$, since $\sum x_j H_j = 0$.

Now, for simplicity let 
$F_i = A_i + u_i$ 
(so $b_{i j} = 0$ and $e = 0$).
\begin{itemize} 
\item 
If $A_1 = x_1^2 + x_3^2$,
$A_2 = y_1^2 + y_2^2$, $A_3 = x_3^2 + y_3^2$,
then $\Sing(Y)$ is $3D_4^0$ 
at $(x_1,x_2,x_3,y_1,y_2,y_3) = (0,1,0,0,0,0)$, $(1,1,1,1,1,1)$, $(0,0,0,1,1,0)$.
\item 
If $A_1 = x_1^2 + x_2^2 + y_1^2$, $A_2 = x_3^2$, $A_3 = y_1^2 + y_2^2$,
then $\Sing(Y)$ is $D_8^0 + D_4^0$ at 
$(1,1,0,0,0,0)$, $(0,0,0,0,0,1)$.
\item 
If $A_1 = x_1^2 + x_2^2 + y_1^2$,
$A_2 = y_1^2 + y_2^2$, $A_3 = x_3^2 + y_3^2$,
then $\Sing(Y)$ is $E_8^0 + D_4^0$ at 
$(1,1,0,0,0,0)$, $(0,0,1,0,0,1)$.
\item 
If 
$A_1 = x_1^2 + x_3^2 + y_1^2$, 
$A_2 = x_2^2 + y_1^2 + y_3^2$, 
and $A_3 = y_2^2$,
then $\Sing(Y)$ consists of a single non-RDP singularity at $(1,0,1,0,0,0)$.
\end{itemize}

We will give an example of the remaining RDP case (i.e.\ $\Sing(Y) = D_{12}^0$), and also examples in the classical case, in a subsequent paper \cite{Matsumoto:k3cover}*{Section 5}.
\end{example}

\section{Examples} \label{sec:examples}

\subsection{\texorpdfstring{Local $\alpha_p$-actions}{Local alpha\_p-actions}}

\begin{example} \label{ex:non-RDP}
For $p = 2,3,5,7$, 
let $D$ be the derivation on $A = k[[x,y]]$ defined as in the table.
Then $D$ is of additive type,
with $\divisorialfix{D} = 0$, 
$\deg \isolatedfix{D}$ is as in the table,
and $A^D = k[[X,Y,Z]] / (F)$, 
where $X = x^p$, $Y = y^p$, $Z$ is as in the table, and $F$ is as in the table,
and $A^D$ is a non-RDP.
(cf.\ Lemma \ref{lem:degree of isolated fixed point}.)

The non-RDPs appearing in Examples \ref{ex:supersingular Enriques} and \ref{ex:k3:3}--\ref{ex:k3:5a:non-RDP}
are isomorphic to the quotient singularities listed here,
at least up to terms of high degree.

\begin{tabular}{llllll}
\toprule
$p$ & $D(x)$ & $D(y)$ & $\deg \isolatedfix{D}$ & $F$ & $Z$ \\
\midrule
$2$ & $y^2$  & $x^6$           & $12$ & $X^7 + Y^3 - Z^2$     & $x^7 + y^3$ \\
$3$ & $y$    & $x^6$           & $6$  & $X^7 + Y^2 - Z^3$     & $x^7 + y^2$ \\
$5$ & $xy$   & $2 (x^2 + y^2)$ & $4$  & $2 X^3 + X Y^2 - Z^5$ & $2 x^3 + x y^2$ \\
$7$ & $y$    & $-2 x^3$        & $3$  & $X^4 + Y^2 - Z^7$     & $x^4 + y^2$ \\
\bottomrule
\end{tabular}
\end{example}

\subsection{Actions on RDP K3 surfaces with rational quotients} \label{subsec:rational quotients}

Examples for $G = \bZ/l\bZ$, $l \leq 19$ and $l \neq p$, are well-known.

Examples for $G = \bZ/p\bZ$, $p \leq 11$, are given in \cite{Dolgachev--Keum:wild-p-cyclic}.

Examples for $G = \mu_p$, $p \leq 19$ and $p \neq 5$, are given in \cite{Matsumoto:k3mun}*{Section 9}.
For $G = \mu_5$, 
the derivation $D = t \partial / \partial t$ on 
the elliptic RDP K3 surface $(y^2 + x^3 + x^2 + t^{10} = 0)$ 
gives an example.

Examples for $G = \alpha_p$, $p \leq 7$, are given in Section \ref{subsec:K3 quotients}.
For $G = \alpha_{11}$,
the derivation $D = \partial / \partial t$ on 
the elliptic RDP K3 surface $(y^2 + x^3 + x^2 + t^{11} = 0)$ 
gives an example.

We do not know whether examples with
$G = \alpha_p$, $p = 13,17,19$, exist.

\subsection{Actions on RDP K3 surfaces with RDP Enriques quotients}

As noted in Proposition \ref{prop:structure of quotient}, this is possible only if $p = 2$.
We gave examples in Example \ref{ex:supersingular Enriques}.

\subsection{Actions with RDP K3 quotients} \label{subsec:K3 quotients}

In this section, we give the following examples of $G$-quotient morphisms $\map{\pi}{X}{Y}$ in the following characteristics $p$.
\begin{itemize}
	\item $X$ and $Y$ are RDP K3 surfaces, $X$ is smooth, $G = \bZ/p\bZ$,
	$(p, \Sing(Y)) = (2, 2 D_4^1), (2, D_8^2), (2, E_8^2), (3, 2 E_6^1), (5, 2 E_8^1)$.
	\item $X$ and $Y$ are RDP K3 surfaces, 
	and the induced morphism $\map{\pi'}{Y}{\pthpower{X}}$ is a $G'$-quotient morphism, with
	\begin{itemize}
		\item $(G, G') = (\mu_p, \mu_p)$, $p \leq 7$;
		\item $(G, G') = (\mu_p, \alpha_p)$, $p \leq 5$;
		\item $(G, G') = (\alpha_p, \alpha_p)$, $p \leq 3$.
	\end{itemize}
	(We note that if $\pi$ is an example for $(G, G') = (\mu_p, \alpha_p)$, then $\pi'$ is an example for $(G, G') = (\alpha_p, \mu_p)$.)
	
	When $p = 2$, we give examples with all pairs 
	$(\Sing(X), \Sing(Y)) \in \set{8 A_1, 2 D_4^0, 1 D_8^0, 1 E_8^0}^2$
	except $(1 E_8^0, 1 E_8^0)$.
	\item $Y$ is an RDP K3 surface with $\Sing(Y)$ and $\Pic(\tilde{Y})$ as in Table \ref{table:singularities of quotient K3}, 
	$X$ is the corresponding $G$-covering that is a K3-like rational surface, and 
	\begin{itemize}
		\item $X$ has a single singularity, which is a non-RDP, $G = \mu_p$ ($p \leq 7, p \neq 2$) and $G = \alpha_p$ ($p \leq 5, p \neq 2$).
		\item $X$ is non-normal, $G = \mu_p$ ($p \leq 7$) and $G = \alpha_p$ ($p \leq 5$). 
	\end{itemize}
	In this case $\map{\pi'}{Y}{\normalization{(\pthpower{X})}}$ is an $\alpha_p$-quotient morphism with rational quotient.
\end{itemize}

We prove in a subsequent paper \cite{Matsumoto:k3rdpht}*{Corollary 6.8} that if $X$ and $Y$ are RDP K3 surfaces then the following are impossible:
\begin{itemize}
	\item $(G, G') = (\alpha_5, \alpha_5)$.
	\item $(G, G', \Sing(X), \Sing(Y)) = (\alpha_2, \alpha_2, 1 E_8^0, 1 E_8^0)$.
\end{itemize}

\medskip

Below we use the following description of derivations.
Suppose $X$ is a projective scheme over $k$,
$L$ is an ample line bundle on it,
and $D^* \in \End_k(H^0(X, L))$ is a $k$-linear endomorphism 
that extends to a derivation $D^*$ of the $k$-algebra $\bigoplus_{m \geq 0} H^0(X, mL)$.
Then $D^*$ induces a derivation $D$ on $X$ 
by $D(f/g) = D^*(f)/g - fD^*(g)/g^2$ 
on $(g \neq 0) \subset X$ for $f,g \in H^0(X, mL)$. 
This can be applied for example to $X = (F = 0) \subset \bP^3$
and $D^* \in \End_k (H^0(\cO_{\bP^3}(1)))$ satisfying $D^*(F) = cF$ for some $c \in k$.
Below we write simply $D$ in place of $D^*$.

\begin{example}[$G = \mu_2$ (resp.\ $G = \alpha_2$)] \label{ex:k3:2} 
Let $F \in k[w,x,y,z]$ be a homogeneous quartic polynomial 
belonging to 
\[
 k[w^2, x^2, y^2, z^2, wx, yz] \quad (\text{resp. }
k[w^2, x^2, y^2, z^2, xz, wz + xy] )
\]
and let $X = (F = 0) \subset \bP^3$.
Such $F$ is uniquely written as 
\begin{align*}
F &= H + wx I + yz J + wxyz K \\ (\text{resp.} \; 
F &= H + xz I + (wz+xy) J + xz(wz+xy) K )
\end{align*}
with $H, I, J, K \in k[w^2, x^2, y^2, z^2]$
of respective degree $4, 2, 2, 0$.
Endow $X$ with a derivation $D$ of multiplicative (resp.\ additive) type with 
\begin{align*}
(D(w),D(x),D(y),D(z)) &= (0,0,y,z) \\ (\text{resp.} \; 
(D(w),D(x),D(y),D(z)) &= (x,0,z,0) )
\end{align*}

If $F$ is generic, then
$X$ and the quotient $Y = X^D$ are RDP K3 surfaces.
Let $L'$ be the line bundle on $Y$ with $H^0(Y, L') = H^0(X, p L)^{D}$.
The derivation $D'$ on $H^0(Y, L')$ defined by $D'(w^2) = D'(x^2) = D'(y^2) = D'(z^2) = 0$
and 
\begin{align*}
D'(wx) &= J + wx K, & D'(yz)    &= I + yz K \\ (\text{resp.} \; 
D'(xz) &= J + xz K, & D'(wz+xy) &= I + (wz+xy) K )
\end{align*}
satisfies $Y^{D'} = \secondpower{X}$ and $D'^2 = K D'$.
If $K \neq 0$ then $K^{-1} D'$ is of multiplicative type,
and if $K = 0$ then $D'$ is of additive type.
This gives an $11$- (resp.\ $10$-) dimensional family $Y$ of $\mu_2$-actions which degenerate to $\alpha_2$-actions in codimension $1$.
One can check that if $F$ is generic then $\Sing(X)$ is $8A_1$,
if $F$ is generic with $K = 0$ then $\Sing(X)$ is $2D_4^0$,
and if $F$ is generic with $K = 0$ and $\#(H = I = J = 0) = 1$ then $\Sing(X)$ is $1D_8^0$.
If $G = \mu_2$ and $(H, I, J, K) = (w^4 + y^4, x^2 + y^2, w^2 + x^2 + y^2 + z^2, 0)$
then $\Sing(X)$ is $1 E_8^0$. 
If $G = \alpha_2$ and $(H, I, J, K) = (x^4 + z^4 + w^2 y^2, w^2, y^2, 0)$
then $\Sing(X)$ is $1 E_8^0$ and $\Sing(Y)$ is $2 D_4^0$. 
If $G = \alpha_2$ and $(H, I, J, K) = (w^4 + x^4 + z^4, w^2, x^2 + y^2 + z^2, 0)$
then $\Sing(X)$ is $1 D_8^0$ and $\Sing(Y)$ is $1 D_8^0$. 
If $G = \alpha_2$ and $(H, I, J, K) = (w^4 + y^2 z^2, x^2, y^2, 0)$ 
then $\Sing(X)$ is $1 E_8^0$ and $\Sing(Y)$ is $1 D_8^0$. 

If $G = \mu_2$ and $(H, I, J, K) = (y^2 I + x^2 J, w^2 + y^2, x^2 + \lambda^2 z^2, 0)$
(resp.\ $G = \alpha_2$ and $(H, I, J, K) = ((z^2 + w^2) I + x^2 J, w^2 + z^2, x^2 + \lambda^2 y^2, 0)$),
with $\lambda \in k \setminus \bF_2$,
then $\Sing(X) = (I = J = 0)$, hence $X$ is non-normal,
and $\Sing(Y)$ consists of $\pi(\Fix(D)) = 8 A_1$ (resp.\ $\pi(\Fix(D)) = 1 D_8^0$) and $4 A_1$ (resp.\ $1 A_1$) contained in $\pi(\Sing(X))$.
Let $Y' \to Y$ be the resolution of the latter singularities.
Then $X \times_Y Y' \to Y'$ is an example of a non-normal $\mu_2$- (resp.\ $\alpha_2$-) covering. 
\end{example}

\begin{example}[$G = \mu_3$ (resp.\ $G = \alpha_3$)] \label{ex:k3:3} 
Let $F \in k[x,y,z]$ be a homogeneous sextic polynomial 
belonging to $k[x,y^3,z^3,A]$,
where $A = yz$ (resp.\ $A = xz + y^2$),
and let $X = (w^2 + F = 0) \subset \bP(3,1,1,1)$.
Such $F$ is uniquely written as 
\[
F = H + xA I + (xA)^2 J
\]
with $H, I, J \in k[x^3, y^3, z^3]$
of respective degree $6, 3, 0$.
Endow $X$ with a derivation $D$ of multiplicative (resp.\ additive) type with 
\begin{align*}
(D(w),D(x),D(y),D(z)) &= (0,0,y,-z) \\ (\text{resp.} \; 
(D(w),D(x),D(y),D(z)) &= (0,0,x,y) ).
\end{align*}

If $F$ is generic, then
$X$ and the quotient $Y = X^D$ are RDP K3 surfaces.
The derivation $D'$ on $Y$ defined by 
\[
D'(y^3) = D'(z^3) = 0, \quad 
D'(w) = I + 2 J xA, \quad 
D'(xA) = w
\]
satisfies $Y^{D'} = X^{(3)}$ and $D'^3 = 2J D'$.
If $J \neq 0$ then $(2J)^{-1/2} D'$ is of multiplicative type
and if $J = 0$ then $D'$ is of additive type.
This gives a $7$- (resp.\ $6$-) dimensional family $Y$ of $\mu_3$-actions which degenerate to $\alpha_3$-actions in codimension $1$.
One can check that if $F$ is generic then $\Sing(X)$ is $6A_2$,
and if $F$ is generic with $J = 0$ then $\Sing(X)$ is $2E_6^0$.

If 
$(H, I, J) = ((\lambda^3 x^3 + y^3)^2 + (y^3 - z^3)^2, y^3 - z^3, 0)$ with $\lambda \in k \setminus \bF_3$,
then $X$ has a single singularity at $(0,1,\lambda,\lambda)$ (resp.\ $(0,0,1,0)$), which is a non-RDP, $X$ is a rational surface,
and $Y$ is an RDP K3 surface with $\Sing(Y) = 6 A_2$ (resp.\ $\Sing(Y) = 2 E_6^0$). 

If 
$(H, I, J) = ((x^3 + y^3 + z^3)^2, x^3 + y^3 + z^3, 0)$,
then $X$ is non-normal rational surface with $\Sing(X) = (w = x + y + z = 0)$,
and $Y$ is an RDP K3 surface with $\Sing(Y) = 6 A_2$ 
(resp.\ $\Sing(Y)$ consists of $\pi(\Fix(D)) = 2 E_6^0$ and $3 A_1$ contained in $\pi(\Sing(X))$), 
and $X \times_Y Y' \to Y'$, where $Y' = Y$ (resp.\ $Y' \to Y$ is the resolution of RDPs of other than $2 E_6^0$)
is an example of a non-normal $\mu_3$- (resp.\ $\alpha_3$-) covering.

\end{example}
\begin{example}[$G = \mu_5$] \label{ex:k3:5m} 
Let $F \in k[x,y,z]$ be a homogeneous sextic polynomial 
belonging to $k[x,y^5,z^5,A]$
where $A = yz$ 
and let $X = (w^2 + F = 0) \subset \bP(3,1,1,1)$.
Endow $X$ with a derivation $D$ of multiplicative (resp.\ additive) type with 
\[
(D(w),D(x),D(y),D(z)) = (0,0,y,-z) 
\]

If $F$ is generic, then
$X$ and the quotient $Y = X^D$ are RDP K3 surfaces.
Write 
\[
 F = a_6 x^6 + a_4 x^4 A + a_2 x^2 A^2 + a_0 A^3 + b x y^5 + c x z^5.
\]
Define a derivation $D'$ on $Y$ by
\begin{gather*}
D'(x^5) = D'(y^5) = D'(z^5) = 0, \\
D'(w x^2) = 3 x \frac{\partial F}{\partial A}, 
D'(wA) = \frac{\partial F}{\partial x}, 
D'(x^3 A) = - w x^2,
D'(xA^2) = -2 wA.
\end{gather*}
Then it satisfies $Y^{D'} = X^{(5)}$ and $D'^5 = e D'$,
where $e = a_2^2 - 3 a_0 a_4$.
If $e \neq 0$ then $e^{-1/4} D'$ is of multiplicative type
and if $e = 0$ then $D'$ is of additive type.
This gives a $3$-dimensional family $Y$ of $\mu_5$-actions which degenerate to $\alpha_5$-actions in codimension $1$.
One can check that if $F$ is generic then $\Sing(X)$ is $4A_4$,
and if $F$ is generic with $e = 0$ then $\Sing(X)$ is $2E_8^0$.

If 
$F = (A - x^2)^3 + x (2 x^5 + y^5 + z^5)$,
then $X$ has a single singularity at $(w,x,y,z) = (0,1,-1,-1)$, which is a non-RDP, $X$ is a rational surface,
and $Y$ is an RDP K3 surface with $\Sing(Y) = 4 A_4 + A_2$, where $A_2$ is the image of the non-RDP. 
Let $Y' \to Y$ be the resolution of the $A_2$ point, then $\Sing(X \times_Y Y')$ is a single non-RDP. 
\end{example}

\begin{example}[$G = \mu_7$] \label{ex:k3:7m} 
Let $a \in k$, 
$F = w^2 + x_1^5 x_2 + x_2^5 x_4 + x_4^5 x_1 + a x_1^2 x_2^2 x_4^2 \in k[w, x_1, x_2, x_4]$
and $X = (F = 0) \subset \bP(3,1,1,1)$.
Let $b = (a^{-3} - 1)^{1/3} \in k \cup \set{\infty}$, hence $b = 0$ if and only if $a^3 = 1$.
Then $\Sing(X)$ consists of the points $(0,x_1,x_2,x_4)$ satisfying
\[ (x_1^5 x_2 : x_2^5 x_4 : x_4^5 x_1 : a x_1^2 x_2^2 x_4^2) = (1 + 4jb : 1 + 2jb : 1 + jb : 4)  \]
for some $j \in \set{1,2,4}$,
and it is $3 A_6$ if $b \neq 0$ and a single non-RDP if $b = 0$.
$X$ admits a derivation $D$ of multiplicative type
with 
\[
D(w) = 0, \; D(x_i) = i x_i.
\]
whose quotient $Y = X^D$ is an RDP K3 surface.
If $b \neq 0$ then $\Sing(Y) = \pi(\Fix(D))$ is $3 A_6$,
and if $b = 0$ then $\Sing(Y) = \pi(\Fix(D)) \cup \pi(\Sing(X))$ is $3 A_6 + A_1$.
In the latter case, let $Y' \to Y$ be the resolution of the $A_1$ point, then $\Sing(X \times_Y Y')$ is a single non-RDP 
whose completion is isomorphic to $k[[X,Y,Z]] / (X^2 + Y^4 + Z^7 + \dots)$.

$Y$ admits a derivation $D'$ defined by 
\begin{align*}
 D'(x_i^7)                 &= 0, \\
 D'(x_i x_{2i}^2 x_{4i}^4) &= i^2 w x_{2i} x_{4i}^3, \\
 D'(w x_i x_{2i}^3)        &= i^2 (- x_{2i}^7 + 2 x_i x_{2i}^2 x_{4i}^4 - 2a x_i^2 x_{2i}^4 x_{4i}),
\end{align*}
$i = 1,2,4$, where the indices are considered modulo $7$, 
satisfying $D'^7 = (1 - a^3) D'$. 
\end{example}

\begin{example}[$G = \alpha_5$] \label{ex:k3:5a:non-RDP}
Let $Y$ be the RDP K3 surface 
$w^2 + (y^2 - 2xz)^3 + z(x^5 + y^5 + z^5) = 0$,
equipped with the derivation $D'$ defined by $D'(w) = 0$, $D'(x) = y$, $D'(y) = z$, $D'(z) = 0$.
Then $\Sing(Y)$ is $2 E_8^0$ at $w = y^2 - 2xz = x + y + z = 0$.
Then $\thpower{(Y^{D'})}{1/p}$ is the $\alpha_5$-covering of $Y$,
with a single singularity that is non-RDP.
\end{example}

\begin{example}[$G = \mu_5$ (resp.\ $G = \alpha_5$)] \label{ex:k3:5:non-normal}
Let $a \in k$ and assume $a (a^3 - 2) \neq 0$ (resp.\ $a = 0$).
Let $S$ be the elliptic RDP K3 surface 
$y^2 = x^3 + a x^2 + t^5 (t-1)^5$,
equipped with the derivation $D' = \partial / \partial t$
having $1$-dimensional fixed locus at $t = \infty$.
Then $\Sing(S)$ is $4 A_4$ at $t = 0$, $t = 1$, $t^5 (t - 1)^5 + 2 a^3 = 0$
(resp.\ $2 E_8^0$ at $t = 0$, $t = 1$).
$S$ admits a non-normal $\mu_5$- (resp.\ $\alpha_5$-) covering,
birational to $\thpower{(S^{D'})}{1/p}$.
We see that $S^{D'}$ is a certain compactification of $y^2 = x^3 + a x^2 + T (T-1)$, where $T = t^5$.
\end{example}
\begin{example}[$G = \mu_7$] \label{ex:k3:7m:non-normal}
Let $S$ be the elliptic RDP K3 surface
$y^2 = x^3 + t^7 x + 1$,
equipped with the derivation $D' = \partial / \partial t$
having $1$-dimensional fixed locus at $t = \infty$.
Then $\Sing(S) = 3 A_6$ at $-4 (t^7)^3 - 27 = 0$.
Similarly to the previous example,
$S$ admits a non-normal $\mu_7$-covering 
birational to $\thpower{(S^{D'})}{1/p}$.
We see that $S^{D'}$ is a certain compactification of $y^2 = x^3 + T x + 1$, where $T = t^7$.
\end{example}

\begin{example}[$G = \bZ/2\bZ$; See also \cite{Dolgachev--Keum:wild-p-cyclic}*{Examples 2.8}] \label{ex:k3:z2}
Let $F \in k[w,x,y,z]$ be a homogeneous quartic polynomial 
belonging to 
\[
k[w^2 + x^2, y^2 + z^2, wx, yz, wy + xz, wz + xy] ,
\]
and let $X = (F = 0) \subset \bP^3$.
Endow $X$ with an automorphism $g$ of order $2$ with
$g(w,x,y,z) = (x,w,z,y)$.
If $F$ is generic, then $X$ is a smooth K3 surface and $Y = X / \spanned{g}$ is an RDP K3 surface,
with 
\[ \Fix(g) = \set{(\alpha,\alpha,\beta,\beta) \mid \alpha^2 c(w^2 x^2)^{1/2} + \alpha \beta c(w x y z)^{1/2} + \beta^2 c(y^2 z^2)^{1/2} = 0} , \]
where $c(m)$ are the coefficients of the monomials $m$ in $F$.
If $F$ is generic (resp.\ generic with $c(w x y z) = 0$), then $\Sing(Y) = \pi(\Fix(g))$ is $2 D_4^1$ (resp.\ $1 D_8^2$). 

Now let $X \subset \bP^5 = \Proj k[x_1, x_2, y_1, y_2, y_3, y_4]$ be the K3 surface defined by
\[
x_1^2 + x_1 y_1 + y_3 y_2 = 
x_2^2 + x_2 y_2 + y_1^2 + y_3 y_4 = 
y_1 y_3 + y_2 y_4 + y_4^2 = 0,
\]
with automorphism $g$ defined by $g(x_i) = x_i + y_i$, $g(y_i) = y_i$.
Then $\# \Fix(g) = 1$ (at $x_1 = x_2 = y_1 = y_2 = y_4 = 0$), 
and $Y = X / \spanned{g}$ is an RDP K3 surface with $\Sing(Y) = \pi(\Fix(g)) = 1 E_8^2$.

\end{example}
\begin{example}[$G = \bZ/3\bZ$] \label{ex:k3:z3}
Let $F \in k[w,x,y,z]$ be a homogeneous quartic polynomial 
belonging to 
\[
k[w, x + y + z, xy + yz + zx, xyz, (x-y)(y-z)(z-x)],
\]
and let $X = (F = 0) \subset \bP^3$.
Endow $X$ with an automorphism $g$ of order $3$ with
$g(w,x,y,z) = (w,y,z,x)$.
If $F$ is generic
(e.g.\ if $F = w^4 + x^4 + y^4 + z^4 - \lambda^3 wxyz$ with $\lambda \neq 0,1$),
then $X$ is a smooth K3 surface,
$\Fix(g) = \set{(0,1,1,1), (\lambda,1,1,1)}$ where $\lambda = (-c(wxyz)/c(w^4))^{1/3}$,
 and $Y = X / \spanned{g}$ is an RDP K3 surface with $\Sing(Y) = 2 E_6^1$.

\end{example}
\begin{example}[$G = \bZ/5\bZ$, and $G = \alpha_5$] \label{ex:k3:z5}
Let $a, b_{-1}, b_0, b_1 \in k$
with $b_{-1} b_1 \neq 0$.
Let $b = b(t) = b_{-1} t^{-1} + b_0 + b_1 t$
and $c = c(t) = t b(t) = b_{-1} + b_0 t + b_1 t^2$.
Let $S$ and $T$ be two RDP K3 surfaces defined by 
\begin{align*}
S \colon y^2  &= x^3 + a t^4 x + t^5 c    , \\
T \colon Y^2 &= X^3 + a^5 t^4 X + t c^5 .
\end{align*}
Let $\xi = t^{-2} X + ab$.
Let $\Delta = - 4 a^3 - 27 b^2$.
Let $\map[\rationalto]{f}{T}{S}$ be the rational map defined by $f(X,Y) = $
\[
 \biggl(
t^2 \frac{\xi^5 - ab \xi^4 - a^2 \Delta \xi^3 - a \Delta^3 \xi}{(2 a \xi^2 + \Delta^2)^2}, 
Y \frac{\xi^6 + a^2   \Delta \xi^4 - 2 b   \Delta^2 \xi^3 - a \Delta^3 \xi^2 + 2 \Delta^5}{(2 a \xi^2 + \Delta^2)^3} 
\biggr).
\] 
Over $k(t)$, this defines a separable (resp.\ inseparable) isogeny of degree $5$ between ordinary (resp.\ supersingular) elliptic curves if $a \neq 0$ (resp.\ $a = 0$).

Suppose $b$ is generic and $a \neq 0$.
Then $T$ and $S$ are RDP K3 surfaces with $4 A_4$ and $2 E_8^1$ respectively.
Let $\tilde{T} \to T$ be the resolution.
Then $f$ induces a finite morphism $\tilde{T} \to S$
that is the quotient morphism of a $\bZ/5\bZ$-action generated by 
the translation by a $5$-torsion point $(X,Y) 
 = (\frac{2}{e^2} \Delta - ab, 2 \Delta (e^3 + \frac{b}{e^3}))$,
$e^4 = 2a$.

Suppose $a = 0$ and $\disc(c) = b_0^2 - 4 b_{-1} b_1 \neq 0$ (so $c$ is not a square).
Then $T$ and $S$ are both RDP K3 surfaces with $2 E_8^0$.
Let $\tilde{T} \to T$ be the resolution,
$C$ be the unique $4 A_4$ configuration contained in the union of the two fibers over $t = 0$ and $t = \infty$,
and $\tilde{T} \to T'$ be the contraction of $C$.
Then $T'$ is an RDP K3 surface with $4 A_4$,
and $f$ induces a finite morphism $\map{f'}{T'}{S}$
which is an $\alpha_5$-quotient morphism.
Define a derivation $D'$ on $S$ by
$D'(x) = 2 c'(t)x$, $D'(y) = 3 c'(t)y$, $D'(t) = c(t)$.
We have $D'^5 = (\disc(c))^2 D'$.
This defines a $\mu_5$-action on $S$ 
whose quotient is $\thpower{T'}{5}$.

Suppose $a = 0$ and $\disc(c) = b_0^2 - 4 b_{-1} b_1 = 0$ (so $c$ is a square).
Then $\Sing(S)$ contains $2 E_8^0$,
the derivation $D'$ on $S$ defined as above has divisorial fixed locus, and the corresponding $\alpha_5$-covering of $S$ is non-normal.
\end{example}

\subsection{Inseparable morphisms of degree $p$ between RDP K3 surfaces}

We give an example for each case with $r > 1$ mentioned in Theorem \ref{thm:insep K3}.

\begin{example}[Kummer surfaces and generalized Kummer surfaces (cf. \cite{Katsura:generalizedkummer})] \label{ex:insep K3 with abelian cover}
	Let $r \in \set{2,3,4,6}$.
	Let $p$ be a prime with $p \equiv 1 \pmod{r}$.
	Let $\map{\bar{\pi}}{A}{B}$ be a purely inseparable isogeny of degree $p$ between abelian surfaces in characteristic $p$,
	(automatically) induced by a derivation, say $D$.
	Suppose we have symplectic automorphisms $g_A \in \Aut_0(A)$ and $g_B \in \Aut_0(B)$ of same order $r$
	satisfying $\bar{\pi} \circ g_A = g_B \circ \bar{\pi}$ 
	and $g_A^*(D) = \zeta D$ for a primitive $r$-th root $\zeta$ of unity.
	Here $\Aut_0$ is the group of automorphisms preserving the origin.
	Then $\map{\pi}{A / \spanned{g_A}}{B / \spanned{g_B}}$ is a purely inseparable morphism of degree $p$ between RDP K3 surfaces,
	whose covering as in Theorem \ref{thm:insep K3} is $\bar{\pi}$.

	The singularities of the quotients are as in Table \ref{table:singularities of abelian-quotient K3} \cite{Katsura:generalizedkummer}*{Table in page 17}: 
	$16 A_1$, $9 A_2$, $4 A_3 + 6 A_1$, $A_5 + 4 A_2 + 5 A_1$ for $r = 2,3,4,6$ respectively.
	
	Examples of such $\bar{\pi}, g_A, g_B$ are given as follows.
	If $r = 2$, take $\bar{\pi}$ arbitrarily and let $g_A = [-1]_A$, $g_B = [-1]_B$.
	If $r = 3,4,6$, take an elliptic curve $E$ equipped with an automorphism $h \in \Aut_0(E)$ of order $r$,
	and let $\map{\bar{\pi}}{A = E \times E}{B = E \times E^{(p)}}$ and $g_A = h \times h^{-1}$, $g_B = h \times (h^{(p)})^{-1}$.
	Then $g_B$ is symplectic since $p \equiv 1 \pmod{r}$.
\end{example}

\begin{rem}
	If $\map{\bar{\pi}}{A}{B}$ be a purely inseparable morphism of degree $p$ between non-supersingular abelian surfaces in characteristic $p = 2$,
	then $\map{\pi}{A / \set{\pm 1}}{B /\set{\pm 1}}$ is a $\mu_2$- or $\alpha_2$-quotient morphism between RDP K3 surfaces.
	More precisely, if $\prank(A) = 2$ (resp.\ $\prank(A) = 1$)
	then both $\Sing(A / \set{\pm 1})$ and $\Sing(B / \set{\pm 1})$ are $4D_4^1$ (resp.\ $2D_8^2$)
	(Katsura \cite{Katsura:Kummer2}*{Proposition 3}),
	and both $\bar{\pi}$ and $\pi$ are $\mu_2$-quotient (resp.\ either both are $\mu_2$-quotient or both are $\alpha_2$-quotient).

	If $A$ is (and hence $B$ is) supersingular, then $A / \set{\pm 1}$ is not birational to a K3 surface,
	instead it is a rational surface with a single non-RDP singularity 
	(Katsura \cite{Katsura:Kummer2}*{Proposition 3}), and so is $B / \set{\pm 1}$.
\end{rem}

\begin{example} \label{ex:insep K3 with K3 cover}
For each pair of $G \in \set{\mu_p, \alpha_p}$ and $r > 1$ appearing in Theorem \ref{thm:insep K3}(\ref{case:K3 mu},\ref{case:K3 alpha}),
we give an example of an RDP K3 surface $\bar{X}$ with a derivation $D$ of multiplicative type or additive type
and a symplectic automorphism $g \in \Aut(X)$ of order $r$
such that $\bar{Y} = \bar{X}^{D}$ is an RDP K3 surface and
$g^*(D) = \zeta D$ for a primitive $r$-th root $\zeta$ of unity,
hence $g$ induces a symplectic automorphism $g' \in \Aut(Y)$ (of order $r$),
and 
the induced morphism $\map{\pi}{X = \bar{X}/\spanned{g}}{Y = \bar{Y}/\spanned{g'}}$ 
has $\map{\bar{\pi}}{\bar{X}}{\bar{Y}}$ as its minimal covering as in Theorem \ref{thm:insep K3}.

[$\mu_5$, $r = 4$]
Let $\bar{X} = (x_1^3 x_2 - x_2^3 x_4 + x_4^3 x_3 - x_3^3 x_1 = 0) \subset \bP^3$ be the quartic RDP K3 surface
(with $4 A_4$ at $\set{(x_1 : x_2 : x_3 : x_4) = (1 : 2 e^3 : e : 3 e^2) \mid e^4 = -1}$),
and define a derivation $D$ and an automorphism $g$ of $\bar{X}$ by 
$D(x_i) = i x_i$, 
$g(x_i) = x_{(2i \bmod 5)}$.
Then both $D$ and $g$ are symplectic, and $g^* D = 2^{-1} D$.
Hence $\map{\pi}{X = \bar{X}/\spanned{g}}{Y = \bar{Y}/\spanned{g}}$ is an example with $\bar{\pi}$ a $\mu_5$-quotient and $r = 4$.

[$\mu_7$, $r = 3$]
Suppose $b \neq 0$ in Example \ref{ex:k3:7m}, and let $g(w, x_1, x_2, x_4) = (w, x_4, x_1, x_2)$.
Then $g$ is symplectic and $g^* D = 2 D$.

[$\alpha_5$, $r = 2$]
Suppose $e = 0$ in Example \ref{ex:k3:5m} and suppose moreover $b = c$, and let $g(w, x, y, z) = (-w, x, z, y)$.
Then $g^* D = - D$ and $g^* D' = - D'$.

[$\mu_3$ (resp.\ $\alpha_3$), $r = 2$]
In Example \ref{ex:k3:3} suppose 
that $H$ and $I$ are invariant under $(x,y,z) \mapsto (x,z,y)$ (resp.\ $(x,y,z) \mapsto (x,-y,z)$).
For example, let $F = x^6 + y^6 + z^6 + xyz(y^3 + z^3)$
(resp.\ $F = x^6 + y^6 + z^6 + x (xz + y^2) (x^3 - z^3)$).
Let $g(w,x,y,z) = (-w,x,z,y)$ (resp.\ $g(w,x,y,z) = (-w,x,-y,z)$). Then $g^*(D) = -D$.
\end{example}

\subsection*{Acknowledgments}
I thank 
Simon Brandhorst, Hiroyuki Ito, Tetsushi Ito, Yukari Ito, Shigeyuki Kondo, Hisanori Ohashi, and Takehiko Yasuda
for helpful comments and discussions.

\begin{bibdiv}
\begin{biblist}
\bibselect{myrefs}
\end{biblist}
\end{bibdiv}

\end{document}